\theoremstyle{plain}
\numberwithin{equation}{section}
\newtheorem{thm}{Theorem}[section]
\newtheorem{ex}[thm]{Example}
\newtheorem{lem}[thm]{Lemma}   
\newtheorem{prop}[thm]{Proposition}
\newtheorem{defn}[thm]{Definition}
\newtheorem{rmrk}[thm]{Remark}   
\newtheorem{problem}[thm]{Problem}
\newcommand{\be}{\begin{equation}}
\newcommand{\ee}{\end{equation}}
\newcommand{\ptGHto}{\stackrel { \textrm{ptGH}}{\longrightarrow} }
\newcommand{\GHto}{\stackrel { \textrm{GH}}{\longrightarrow} }
\newcommand{\R}{\mathbb{R}}
\newcommand{\diam}{\operatorname{Diam}}
\newcommand{\dil}{\textrm{dil}}
\begin{document}
\fancyhead{}
\renewcommand{\headrulewidth}{0pt}
\fancyfoot{}
\fancyfoot[LE,RO]{\medskip \thepage}
\fancyfoot[LO]{\medskip MISSOURI J.~OF MATH.~SCI., SPRING 2019}
\fancyfoot[RE]{\medskip MISSOURI J.~OF MATH.~SCI., VOL.~31, NO.~1}

\setcounter{page}{1}

\title[Smocked Metric Spaces and their Tangent Cones]{Smocked Metric Spaces and their Tangent Cones}

\author[Sormani, Kazaras and students]{Prof.~Christina Sormani (Lehman and CUNYGC),\\ Dr.~Demetre Kazaras (Stony Brook),\\ and Students: \\
David Afrifa (Lehman) \\
Victoria Antonetti (Lehman) \\
Moshe Dinowitz (Stony Brook)\\
Hindy Drillick (Stony Brook) \\
Maziar Farahzad (Stony Brook)\\
Shanell George (Lehman) \\
Aleah Lydeatte Hepburn (Lehman) \\
Leslie Trang Huynh (Lehman) \\
Emilio Minichiello (Queens) \\
Julinda Mujo Pillati (Lehman)\\
Srivishnupreeth Rendla (Stony Brook) \\
Ajmain Yamin (Stony Brook)\\
}
\address{Department of Mathematics\\
                Lehman College\\
                Bronx, NY 10468}
                \email{sormanic@gmail.com}
 \address{Department of Mathematics\\
                CUNY Graduate Center\\
                365 Fifth Avenue\\
                NY NY 10016}  
 \address{Department of Mathematics\\
                Stony Brook University\\
                100 Nicolls Rd, \\ Stony Brook, NY 11794
                }                              
\email{demetre.kazaras@stonybrook.edu}

\thanks{We are grateful to SCGP and CUNYGC for hosting our meetings.
Prof.~Sormani's research was funded in part by NSF DMS 1612049. Dr.~Kazaras' research
was funded by SB and SCGP.  The students were unfunded volunteers completing the
work for research credit only. }

\keywords {Smocked Metric Space, Gromov-Hausdorff, Tangent Space}
       
\subjclass[2000]{53C23, 54E35}

\begin{abstract} 
We introduce the notion of a smocked metric space and explore the
balls and geodesics in a collection of different smocked spaces.  We find their rescaled
Gromov-Hausdorff limits and prove these tangent cones at
infinity exist, are unique, and are normed spaces.  We close
with a variety of open questions suitable for advanced undergraduates, masters students,
and doctoral students.
\end{abstract}

\maketitle

\section{Introduction}

The asymptotic behavior of a metric space at infinity has been well studied by many
mathematicians by taking a sequence of rescalings of the metric spaces and finding their
Gromov-Hausdorff limit.  This limit is called the tangent cone at infinity.  This idea
has ancient roots.  For example, a hyperboloid rescaled in upon itself converges to a cone,
which is equivalent to saying it is asymptotic to a cone at infinity.   However for more abstract
metric spaces, it can be more difficult to understand what it means to take a limit
and it can be more difficult to find that limit.  Often the limit is not unique and there is no
reason for it to be a cone.   

Here we introduce a new class of metric spaces that we call {\em smocked
spaces} inspired by the craft of smocking fabric.  Each smocked space is defined by 
taking a Euclidean space with a pattern on it 
(as in Figure~\ref{fig:smocking-patterns}) and then pulling each stitch in the
pattern to a single point.  The notion of pulling a thread to a point has already been
explored by metric geometers as it provides interesting counter examples to questions
involving areas and perimeters (cf. \cite{Area-spaces} by Burago-Ivanov).  However this is the first time anyone has explored 
more complex patterns in which many threads are drawn to points.  We discover that
indeed we obtain some rather surprising tangent cones at infinity when studying these spaces.

\begin{figure}[h]
\includegraphics[width=.3 \textwidth]{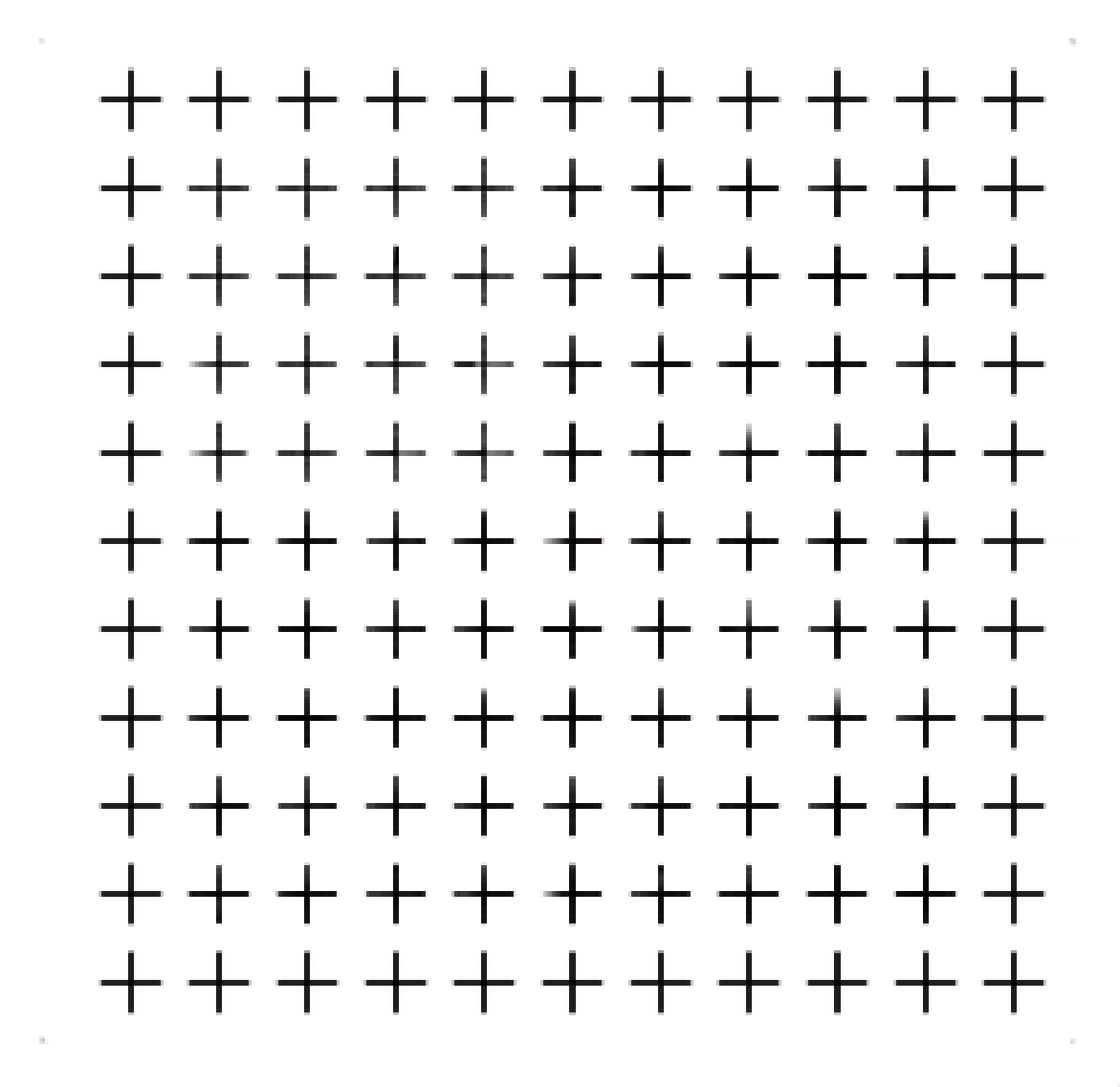} 
\includegraphics[width=.3 \textwidth]{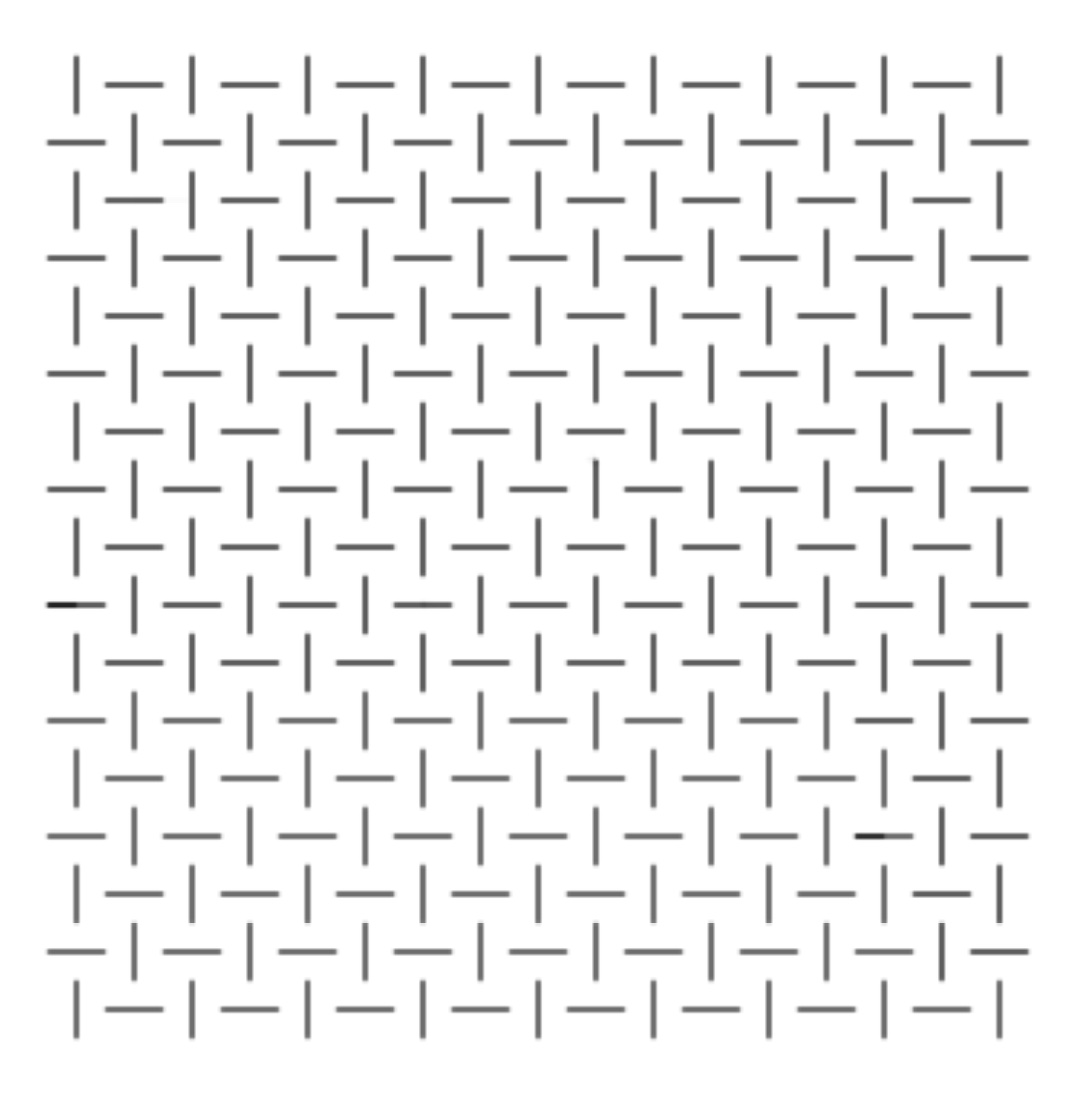} 
\includegraphics[width=.3 \textwidth]{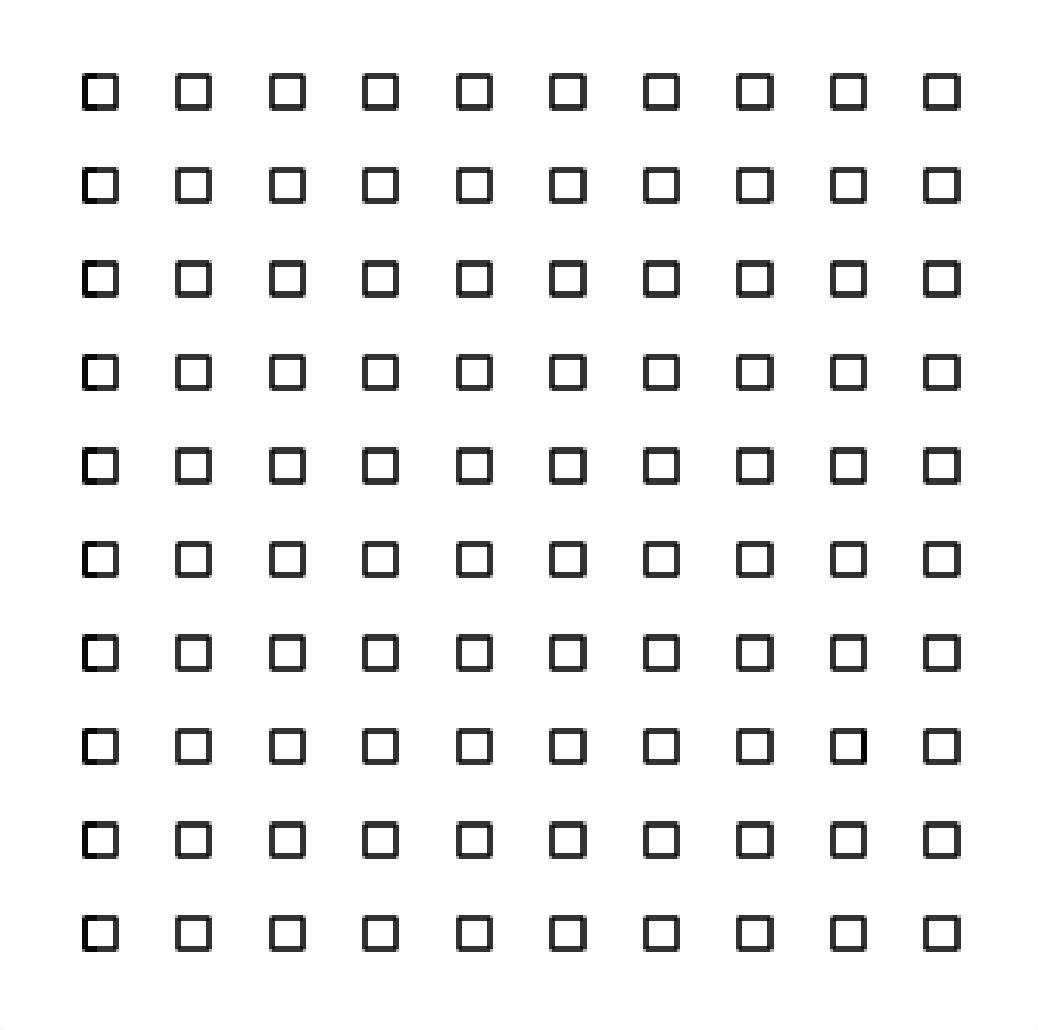} \\
\includegraphics[width=.3 \textwidth]{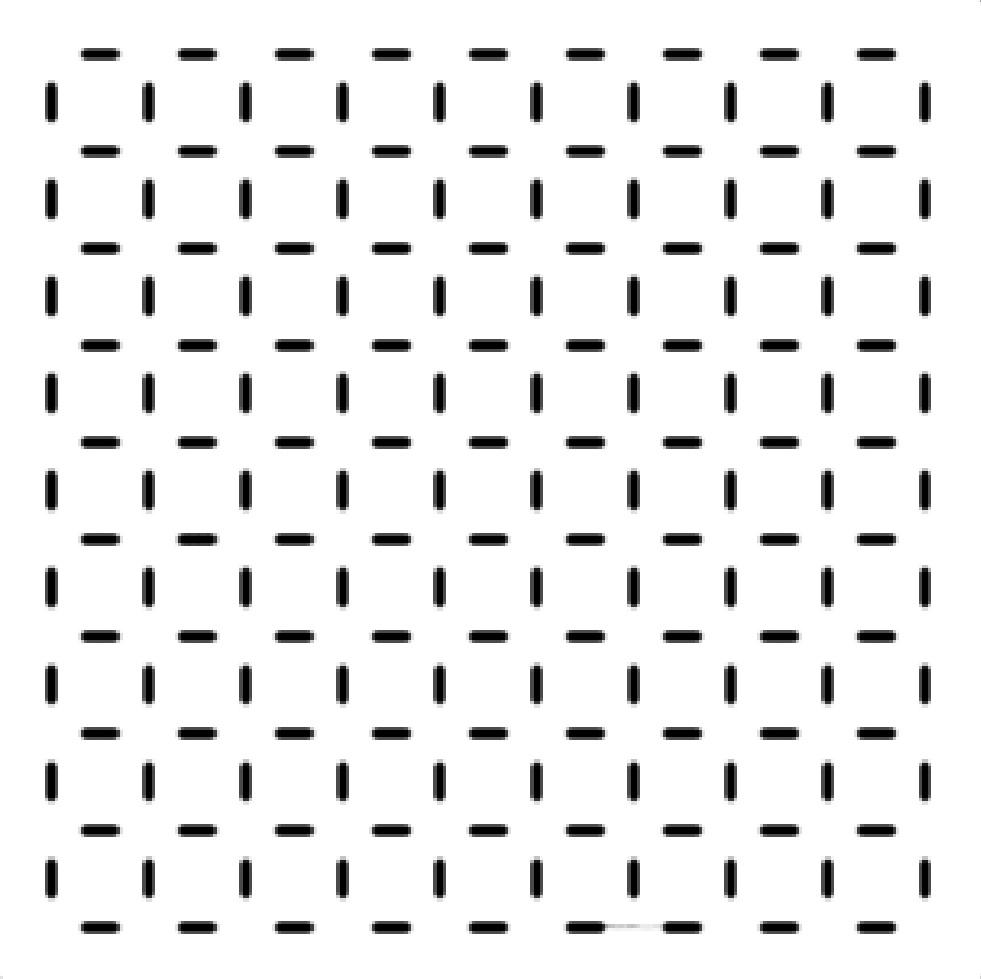} 
\includegraphics[width=.3 \textwidth]{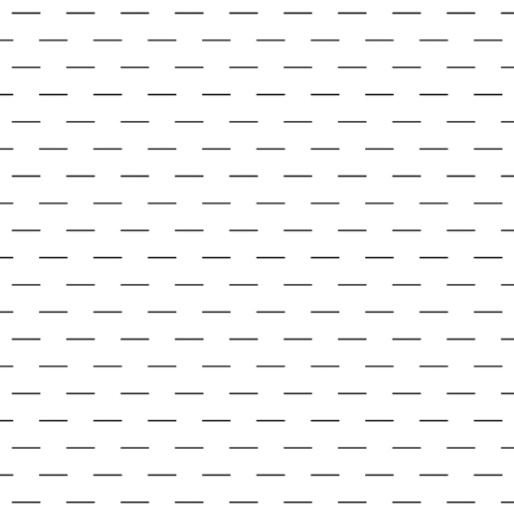} 
\includegraphics[width=.3 \textwidth]{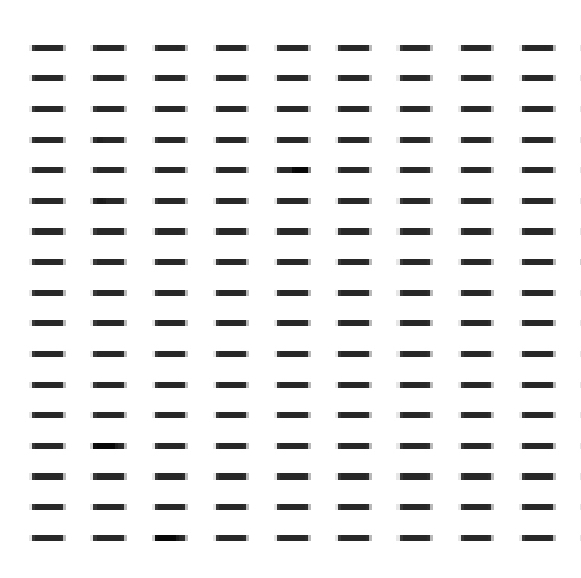} 
\caption{The smocked spaces: $X_+$, $X_T$, $X_\square$, and $X_H$, $X_\diamond$,  $X_=$.}
\label{fig:smocking-patterns}
\end{figure}

We present the rigorous definition of a smocked space appears in Definition~\ref{defn-smock} after reviewing the 
definition of pulling a thread to a point in Definition~\ref{defn-ps}.   We prove some lemmas
about balls in these spaces, which allow us to explore the balls of all six spaces on an intuitive level.
We then choose three smocked spaces to analyze more deeply, ultimately determining
their tangent cones at infinity in Theorem~\ref{thm-tan+}, Theorem~\ref{thm-tan-square},  
and Theorem~\ref{thm-tan-T}.  We've also proven
a general theorem one can apply to find the Gromov-Hausdorff limit of a large class of smocking
spaces in Theorem~\ref{thm-smocking-R}, while proving these tangent cones are unique and are normed spaces.   
In upcoming work by some of the authors \cite{SWIF-smocked},
we complete the analysis of the other three smocked spaces and examine three dimensional versions of them as well.
In upcoming work by Kazaras and Sormani \cite{Kazaras-Sormani-tori}, we will apply the results in these
two papers to a question of Gromov and Sormani \cite{Gromov-Sormani-IAS-report}.\footnote{Prof.~Sormani's grant
NSF DMS 1612049 funds this overarching project on scalar curvature but the students were unpaid volunteers
meeting in January 2019 at the CUNYGC and SCGP.}

This paper can be read by anyone who has studied basic metric geometry as we 
review every concept before we need to apply it.  Through the reading of this paper,
one can learn the definition of Gromov-Hausdorff convergence, a standard technique for proving this convergence
using correspondences, and the definition of the tangent cone at infinity of a metric space.  As each concept is
introduced, it is immediately applied to each of these patterns by various teams of students.  It may be fun for the
reader to find a smocking pattern online or in a smocking guide like \cite{smocking-book} and work with it while 
reading this paper to discover something new.
The reader might also slightly adapt the patterns explored within to see the consequences of altering the
sizes of stitches and the spacing between them. 

The paper closes with a section of open problems some of which are labeled as possible projects
for an undergraduate or masters thesis.   Other open problems are significantly more advanced.  Students
who would like to study metric geometry on a more advanced level are encouraged to read
Burago-Burago-Ivanov's award winning textbook \cite{BBI}.  

\tableofcontents

\section{Background}

Here we quickly review metric spaces and pulled thread spaces.  See also the award winning
textbook by Burago-Burago-Ivanov \cite{BBI}.  If you already know these topics, just briefly glance
through for notation.

\subsection{Metric spaces by Prof.~Sormani and Ajmain}

\begin{defn} \label{defn-metric}
A metric space $(X,d)$ is a set $X$ with a function $d: X\times X \to \R$
satisfying the following:
\begin{itemize}
\item Nonnegativity: $\quad d(x,y) \ge 0 \quad \forall x,y \in X.\quad $ 
\item Definiteness: $\quad d(x,y)=0 \iff x=y$.
\item Symmetry: $\quad d(x,y)=d(y,x) \quad \forall x,y \in X.$ 
\item The Triangle Inequality: $\quad d(x,y) \le d(x,z) + d(z,y) \quad \forall x,y,z \in X.$
\end{itemize}
\end{defn}

In this paper we will use the following notation:

\begin{defn}
Given a point $p$ in a metric space $X$ and $r>0$.
An open ball of radius $r$ about $p$:
\be
B_r(p)=\{x:\, d(x,p)<r\}.
\ee
A closed ball of radius $r$ about $p$:
\be
\bar{B}_r(p)=\{x:\, d(x,p)\le r\}.
\ee
A sphere of radius $r$ about $p$:
\be
\partial B_r(p)=\{x:\, d(x,p)=r\}.
\ee
\end{defn}

\begin{defn}
Given a set $K$ in a metric space $X$ and $r>0$,
The tubular neighborhood of radius $r$ about $K$:
\be
T_r(K) =\{x: \, \exists y\in K \, s.t.\, d(x,y)<r\}.
\ee
\end{defn}

\begin{lem} \label{tubular-ball}
Given a point $p$ in a metric space $X$ and $r,s>0$
we have
\be
T_s(B_r(p))=B_{r+s}(p).
\ee
\end{lem}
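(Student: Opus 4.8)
The plan is to establish the stated equality by proving the two inclusions $T_s(B_r(p)) \subseteq B_{r+s}(p)$ and $B_{r+s}(p) \subseteq T_s(B_r(p))$ separately.

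The forward inclusion is a direct application of the metric axioms. If $x \in T_s(B_r(p))$, then by definition there is a witness $y \in B_r(p)$ with $d(x,y) < s$, and membership in $B_r(p)$ means $d(y,p) < r$. The Triangle Inequality of Definition~\ref{defn-metric} gives
\[
d(x,p) \;\le\; d(x,y) + d(y,p) \;<\; s + r ,
\]
so $x \in B_{r+s}(p)$. This half needs nothing beyond the triangle inequality.

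I expect the reverse inclusion to be the main obstacle. Given $x$ with $d(x,p) < r+s$, I must produce a witness $y$ with $d(y,p) < r$ and $d(x,y) < s$; intuitively $y$ should sit ``between'' $p$ and $x$, at distance a little under $r$ from $p$ and a little under $s$ from $x$. The difficulty is that the metric axioms alone do not guarantee such an intermediate point exists --- on a space with no points lying between $p$ and $x$ (for instance a two-point space) the tubular neighborhood $T_s(B_r(p))$ can be strictly smaller than $B_{r+s}(p)$. The inclusion therefore rests on the fact that the spaces at hand are \emph{length spaces}: Euclidean space, and hence the smocked spaces built from it by pulling threads to points, admit curves between any two points whose length is arbitrarily close to their distance.

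To exploit this, if $x = p$ the claim is immediate (take $y = p$, using $r,s>0$), so assume $\ell := d(x,p) > 0$. I would fix a curve $\gamma$ from $p$ to $x$ of length $L$ with $\ell \le L < r+s$, reparametrize it by arc length, and set $y := \gamma(t_0)$ for a parameter $t_0$ chosen in the interval $\big(\max\{0,\,L-s\},\ \min\{L,\,r\}\big)$. This interval is nonempty precisely because $L < r+s$ together with $r,s>0$. Since the sub-arcs of $\gamma$ bound the corresponding distances, $d(p,y) \le t_0 < r$ and $d(x,y) \le L - t_0 < s$, so $y \in B_r(p)$ witnesses $x \in T_s(B_r(p))$. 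Combining the two inclusions yields the equality, the only nontrivial ingredient being the length-space structure used to construct $y$.
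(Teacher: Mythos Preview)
Your forward inclusion matches the paper's argument exactly.

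For the reverse inclusion you have actually been more careful than the paper. The paper argues: given $q\in B_{r+s}(p)$, ``If $B_s(q)\cap B_r(p)=\varnothing$ then $r+s\le d(q,p)$, contradiction.'' That implication is precisely the existence of an intermediate point, and as you observe it is \emph{false} in an arbitrary metric space; your two-point example (say $d(p,q)=3$, $r=s=2$) kills both the implication and the lemma itself as stated. So the paper's one-line proof tacitly uses the same length-space structure you invoke, but without saying so.

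Your fix via an almost-length-minimizing curve is correct: with $L<r+s$ the interval $(\max\{0,L-s\},\min\{L,r\})$ is nonempty, and the arclength bounds give the witness $y$. This is exactly the content the paper's sentence hides. In the applications within the paper (Euclidean space, pulled-thread and smocked spaces) the ambient space is a length space, so both arguments go through there; your version simply makes the needed hypothesis explicit. If anything, you might note that for $\mathbb{E}^N$ one can take the straight segment and set $y=p+\tfrac{t_0}{\ell}(x-p)$ directly, which avoids any appeal to general length-space machinery while still being rigorous.
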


\begin{proof} 
Let $q\in T_s(B_r(p))$. By the definition of tubular neighborhood, 
$\exists y\in B_r(p)$ such that $d(q,y)<s$.  
By the triangle inequality $d(q,p) < r+s$, and therefore $q \in B_{r+s}(p)$
Now take $q\in B_{r+s}(p)$. So $d(q,p) < r+s$.
If  $B_s(q) \cap B_r(p)= \varnothing$ then $ r+s \leq d(q,p)$ which leads to a contradiction.  Hence 
$\exists y \in B_r(p)$ such that $d(y,q)<s$, and so $q\in T_s(B_r(p))$.
\end{proof}

\begin{lem} \label{tubular-subset}
If $K_1\subset K_2$ then $T_s(K_1) \subset T_s(K_2)$.
\end{lem}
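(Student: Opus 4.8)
The plan is to prove the set inclusion directly by unpacking the definition of the tubular neighborhood and then invoking the hypothesis $K_1 \subset K_2$. To establish $T_s(K_1) \subset T_s(K_2)$, I would fix an arbitrary point $x \in T_s(K_1)$ and show that it must also belong to $T_s(K_2)$, after which the inclusion follows since $x$ was arbitrary.

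First I would apply the definition of the tubular neighborhood to $K_1$: because $x \in T_s(K_1)$, there exists a witness point $y \in K_1$ with $d(x,y) < s$. Next I would use the inclusion hypothesis, which says that every element of $K_1$ is an element of $K_2$; in particular, the same witness $y$ satisfies $y \in K_2$. Thus we have exhibited a point $y \in K_2$ with $d(x,y) < s$, which is precisely the condition required for $x \in T_s(K_2)$ according to the definition of the tubular neighborhood applied to $K_2$.

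The main thing to observe is that there is no genuine obstacle here: the argument is a single pass through the definition, and the very witness point $y$ that certifies $x \in T_s(K_1)$ serves, unchanged, as the witness certifying $x \in T_s(K_2)$. The only care required is that the radius $s$ is held fixed throughout the argument, so the strict inequality $d(x,y) < s$ carries over verbatim without any adjustment. Since $x$ was an arbitrary element of $T_s(K_1)$, this completes the proof of the inclusion.
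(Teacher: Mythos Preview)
Your proof is correct and follows essentially the same approach as the paper's own proof: pick an arbitrary element of $T_s(K_1)$, extract the witness $y\in K_1$ from the definition, and observe that the hypothesis $K_1\subset K_2$ makes $y$ a witness for membership in $T_s(K_2)$.
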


\begin{proof} 
Let $p\in T_s(K_1)$.  Then $\exists y\in K_1$ such that $d(p,y)< s$.  Since $K_1\subset K_2$, we have $\exists y\in K_2$ such that $d(p,y)\in s$.  Thus $p\in T_s(K_2)$.
\end{proof}

\begin{lem} \label{tubular-cup}
$T_s(K_1\cup K_2) = T_s(K_1) \cup T_s(K_2)$.
\end{lem}

\begin{proof} 
This follows from the definition of union:
\begin{eqnarray*}
T_s(K_1\cup K_2) &=& \{ z: \, \exists w \in K_1\cup K_2 \,s.t.\, d(z,w)<s\}\\
&=&\{ z: \, \exists w \in K_1\,s.t.\, d(z,w)<s\,OR\, \exists w\in K_2 \,s.t.\, d(z,w)<s\}\\
&=&\{ z: \, \exists w \in K_1\,s.t.\, d(z,w)<s\} \cup \{ z: \, \exists w \in K_2\,s.t.\, d(z,w)<s\}\\
&=& T_s(K_1) \cup T_s( K_2). 
\end{eqnarray*}
\end{proof}

\begin{lem} \label{tubular-cap}
$T_s(K_1\cap K_2) \subset T_s(K_1) \cap T_s(K_2)$.
\end{lem}

\begin{proof} 
This follows from the definition of intersection:
\begin{eqnarray*}
T_s(K_1\cup K_2) &=& \{ z: \, \exists w \in K_1\cap K_2 \,s.t.\, d(z,w)<s\}\\
&=&\{ z: \, \exists w \in K_1\,AND\,  K_2 \,s.t.\, d(z,w)<s\}\\
&\subset &\{ z: \, \exists w \in K_1\,s.t.\, d(z,w)<s\} \cap \{ z: \, \exists w \in K_2\,s.t.\, d(z,w)<s\}\\
&=& T_s(K_1) \cap T_s( K_2).
\end{eqnarray*}
\end{proof}

\begin{lem}\label{tubular-interval}
In ${\mathbb{E}}^2$ we have the following
\be
T_s([a,b]\times\{y\})\subset [a-s,b+s]\times[y-s, y+s]
\ee
\be
T_s(\{x\}\times [a,b])\subset [x-s,x+s]\times [a-s,b+s].
\ee
\end{lem}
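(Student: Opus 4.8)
The plan is to prove the first inclusion directly from the definition of the tubular neighborhood, and then obtain the second by the symmetry of the two coordinates in $\mathbb{E}^2$. First I would fix an arbitrary point $z=(z_1,z_2)\in T_s([a,b]\times\{y\})$. By the definition of $T_s$, there is a witness $w=(w_1,y)$ lying on the segment, so that $a\le w_1\le b$ and $d(z,w)<s$, where $d$ denotes the Euclidean distance on $\mathbb{E}^2$.

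The key step is to pass from the single distance bound $d(z,w)<s$ to two separate coordinate bounds. Since
\[
|z_1-w_1|\le \sqrt{(z_1-w_1)^2+(z_2-y)^2}=d(z,w)<s,
\]
and likewise $|z_2-y|\le d(z,w)<s$, each coordinate of $z$ is controlled. From $|z_1-w_1|<s$ together with $a\le w_1\le b$ I get $a-s<z_1<b+s$, hence $z_1\in[a-s,b+s]$; and from $|z_2-y|<s$ I get $z_2\in[y-s,y+s]$. Therefore $z\in[a-s,b+s]\times[y-s,y+s]$, which is the first inclusion.

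The second inclusion is identical after interchanging the roles of the two coordinates: a point in $T_s(\{x\}\times[a,b])$ has a witness whose first coordinate is $x$ and whose second coordinate lies in $[a,b]$, and the same estimate bounds the horizontal displacement by $s$ and the vertical displacement to within $s$ of $[a,b]$. There is essentially no obstacle here; the only point requiring a moment's care is the elementary observation that each coordinate difference is dominated by the Euclidean norm, which is precisely what lets the single hypothesis $d(z,w)<s$ split into the two one-dimensional estimates.
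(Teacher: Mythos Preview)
Your proof is correct and follows essentially the same approach as the paper's own proof: take an arbitrary point in the tubular neighborhood, use the witness on the segment, bound each coordinate difference by the Euclidean distance, and then invoke symmetry for the second inclusion.
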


\begin{proof} 
Let $(p_1,p_2)\in T_s([a,b]\times\{y\})$.  
Then  $\exists(q_1,q_2)\in [a,b]\times\{y\}$ such that 
\be
d(p,q)= \sqrt{(p_1-q_1)^2 + (p_2-q_2)^2}<s.
\ee
  Hence $|p_1-q_1|<s$ and $|p_2-q_2|<s$. Therefore $p\in [a-s,b+s]\times[y-s,s+s]$, as required.  
A similar argument shows $T_s(\{x\}\times [a,b])\subset [x-s,x+s]\times [a-s,b+s]$.
\end{proof}

\subsection{Pulled Thread Spaces by Prof.~Sormani and Ajmain} \label{sect-ps}  

Before introducing a smocked metric space we recall the notion of a pulled thread metric space, 
particularly in the setting where one starts with a Euclidean space, $\mathbb{E}^N$.  The idea
is that if one views a Euclidean plane as a cloth, and marks an interval on that cloth, then sews 
along that interval, and then pulls the thread tight, one obtains a new metric space (called a pulled
strong space) in which the interval is now a point as in Figure~\ref{fig:pulled-thread}.

\begin{figure}[h]
\includegraphics[width=.15 \textwidth]{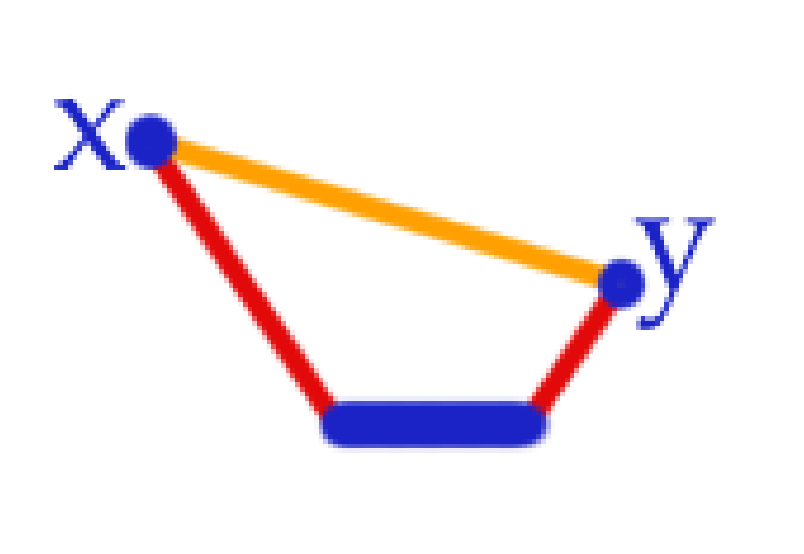} 
\caption{The distance between two points in a pulled thread space is the minimum of the length of the direct path and the sum of the lengths of a pair of segments touching the interval.}
\label{fig:pulled-thread}
\end{figure}

\begin{defn} \label{defn-ps}
Given a Euclidean space, $\mathbb{E}^N$, and an interval, $I$, one can define the {\bf pulled thread
metric space}, $(X,d)$, in which the interval is viewed as a single point.  
\be
X = \{ x: \, x \in {\mathbb{E}^N}\setminus I\} \cup \{I\}.
\ee
We have a {\bf pulled thread map} $\pi: \mathbb{E}^N \to X$ such that $\pi(x)=x$ for $x \in {\mathbb{E}^N}\setminus I$
and $\pi(x)=I$ for $x\in I$.
The distance function, $d: X\times X \to [0,\infty)$, is defined by
\begin{eqnarray}
d(x,y) &=& \min\{ |x-y|, \min\{ |x-z|+|z'-y|:\, z, z'\in I\} \quad \textrm{ for } x,y\in \mathbb{E}^N \\
d(x, I) &=& \min\{ |x-z|: \, z\in I \} \quad \textrm{ for } x\in \mathbb{E}^N.
\end{eqnarray}
We can then define the {\bf pulled thread pseudometric} $\bar{d}: {\mathbb{E}^N}\times {\mathbb{E}^N} \to [0, \infty)$
to be
\be
\bar{d}(x,y)= d(\pi(x), \pi(y))= \min\{ |x-y|, \min\{ |x-z|+|z'-y|:\, z, z'\in I\}. 
\ee
Finally we define the distance to the interval, $D: {\mathbb{E}^N}\to [0,\infty)$:
\be
 D(x)= \min\{ |x-z|: \, z\in I \} = d(\pi(x),I).
\ee
Notation: For any $p\in X$, $p\neq I$, write $z_p$ to denote the unique point in $I$ such that $|p-z_p| = d(p,I)$.
\end{defn}

\begin{lem} \label{lem-ps-metric}
A pulled thread space is a metric space.
\end{lem}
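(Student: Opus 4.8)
The plan is to verify the four axioms of Definition~\ref{defn-metric} for $(X,d)$, and the first move I would make is a simplification that streamlines everything. Because $z$ and $z'$ range independently over $I$, the nested minimum in the definition of $d$ factors:
\[
\min\{|x-z|+|z'-y| : z,z'\in I\} = \Big(\min_{z\in I}|x-z|\Big) + \Big(\min_{z'\in I}|z'-y|\Big) = D(x)+D(y).
\]
Hence for $x,y\in\mathbb{E}^N\setminus I$ we have the clean formula $d(x,y)=\min\{|x-y|,\,D(x)+D(y)\}$, while $d(x,I)=D(x)$. With this reformulation the routine axioms become transparent.

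Nonnegativity is immediate, as $d$ is a minimum of sums of Euclidean norms. Symmetry holds because $|x-y|=|y-x|$ and $D(x)+D(y)=D(y)+D(x)$, together with $d(x,I)=d(I,x)$ by definition. For definiteness, $x=y$ forces $|x-y|=0$ and hence $d(x,y)=0$; conversely, if $d(x,y)=0$ for $x,y\in\mathbb{E}^N\setminus I$, then either $|x-y|=0$, giving $x=y$, or $D(x)+D(y)=0$, which forces $D(x)=D(y)=0$ and thus $x,y\in I$, contradicting $x,y\notin I$. The remaining sub-case $d(x,I)=D(x)=0$ is the one spot requiring care: I would invoke that $I$ is a \emph{closed} segment, so that $D(x)=0$ iff $x\in I$, which is incompatible with $x\in\mathbb{E}^N\setminus I$ and so confirms definiteness.

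The triangle inequality is the real work and the step I expect to be the main obstacle. The key auxiliary fact is that $D$ is $1$-Lipschitz: taking $z\in I$ to realize $D(w)$, the Euclidean triangle inequality gives $D(x)\le|x-z|\le|x-w|+|w-z|=|x-w|+D(w)$, and symmetrically $D(y)\le|w-y|+D(w)$. Using this, I would establish $d(x,y)\le d(x,w)+d(w,y)$ for $x,y,w\in\mathbb{E}^N\setminus I$ by splitting into four cases according to which term realizes each of the two minima on the right. When both realize the Euclidean term, the bound is just $|x-w|+|w-y|\ge|x-y|\ge d(x,y)$. In each of the three cases where at least one min is realized by a $D$-term, the Lipschitz estimates collapse the right-hand side to something $\ge D(x)+D(y)\ge d(x,y)$; for instance $|x-w|+\big(D(w)+D(y)\big)\ge D(x)+D(y)$, and $\big(D(x)+D(w)\big)+\big(D(w)+D(y)\big)\ge D(x)+D(y)$.

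Finally, the cases where one of the three points is the collapsed point $I$ follow directly from $d(\cdot,I)=D$ and the same Lipschitz inequalities: e.g.\ $w=I$ gives $d(x,y)\le D(x)+D(y)=d(x,I)+d(I,y)$ at once, and the degenerate cases are trivial. The only genuinely subtle ingredients are thus the closedness of $I$ (for definiteness) and the bookkeeping in the four-case split; all else reduces to the Euclidean triangle inequality and the $1$-Lipschitz property of the distance-to-$I$ function.
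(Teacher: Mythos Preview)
Your proof is correct and follows the same underlying strategy as the paper's---a case split on which term realizes each minimum---but your packaging is noticeably cleaner. The paper never writes down the factorization $\min\{|x-z|+|z'-y|:z,z'\in I\}=D(x)+D(y)$ explicitly; instead it carries around the nearest-point notation $z_p\in I$ and works through a two-level case tree (Cases I, II with subcases a, b and further subcases i--iv), checking each inequality by hand. Your route isolates the single auxiliary fact that $D$ is $1$-Lipschitz and uses it to collapse all the ``mixed'' cases at once, which both shortens the argument and makes clear why it works. The paper's version is more concrete and perhaps easier for a first reader to follow line by line; yours is the version one would write once the mechanism is understood.
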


\begin{proof} This has three parts:  

\noindent
{\em \bf Positive definiteness: $d(p,q)\ge 0$ and $d(p,q)=0 \iff p=q$:}

The fact that $d(p,q) \geq 0$ for all $p,q\in X$ is clear from Definition~\ref{defn-ps}.  Suppose $d(p,q) = 0$
then $p=\pi(v)$ and $q=\pi(w)$ where
Then either 
\be
|v-w|=0 \implies v=w \implies p=q
\ee
or there exists $z, z'\in I$ such that
\be
|v-z|+|z'-w|=0 
\ee
which implies $v=z$ and $z'=w$ so $v,w \in I$ so 
\be
p=\pi(v)=\pi(w)=q.
\ee
{\em \bf Symmetry $d(p,q)=d(q,p)$: } 

This follows by taking $p=\pi(x)$ and $q=\pi(y)$ and noting that
\begin{eqnarray}
d(p,q)&=&\min\{ |x-y|, \min\{ |x-z|+|z'-y|:\, z, z'\in I\} \\
&=&\min\{ |y-x|, \min\{ |y-z|+|z'-x|:\, z, z'\in I\}=d(q,p).
\end{eqnarray}
{\em \bf Triangle inequality $d(a,b) \le d(a,q)+d(q,b)$:}  

We prove the triangle inequality in cases
using the notation that for any $p\in X$ there exists $v_p \in {\mathbb{E}}^N$ such that
$p=\pi(v_p)$ and if $p\neq I$ then there exists $z_p\in I$ such that 
\be
d(p,I)= |v_p-z_p|.
\ee
{\bf Case I: }We assume $a\in \pi(X\setminus \{I\})$ and $b= I$ (which also includes $a$ and $b$ switched by symmetry).
This breaks into two cases:

{\bf Case I.a:} We assume $q=I$, This  implies $d(b,q)=0$ and $d(a,q)=d(b,q)$ so
\be
d(a,b) = d(a,q) + d(q,b) 
\ee

{\bf Case I.b:} We assume $q\neq I$. This breaks into two deeper cases

{\bf Case I.b.i:}  We assume $d(a,q) = |a-q| $ which implies
\begin{align}
 d(a,b) = |v_a- z_a| < |v_a-z_q| \leq |v_a-v_q| - |v_q-z_q| = d(a,q) + d(q,b)
\end{align}

{\bf Case I.b.ii:} We assume $d(a,q) = |v_a-z_a| + |v_q-z_q|$ which implies
\begin{align}
 d(a,b) = |v_a - z_a| < |v_a-z_q| + 2|v_q-z_q| = d(a,q)  + d(q,b)
\end{align}
{\bf Case II :} We assume $a, b\in \pi(X\setminus \{I\})$.  This breaks into two cases:

{\bf Case II.a:} We assume $q=I $ which implies
\be
d(a,b) \leq |v_a-z_a| + |v_b-z_b| = d(a,q) + d(q,b)
\ee

{\bf Case II.b: } We assume $q\neq I$.  This breaks into four deeper cases:

\quad  {\bf Case II.b.i: }We assume $ d(a,q) = |v_a-z_a| + |v_q-z_q| $ and $ d(q,b) = |v_b-z_b| + |v_q-z_q|$. 
\begin{align}
\textrm{So }  d(a,b) &\leq  |v_a-z_a| + |v_b-z_b| \\ &\leq |v_a-z_a| + |v_b-z_b| + 2|v_q-z_q| \\&= d(a,q) + d(q,b).
\end{align}
\quad \quad {\bf Case II.b.ii:} We assume $d(a,q) = |v_a-v_q|$ and $ d(q,b) = |v_b-z_b| + |v_q-z_q| $. 
\begin{align}
\textrm{So }  d(a,b) &\leq |v_a-z_a| + |v_b-z_b| \leq |v_a-z_q| + |v_b-z_b| \\&\leq |v_a-v_q|  + |v_q-z_q|+ |v_b-z_b| \\ &= d(a,q) + d(q,b).
\end{align}
\quad \quad {\bf Case II.b.iii:} We assume $d(a,q) = |v_a-z_a| + |v_q-z_r|$ and $ d(q,b) = |v_b-v_q| $. 
\begin{align}
\textrm{So }  d(a,b) &\leq |v_a-z_a| + |v_b-z_b| \\&\leq |v_a-z_a| + |v_b-q| + |v_q-z_q|\\& = d(a,q). + d(q,b)
\end{align}
\quad \quad {\bf Case II.b.iv:} We assume $d(a,q) = |v_a-v_q|$ and  $d(v_q,v_b) = |v_b-v_q| $. 
\begin{align}
\textrm{So } d(a,b)\leq |v_a-v_b| \leq |v_a-v_q| + |v_b-v_q| =  d(a,q) + d(q,b).
\end{align}
\end{proof}

\begin{rmrk}
More generally pulled thread spaces can be defined starting with any geodesic metric space or length space.
See Burago-Burago-Ivanov's textbook \cite{BBI}.
\end{rmrk}

\begin{rmrk}
Note that there is no particular reason for $I$ to be a closed interval.  It might be any
compact set.  But the classical definition of a pulled thread is that the $I$ is an interval.
\end{rmrk}

\subsection{Balls in Pulled Thread Spaces by Prof.~Sormani, Ajmain and Julinda}

Here we prove in three lemmas
that the balls in pulled thread spaces have the form depicted in Figure~\ref{fig:pulled-thread-balls-center}.

\begin{figure}[h]
\includegraphics[width=.4 \textwidth]{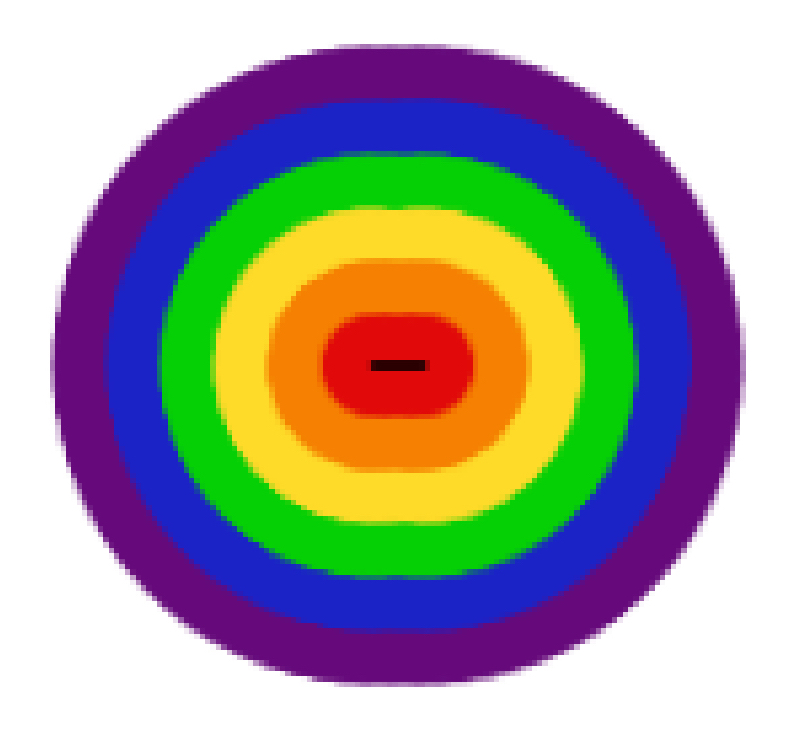} \includegraphics[width=.4 \textwidth]{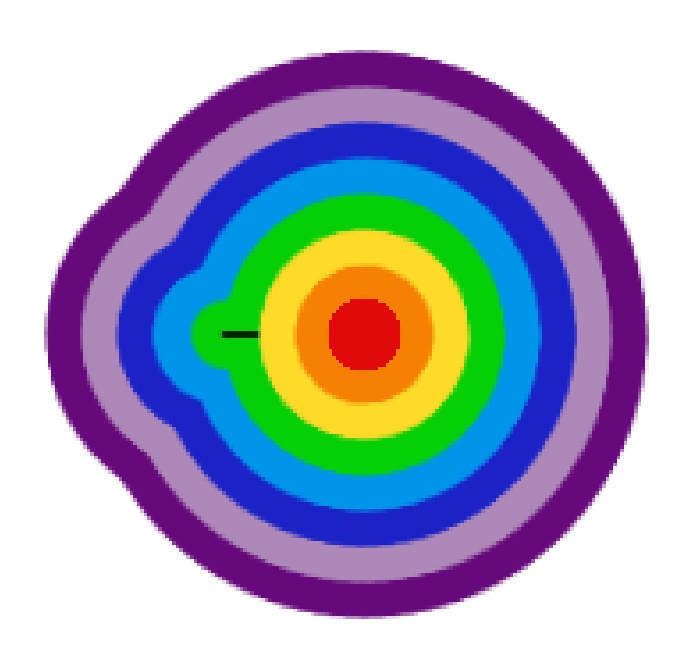} 
\caption{Concentric balls about the pulled interval as in Lemma~\ref{ball-ps-1} and balls
about point that is not the pulled interval as in Lemmas~\ref{ball-ps-1}
and~\ref{ball-ps-add}.}
\label{fig:pulled-thread-balls-center}\label{fig:pulled-thread-balls-off}
\end{figure}

\begin{lem} \label{ball-ps-1}
In a pulled thread space as in Definition~\ref{defn-ps}
\be
\pi^{-1}(B_r(I))= T_r(I).
\ee
See the left side of Figure~\ref{fig:pulled-thread-balls-center}.
\end{lem}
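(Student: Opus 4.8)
The plan is to reduce both sides to the same sublevel set of the distance-to-interval function $D$ from Definition~\ref{defn-ps}. The single fact that does all the work is the identity $d(\pi(x), I) = D(x)$, which is immediate from that definition, since $D(x) = \min\{|x-z| : z \in I\} = d(\pi(x), I)$ for every $x \in \mathbb{E}^N$.

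First I would unwind the left-hand side. By the definition of preimage and of the open ball $B_r(I)$, a point $x \in \mathbb{E}^N$ lies in $\pi^{-1}(B_r(I))$ precisely when $\pi(x) \in B_r(I)$, that is, when $d(\pi(x), I) < r$. If $x \in \mathbb{E}^N \setminus I$ then $\pi(x) = x$ and the condition becomes $D(x) < r$; if $x \in I$ then $\pi(x) = I$, so $d(\pi(x), I) = 0 < r$ and $x$ always belongs to the set, consistent with $D(x) = 0 < r$ in that case. Hence $\pi^{-1}(B_r(I)) = \{x \in \mathbb{E}^N : D(x) < r\}$.

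Next I would unwind the right-hand side. By the definition of the tubular neighborhood, $x \in T_r(I)$ iff there exists $y \in I$ with $|x - y| < r$. Such a $y$ exists exactly when $\inf_{y \in I} |x - y| < r$; since $I$ is compact this infimum is attained and equals $D(x)$, so $x \in T_r(I)$ iff $D(x) < r$. Thus $T_r(I) = \{x \in \mathbb{E}^N : D(x) < r\}$ as well, and comparing the two descriptions yields the claimed equality.

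The argument is short, so I do not expect a serious obstacle; the only point requiring care is bookkeeping the two roles of the interval --- as the single collapsed point $I \in X$ on the target of $\pi$, and as a compact subset of $\mathbb{E}^N$ on its source --- and confirming that the points of $I$ itself are classified the same way by both the ball description and the tubular-neighborhood description. The passage from ``$\exists y \in I$ with $|x-y| < r$'' to ``$D(x) < r$'' is the one place where compactness of $I$ enters, ensuring the infimum defining $D$ is a genuine minimum.
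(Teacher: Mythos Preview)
Your proof is correct and follows essentially the same approach as the paper: both arguments reduce each side to the condition $\min\{|x-z|:z\in I\}<r$, i.e., to the sublevel set $\{x:D(x)<r\}$. Your only embellishment is the explicit case split on whether $x\in I$, which the paper handles implicitly by writing $d(\pi(v),\pi(I))$ uniformly.
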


\begin{proof} 
Note that $v \in T_r(I)$ if and only if
\be
\exists\, z\in I \textrm{ such that } |v-z|<r.
\ee
By Definition~\ref{defn-ps}, this is true if and only if
\be
d(\pi(v), \pi(I))=\min\{ |v-z'|:\, z'\in I\} <r.
\ee
This is true if and only if $\pi(v) \in B_r(\pi(I))$ 
which is true if and only if
 $v \in \pi^{-1}(B_r(I))$.
\end{proof}

\begin{lem} \label{ball-ps-h}  
In a pulled thread space as in Definition~\ref{defn-ps}, 
\be
\pi^{-1}(B_r(\pi(x)))= B_r(x) \quad \forall  r\le D(x).
\ee
See the red, orange, and yellow balls on the right side of Figure~\ref{fig:pulled-thread-balls-off}.
\end{lem}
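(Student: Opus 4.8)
The plan is to peel back both sides to subsets of $\mathbb{E}^N$ and then compare them directly. First I would observe that, by the definition of the pulled thread map $\pi$ and the distance $d$ on $X$, a point $v\in\mathbb{E}^N$ lies in $\pi^{-1}(B_r(\pi(x)))$ exactly when $d(\pi(x),\pi(v))<r$, which by Definition~\ref{defn-ps} is the pulled thread pseudometric $\bar{d}(x,v)<r$. Since the $B_r(x)$ on the right-hand side is just the Euclidean ball $\{v:\,|x-v|<r\}$, the whole claim collapses to the equivalence
\[
\bar{d}(x,v)<r \iff |x-v|<r \qquad\text{whenever } r\le D(x).
\]

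The simplification I would exploit is that the double minimum in $\bar{d}$ decouples: because $z$ and $z'$ range over $I$ independently, $\min\{|x-z|+|z'-v|:\,z,z'\in I\}=D(x)+D(v)$, so that $\bar{d}(x,v)=\min\{|x-v|,\,D(x)+D(v)\}$. One inclusion is then immediate and needs no hypothesis on $r$: if $|x-v|<r$ then $\bar{d}(x,v)\le|x-v|<r$, giving $B_r(x)\subset\pi^{-1}(B_r(\pi(x)))$. For the reverse inclusion I would argue by contradiction: suppose $\bar{d}(x,v)<r$ but $|x-v|\ge r$. Since $\bar{d}(x,v)<r\le|x-v|$, the minimum defining $\bar{d}(x,v)$ cannot be achieved by the direct term, so it must be the wrap-around term, $\bar{d}(x,v)=D(x)+D(v)<r$. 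Combined with the standing hypothesis $r\le D(x)$ this forces $D(v)<0$, contradicting $D(v)\ge 0$. Hence $|x-v|<r$, yielding $\pi^{-1}(B_r(\pi(x)))\subset B_r(x)$ and completing the equality.

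The main (and essentially only) obstacle is recognizing why the restriction $r\le D(x)$ is exactly the right hypothesis: geometrically it guarantees that the ball of radius $r$ about $x$ does not reach the interval $I$, so no point inside it can ever shorten its distance to $x$ by routing through the pulled interval, and the pulled thread distance therefore agrees with the Euclidean distance throughout this ball. Once the double minimum is rewritten as $D(x)+D(v)$, this intuition becomes the transparent sign contradiction above, so I do not expect any genuinely technical difficulty.
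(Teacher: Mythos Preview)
Your proof is correct and follows essentially the same line as the paper's: both show the easy inclusion $B_r(x)\subset\pi^{-1}(B_r(\pi(x)))$ directly from $\bar d\le|\cdot|$, and for the reverse inclusion both use that the through-interval term is bounded below by $D(x)\ge r$ so cannot realize a distance strictly less than $r$. Your explicit decoupling $\min_{z,z'\in I}\{|x-z|+|z'-v|\}=D(x)+D(v)$ is a nice cleanup that the paper leaves implicit (it only uses $|w'-x|\ge D(x)$), and your contradiction phrasing yields the same conclusion by an equivalent route.
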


\begin{proof}  
If $v\in \pi^{-1}(B_r(\pi(x)))$ then 
$
\pi (v)\in B_r(\pi (x)).
$
Thus by Definition~\ref{defn-ps},
\be
\min\{|v-x|, |v-w|+|w'-x|: \, w, w'\in I\} =d(\pi(v), \pi(x)) < r < D(x).
\ee
Since $|w'-x|\ge D(x)$, we see that 
\be
|v-x|=d(\pi(v), \pi(x)) < r
\ee
which implies that $v\in B_r(x)$.

On the other hand, if $v\in B_r(x)$ then $|v-x|<r$.  So
\be
d(\pi(v), \pi(x))=\min\{|v-x|, |v-w|+|w'-x|: \, w, w'\in I\} \le |v-x|<r
\ee
which implies that $\pi(v) \in \pi^{-1}(B_r(\pi(x)))$.
\end{proof}

\begin{lem} \label{ball-ps-add}
In a pulled thread space as in Definition~\ref{defn-ps},
if $D(x) = h$ then 
\be
\pi^{-1}(B_{h+s}(x))= B_{h+s}(x)  \cup T_s(I) \quad \forall  s>0.
\ee
See the green, blue, and purple balls in Figure~\ref{fig:pulled-thread-balls-off}.
\end{lem}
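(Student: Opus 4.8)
The plan is to reduce everything to an explicit formula for the pulled thread pseudometric $\bar d(v,x)$ and then split the resulting ball condition into the two pieces appearing on the right-hand side. First I would record the key simplification: in the expression
\be
\min\{\,|v-z|+|z'-x| : z,z'\in I\,\}
\ee
the points $z$ and $z'$ range over $I$ \emph{independently}, so the double minimum separates as $D(v)+D(x)$, where $D(w)=\min\{|w-z|:z\in I\}$ is the distance-to-$I$ function from Definition~\ref{defn-ps}. Combining this with the direct term gives
\be
\bar d(v,x)=\min\{\,|v-x|,\;D(v)+D(x)\,\},
\ee
and using the hypothesis $D(x)=h$ this becomes $\bar d(v,x)=\min\{\,|v-x|,\;D(v)+h\,\}$.

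With this formula in hand, the second step is to rewrite the membership condition. A point $v\in\mathbb{E}^N$ lies in $\pi^{-1}(B_{h+s}(x))$ exactly when $\bar d(v,x)=d(\pi(v),\pi(x))<h+s$. Since a minimum of two quantities is strictly less than $h+s$ precisely when at least one of the two quantities is, this is equivalent to the disjunction $|v-x|<h+s$ or $D(v)+h<h+s$.

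The third step is to identify each disjunct with one of the sets in the claim. The inequality $|v-x|<h+s$ says exactly that $v$ lies in the Euclidean ball $B_{h+s}(x)$. The inequality $D(v)+h<h+s$ simplifies to $D(v)<s$, which by the same reasoning used in Lemma~\ref{ball-ps-1} (namely $D(v)<s$ if and only if there exists $z\in I$ with $|v-z|<s$) says exactly that $v\in T_s(I)$. Taking the union of the two cases yields $\pi^{-1}(B_{h+s}(x))=B_{h+s}(x)\cup T_s(I)$, as required.

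I do not expect a genuine obstacle here. The only point requiring care is the factorization of the double minimum into $D(v)+D(x)$, which relies on $z$ and $z'$ being chosen independently, together with the routine but easy-to-botch bookkeeping of strict versus non-strict inequalities when passing between the statement ``$\min<r$'' and the disjunction of its two constituent inequalities.
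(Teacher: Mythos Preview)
Your proposal is correct and follows essentially the same approach as the paper's proof: both unwind the condition $\bar d(v,x)<h+s$ into the disjunction $|v-x|<h+s$ or $D(v)<s$ and identify each disjunct with one of the two sets. The only cosmetic difference is that you factor the double minimum as $D(v)+D(x)$ explicitly at the outset, whereas the paper phrases the same observation as ``we can take $z_2$ closest to $x$ so $|z_2-x|=D(x)=h$.''
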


\begin{proof} 
Observe, 
\begin{align*}
    v\in \pi ^{-1}(B_{h+s}(\pi (x))) \\
    \iff & \pi (v)\in B_{h+s}(\pi (x)) \\
    \iff & d(\pi (v), \pi (x)) < h+s \\
    \iff & \min \{|v-x| , \min\{|v-z_1|+|z_2-x|: z_1,z_2\in I\}\} <h+s \\
    \iff & |v-x|<h+s \,\,OR \,\, \exists \,z_1,z_2\in I \,\,s.t. \,\,|v-z_1|+|z_2-x|<h+s. 
    \end{align*}
Note: we can take $z_2$ closest to $x$ so $|z_2-x|=D(x)=h$.  Thus
\begin{align*}
  v\in \pi ^{-1}(B_{h+s}(\pi (x))) \\
    \iff & v\in B_{h+s}(x) \,\,OR\,\, \exists z_1 \in I \,\,s.t.\,\, |v_1-s| <s \\
    \iff & v\in B_{h+s}(x) \,\,OR\,\, v\in T_s(I) \\
    \iff & v\in B_{h+s}(x) \cup T_s(I).
\end{align*}
\end{proof}

\begin{prop}\label{ps-v}
In a pulled thread space as in Definition~\ref{defn-ps},
if the length of the interval is $L>0$ then
\be
B_r(x) \subset \pi^{-1}(B_r(\pi(x))) \subset B_{r+L}(x).
\ee
\end{prop}

\begin{proof}  
We see that $B_r(x) \subset \pi^{-1}(B_r(\pi(x)))$ because
\be
w\in B_r(x) \implies r>|w-x|  \implies r > d(\pi(w),\pi(x)) \implies \pi(w)\in (B_r(\pi(x))).
\ee
We see that $\pi^{-1}(B_r(\pi(x))) \subset B_{r+L}(x)$ because
\be
v\in \pi^{-1}(B_r(\pi(x)))
\implies \pi(v)\in B_r(\pi(x))  \implies d(\pi(v),\pi(x))<r.
\ee
Case 1:  $d(\pi(v),\pi(x)) = |v-x|$.   
$$\implies |v-x|<r\implies v\in B_r(x) \subseteq B_{r+L}(x).$$
Case 2:  $d(\pi(v),\pi(x)) =|v-z_1|+|z_2-x|$ for some  $z_1,z_2\in I$
$$ \implies |v-x|\leq |v-z_1| + |z_1 - z_2| + |z_2 - x|\le r+L \implies v\in B_{r+L}(x).$$  
\end{proof}

\section{Introducing Smocked Spaces}

We now introduce a new notion called a smocked space.
In sewing there is a technique called smocking which is used
to add texture to a cloth.  See for example \cite{smocking-book} for a few patterns
and search ``Canadian Smocking'' in {\em Pinterest} for many more.  
To create such a smocked cloth, the seamstress follows
a pattern.   In Figure~\ref{fig:smocking-patterns} we presented some such patterns.
Each interval (or stitch) marked in black is sewn by a 
thread and pulled to a point.    Sometimes the stitches are 
squares or plus signs.    There are many other standard smocking patterns
which are also periodic and many more which are not periodic.

In this section we rigorously define the metric space created from a plane
by pulling every stitch in a smocking pattern to a point.  We then describe the
six patterns and their smocked spaces each on their own subsection along
with graphics.   It is recommended that the reader glance into these subsections
while reading the definition.

\subsection{ The Definition of a Smocked Space by Prof.~Sormani}

\begin{defn} \label{defn-smock}
Given a Euclidean space, $\mathbb{E}^N$, and a finite or countable collection of
disjoint connected compact sets called {\bf smocking intervals} or {\bf smocking stitches}, 
\be
\mathcal{I}=\{I_j: \, j \in J\},
\ee
with a positive {\bf smocking separation factor},
\be \label{s-factor}
\delta=\min\{|z-z'|: \, z\in I_j, \, z'\in I_{j'},\, j\neq j' \in J\} >0,
\ee
 one can define the {\bf smocked
metric space}, $(X,d)$, in which each stitch is viewed as a single point.  
\be
X = \left\{ x: \, x \in {\mathbb{E}^N}\setminus S\right\} \cup \mathcal{I}
\ee
where $S$ is the {\bf smocking set} or {\bf smocking pattern}:
\be
S= \bigcup_{j \in J} I_j .
\ee
We have a {\bf smocking map} 
$
\pi: \mathbb{E}^N \to X $ defined by
\be
 \pi(x) = \begin{cases} 
          x & \textrm{ for }x \in {\mathbb{E}^N}\setminus S \\
          I_j&  \textrm{ for } x\in I_j \textrm{ and } j\in J
                 \end{cases}
\ee
The {\bf smocked distance function}, $d: X\times X \to [0,\infty)$, is defined for $y, x\notin \pi(S)$, and stitches
$I_m$ and $I_k$ as follows:
\begin{eqnarray*}
d(\,x,\,y\,) &=& \min \left\{d_0(x,y), d_1(x,y), d_2(x,y), d_3(x,y), ...\right\} \\
d(\,x,\, I_k) &=& \min \{ d_0(x,z),  d_1(x,z), d_2(x,z), d_3(x,z), ...:\, z\in I_k\} \\
d(I_m,I_k) &=& \min \{ d_0(z',z), d_1(z',z), d_2(z',z), d_3(z',z), ... \,:z'\in I_m,\, z\in I_k \},
\end{eqnarray*}
where $d_k$ are the sums of lengths of segments that jump to and between $k$ stitches:
\begin{eqnarray*}
d_0(v,w) &=& |v-w|\\
d_1(v,w) &=&  \min\{ |v-z_1|+|z'_1-w|:\, z_1, z_1'\in I_{j_1}, \, j_1 \in J\}\\
d_2(v,w) &=& \min\{ |v-z_1|+|z'_1-z_2|+|z'_2-w|:\, z_i, z'_i\in I_{j_i}, \, j_1\neq j_2 \in J\}\\
d_k(v,w) &=& \min \{  |v-z_1|+\sum_{i=1}^{k-1} |z'_i-z_{i+1}|+|z'_k-w|:\, z_i, z'_i\in I_{j_i}, \, j_1\neq \cdots \neq j_k \in J\}.
\end{eqnarray*}
We define the {\bf smocking pseudometric} $\bar{d}: {\mathbb{E}^N}\times {\mathbb{E}^N} \to [0, \infty)$
to be
$$
\bar{d}(v,w)= d(\pi(v), \pi(w))=\min \{d_k(v',w'): \,\pi(v)=\pi(v'),\, \pi(w)=\pi(w'),\, k\in {\mathbb{N}}\}.
$$
We will say the smocked space is {\bf parametrized by points in the stitches}, if 
\be\label{param-by-points}
J \subset {\mathbb{E}}^N \textrm{ and } \forall j \in J \,\, j \in I_j.
\ee
\end{defn}

In Theorem~\ref{thm-smock-metric} below, we will prove the minima are achieved and that the smocking
spaces are metric spaces.   In fact, we will see that the
distances on these spaces can be determined by explicitly finding the shortest
path of segments $z_i'$ to $z_i$ between a given pair of points.   In Figure~\ref{fig:geodesics} we see
such paths in one smocked space.   If such a path has a single segment and jumps
through no intervals then $d(x,y)=d_0(x,y)$.  If the path jumps through one
interval then $d(x,y)=d_1(x,y)$, and if it jumps through $k$ intervals with $k+1$ segments then
$d(x,y)=d_k(x,y)$.

\begin{figure}[h]
\includegraphics[scale=.3]{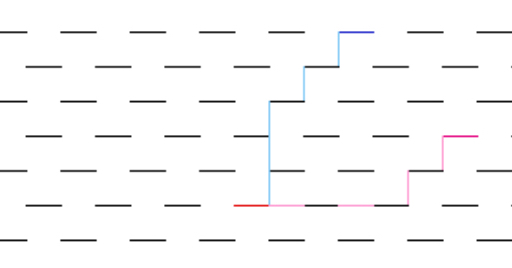}
\caption{Here we see a path of segments in blue from a dark blue interval to a
red interval and another path of segments in pink from a dark pink interval to the
red interval.}
\label{fig:geodesics}
\end{figure}

\begin{rmrk}
Note that often the $I_k$ are closed intervals.   As one can
see in the introduction, in some smocking patterns the stitches are 
replaced with squares or plus signs
or wedges. 
\end{rmrk}

\subsection{ A Smocked Space is a Metric Space by Prof.~Sormani}

\begin{thm} \label{thm-smock-metric}
The smocked metric space is a well defined metric space and in fact
for any $v,w  \in \mathbb{E}^N$ 
\be \label{exists-N}
\exists N(v,w) \le \lceil{|v-w|/\delta}\rceil \,\, s.t.\,\, d_{N(v,w)}(v,w) \le d_{k}(v,w) \quad \forall k \ge N,
\ee
where $\delta$ is the separation factor.    
So the minimum in the definition of the smocking distance and pseudometric is achieved 
\be
\bar{d}(v,w)=d_N(v,w) \textrm{ and } d(\pi(v), \pi(w))=d_N(v,w).
\ee
\end{thm}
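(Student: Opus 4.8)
The plan is to extract from the definition the single quantitative estimate that controls everything: a lower bound on $d_k$ coming from the separation factor. First I would note that in any competitor for $d_k(v,w)$ the stitches $I_{j_1},\dots,I_{j_k}$ have consecutive indices distinct, so the path contains the $k-1$ connecting segments $|z_i'-z_{i+1}|$ with $z_i'\in I_{j_i}$, $z_{i+1}\in I_{j_{i+1}}$ and $j_i\neq j_{i+1}$. By the definition \eqref{s-factor} of $\delta$ each such segment has length at least $\delta$, whence $d_k(v,w)\ge (k-1)\delta$. This inequality, saying that long chains of jumps are expensive, is the heart of the matter.

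From this I would deduce the quantization of $N$ and the attainment of the minimum. Setting $N^*=\lceil |v-w|/\delta\rceil$, for every $k>N^*$ we get $d_k(v,w)\ge (k-1)\delta\ge N^*\delta\ge |v-w|=d_0(v,w)$, so the infimum defining $\bar d(v,w)$ over $k\in\mathbb{N}$ reduces to the minimum of the finite list $d_0(v,w),\dots,d_{N^*}(v,w)$; letting $N\le N^*$ index this minimum gives both \eqref{exists-N} and $\bar d(v,w)=d(\pi(v),\pi(w))=d_N(v,w)$. To see that $d_N$ is genuinely attained, not merely an infimum, I would restrict to competitor paths of length at most $|v-w|$ (harmless, since $d_0$ is always a candidate): every stitch such a path meets has a point in the compact ball $\bar B_{|v-w|}(v)$, and selecting one representative point per stitch yields a $\delta$-separated subset of that ball, necessarily finite. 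Hence only finitely many stitches, and finitely many admissible sequences, are in play, and over each fixed sequence the length is continuous on the compact product of the $I_{j_i}$, so the minimum is realized by an explicit path.

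The metric axioms for the induced $d$ on $X$ are then mostly routine. Symmetry holds because reversing a path preserves its length, so $d_k(v,w)=d_k(w,v)$; nonnegativity is built in. For definiteness, $\bar d(v,w)=0$ forces either $d_0(v,w)=|v-w|=0$, i.e.\ $v=w$, or a vanishing $d_k$ with $k\ge1$, which makes every connecting segment vanish---impossible for $k\ge2$ since $\delta>0$, and for $k=1$ forcing $v,w\in I_{j_1}$, i.e.\ $\pi(v)=\pi(w)$. Thus $\bar d$ is a pseudometric with $\bar d(v,w)=0\iff\pi(v)=\pi(w)$, so it descends to a well-defined metric $d$ on $X$.

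I expect the triangle inequality to be the main obstacle, precisely because the constraint $j_1\neq\cdots\neq j_k$ makes $\bar d$ an infimum over \emph{reduced} jump paths, and the concatenation of two reduced paths need not be reduced. My plan is a short reduction lemma: the length of \emph{any} jump path from $a$ to $b$, even one repeating stitches, is at least $\bar d(a,b)$, because whenever two consecutive stitches coincide the intermediate segment may be deleted at no cost (the stitch is a single point of $X$), and iterating this produces a genuine $d_{k'}(a,b)\ge \min_k d_k(a,b)=\bar d(a,b)$. Granting the lemma, I would take optimal paths realizing $d(\pi(a),\pi(c))$ and $d(\pi(c),\pi(b))$ and concatenate them at $c$---the junction sits either at a regular point or inside one common stitch, traversed for free in either case---to obtain a jump path from $a$ to $b$ of length exactly $d(\pi(a),\pi(c))+d(\pi(c),\pi(b))$; the lemma then yields $d(\pi(a),\pi(b))\le d(\pi(a),\pi(c))+d(\pi(c),\pi(b))$, completing the proof.
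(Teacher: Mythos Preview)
Your proposal is correct and follows essentially the same approach as the paper: the lower bound on $d_k$ from the separation factor, the restriction to finitely many stitches in a ball to realize each $d_k$, symmetry by path reversal, and the triangle inequality by concatenation of optimal paths. Your treatment is in fact more careful than the paper's in two places---you correctly obtain $(k-1)\delta$ rather than the paper's $k\delta$ (a miscount that does not affect the conclusion), and your reduction lemma explicitly handles the consecutive-distinctness constraint $j_1\neq\cdots\neq j_k$ that the paper's concatenation argument glosses over.
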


\begin{proof}
Initially we consider all minima in the definition of a smocked space to be infima.  

First, we fix $v \neq w \in {\mathbb{E}^N}$, and prove (\ref{exists-N}).
Observe that
\be \label{k-delta}
d_k(v,w) \ge k \delta
\ee
where $\delta$ is the separation factor defined in (\ref{s-factor})
because each $ |z'_i-z_i| \ge \delta$ in the definition of $d_k$.
Thus
\be
d_0(v,w)=|v-w| \le d_k(v,w) \qquad \forall k \ge N'=\lceil{|v-w|/\delta}\rceil.
\ee
So we need only choose $N_{v,w}$ such that
\be
d_{N(v,w)}(v,w)= \min\{ d_0(v,w),..., d_{N'}(v,w)\}.
\ee

We now examine
\be
d_{k}(v,w)=\inf \{  |v-z_1|+\sum_{i=1}^k |z'_i-z_i|+|z'_k-w|:\, z_i, z'_i\in I_{j_i}, \, j_1\neq \cdots \neq j_k \in J\}.
\ee
Since $d_{k}(v,w) \le |v-w|$ one need not consider stitches such that $I_j \cap B(v, |v-w|) =\emptyset$.
Since the smocking stitches are a definite distance $\delta>0$ apart, there are only finitely many smocking stitches
such that $|_j\cap B(v, |v-w|) \neq \emptyset$.  Since each smocking stitch is compact, the infima over choices
of $z_i$ and $z'_i$ is also achieved.  Thus this infimum is a minimum as well.

Now we prove $d$ is symmetric.  First observe that for any $v,w\in {\mathbb{E}}^N$
\be
d_k(v,w)=d_k(w,v) 
\ee
because we can reverse the order of the segments:
\be
 |v-z_1|+\sum_{i=1}^k |z'_i-z_i|+|z'_k-w| =  |w-z'_k|+\sum_{i=1}^k |z'_i-z_i|+|z_1-v|.
 \ee
 Taking the minimum of symmetric $d_k$ we have:
 $
 \bar{d}(v,w)=\bar{d}(w,v),
 $
 and thus 
 \be
 d(\pi(v), \pi(w))= d(\pi(w), \pi(v)).
 \ee
 This suffices to prove symmetry since  the smocking map $\pi$ is surjective.
 
 To see that $d$ is definite, we again take advantage of the fact that the smocking map
is surjective, and refer to the points in $X$ as $\pi(v)$ and $\pi(w)$.
  By (\ref{k-delta}),
\be 
d(\pi(v),\pi(w))=0 \iff |v-w|=d_0(v,w)=d(v,w)=0 \iff v=w .
\ee
Note that $\bar{d}(v,w)=0$ when $v\neq w\in I_j$ which is why 
$\bar{d}$ is only a pseudometric.

To see that $d$ and $\bar{d}$ satisfy the triangle inequality, consider $y, v, w \in {\mathbb{E}}^N$.
As the minima are achieved, we know there exists stitches, $I_{j_1},..., I_{j_k}$
where $k=N(y,v)$,
and $z_i, z_i' \in I_{j_i}$ such that
\be
\bar{d}(y,v) = |y-z_1| +\sum_{i=1}^{k} |z'_i-z_i|+ |z'_{k}-v|.
\ee
There exists more stitches, $I_{j_{k+1}}, ... I_{j_{k+k'}}$, where $k'=N(v,w)$
and $z_i, z_i' \in I_{j_i}$ such that
\be
\bar{d}(v,w) = |v-z_{k+1}| +\sum_{i=k+1}^{k+k'} |z'_i-z_i|+ |z_{{k+k'}}-w|.
\ee
Adding these together and using the fact that 
\be
 |z'_{{k}}-v| + |v-z_{k+1}| \ge  |z'_{{k}} -z_{k+1}|, 
 \ee
 we have
\be 
\bar{d}(y,v) + \bar{d}(v,w) \ge   |y-z_1| + \sum_{i=1}^{k+k'} |z'_i-z_i| + |z_{{k+k'}}-w| \ge \bar{d}(y,w).
\ee
Applying the surjective smocking map we see that $d$ satisfies the triangle inequality.
\end{proof}

\subsection{ The Diamond Smocked Space $X_\diamond$ by Prof.~Sormani and Dr.~Kazaras}

In this section we study one of the most classic smocking patterns: diamond smocking (also called honeycomb smocking although there are no hexagons).  The pattern used to create diamond smocking is depicted in Figure~\ref{fig:pattern-diamond}.  It only appears to have diamonds after sewing the threads tight as seen in the same figure.  

\begin{figure}[h]
\includegraphics[width=.2 \textwidth]{pattern-diamond} \hspace{2cm}
\includegraphics[width=.2 \textwidth]{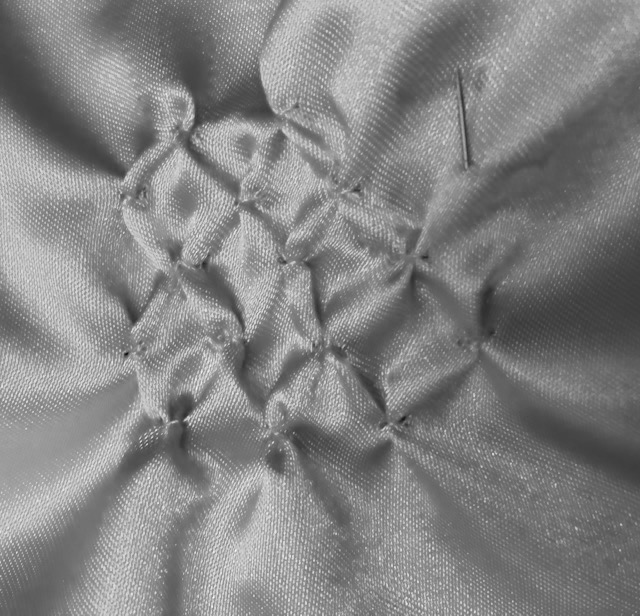}
\caption{The classical diamond smocking pattern will be used to define the smocking stitches of $(X_\diamond, d_\diamond)$.}
\label{fig:pattern-diamond}
\end{figure}

We need to define the smocking stitches to define this space rigorously: 

\begin{defn}\label{defn-diamond}
Our metric space $(X_\diamond, d_\diamond)$ is a smocked plane defined as in Definition~\ref{defn-smock}. 
We start with the Euclidean plane ${\mathbb{E}}^2$.
We define our index set (which will also be the center points of our stitches):
$$
J_\diamond\,\,\,=\,\,\,\{(j_1,j_2):  \, j_1=2n_1-n_2,\, j_2=n_2): \, n_1, n_2 \in {\mathbb{Z}} \}\,\, =
$$
$$
=\,\{(0,0), (\pm 2,0), (\pm 1, \pm 1), (0, \pm 2), (\pm 4,0), (\pm 3, \pm 1), ...\}.
$$
We define our stitches (which are all horizontal of unit length):
$$
I_{(j_1,j_2)}= [j_1-1/2, j_1+1/2] \times \{j_2\},
$$
as in Figure~\ref{fig:pattern-diamond}.
\end{defn}


\subsection{ The Ribbed Smocked Space, $X_=$ by Leslie and Shanell}

In this section we study one of the most classic smocking patterns: ribbed smocking (which is used to create fabric with a ribbed texture).  The pattern used to create ribbed smocking is depicted in Figure~\ref{fig:pattern=}.  It only appears to be ribbed after sewing the threads tight as seen in the same figure.  

\begin{figure}[h]
\includegraphics[width=.2 \textwidth]{pattern=} \hspace{2cm}  \includegraphics[width=.2 \textwidth]{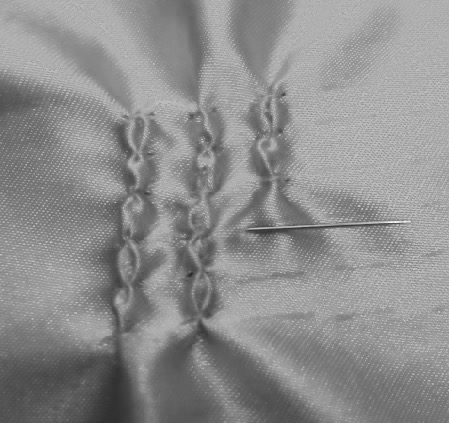}
\caption{The classical ribbed smocking pattern will be used to define the smocking stitches of $(X_=, d_=)$.}
\label{fig:pattern=}
\end{figure}

Let us begin by describing the stitches which will be used to create the smocked plane.
 
\begin{defn}\label{defn-=}
Our metric space $(X_=, d_=)$ is a smocked plane defined as in Definition~\ref{defn-smock}. 
We start with the Euclidean plane $\mathbb{E}^2$.
We define our index set:
$$
J_= = 2\mathbb{Z}\times\mathbb{Z}.
$$

We define our stitches: if $(j_1,j_2) \in J_=$, then
$$
I_{(j_1,j_2)}= [j_1-0.5,j_1+0.5]\times \{j_2\} 
$$
as in Figure~\ref{fig:pattern=}.
\end{defn}

\subsection{ The Woven Smocked Space $X_T$ by Julinda, Aleah, and Victoria:}

In this section we study one of the most classic smocking patterns: weaved smocking (which is used to create fabric with a basket weave texture).  The pattern used to create weaved smocking is depicted in Figure~\ref{fig:patternT}.  In the same figure we have sewn the threads to a point in a cloth.   In traditional smocking only the endpoints of these stitches
are joined and then the pattern looks like like a garden lattice.  However, here we have sewn each entire stitch to a point.  

\begin{figure}[h]
\includegraphics[width=.2 \textwidth]{patternT} \hspace{2cm} \includegraphics[width=.2 \textwidth]{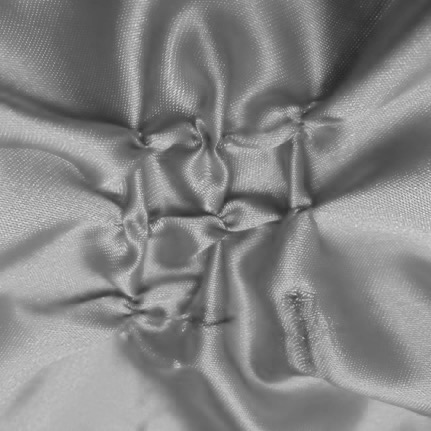}
\caption{This classic smocking pattern will be used to define the smocking stitches of $(X_T, d_T)$.}
\label{fig:patternT}
\end{figure}
 
Let us begin by describing the stitches which will be used to create the smocked plane.

\begin{defn}\label{defn-T}
Our metric space $(X_T, d_T)$ is a smocked plane defined as in Definition~\ref{defn-smock}. 
We start with the Euclidean plane ${\mathbb{E}}^2$.
We define our index set (which will also be the center points of our stitches):
$$
J_T = \{(j_1,j_2):\, j_1=2n_1,\, j_2=2n_2,\,\, n_1, n_2 \in {\mathbb{Z}}\} =2  {\mathbb{Z}}\times 2  {\mathbb{Z}}.
$$
We define our horizontal stitches (of length 2)
$$
I_{(j_1,j_2)}= [j_1-1,j_1+1]\times \{j_2\} \textrm{ when } (j_1+j_2)/2 \textrm{ is even,}
$$
and our vertical stitches (of length 2):
$$
I_{(j_1,j_2)}= \{j_1\}\times [j_2-1,j_2+1] \textrm{ when } (j_1+j_2)/2 \textrm{ is odd,}
$$
as in Figure~\ref{fig:patternT}.
\end{defn}

\subsection{ The Flower Smocked Space $X_+$ by Emilio, Moshe, and Ajmain:} 

In this section we study one of the most classic smocking patterns: flower smocking (which is used to create fabric with
flowers).  The pattern used to create flower smocking is depicted in Figure~\ref{fig:pattern-+}.  Each stitch appears to be a 
flower once it is sewn as seen in the same figure.  Note that the 
smocking stitches here are compact sets formed by overlapping pairs of stitches.

\begin{figure}[h]
\includegraphics[width=.2 \textwidth]{pattern+}\hspace{2cm} \includegraphics[width=.2 \textwidth]{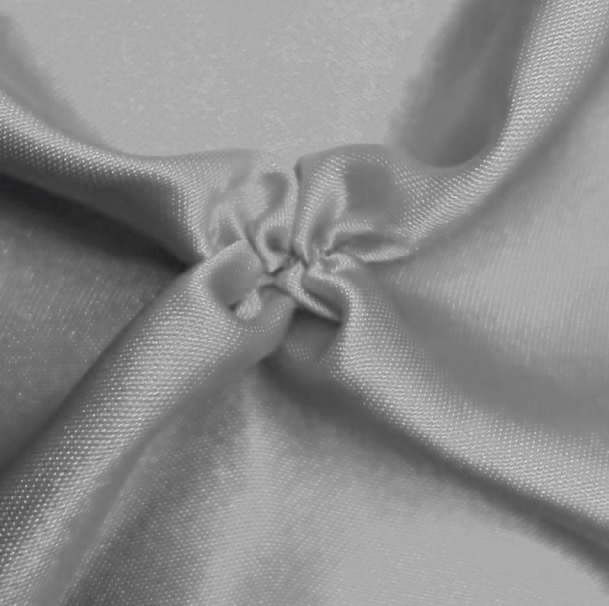}
\caption{This smocking pattern will be used to define the smocking stitches of $(X_+, d_+)$.}
\label{fig:pattern-+}
\end{figure}

Let us begin by describing the stitches which will be used to create the smocked plane.

\begin{defn}\label{defn-+}
Our metric space $(X_+, d_+)$ is a smocked plane defined as in Definition~\ref{defn-smock}. 
We start with the Euclidean plane ${\mathbb{E}}^2$.
We define our index set (which will also be the center points of our stitches):
$$
J_+= \{(j_1,j_2):\, j_1=3n_1,\, j_2=3n_2,\,\, n_1, n_2 \in {\mathbb{Z}}\}.
$$
We define our stitches (which are $+$ shapes):
$$
I_{(j_1,j_2)}= \left( [j_1-1, j_1+1]\times \{j_2\} \right)\,\,\, \cup \,\,\,\left( \{j_1\} \times [j_2-1, j_2+1] \right)
$$
as in Figure~\ref{fig:pattern-+}.
\end{defn}

\subsection{ The Checkered Smocked Space, $X_H$, by David and Vishnu} 

In this section we study checkered smocking (which is used to create fabric with alternating diamonds puffed up and down).  The pattern used to create checkered smocking is depicted in Figure~\ref{fig:patternH}.  It only appears to be checkered after sewing the threads tight as seen in the same figure.  

\begin{figure}[h]
\includegraphics[width=.2 \textwidth]{patternH} \hspace{2cm} \includegraphics[width=.2 \textwidth]{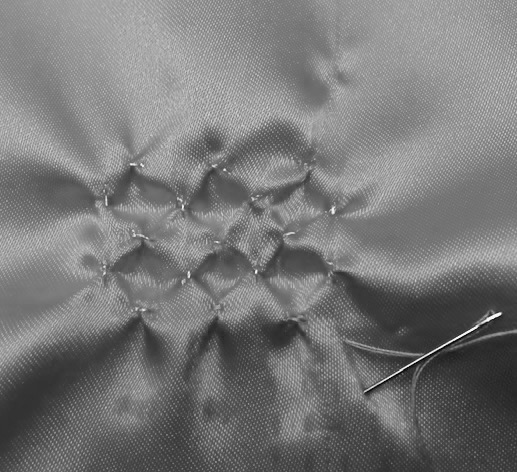}
\caption{This smocking pattern will be used to define the smocking stitches of $(X_H, d_H)$.}
\label{fig:patternH}
\end{figure}

Let us begin by describing the stitches which will be used to create the smocked plane $X_H$.
 
\begin{defn}\label{defn-H}
Our metric space $(X_H, d_H)$ is a smocked plane defined as in Definition~\ref{defn-smock}. 
We start with the Euclidean plane ${\mathbb{E}}^2$.
We define our index set in two parts:
\be
J_H = J_H^- \cup J_H^\vert
\ee
where
\be
J_H^-= 3\mathbb{Z} \times 3\mathbb{Z}
\ee
and
\be
J_H^\vert=(3\mathbb{Z} +1.5) \times (3\mathbb{Z} + 1.5).
\ee
We define our stitches: if $(j_1,j_2) \in J_H^-$, $I_{(j_1,j_2)}$ is the horizontal segment
\be
I_{(j_1,j_2)}= [j_1-0.5,j_1+0.5]\times \{j_2\} 
\ee
and if $(j_1,j_2) \in J_H^\vert$, $I_{(j_1,j_2)}$ is the vertical segment
\be
I_{(j_1,j_2)}= \{j_1\}\times [j_2-0.5,j_2+0.5]
\ee
as in Figure~\ref{fig:patternH}.
\end{defn}

\subsection{ The Bumpy Smocked Space $X_\square$ by Maziar and Hindy:}

In this section we study bumpy smocking (which is used to create fabric with a bumpy texture).  The pattern used to create bumpy smocking is depicted in Figure~\ref{fig:pattern-square}.  It only appears to be bumpy after sewing the threads tight as seen in the same figure. 

\begin{figure}[h]
\includegraphics[width=.2 \textwidth]{pattern-square} \hspace{2cm} \includegraphics[width=.2 \textwidth]{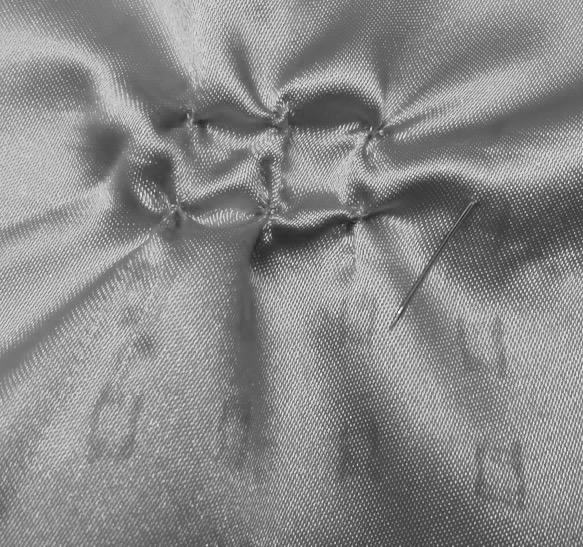}
\caption{This smocking pattern will be used to define the smocking stitches of $(X_\square, d_\square)$.}
\label{fig:pattern-square}
\end{figure}

Let us begin by describing the stitches which will be used to create the smocked plane.

\begin{defn}\label{defn-square}
Our metric space $(X_\square, d_\square)$ is a smocked plane defined as in Definition~\ref{defn-smock}. 
We start with the Euclidean plane ${\mathbb{E}}^2$.
We define our index set (which will also be lower left corners of our squares) by:
\be
J_\square= \{ (j_1, j_2):\,\, j_1=3n_1,\, j_2=3n_2,\,\, n_1, n_2 \in {\mathbb{Z}}\}.
\ee
We define our stitches (which are squares of unit side lengths) by:
\begin{eqnarray*}
I_{(j_1,j_2)}&=&  \left([j_1, j_1+1] \times \{j_2\} \right) \,\, \cup \,\,  \left([j_1, j_1+1] \times \{j_2+1\} \right)\\
&& \quad \cup \left( \{j_1\}\times [j_2, j_2+1] \right) \,\,\cup \,\, \left( \{j_1+1\}\times [j_2, j_2+1] \right)
\end{eqnarray*}
as in Figure~\ref{fig:pattern-square}.
\end{defn}

\section{The Smocking Constants: Depth, Lengths, and Separation Factor} \label{subsect-consts}

In this section we describe some essential parameters that we can later use to
estimate the balls in smocked spaces and their distance functions.  After providing the definitions,
we have subsections in which we find their values for the six smocked spaces that we have just defined above.

\subsection{ Defining Depth and Lengths by Prof.~Sormani and Maziar}

Given a smocked space, $(X,d)$, as in Definition~\ref{defn-smock}, with smocking
stitches $\{I_j:\, j \in J\}$ and smocking set $S=\bigcup_{j\in J} I_j$ we make the following
definitions:

\begin{defn}\label{defn-distance-to-smocking set}
$D: {\mathbb{E}^N}\to [0,\infty)$, the distance of a point $x\in \mathbb{E}^N$ to the smocking set is defined to be
\be
 D(x)= \min\{ |x-z|: \, z\in I_j, \, j \in J \}.
\ee
\end{defn}

\begin{lem}\label{lem-distance-to-smocking set}
The minimum in Definition~\ref{defn-distance-to-smocking set} is achieved.
\end{lem}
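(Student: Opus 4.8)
The statement is nontrivial only because the index set $J$ may be countably infinite, so that $D(x)$ is a priori an infimum over infinitely many stitches rather than a minimum over finitely many. The plan is to use the smocking separation factor $\delta>0$ from (\ref{s-factor}) to reduce the problem to a minimum over \emph{finitely many} stitches, each of which is compact by Definition~\ref{defn-smock}.

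First I would fix $x\in\mathbb{E}^N$ and produce a finite upper bound for $D(x)$. Choosing any single stitch $I_{j_0}$, which is nonempty and compact, the continuous function $z\mapsto|x-z|$ attains a minimum on $I_{j_0}$, giving a point $z_0\in I_{j_0}$ with $|x-z_0|=R_0<\infty$. Hence $D(x)\le R_0$, and in computing the infimum we may discard every stitch that does not meet the closed ball $\bar{B}_{R_0}(x)$, since such a stitch contributes only distances strictly larger than $R_0$.

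The key step is to show that only finitely many stitches meet $\bar{B}_{R_0}(x)$. Suppose to the contrary that infinitely many stitches $I_j$ with distinct indices intersect $\bar{B}_{R_0}(x)$, and pick a point $z_j\in I_j\cap \bar{B}_{R_0}(x)$ from each. These points all lie in the compact set $\bar{B}_{R_0}(x)$, so the sequence $(z_j)$ has a convergent subsequence and is therefore Cauchy along it; in particular two of these points lie within distance $\delta$ of one another. But they belong to distinct stitches, so by the separation factor (\ref{s-factor}) they must be at least $\delta$ apart, a contradiction. Equivalently, one may give a packing argument: the open balls of radius $\delta/2$ about the $z_j$ are pairwise disjoint and contained in $\bar{B}_{R_0+\delta/2}(x)$, so the finite volume of this ball forces the collection to be finite.

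Having reduced to finitely many stitches $I_{j_1},\dots,I_{j_m}$, I would finish by noting that on each compact $I_{j_i}$ the quantity $\min\{|x-z|:z\in I_{j_i}\}$ is attained at some $z_i$, and then
\[
D(x)=\min\{\,|x-z_i|:\,i=1,\dots,m\,\}
\]
is a minimum over a finite set, hence achieved. The only real obstacle is the finiteness step, and that is precisely where the positivity of the separation factor $\delta$ is indispensable: without it one could have stitches accumulating toward $x$, and the minimum could genuinely fail to exist.
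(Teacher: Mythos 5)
Your proof is correct and follows essentially the same route as the paper's: both use the positivity of the separation factor $\delta$ to reduce to finitely many stitches meeting a ball about $x$, then use compactness of each stitch to attain the minimum on each, and finally take a minimum over a finite set. The only cosmetic differences are that you make the radius $R_0$ explicit and argue finiteness via Bolzano--Weierstrass (or packing) rather than the paper's finite covering by $\delta$-balls, which if anything is slightly cleaner.
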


\begin{proof}  
For any $x\in S$, $D(x)=0$ and hence is achieved. Now, fix $x\in \mathbb{E}^N\setminus S$. For any $r>0$, the number of stitches inside $B(x,r)$ is finite. This is because the smocking separation factor $\delta$ is always positive, i.e.
\be \label{s-factor}
\delta=\min\{|z-z'|: \, z\in I_j, \, z'\in I_{j'},\, j\neq j' \in J\} >0.
\ee
and only finitely many balls of radius $\delta$ cover $B(x,r)$, because the closure of $B(x,r)$ is compact. 

Let $K$ be the index set of stitches that intersect $B(x,r)$. For each $k\in K$, let 
\be
D_k(x)= \inf\{|x-z|: \, z\in I_k\}.
\ee 
There exists ${z_k}_n \in I_k$ approaching this infimum, such that $|x-{z_k}_{n+1}|<|x-{z_k}_n|$ for all $n$. Since each stitch is compact, $({z_k}_n)_n$ has a convergent subsequence, and hence there is a point $z_k\in I_k$ that achieves the infimum for $D_k(x)$. Then, $D(x)= \min\{D_k(x): \, k\in K\}$ is achieved since $K$ is finite.
\end{proof}

\begin{defn} \label{defn-smocking-depth}
The smocking depth, $h$, is defined to be
\be
h=\inf\{ r:\, \mathbb{E}^N \subset T_r(S)\} \in [0,\infty],
\ee
which by definition of tubular neighborhood is 
\be
h=\inf\{ r:\, \forall x \in X \,\,\exists j \in J \, \exists z \in I_j \,\,s.t.\,\, |x-z|<r\}.
\ee
\end{defn}

\begin{lem} \label{lem-smocking-depth}
If the smocking depth, $h$, is finite, then 
\be
h= \sup \{ D(x): \, x\in \mathbb{E}^N\}.
\ee
\end{lem}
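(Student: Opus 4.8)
The plan is to establish the equality $h=\sup\{D(x):x\in\mathbb{E}^N\}$ by two opposite inequalities, after first translating membership in the tubular neighborhood into a statement about $D$. Writing $M=\sup\{D(x):x\in\mathbb{E}^N\}$, the key preliminary observation is that for any $r>0$ and any $x\in\mathbb{E}^N$ one has $x\in T_r(S)$ if and only if $D(x)<r$: if $D(x)<r$, then by Lemma~\ref{lem-distance-to-smocking set} there is a point $z\in S$ with $|x-z|=D(x)<r$, so $x\in T_r(S)$; conversely, any $y\in S$ witnessing $x\in T_r(S)$ forces $D(x)\le|x-y|<r$. Consequently $\mathbb{E}^N\subset T_r(S)$ if and only if $D(x)<r$ for every $x$, which is exactly the reformulated definition of $h$ recorded in Definition~\ref{defn-smocking-depth}.

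First I would prove $M\le h$. Since $h$ is assumed finite, the set $\{r:\mathbb{E}^N\subset T_r(S)\}$ is nonempty. For any $r$ in this set the observation above gives $D(x)<r$ for all $x$, hence $M=\sup_x D(x)\le r$. Taking the infimum over all such $r$ yields $M\le h$; in particular $M<\infty$.

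Then I would prove the reverse inequality $h\le M$. Fix any $r>M$. Then $D(x)\le M<r$ for every $x\in\mathbb{E}^N$, so $\mathbb{E}^N\subset T_r(S)$ and therefore $h\le r$. Letting $r$ decrease to $M$ gives $h\le M$, and combining the two inequalities proves $h=M$, as claimed.

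The computation is short, so the only point requiring care --- and the one I would flag as the main subtlety --- is the strict inequality built into the definition of $T_r(S)$. Because of it, the supremum $M$ may in fact be attained at some $x_0$ (with $D(x_0)=M$), in which case $x_0\notin T_M(S)$ and $\mathbb{E}^N\not\subset T_M(S)$; thus $h=M$ is realized as an infimum that need not itself belong to the defining set. I would therefore make sure the argument treats $h$ purely as an infimum and never asserts $\mathbb{E}^N\subset T_h(S)$, so that this boundary behavior causes no trouble.
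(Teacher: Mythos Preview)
Your proof is correct and follows essentially the same two-inequality approach as the paper: show $\sup_x D(x)\le h$ by noting $D(x)\le r$ whenever $\mathbb{E}^N\subset T_r(S)$, then show $h\le\sup_x D(x)$ by checking that any $r$ exceeding the supremum lies in the defining set for $h$. Your treatment of the strict inequality in the definition of $T_r(S)$ is in fact a bit more careful than the paper's, which asserts $\mathbb{E}^N\subset T_{r'}(S)$ directly from $D(x)\le r'$; your version, taking $r>M$ and then letting $r\downarrow M$, avoids that slip.
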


\begin{proof}  
Since the smocking depth is finite, there exists
 $r\in(0,\infty)$ such that $\mathbb{E}^N\subset T_r(S)$.  We claim
\be 
D(x)\leq r \quad \forall x\in \mathbb{E}^N.
\ee 
Otherwise, $D(x)>r$ for some $x\in \mathbb{E}^N$.  So the minimum distance of $x$ to a stitch is strictly more than $r$. This implies $x\notin T_r(S)$ by definition of tubular neighborhood, which is a contradiction. 

Taking the supremum of the LHS and infimum of the RHS of the inequality, we have
\be 
\sup\{D(x): \, x\in \mathbb{E}^N\}\leq \inf\{r:\, \mathbb{E}^N \subset T_r(S)\}.
\ee
Let $r'=\sup\{D(x): \, x\in \mathbb{E}^N\}$. 
Then, $\mathbb{E}^N \subset T_{r'}(S)$, since 
\be
\forall x\in \mathbb{E}^N, \exists j\in J, \exists z \in I_j, \mbox{ such that } |x-z|=D(x)\leq r'.
\ee
Therefore, it is not possible that $r'<r$, $r$ being the infimum. 
Thus, $r'=h$ as defined in Definition~\ref{defn-smocking-depth}.   
\end{proof}

\begin{defn}\label{defn-smocking-lengths}
The smocking lengths are defined
either using the lengths of intervals
\begin{eqnarray}
L_{min}&=& \inf \{ L(I_j): \, j \in J\} \in [0,\infty)\\
L_{max}&=& \sup \{ L(I_j): \, j \in J\} \in (0,\infty]
\end{eqnarray}
and if $L_{min}=L_{max}$ we call this the smocking length.
If the smocking stitches are not intervals we replace 
length with diameter in the above.
\end{defn}

\begin{defn} \label{defn-sep}
The smocking separation factor, $\delta=\delta_X$, is defined to be
\be
\delta_X =  \min\left\{ |z -w|: \,\, z\in  I_j, \, w\in I_k, \, j\neq k \in J\right\}.
\ee
\end{defn}

\begin{lem}\label{lem-hplusL}
If a smocked spaces is parametrized by points in stitches as in (\ref{param-by-points}),
then
\be
{\mathbb{E}}^N \subset T_{h+L}(S),
\ee
where $S = \bigcup_{j\in J} I_j$ is the smocking set and $h$ is the smocking depth
and $L=L_{max}$ is the maximum smocking length.
\end{lem}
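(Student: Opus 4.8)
The plan is to verify the inclusion pointwise. Since $T_{h+L}(S)=\{x:\ \exists\, z\in S \text{ with } |x-z|<h+L\}$, it suffices to produce, for each $x\in\mathbb{E}^N$, a point $z\in S$ with $|x-z|<h+L$, and the whole argument comes down to pairing the covering bound furnished by the depth with the strict slack coming from the positivity of $L=L_{max}$. If $h=\infty$ the claim is immediate, since then $h+L=\infty$ and $T_{h+L}(S)=\mathbb{E}^N$ because every point of $\mathbb{E}^N$ sits at finite distance from the nonempty smocking set $S$. So the substance is the case $h<\infty$.

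For finite depth I would invoke Lemma~\ref{lem-smocking-depth}, which gives $h=\sup\{D(x):x\in\mathbb{E}^N\}$ and hence $D(x)\le h$ for every $x$. By Lemma~\ref{lem-distance-to-smocking set} the minimum defining $D(x)$ is attained, so there are an index $j\in J$ and a point $z\in I_j\subset S$ with $|x-z|=D(x)\le h$. Because the stitches have positive maximal length, $L=L_{max}\in(0,\infty]$ by Definition~\ref{defn-smocking-lengths}, we have $h<h+L$, so $|x-z|\le h<h+L$ and therefore $x\in T_{h+L}(S)$. As $x$ was arbitrary this yields $\mathbb{E}^N\subset T_{h+L}(S)$.

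The parametrization hypothesis $j\in I_j$ is what makes the ``$+L$'' natural and sets up the sharper statement I expect to use later: since $j\in I_j$ and $\diam(I_j)\le L$, the triangle inequality gives $|x-j|\le|x-z|+|z-j|\le h+L$, so in fact every point of $\mathbb{E}^N$ lies within $h+L$ of an actual index point $j\in J$, which is the form convenient for covering $\mathbb{E}^N$ by balls centered on the lattice $J$. The one step I would be careful about is the strict-versus-nonstrict inequality built into the tubular neighborhood: the depth only delivers $D(x)\le h$, so passing to the strict bound $<h+L$ genuinely uses $L>0$, and if one phrases the conclusion through the index point $j$ rather than through $z$ one must still dispatch the boundary case $|x-j|=h+L$ by returning to the nearest point $z\in S$, for which $|x-z|=D(x)\le h<h+L$ remains strict.
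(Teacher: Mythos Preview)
Your proof is correct and follows the same route as the paper's: find a nearest stitch point $z\in I_j$ with $|x-z|\le h$, and use the parametrization $j\in I_j$ together with $\diam(I_j)\le L$ to get $|x-j|\le h+L$. You are in fact more careful than the paper about the strict inequality in the definition of $T_{h+L}(S)$, correctly noting that the stated inclusion already follows from $|x-z|\le h<h+L$ (using only $L>0$), while the parametrization hypothesis is what delivers the stronger and more useful bound $|x-j|\le h+L$ on the distance to an actual index point $j\in J$.
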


\begin{proof}
Given any $x\in {\mathbb{E}}^N$, by the definition of smocking depth,
we have a closest point in a closest stitch, $z\in I_j$, such that
\be
d(x, I_j)=\bar{d}(x, z)=|x-z| \le h. 
\ee
Since our smocked space is parametrized by points in stitches
we have
\be
|z-j|\le L .
\ee
So
\be
|x-j|\le L+h.
\ee
Thus $x \in T_{L+h}(S)$.
\end{proof}

\subsection{ The Smocking Constants of $X_\diamond$  by Prof.~Sormani and Maziar}   

Here we find the smocking constants for the Diamond Smocking Space:

\begin{lem} \label{lem-depth-diamond} 
The smocking depth
\be
h_\diamond=\inf\{ r:\, \mathbb{E}^N \subset T_r(S)\} \in [0,\infty] =\frac{5}{8}.
\ee
\end{lem}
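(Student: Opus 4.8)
The plan is to invoke Lemma~\ref{lem-smocking-depth}, which reduces the computation of $h_\diamond$ to finding $\sup\{D(x):x\in\mathbb{E}^2\}$ once we know $h_\diamond<\infty$. First I would record the geometry coming from Definition~\ref{defn-diamond}: since $j_1+j_2=2n_1$, the stitch centers are exactly the integer points $(j_1,j_2)$ with $j_1+j_2$ even, and each stitch is the horizontal unit segment $[j_1-\tfrac12,j_1+\tfrac12]\times\{j_2\}$. Thus on the row $\{y=n\}$ the stitches are centered at the integers $m\equiv n\ (\mathrm{mod}\ 2)$, with $x$-projections $[m-\tfrac12,m+\tfrac12]$. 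The crucial structural fact is that the $x$-projections of the stitches on two consecutive rows $y=n$ and $y=n+1$ together tile $\mathbb{R}$, overlapping only at half-integers. Finiteness of $h_\diamond$ is then immediate: any point lies within horizontal distance $\tfrac12$ of a stitch on the nearest integer row, so $D\le\sqrt2/2<\infty$, and Lemma~\ref{lem-smocking-depth} applies.

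For the upper bound $D(x,y)\le\tfrac58$ I would fix $p=(x,y)$, set $n=\lfloor y\rfloor$ and $t=y-n\in[0,1)$, and bound $D(p)$ using only the two stitches nearest $p$ on the rows $y=n$ and $y=n+1$ (legitimate since $D(p)$ is a minimum over all stitches). By the tiling fact, $x$ lies in the $x$-projection of a stitch on exactly one of these two rows, apart from the boundary half-integer case which gives $D(p)\le\min(t,1-t)\le\tfrac12$ directly. Say $x$ is covered on row $y=n+1$; then that stitch lies directly above $p$, so $D(p)\le 1-t$, while $x$ sits in a gap of row $y=n$ whose nearest stitch endpoint is at horizontal distance $g\le\tfrac12$, so $D(p)\le\sqrt{g^2+t^2}\le\sqrt{\tfrac14+t^2}$. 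Hence
\[
D(p)\le\min\!\Big(1-t,\ \sqrt{\tfrac14+t^2}\Big),
\]
and an elementary one-variable optimization shows this is maximized at $t=\tfrac38$, where both arguments equal $\tfrac58$. The complementary case ($x$ covered on row $y=n$) is identical after the reflection $t\mapsto 1-t$.

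For the lower bound I would exhibit the extremal point $p_0=(1,\tfrac38)$ and show $D(p_0)=\tfrac58$. Any stitch on a row $y=k$ with $k\ge 1$ or $k\le -1$ is already at vertical distance $|k-\tfrac38|\ge\tfrac58$ from $p_0$, so only the row $y=0$ can supply a closer stitch; there $x=1$ lies in the gap $(\tfrac12,\tfrac32)$ and the nearest endpoints $(\tfrac12,0),(\tfrac32,0)$ are at distance $\sqrt{(\tfrac12)^2+(\tfrac38)^2}=\sqrt{25/64}=\tfrac58$. Thus every stitch is at distance $\ge\tfrac58$, so $D(p_0)=\tfrac58$ and $\sup_x D(x)\ge\tfrac58$. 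Combined with the upper bound this gives $\sup_x D(x)=\tfrac58$, and Lemma~\ref{lem-smocking-depth} yields $h_\diamond=\tfrac58$.

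The main point requiring care is justifying that only the two adjacent rows matter: this rests entirely on the interlocking tiling of the $x$-projections, which I would prove carefully, together with verifying that in the "gap" row the nearest point of the relevant stitch is genuinely an endpoint, so that the distance really equals $\sqrt{g^2+t^2}$ with $g\le\tfrac12$. Everything else is a short planar-distance computation and the scalar optimization of $\min\!\big(1-t,\sqrt{1/4+t^2}\big)$.
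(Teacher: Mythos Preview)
Your proposal is correct. The key identification of the extremal point (your $p_0=(1,\tfrac38)$, equidistant from $I_{(1,1)}$ above and the endpoints of $I_{(0,0)}$, $I_{(2,0)}$ below) and the resulting Pythagorean relation $(1-t)^2=\tfrac14+t^2\Rightarrow t=\tfrac38$ are exactly what the paper finds.

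The route differs in bookkeeping. The paper tiles $\mathbb{E}^2$ by $1\times 2$ rectangles $R_j=[j_1-\tfrac12,j_1+\tfrac12]\times[j_2-2,j_2]$, uses a reflection to reduce to the unit square $Q_j$, and then argues geometrically (with figures) that the last uncovered point under growing tubular neighborhoods is where the horizontal boundary of $T_r(I_{(j_1,j_2)})$ meets the circular arcs from the neighboring stitches' endpoints, solving a right triangle for $r=\tfrac58$. You instead invoke Lemma~\ref{lem-smocking-depth} to recast the problem as $\sup_x D(x)$, decompose by horizontal strips between integer rows, exploit the interlocking tiling of $x$-projections on consecutive rows, and reduce to the scalar optimization $\max_{t\in[0,1)}\min(1-t,\sqrt{1/4+t^2})$. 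Your approach is figure-free and makes the upper bound $D\le\tfrac58$ fully explicit, whereas the paper's tile-based argument is more visual and ties in directly with the tubular-neighborhood formulation of $h$; both arrive at the same equidistant point and the same quadratic.
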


\begin{proof} 
Note that 
\be 
\mathbb{E}^2 =\{[j_1-\tfrac{1}{2},j_1+\tfrac{1}{2}] \times[j_2-2,j_2]: j\in J\},
\ee
i.e. rectangles 
\be
R_j=[j_1-\tfrac{1}{2},j_1+\tfrac{1}{2}]\times [j_2-2,j_2]
\ee
 tile the Euclidean plane. Moreover, the rectangles $R_j$ have a reflective symmetry along the line connecting $(j_1-1,j_2-1)$ and $(j_1+1,j_2-1)$. Therefore, it is sufficient to consider the square 
 \be
 Q_j= [j_1-\tfrac{1}{2},j_1+\tfrac{1}{2}]\times [j_2-1,j_2].
 \ee
  The rectangle $R_j$ and the square $Q_j$ are shown in Figure \ref{Diamond_tile}. 

\begin{figure}[h!]
\centering
\includegraphics[width=.4 \textwidth]{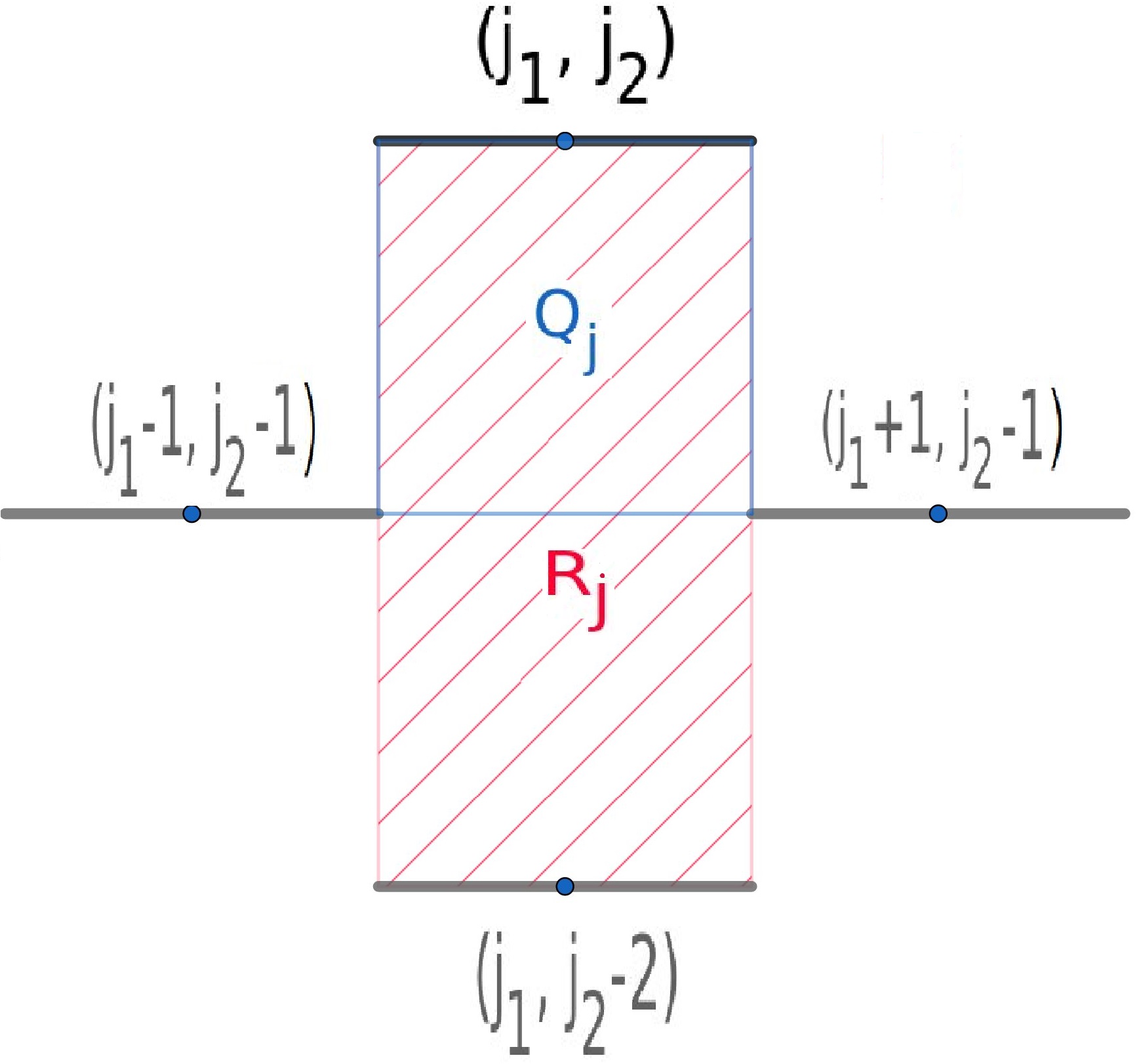}
\caption{$R_j$ and $Q_j$}
\label{Diamond_tile} 
\end{figure}

The tubular neighborhoods of radius $r$ of the diamond smocked space for four of the stitches is shown in Figure \ref{Diamond_tubular}.

\begin{figure}[h!]
\centering
\includegraphics[width=.4 \textwidth]{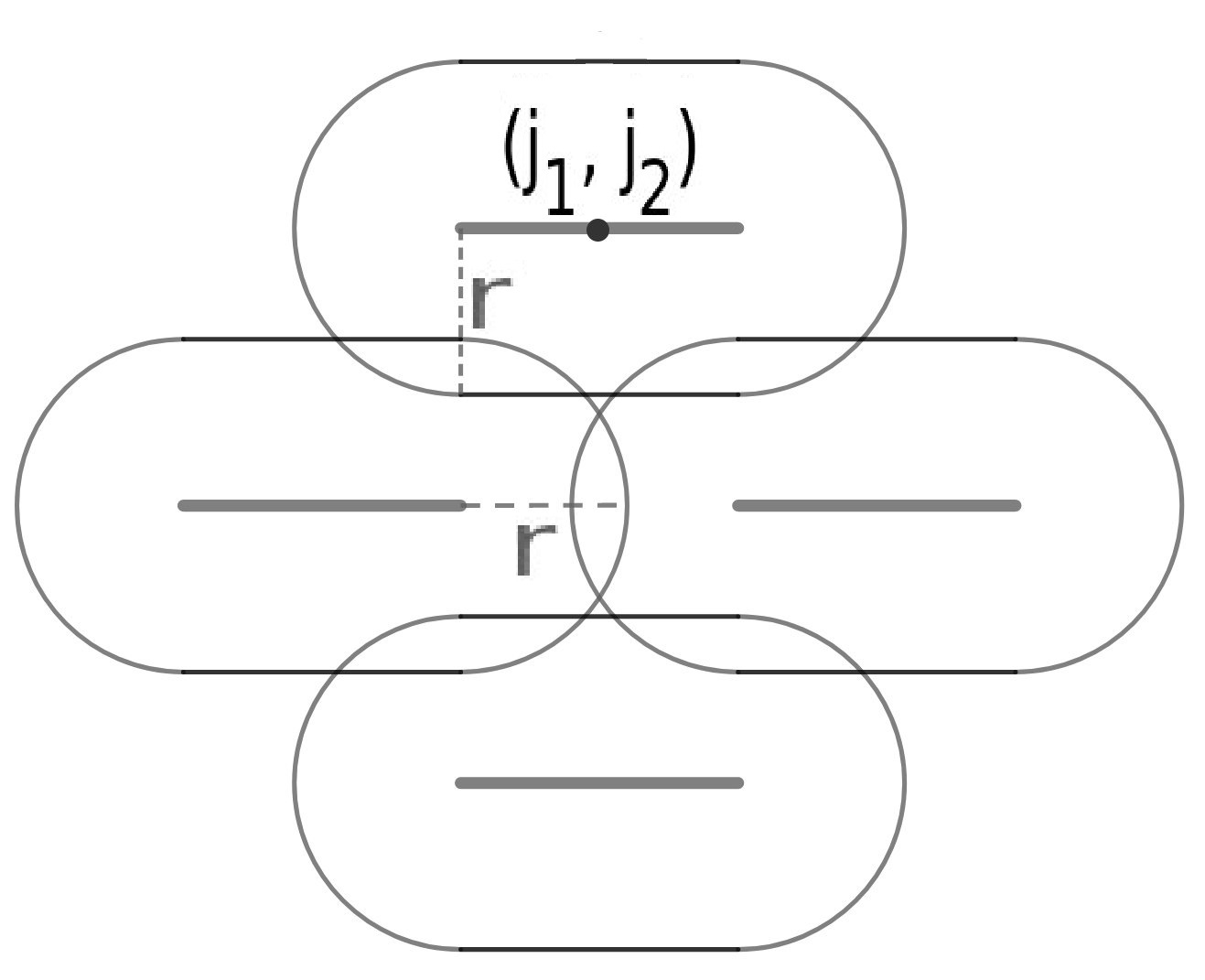}
\caption{Tubular neighborhoods in $X_\diamond$. }
\label{Diamond_tubular} 
\end{figure}

We see that the boundaries of the tubular neighborhoods that lie in $Q_j$ are 
\begin{enumerate}
    \item a length one horizontal segment from the tubular neighborhood of $I_{(j_1,j_2)}$, distance $r$ below it,
    \item a quarter of circle right of $I_{(j_1-1, j_2-1)}$ of radius $r$,
    \item a quarter of circle left of $I_{(j_1+1, j_2-1)}$ of radius $r$. 
\end{enumerate} 
The smallest $r$ for which the tubular neighborhoods of the stitches cover $Q_j$ is achieved when these boundary pieces intersect at a point $B$ as shown in Figure \ref{Diamond_smk_depth}. This is because otherwise the point, $B$, distance $r=\frac{5}{8}$ from the $(j_1,j_2)$ stitch and from the end point of the $(j_1+1,j_2-1)$ stitch would not be covered.

\begin{figure}[h!]
\centering
\includegraphics[width=.4 \textwidth]{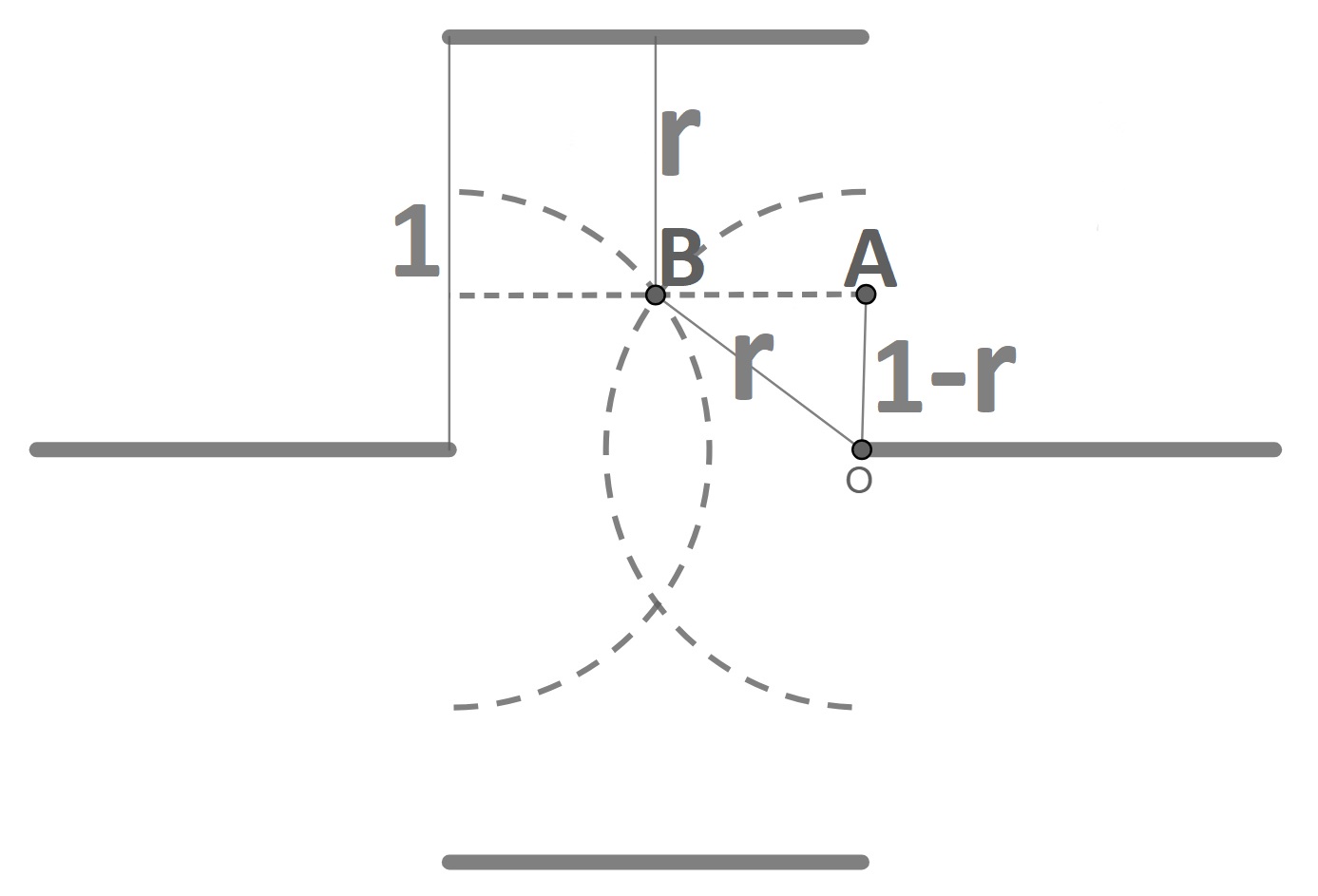}
\caption{Smallest covering tubular neighborhoods.}
\label{Diamond_smk_depth} 
\end{figure}

Consider the triangle $OAB$ in figure \ref{Diamond_smk_depth}, for which $AB=\frac{1}{2}L= \frac{1}{2}$, $OA= 1-r$, and $OB= r$. By the Pythagorean Theorem,
\be
0= \left(\tfrac{1}{ 2}\right)^2+ (1-r)^2 -r^2=\frac{1}{4} + 1-2r+r^2 -r^2=\frac{5}{4}-2r.
\ee
Therefore, $r=\frac{5}{8}$ is the smallest $r$ such that 
\be
R_j  \,\,\subset \,\,T_r(I_{(j_1,j_2)})\cup T_r(I_{(j_1+1, j_2-1)})\cup T_r(I_{(j_1, j_2-2)})\cup T_r(I_{(j_1-1, j_2-1)}).
\ee
\end{proof}

\begin{lem} \label{lem-length-diamond}
The smocking lengths are 
\begin{eqnarray}
L^\diamond_{min}&=& \inf \{ L(I_j): \, j \in J_\diamond\} =1\\
L^\diamond_{max}&=& \sup \{ L(I_j): \, j \in J\diamond\} =1
\end{eqnarray}
so the smocking length is $1$.
\end{lem}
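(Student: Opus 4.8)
The plan is to read off the length of a single stitch directly from Definition~\ref{defn-diamond} and to observe that this length is the same for every index. According to that definition, each stitch of $X_\diamond$ has the form
\be
I_{(j_1,j_2)}= [j_1-1/2, j_1+1/2] \times \{j_2\},
\ee
which is a horizontal segment in $\mathbb{E}^2$. First I would record that its length is the length of the interval $[j_1-1/2, j_1+1/2]$, namely $(j_1+1/2)-(j_1-1/2)=1$, and that this value is manifestly independent of the choice of $(j_1,j_2) \in J_\diamond$.

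Since $L(I_j)=1$ for every $j \in J_\diamond$, the set $\{L(I_j):\, j \in J_\diamond\}$ is the singleton $\{1\}$. Taking its infimum and supremum then immediately yields $L^\diamond_{min}=1$ and $L^\diamond_{max}=1$, so by Definition~\ref{defn-smocking-lengths} the smocking length equals $1$. There is essentially no obstacle here; the only point requiring any attention is to confirm that \emph{all} the diamond stitches really are unit horizontal segments, rather than (as in some of the other patterns) a mix of horizontal and vertical pieces or compound shapes. This is immediate from Definition~\ref{defn-diamond}, so the index set $J_\diamond$ plays no role beyond being nonempty, and the $\inf$ and $\sup$ coincide trivially.
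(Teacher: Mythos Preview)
Your proof is correct and takes essentially the same approach as the paper: compute the length of a generic stitch $I_{(j_1,j_2)}=[j_1-1/2,j_1+1/2]\times\{j_2\}$ as $(j_1+1/2)-(j_1-1/2)=1$ and observe this is independent of $(j_1,j_2)$. The paper's proof is simply a one-line version of what you wrote.
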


\begin{proof}
All our stitches $I_j=\{j_1-1/2, j_1+1/2\}\times\{j_2\}$ have length 
\be
(j_1+1/2)-(j_1-1/2)=1.
\ee
\end{proof}

\begin{lem} \label{lem-delta-diamond}
The smocking separation factor is
\be
\delta_\diamond =1
\ee
\end{lem}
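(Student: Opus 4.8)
The plan is to unwind Definition~\ref{defn-sep} and reduce the computation to a short, explicit case analysis governed by the geometry of the stitches. First I would record the structural fact about the index set $J_\diamond$ from Definition~\ref{defn-diamond}: writing $j_1 = 2n_1 - n_2$ and $j_2 = n_2$ gives $j_1 + j_2 = 2n_1$, and conversely every $(j_1,j_2) \in \mathbb{Z}^2$ with $j_1 + j_2$ even arises this way (take $n_2 = j_2$, $n_1 = (j_1+j_2)/2$). Hence
\[
J_\diamond = \{(j_1,j_2) \in \mathbb{Z}^2 : j_1 + j_2 \text{ is even}\},
\]
and every stitch $I_{(j_1,j_2)} = [j_1 - \tfrac{1}{2}, j_1 + \tfrac{1}{2}] \times \{j_2\}$ is a horizontal unit segment at integer height $j_2$ centered at integer abscissa $j_1$.

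Next I would prove the lower bound $\delta_\diamond \geq 1$ by splitting on the vertical separation $\Delta = |j_2 - j_2'|$ of the centers of two distinct stitches $I_{(j_1,j_2)}$ and $I_{(j_1',j_2')}$. If $\Delta \geq 2$, then any two points, one on each segment, differ by at least $\Delta \geq 2$ in their second coordinate, so their distance is at least $2$. If $\Delta = 1$, the distance is at least the vertical gap $1$. If $\Delta = 0$, the two segments lie at the same height, so $j_1 + j_2$ and $j_1' + j_2$ are both even, forcing $j_1 \equiv j_1' \pmod 2$ and hence $|j_1 - j_1'| \geq 2$ (the centers being distinct); the horizontal gap between the two unit segments is then $|j_1 - j_1'| - 1 \geq 1$. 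In every case the stitches are at distance at least $1$, so $\delta_\diamond \geq 1$.

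Finally I would exhibit a pair realizing the bound. Taking $I_{(0,0)} = [-\tfrac{1}{2}, \tfrac{1}{2}]\times\{0\}$ and $I_{(2,0)} = [\tfrac{3}{2}, \tfrac{5}{2}]\times\{0\}$ (both $(0,0)$ and $(2,0)$ lie in $J_\diamond$ since their coordinate sums are even), the nearest points are $(\tfrac{1}{2}, 0)$ and $(\tfrac{3}{2}, 0)$, at distance exactly $1$. Since every distinct pair of stitches is at distance at least $1$ and this pair achieves distance $1$, the minimum in Definition~\ref{defn-sep} equals $1$, i.e.\ $\delta_\diamond = 1$.

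The computation is entirely elementary, so the only point requiring genuine care is the parity bookkeeping for the index set: the $\Delta = 0$ estimate crucially uses that stitches at a common height have centers separated by at least $2$, which is exactly where the evenness of $j_1 + j_2$ enters. I would therefore state the characterization of $J_\diamond$ cleanly at the outset so that the three-case distance estimate reads off immediately, with no residual obstacle.
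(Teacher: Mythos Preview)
Your proof is correct and follows essentially the same approach as the paper: a case split on the vertical separation of the stitch centers (the paper collapses your $\Delta=1$ and $\Delta\ge 2$ into a single case $j_2\neq k_2$), together with the parity observation that same-height centers differ by at least $2$ horizontally. The only cosmetic difference is your choice of witnessing pair $I_{(0,0)}$, $I_{(2,0)}$ for the upper bound, whereas the paper uses $I_{(0,0)}$, $I_{(1,1)}$; both realize distance $1$.
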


\begin{proof} 
Recall
\be
\delta_\diamond =\min\left\{ |z -w|: \,\, z\in  I_j, \, w\in I_k, \, j\neq k \in J\right\}.
\ee
If we let
\be
z_0 = (1/2, 0) \in I_{(0,0)} \in J_\diamond
\ee
and
\be
w_0=(1/2,1) \in I_{(1,1)} \in J_\diamond
\ee
 then
we see that
\be
\delta_\diamond \le |z_0-w_0|= 1.
\ee

On the other hand, taking any $j \neq k$ we consider the following cases:

Case I: $j_2\neq k_2$.   Then by the definition of $J_\diamond$,  $|j_2-k_2|\ge 1$.
For any $z\in  I_j, \, w\in I_k$,
\be
z_2=j_2 \textrm{ and }w_2=k_2
\ee
by the definitions of our stitches.  Thus
\be
|z-w| \ge |z_2-w_2|=|j_2-k_2| \ge 1.
\ee

Case II: $j_2= k_2$ and so $j_1\neq k_1$.  By the definition of $J_\diamond$,  $|j_2-k_2|\ge 2$.
For any $z\in  I_j, \, w\in I_k$,
\be
z_1\in [j_1-1/2, j_1+1/2] \textrm{ and } w_1\in [k_1-1/2, k_1+1/2]
\ee
by the definitions of our stitches.  Thus
\be
|z-w|\ge |z_1-w_1| \ge 2 -(1/2)-(1/2) =1.
\ee
Combining our cases, we see the minimum is $\ge 1$.
\end{proof}

\subsection{ The Smocking Constants of $X_=$ by Julinda and Hindy}

Here we find the smocking constants for the $X_=$:

\begin{lem} \label{lem-depth-=}
The smocking depth
\be
h_=  =\inf\{ r:\, \mathbb{E}^N \subset T_r(S)\} \in [0,\infty] = \frac{\sqrt{2}}{2}.
\ee
\end{lem}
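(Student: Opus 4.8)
The plan is to compute $h_=$ as the supremum of the distance-to-the-smocking-set function $D(x)=\min\{|x-z|:z\in I_j,\,j\in J_=\}$ and then locate the deepest point of $\mathbb{E}^2$ explicitly. The smocking set here is the union of the horizontal unit segments $I_{(j_1,j_2)}=[j_1-\tfrac12,\,j_1+\tfrac12]\times\{j_2\}$ with $j_1\in 2\Z$, $j_2\in\Z$, so $S$ is doubly periodic: it is invariant under the translations $(x,y)\mapsto(x+2,y)$ and $(x,y)\mapsto(x,y+1)$, as well as under reflection across every vertical line $x=n$ with $n\in\Z$ (even $n$ fix a stitch, odd $n$ swap adjacent stitches) and across every horizontal line $y\in\tfrac12\Z$. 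Once I establish the upper bound $D\le\tfrac{\sqrt2}{2}$ below, it follows that $\mathbb{E}^2\subset T_r(S)$ for every $r>\tfrac{\sqrt2}{2}$, so $h_=$ is finite and Lemma~\ref{lem-smocking-depth} applies to give $h_==\sup\{D(x):x\in\mathbb{E}^2\}$. Since $D$ is invariant under the symmetry group just described, and the translations and reflections identify every point of the plane with a point of the fundamental rectangle $Q=[0,1]\times[0,\tfrac12]$, it suffices to maximize $D$ over $Q$.

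The corner $(1,\tfrac12)$ of $Q$ is the candidate deepest point: it sits at the center of a gap in each row and halfway between the rows $y=0$ and $y=1$. Its four nearest stitch points are the endpoints $(\tfrac12,0)$, $(\tfrac32,0)$, $(\tfrac12,1)$, $(\tfrac32,1)$, each at distance $\sqrt{(\tfrac12)^2+(\tfrac12)^2}=\tfrac{\sqrt2}{2}$, so $D(1,\tfrac12)=\tfrac{\sqrt2}{2}$. This gives the lower bound $\sup_Q D\ge\tfrac{\sqrt2}{2}$.

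The remaining and principal step is the matching upper bound $D(x,y)\le\tfrac{\sqrt2}{2}$ for all $(x,y)\in Q$. Because $D$ is a minimum over all stitches, it is enough to bound the distance to the two segments in the adjacent row $y=0$, namely $[-\tfrac12,\tfrac12]\times\{0\}$ and $[\tfrac32,\tfrac52]\times\{0\}$. If $x\le\tfrac12$, the first segment lies directly below $(x,y)$ and $D\le y\le\tfrac12<\tfrac{\sqrt2}{2}$. If $x\in[\tfrac12,1]$, the closest point of that segment is its endpoint $(\tfrac12,0)$, and since $x-\tfrac12\le\tfrac12$ and $y\le\tfrac12$ we get $D\le\sqrt{(x-\tfrac12)^2+y^2}\le\sqrt{\tfrac14+\tfrac14}=\tfrac{\sqrt2}{2}$, with equality only at $(1,\tfrac12)$. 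Hence $\sup_Q D=\tfrac{\sqrt2}{2}$, and by periodicity $\sup_{\mathbb{E}^2}D=\tfrac{\sqrt2}{2}=h_=$.

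I expect the main obstacle to be the bookkeeping in the symmetry reduction: one must check that the translation/reflection group really sends every point of $\mathbb{E}^2$ into $Q$ and that $D$ is invariant, and confirm that bounding via the two named segments in the adjacent row is legitimate (the stitches in row $y=1$ are at distance at least $1-y\ge\tfrac12$, and all other stitches are farther still by the separation factor $\delta_=$). An alternative, in the style of Lemma~\ref{lem-depth-diamond}, is to argue through tubular neighborhoods: show that the four neighborhoods $T_r$ of the surrounding stitches first cover $Q$ exactly when their circular boundary arcs meet at $(1,\tfrac12)$, and read off $r=\tfrac{\sqrt2}{2}$ from the right triangle with legs $\tfrac12$ and $\tfrac12$ via the Pythagorean Theorem.
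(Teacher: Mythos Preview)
Your proof is correct and follows essentially the same approach as the paper: identify the deepest point $(1,\tfrac12)$, compute its distance $\tfrac{\sqrt2}{2}$ to the nearest stitch endpoints, and use periodicity to reduce the upper bound to a bounded region. The only cosmetic difference is that you exploit the reflection symmetries to work on the smaller fundamental domain $Q=[0,1]\times[0,\tfrac12]$, whereas the paper works on the larger rectangle $A=[-\tfrac12,\tfrac52]\times[0,1]$ and invokes the fact that the center of a square is its farthest point from the corners; your case split on $x\le\tfrac12$ versus $x\in[\tfrac12,1]$ achieves the same effect more explicitly.
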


\begin{proof}
Let $A = [-\frac{1}{2}, \frac{5}{2}] \times [0,1] \subset \mathbb{E}^2.$ $A$ contains four smocking stitches: $I_0, I_{(0,1)},I_{(2,0)}, I_{(2,1)}.$ Denote the union of these smocking stitches by $S' \subset S$. Consider the point $a = (1, \frac{1}{2}) \in A.$ 
 We have that 
 \be
 d_{\mathbb{E}^2}(a, I_j) = \frac{\sqrt{2}}{2}
 \ee
  for all $I_j$ in $A$. Furthermore, $B_{\frac{\sqrt{2}}{2}}(a)$ contains just these four smocking stitches, so $a \notin T_r(S)$ for any $r < \frac{\sqrt{2}}{2}$. Therefore, $$h_= \geq \frac{\sqrt{2}}{2}.$$
 
 Note that $a$ is the center of the square $[\frac{1}{2}, \frac{3}{2}] \times [0,1]$ whose corners are the inner endpoints of the $I_j$. Since the center of a square is the farthest point from the corners, all other points in the square are included in $T_{\frac{\sqrt{2}}{2}}(S)$. Furthermore, it is clear that 
 \be
 \left(\left[-\frac{1}{2}, \frac{1}{2}\right] \times\left [0,1\right] \right) \cup \left(\left[\frac{3}{2}, \frac{5}{2}\right] \times \left[0,1\right]\right) \subset  T_{\frac{1}{2}}(S').
 \ee
 
 Therefore, $A \subset T_{\frac{\sqrt{2}}{2}}(S).$
 
Since our space is periodic in copies of $A$, the proof is complete.
\end{proof}

The next two lemmas are very easy to see:

\begin{lem} \label{lem-length-=}
The smocking lengths are 
\be
L=L^=_{min} = L^=_{max} =1.
\ee
\end{lem}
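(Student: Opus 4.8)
The plan is to verify the two identities $L^=_{min} = 1$ and $L^=_{max} = 1$ by directly computing the length of an arbitrary stitch and observing that this length is independent of the index $(j_1,j_2)$. Since every stitch is a congruent horizontal segment, both the infimum and the supremum appearing in Definition~\ref{defn-smocking-lengths} collapse to a single common value, so it suffices to evaluate $L(I_{(j_1,j_2)})$ once in full generality.

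First I would recall from Definition~\ref{defn-=} that for each $(j_1,j_2) \in J_=$ the stitch is the horizontal segment
\be
I_{(j_1,j_2)} = [j_1 - 0.5,\, j_1 + 0.5] \times \{j_2\}.
\ee
The length of such a segment is the difference of the first coordinates of its endpoints, namely
\be
L(I_{(j_1,j_2)}) = (j_1 + 0.5) - (j_1 - 0.5) = 1.
\ee
The $j_1$-dependence cancels in this subtraction, and the second coordinate $j_2$ plays no role in the length of a horizontal segment, so the value $1$ is obtained for every $(j_1,j_2) \in J_=$.

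Consequently the set $\{L(I_j): j \in J_=\}$ is the singleton $\{1\}$, whence its infimum and supremum both equal $1$; this gives $L^=_{min} = L^=_{max} = 1$ and a well-defined smocking length $L = 1$. I expect essentially no obstacle here: the only step meriting any comment is confirming that the length is genuinely constant across the index set, which is immediate from the cancellation above. This mirrors exactly the argument already carried out for the diamond space in Lemma~\ref{lem-length-diamond}.
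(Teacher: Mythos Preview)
Your proposal is correct and follows exactly the approach the paper intends: the paper declares this lemma ``very easy to see'' and omits the proof entirely, and your direct computation of $L(I_{(j_1,j_2)}) = (j_1+0.5)-(j_1-0.5)=1$ is precisely the verification one would supply (and mirrors the proof of Lemma~\ref{lem-length-diamond}).
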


\begin{lem} \label{lem-delta-=}
The smocking separation factor is
\be
\delta_= = 1
\ee
\end{lem}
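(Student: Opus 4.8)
The plan is to establish $\delta_= = 1$ by proving the two inequalities $\delta_= \le 1$ and $\delta_= \ge 1$ separately, mirroring exactly the argument already used for $\delta_\diamond$ in Lemma~\ref{lem-delta-diamond}. Recall that $J_= = 2\mathbb{Z} \times \mathbb{Z}$ and that each stitch $I_{(j_1,j_2)} = [j_1 - 1/2, j_1 + 1/2] \times \{j_2\}$ is a horizontal unit segment, so two distinct stitches are separated either vertically (different $j_2$) or horizontally (same $j_2$ but different even $j_1$).

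First I would establish the upper bound by exhibiting a single pair of distinct stitches realizing distance $1$. Taking the point $z_0 = (1/2, 0) \in I_{(0,0)}$ and the point $w_0 = (1/2, 1) \in I_{(0,1)}$, both legitimate since $(0,0), (0,1) \in J_=$, one computes $|z_0 - w_0| = 1$, so the minimum defining $\delta_=$ is at most $1$. The horizontally adjacent pair $I_{(0,0)}, I_{(2,0)}$ works equally well, with closest points $(1/2, 0)$ and $(3/2, 0)$ again at distance $1$.

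Next I would prove the matching lower bound $|z - w| \ge 1$ for every $z \in I_j$ and $w \in I_k$ with $j \ne k \in J_=$, splitting into two cases according to whether the stitches share a horizontal line. If $j_2 \ne k_2$, then since both are integers $|j_2 - k_2| \ge 1$; because every point of $I_{(j_1,j_2)}$ has second coordinate exactly $j_2$, we get $|z - w| \ge |z_2 - w_2| = |j_2 - k_2| \ge 1$. If instead $j_2 = k_2$, forcing $j_1 \ne k_1$, then $j_1, k_1 \in 2\mathbb{Z}$ gives $|j_1 - k_1| \ge 2$, and since $z_1 \in [j_1 - 1/2, j_1 + 1/2]$ and $w_1 \in [k_1 - 1/2, k_1+ 1/2]$ the first coordinates are separated by at least $|j_1 - k_1| - 1/2 - 1/2 \ge 1$, whence $|z - w| \ge |z_1 - w_1| \ge 1$. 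Combining the two inequalities yields $\delta_= = 1$.

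There is no genuine obstacle here: the argument is routine and structurally identical to the diamond computation. The only point requiring care is the bookkeeping of the lattice constants, namely that $j_1$ steps by $2$ while $j_2$ steps by $1$, against a stitch half-width of $1/2$, so that both cases bottom out at exactly $1$ rather than at some smaller value; it is precisely the match between the unit vertical spacing and the value $2 - 1/2 - 1/2 = 1$ in the horizontal case that makes the two bounds coincide.
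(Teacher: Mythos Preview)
Your proof is correct and follows exactly the approach one would expect, mirroring the argument for $\delta_\diamond$ in Lemma~\ref{lem-delta-diamond}; the paper itself does not write out a proof here, simply noting that this lemma is ``very easy to see,'' so your detailed case analysis is more than sufficient.
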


\subsection{ The Smocking Constants of $X_T$ by Dr.~Kazaras, Moshe and David :}

Recall Definition~\ref{defn-T} and Subsection~\ref{subsect-consts}.   Here we find the smocking constants for $X_T$. 

\begin{lem} \label{lem-depth-T}
The smocking depth of $X_T$ is
\be
h_T = 1
\ee
\end{lem}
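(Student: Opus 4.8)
The plan is to compute the depth via Lemma~\ref{lem-smocking-depth}: once we know that $h_T$ is finite, that lemma gives $h_T = \sup\{D(x) : x \in \mathbb{E}^2\}$, where $D$ is the distance to the smocking set from Definition~\ref{defn-distance-to-smocking set}. So it suffices to bound $D$ above by $1$ everywhere and to exhibit a single point where $D=1$.

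For the upper bound, first I would record from Definition~\ref{defn-T} that every stitch of $X_T$ is a segment of length $2$ centered at a lattice point $(2m,2n)$, $m,n\in\mathbb{Z}$, oriented either horizontally or vertically. Whichever its orientation, the set of points within Euclidean distance $1$ of the stitch $I_{(2m,2n)}$ contains the closed $2\times 2$ cell $Q_{m,n}=[2m-1,2m+1]\times[2n-1,2n+1]$: for a horizontal stitch $[2m-1,2m+1]\times\{2n\}$ and a point $(x,y)\in Q_{m,n}$ the foot of the perpendicular $(x,2n)$ lies on the stitch and has distance $|y-2n|\le 1$, and symmetrically for vertical stitches. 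Since the cells $\{Q_{m,n}:(m,n)\in\mathbb{Z}^2\}$ tile $\mathbb{E}^2$, every point lies within distance $1$ of $S$, so $D(x)\le 1$ for all $x$; in particular $\mathbb{E}^2\subset T_r(S)$ for every $r>1$, which makes $h_T\le 1<\infty$ and justifies applying Lemma~\ref{lem-smocking-depth}.

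For the matching lower bound, I would exhibit the deep point $p=(1,1)$, which sits at the common corner of four adjacent cells and is ringed by the four stitches $I_{(0,0)}$, $I_{(2,0)}$, $I_{(0,2)}$, and $I_{(2,2)}$. A direct check shows that the nearest point of each of these is an endpoint lying at distance exactly $1$ (the endpoints $(1,0)$, $(2,1)$, $(1,2)$, $(0,1)$ form a unit diamond about $p$), while every remaining stitch is strictly farther. Hence $D(p)=1$, so $\sup_x D(x)\ge 1$, and combining with the upper bound via Lemma~\ref{lem-smocking-depth} yields $h_T=1$.

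I do not expect a genuine obstacle; the only care needed is to confirm that the $2\times 2$ cells cover $\mathbb{E}^2$ with no gaps, so that no elongated hole between two perpendicular stitches could hide a point of depth exceeding $1$ (the tiling by the $Q_{m,n}$ settles this cleanly), and, at the candidate point $p$, to verify that no stitch other than the four nearest comes within distance $1$ (the next-nearest stitches sit at distance $\sqrt{5}$).
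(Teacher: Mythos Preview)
Your proof is correct and follows essentially the same approach as the paper: reduce by the $2\mathbb{Z}\times 2\mathbb{Z}$ translational symmetry to a fundamental cell, verify that every point in the cell lies within distance $1$ of a stitch, and exhibit $(1,1)$ as the point achieving $D=1$. The only cosmetic difference is your choice of fundamental domain---you tile by cells $Q_{m,n}$ centered at the stitches and cover each cell with a \emph{single} stitch via perpendicular projection, whereas the paper uses the corner-anchored square $[0,2]\times[0,2]$ and its four bordering stitches; your version makes the upper bound slightly more explicit.
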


\begin{figure}[h]
\includegraphics[width=.4 \textwidth]{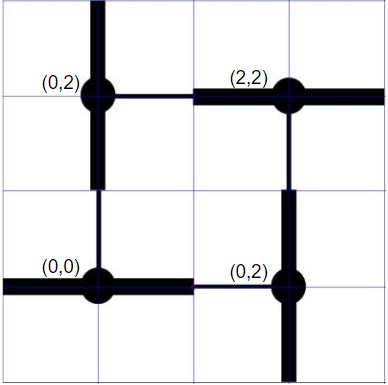}
\caption{The four smocking stitches that intersect with $[0,2]\times [0,2]$.}
\label{fig:depth-T}
\end{figure}

\begin{proof}   
Our space is invariant under translation by elements in the lattice $2\mathbb{Z}\times 2\mathbb{Z}$.  So we need only consider the point with the largest distance from any smocking stitches in $[0,2]\times [0,2]$.  
As one can see in Figure~\ref{fig:depth-T}, there
are four smocking stitches that intersect with this square:
\begin{eqnarray}
I_{(0,2)} = \{0\} \times [1,3] & \textrm{ and }& I_{(2,2)}=[1,3] \times \{2\} \\
 I_{(0,0)}=[-1,1] \times \{0\}  & \textrm{ and } &  I_{(2,0)} = \{2\}\times [-1,1]
 \end{eqnarray}
The point $(1, 1)$ in the center of this square has distance $1$ from these stitches, 
while the other points in the square are each closer to one of these four smocking stitches. 
In other words, the farthest distance possible is $1=h_T$.
\end{proof}

\begin{lem} \label{lem-length-T}
The smocking lengths of $X_T$ are 
\begin{eqnarray}
L^T_{min}&=& \inf \{ L(I_j): \, j \in J_T\} =2\\
L^T_{max}&=& \sup \{ L(I_j): \, j \in J_T\} =2
\end{eqnarray}
so the smocking length is $L_T = 2$.
\end{lem}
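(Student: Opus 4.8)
The plan is to compute the length $L(I_j)$ directly from Definition~\ref{defn-T} and observe that it is constant across all stitches, so that the infimum and supremum trivially coincide. Recall that $J_T = 2\mathbb{Z} \times 2\mathbb{Z}$ and that each stitch is classified by the parity of $(j_1+j_2)/2$. First I would split into the two cases dictated by this parity, since the stitches are horizontal in one case and vertical in the other.

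In the horizontal case, where $(j_1+j_2)/2$ is even, the stitch is
\be
I_{(j_1,j_2)} = [j_1-1, j_1+1] \times \{j_2\},
\ee
whose length is
\be
(j_1+1) - (j_1-1) = 2.
\ee
In the vertical case, where $(j_1+j_2)/2$ is odd, the stitch is
\be
I_{(j_1,j_2)} = \{j_1\} \times [j_2-1, j_2+1],
\ee
whose length is likewise
\be
(j_2+1) - (j_2-1) = 2.
\ee
Since every index $(j_1,j_2) \in J_T$ falls into exactly one of these two cases, I would conclude that $L(I_j) = 2$ for all $j \in J_T$.

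With every stitch having length exactly $2$, the set $\{L(I_j): j \in J_T\}$ is the singleton $\{2\}$, so its infimum and supremum both equal $2$. This gives $L^T_{min} = L^T_{max} = 2$ and hence the smocking length $L_T = 2$, as claimed.

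There is genuinely no obstacle here: the only point requiring any care is confirming that the parity condition partitions $J_T$ so that every stitch is covered by one of the two formulas, and both formulas yield the same length. This mirrors the structure of Lemma~\ref{lem-length-diamond} and Lemma~\ref{lem-length-=}, where the constant-length property likewise made the computation immediate.
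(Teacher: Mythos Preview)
Your proof is correct and takes essentially the same approach as the paper, which simply observes that each interval $I_{(j_1,j_2)}$ has length $2$. You have spelled out the horizontal and vertical cases explicitly, which the paper leaves implicit, but the underlying argument is identical.
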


\begin{proof}
This follows from the fact that each interval $I_{(j_1,j_2)}$ has length $2$.
\end{proof}

\begin{lem} \label{lem-delta-T}
The smocking separation factor of $X_T$ is
\be
\delta_T =1
\ee
\end{lem}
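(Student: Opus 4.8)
The plan is to establish the two inequalities $\delta_T \le 1$ and $\delta_T \ge 1$ separately, exactly as was done for the diamond space in Lemma~\ref{lem-delta-diamond}. For the upper bound I would exhibit a single pair of distinct stitches whose Euclidean distance is $1$. A convenient choice is the horizontal stitch $I_{(0,0)}=[-1,1]\times\{0\}$ (horizontal because $(0+0)/2=0$ is even) together with the vertical stitch $I_{(2,0)}=\{2\}\times[-1,1]$ (vertical because $(2+0)/2=1$ is odd); the points $(1,0)\in I_{(0,0)}$ and $(2,0)\in I_{(2,0)}$ realize distance $1$, so $\delta_T\le 1$.

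For the lower bound I would fix two distinct stitches $I_j,I_k$ with centers $j=(2a_1,2a_2)$ and $k=(2b_1,2b_2)$ in $J_T=2\mathbb{Z}\times 2\mathbb{Z}$, and show $|z-w|\ge 1$ for all $z\in I_j$ and $w\in I_k$. The natural tool is to bound $|z-w|$ below by the separate coordinate gaps, using that each center offset $2a_i-2b_i$ is an even integer and that the stitches have half-length $1$. I would organize the argument by the orientations of the two stitches, which are recorded by the parities of $(j_1+j_2)/2=a_1+a_2$ and $(k_1+k_2)/2=b_1+b_2$.

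When both stitches are horizontal (both parities even): if they lie on the same horizontal line $a_2=b_2$, then the evenness of $a_1+a_2$ and of $b_1+a_2$ forces $a_1\equiv b_1\pmod 2$, hence $|a_1-b_1|\ge 2$, so the horizontal gap between the segments is at least $|2a_1-2b_1|-2\ge 2$; while if $a_2\ne b_2$ then $|2a_2-2b_2|\ge 2$ already gives $|z-w|\ge|z_2-w_2|\ge 2$. The both-vertical case is identical after interchanging the two coordinates. In each of these situations the bound obtained is in fact at least $2$, comfortably above $1$.

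The main obstacle is the mixed case, where $I_j$ is horizontal and $I_k$ is vertical, since the segments are perpendicular and I must be sure the lower bound is sharp. Here I would write, for $z=(z_1,2a_2)\in I_j$ and $w=(2b_1,w_2)\in I_k$, the expression $|z-w|^2=(z_1-2b_1)^2+(2a_2-w_2)^2$ and minimize each term separately over $z_1\in[2a_1-1,2a_1+1]$ and $w_2\in[2b_2-1,2b_2+1]$. The key point is the parity constraint: since $a_1+a_2$ is even while $b_1+b_2$ is odd, we cannot have $a_1=b_1$ and $a_2=b_2$ simultaneously. If $a_1\ne b_1$ then $|2a_1-2b_1|\ge 2$ forces the first term to be at least $(|2a_1-2b_1|-1)^2\ge 1$, and symmetrically the second term is $\ge 1$ when $a_2\ne b_2$; thus at least one of the two terms is $\ge 1$, giving $|z-w|\ge 1$ (and $|z-w|\ge\sqrt2$ when both indices differ). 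Combining all cases yields $\delta_T\ge 1$, and together with the explicit pair above this proves $\delta_T=1$.
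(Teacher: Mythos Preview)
Your proof is correct. The paper takes a different, much terser route: it invokes the translational symmetry of the stitch pattern to restrict attention to the fundamental square $[0,2]\times[0,2]$ depicted in Figure~\ref{fig:depth-T}, and then simply reads off from that picture that the closest pair of distinct stitches is a neighboring horizontal--vertical pair at distance~$1$ (the segment from the center of one interval to an endpoint of the other). Your argument instead treats all pairs $I_j,I_k$ directly via a coordinate case analysis organized by the orientations of the two stitches, using the parity condition on $(j_1+j_2)/2$ to rule out the degenerate situation $a_1=b_1$ and $a_2=b_2$ in the mixed case. What your approach buys is an explicit, picture-free verification of the lower bound $\delta_T\ge 1$ in every case (and in fact sharper bounds of $2$ or $\sqrt{2}$ in the non-adjacent cases), whereas the paper's approach is shorter but relies on the reader inspecting the figure.
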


\begin{proof}
As above, we can restrict our study to the square in Figure~\ref{fig:depth-T}.
The shortest distance between distinct stitches is realized by neighboring 
horizontal and vertical intervals, which are all a distance $1$ apart achieved
by a segment running from the center of one smocking interval to the end point
the other smocking interval.
\end{proof}

\subsection{ The Smocking Constants of $X_+$ by Shanell and Vishnu}   

Now we will compute the smocking constants for $(X_+, d_+)$.   

\begin{lem} \label{lem-depth-+}
The smocking depth of $(X_+, d_+)$ is
\be
h_+  =\inf\{ r:\, \mathbb{E}^N \subset T_r(S)\} =\sqrt {\tfrac{5}{2}}
\ee
\end{lem}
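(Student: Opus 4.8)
The plan is to invoke Lemma~\ref{lem-smocking-depth}, which (since the depth will turn out finite) reduces the problem to computing $\sup\{D(x):x\in\mathbb{E}^2\}$, the supremum of the distance to the smocking set $S=\bigcup_{j\in J_+}I_j$. Because the flower pattern is invariant under translation by the lattice $3\mathbb{Z}\times 3\mathbb{Z}$, it suffices to work inside one fundamental square $Q=[0,3]\times[0,3]$, whose four corners $(0,0),(3,0),(0,3),(3,3)$ carry plus-shaped stitches. This mirrors the tiling reduction used in Lemma~\ref{lem-depth-diamond}.

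For the lower bound $h_+\ge\sqrt{5/2}$, I would exhibit the center $c=(1.5,1.5)$ of $Q$ and compute $D(c)$ directly. The nearest stitch points to $c$ are the inner arm endpoints, for instance $(1,0)$ at the tip of the horizontal arm $[-1,1]\times\{0\}$ of $I_{(0,0)}$, and by the Pythagorean theorem $|c-(1,0)|=\sqrt{(1/2)^2+(3/2)^2}=\sqrt{5/2}$. I would then check that all four corner crosses yield exactly this distance and that every more distant cross is farther, so that $D(c)=\sqrt{5/2}$ and hence $h_+\ge\sqrt{5/2}$.

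For the upper bound $h_+\le\sqrt{5/2}$, I would split $Q$ into its four quadrant sub-squares and, in the sub-square $[1.5,3]\times[1.5,3]$, bound the distance from an arbitrary point $p=(x,y)$ to the single cross $I_{(3,3)}$ (whose relevant arms reach the endpoints $(2,3)$ and $(3,2)$); since $D(p)$ is at most the distance to any one stitch, this bound suffices. I would break into cases by which arm realizes the nearest point: when $x,y\ge 2$ the nearest points lie on the arms themselves, giving $D\le\min(3-x,3-y)\le 1$; in the corner region $x,y\le 2$ the nearest points are the endpoints $(2,3)$ and $(3,2)$, and both squared distances $(x-2)^2+(y-3)^2$ and $(x-3)^2+(y-2)^2$ are increasing as $x,y$ decrease, hence maximized over the region at $(1.5,1.5)$ with value $5/2$, so $D\le\sqrt{5/2}$; the two mixed cases are bounded by $\sqrt{5/4}$. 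The eightfold dihedral symmetry of the configuration about $c$ transports this bound to the other three sub-squares, and translation periodicity extends it to all of $\mathbb{E}^2$.

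The main obstacle is precisely this upper bound: one must verify that the center really is the deepest point, i.e. that within each sub-square the farthest point from the nearest cross is the central corner and that the cross's two arms leave no point uncovered at distance exceeding $\sqrt{5/2}$. That is the one place requiring genuine case analysis; the lower bound and the reductions by symmetry and periodicity are short computations, entirely parallel to Lemma~\ref{lem-depth-diamond}.
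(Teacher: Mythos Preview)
Your proposal is correct and follows essentially the same strategy as the paper: reduce by periodicity to the fundamental square $[0,3]\times[0,3]$, exhibit the center $(3/2,3/2)$ as a witness for $D=\sqrt{5/2}$, and then partition the square to show no point is deeper. The only cosmetic difference is that the paper partitions into eight right triangles sharing the center as a common vertex (each with a leg on a stitch arm, so the center is automatically the farthest point in each triangle), whereas you partition into four quadrant sub-squares and run a short case analysis inside one of them; both arguments reach the same conclusion with comparable effort.
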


\begin{proof}
The point $(0,1) + (\frac{1}{2},\frac{3}{2})$ is distance
\be
d = \sqrt{\left(\tfrac{1}{2}\right)^2 + \left(\tfrac{3}{2}\right)^2} = \sqrt {\tfrac{5}{2}}
\ee 
away from each of the four stitches surrounding it.  Thus $h_+ \geq d$.  Notice for any other point in the square 
\be
\square = [0,3]\times[0,3],
\ee 
the distance to a stitch is less than $d$.  This is because we can partition $\square$ into $8$ right triangles, all sharing a vertex at $(0,1) + (\frac{1}{2},\frac{3}{2})$ and having some leg intersecting one of the $4$ surrounding stitches, so that the point $(0,1) + (\frac{1}{2},\frac{3}{2})$ is the furthest away from the side of the triangle which intersects a stitch, as in Figure \ref{fig:Partition+}.
\end{proof}

\begin{figure}[h]
\includegraphics[width=.2 \textwidth]{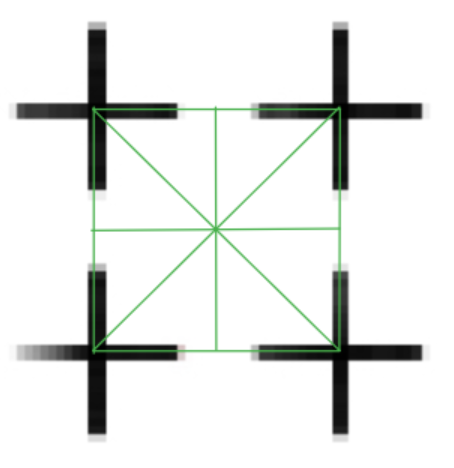}
\caption{The partition of the square $[0,3]\times[0,3]$ into $8$ triangles.}
\label{fig:Partition+}
\end{figure}

\begin{lem} \label{lem-length-+}
The smocking lengths are 
\begin{eqnarray}
L^+_{min}&=& \inf \{ L(I_j): \, j \in J_+\} =2\\
L^+_{max}&=& \sup \{ L(I_j): \, j \in J_+\} =2
\end{eqnarray}
\end{lem}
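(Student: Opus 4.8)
The plan is to invoke the clause in Definition~\ref{defn-smocking-lengths} stating that when the stitches are not intervals one replaces length with diameter. Each stitch $I_{(j_1,j_2)}$ is a $+$ shape rather than a single interval, so the quantity denoted $L(I_j)$ is to be read as $\diam(I_{(j_1,j_2)})$, and the task reduces to showing this diameter equals $2$ for every $j\in J_+$.

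First I would reduce to a single representative stitch. Every stitch is the translate by $(j_1,j_2)$ of the standard cross
\be
I_0 = \left([-1,1]\times\{0\}\right) \cup \left(\{0\}\times[-1,1]\right),
\ee
and translation is an isometry of $\mathbb{E}^2$, so $\diam(I_{(j_1,j_2)}) = \diam(I_0)$ for all $j\in J_+$. Hence it suffices to compute the single number $\diam(I_0)$; once this is done, both the infimum and the supremum over $j$ collapse to that common value and the lemma follows immediately.

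Next I would compute $\diam(I_0)$ by splitting into cases according to which arms a pair of points lies on. If both points lie on the horizontal arm $[-1,1]\times\{0\}$ (or both on the vertical arm), their separation is at most $2$, attained by the endpoints $(\pm1,0)$ (respectively $(0,\pm1)$). If one point $(x,0)$ lies on the horizontal arm and the other $(0,y)$ on the vertical arm, then their distance is $\sqrt{x^2+y^2}\le\sqrt{1+1}=\sqrt{2}$. Taking the maximum over all cases gives $\diam(I_0)=2$, realized by the two horizontal (or two vertical) endpoints.

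There is no serious obstacle here; the only point requiring a moment's care is confirming that the cross-arm pairs do not exceed the within-arm maximum, i.e. that $\sqrt{2}<2$, so that the diameter is genuinely attained along a single arm rather than along a diagonal connecting the tips of perpendicular arms. With $\diam(I_0)=2$ established and every stitch congruent to $I_0$, I would conclude $L^+_{min}=L^+_{max}=2$.
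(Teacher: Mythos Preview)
Your proof is correct and follows exactly the paper's approach: the paper's proof is a single sentence stating that the diameter of a $+$-shaped stitch $I_j$ is $2$, and you have simply supplied the details behind that assertion (translation to $I_0$ and the case analysis on arms).
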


\begin{proof}
This follows from the fact that the diameter of a $+$ shaped stitch $I_j$ is $2$.
\end{proof}

\begin{lem} \label{lem-delta-+}
The smocking separation factor of $(X_+,d_+)$ is
\be
\delta_+ =1
\ee
\end{lem}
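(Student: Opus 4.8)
The plan is to mirror the two-sided strategy already used for $\delta_\diamond$ in Lemma~\ref{lem-delta-diamond}: first exhibit an explicit pair of points in distinct stitches at distance exactly $1$ to obtain $\delta_+ \le 1$, and then show $|z-w|\ge 1$ for every $z\in I_j$, $w\in I_k$ with $j\neq k$. Since $J_+ = 3\mathbb{Z}\times 3\mathbb{Z}$ is a lattice and each stitch sits in a bounded box about its center, the whole estimate should reduce to a one-coordinate projection argument.

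For the upper bound I would take $z_0=(1,0)$, the right endpoint of the horizontal arm of $I_{(0,0)}$, and $w_0=(2,0)$, the left endpoint of the horizontal arm of $I_{(3,0)}$. Both $(0,0)$ and $(3,0)$ lie in $J_+$ and are distinct, so $z_0$ and $w_0$ belong to different stitches, and $|z_0-w_0|=1$ gives $\delta_+\le 1$.

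For the lower bound the crucial observation is that each plus-shaped stitch is contained in a box of half-width $1$ about its center: for any $z\in I_{(j_1,j_2)}$ one has $|z_1-j_1|\le 1$ and $|z_2-j_2|\le 1$, because a point on the horizontal arm has $x$-coordinate in $[j_1-1,j_1+1]$ and $y$-coordinate $j_2$, while a point on the vertical arm has $x$-coordinate $j_1$ and $y$-coordinate in $[j_2-1,j_2+1]$. Now fix distinct $j=(3n_1,3n_2)$ and $k=(3m_1,3m_2)$ in $J_+$. Since $(n_1,n_2)\neq(m_1,m_2)$, at least one coordinate of the centers differs; say $n_1\neq m_1$, the case $n_2\neq m_2$ being identical after swapping the two coordinates. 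Then $|j_1-k_1|=3|n_1-m_1|\ge 3$, so for any $z\in I_j$ and $w\in I_k$,
\[
|z-w|\ \ge\ |z_1-w_1|\ \ge\ |j_1-k_1|-|z_1-j_1|-|w_1-k_1|\ \ge\ 3-1-1\ =\ 1.
\]

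Combining the two bounds yields $\delta_+=1$. There is no real obstacle here; the only point requiring care is recognizing that the separation can be detected from a single coordinate, which is precisely why the box-containment of each stitch about its center is the key step. Everything else is the spacing of the lattice $J_+$ together with the triangle inequality.
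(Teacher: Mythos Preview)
Your proof is correct and follows essentially the same approach as the paper: exhibit the pair $(1,0)\in I_{(0,0)}$ and $(2,0)\in I_{(3,0)}$ for the upper bound, then check that no other pair of stitches can be closer. The paper's lower bound simply computes the distances to the axial neighbor ($1$) and the diagonal neighbor ($\sqrt{8}$) and invokes the symmetry of the pattern, whereas you replace the symmetry appeal with an explicit coordinate-projection estimate using the box containment $I_{(j_1,j_2)}\subset [j_1-1,j_1+1]\times[j_2-1,j_2+1]$; this is a slightly more detailed justification of the same fact.
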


\begin{proof}
Notice the stitches $I_{(0,0)}$ and $I_{(3,0)}$ are a distance $1$ apart and the stitches $I_{(0,0)}$ and $I_{(3,3)}$ are a distance $\sqrt{2^2 + 2^2} = \sqrt 8$ apart. By the symmetry of $X_+$, $\delta_+ =1$.
\end{proof}

\subsection{ The Smocking Constants of $X_H$ by Victoria, Maziar, and Ajmain:}

Here we find the smocking constants for $X_H$:

\begin{lem} \label{lem-depth-=}
The smocking depth
\be
h_H =\inf\{ r:\, \mathbb{E}^2 \subset T_r(S)\} \in [0,\infty] = 1.5
\ee
\end{lem}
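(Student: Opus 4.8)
The plan is to apply Lemma~\ref{lem-smocking-depth}: as soon as we know the depth is finite, it equals $\sup\{D(x):x\in\mathbb{E}^2\}$, the largest distance of a point of the plane from the smocking set $S$. So the task reduces to locating the farthest point from $S$ and showing its distance is $\tfrac32$.

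First I would cut the search region down using symmetry. The set $S$ of $X_H$ (Definition~\ref{defn-H}) is invariant under the translations $(x,y)\mapsto(x+3,y)$ and $(x,y)\mapsto(x,y+3)$ and under the reflections across the $x$- and $y$-axes, since these fix the horizontal-center lattice $3\mathbb{Z}\times3\mathbb{Z}$ and the vertical-center lattice $(3\mathbb{Z}+\tfrac32)\times(3\mathbb{Z}+\tfrac32)$ and preserve the horizontal and vertical orientations of the stitches. Together these generate reflections across every line $x\in\tfrac32\mathbb{Z}$ and $y\in\tfrac32\mathbb{Z}$, a group whose fundamental domain is the square $Q=[0,\tfrac32]\times[0,\tfrac32]$. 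Because every generator is an isometry preserving $S$, the function $D$ is invariant under the group, so $\sup_{\mathbb{E}^2}D=\sup_{Q}D$. Only two stitches matter inside $Q$: the horizontal stitch $A=[-\tfrac12,\tfrac12]\times\{0\}$ centered at the corner $(0,0)$, and the vertical stitch $B=\{\tfrac32\}\times[1,2]$ centered at the opposite corner $(\tfrac32,\tfrac32)$.

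The heart of the argument is a covering estimate showing every point of $Q$ is within $\tfrac32$ of $A$ or $B$, which I would prove by splitting $Q$ at the line $y=1$. For $y\ge1$ the nearest point of $B$ to $(x,y)$ is $(\tfrac32,y)$, at distance $\tfrac32-x\le\tfrac32$. For $y\le1$ I would use $A$: when $x\le\tfrac12$ the foot of the perpendicular lands on $A$ and the distance is just $y\le1$; when $\tfrac12\le x\le\tfrac32$ the nearest point is the endpoint $(\tfrac12,0)$ and the distance is $\sqrt{(x-\tfrac12)^2+y^2}\le\sqrt{1+1}=\sqrt2<\tfrac32$. Hence $D\le\tfrac32$ on $Q$, so $D\le\tfrac32$ on all of $\mathbb{E}^2$; in particular the depth is finite.

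For the matching lower bound I would take the corner $p=(0,\tfrac32)$ of $Q$. Every horizontal stitch lies on a line $y=3b$ with $b\in\mathbb{Z}$, and $\tfrac32$ sits at distance $\tfrac32$ from $3\mathbb{Z}$, so each horizontal stitch is at distance $\ge\tfrac32$ from $p$; likewise every vertical stitch lies on a line $x=3a+\tfrac32$ with $a\in\mathbb{Z}$, and $0$ sits at distance $\tfrac32$ from $3\mathbb{Z}+\tfrac32$, so each vertical stitch is at distance $\ge\tfrac32$ from $p$. Thus $D(p)\ge\tfrac32$, and with the covering bound $D(p)\le\tfrac32$ we get $D(p)=\tfrac32$. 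Therefore $\sup_Q D=\tfrac32$ and $h_H=\tfrac32$. The step I would watch most carefully is the symmetry reduction: one must be sure the two axis-reflections really preserve $S$ (so that no region of the plane is left unexamined), after which the slick point is recognizing that the two stitches $A$ and $B$ alone suffice to cover the fundamental square.
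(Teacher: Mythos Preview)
Your argument is correct, and it takes a cleaner route than the paper's. The paper reduces only by the translation lattice $3\mathbb{Z}\times3\mathbb{Z}$, lands on a $3\times3$ fundamental square $\square_j$, and then decomposes $\square_j$ into four corner unit squares $\boxplus_j$ (each touching a stitch, so covered within $\sqrt2$) plus a cross-shaped remainder broken into four rectangles, each within $1.5$ of a specified stitch. You instead exploit the reflection symmetries across both axes, which together with the translations yield reflections across every line $x\in\tfrac32\mathbb{Z}$ and $y\in\tfrac32\mathbb{Z}$; this shrinks the fundamental domain to the quarter square $Q=[0,\tfrac32]^2$, where a single split at $y=1$ and two stitches $A,B$ suffice for the upper bound. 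Your lower bound at $p=(0,\tfrac32)$ is also more carefully stated than the paper's: rather than checking only the four surrounding stitches, you bound the distance to \emph{every} horizontal stitch via the line $y\in3\mathbb{Z}$ it sits on and to every vertical stitch via the line $x\in3\mathbb{Z}+\tfrac32$, which cleanly gives $D(p)\ge\tfrac32$. The symmetry reduction is indeed the step to scrutinize, and your verification that the axis reflections preserve both $J_H^-$ and $J_H^\vert$ (and the stitch orientations) is what makes it go through.
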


\begin{proof}
Note that the point $(0,1.5)$ is exactly distance $1.5$ from the four stitches around it: $I_{(0,0)}, I_{(0,3)}, I_{(-1.5,1.5)},$ and $I_{(1.5, 1.5)}$).  So $h_H \ge 1.5$.

Now we must show ${\mathbb{E}}^2 \subset T_{1.5}(J_H)$.   

For each $j\in J_-$ let 
\be
\square_j = [j_1 - 1.5, j_1. + 1.5]\times[j_2, j_2 + 3],
\ee
which is a 3x3 square above  $I_j$ that touches 
$I_{(j_1,J_2+3)}$, $I_{(j_1-1.5,j_2+1.5)}$ or $I_{(j_1+1.5, j_2+1.5)}$.
Let
\begin{eqnarray*}
\boxplus_j &=& [j_1-1.5,j_1-0.5]\times[j_2,j_2+1]\\
&&\sqcup\,\,\, [j_1+0.5,j_1+1.5]\times[j_2,j_2+1] \\
 &&\sqcup\,\,\, [j_1-1.5,j_1-0.5]\times[j_2+2,j_2+3]\\
 &&\sqcup\,\,\, [j_1+0.5,j_1+1.5]\times[j_2+2,j_2+3]. 
\end{eqnarray*} 
be four smaller unit squares in the corners of $\square_j$.

Notice 
\be
{\mathbb{E}}^2 = \bigcup_{j\in J_H^-} \square_j = \bigcup _{j\in J_H^-}  \boxplus_j \sqcup \left( \square_j \setminus \boxplus_j\right).
\ee
 Let $(x,y)\in {\mathbb{E}}^2$. Then there exists $j\in J_H^-$ such that 
 \be
 (x,y) \in  \boxplus_j \sqcup \left( \square_j - \boxplus_j\right).
 \ee
  Suppose $(x,y) \in  \boxplus_j$.  Since $\boxplus_j$ is a disjoint union of four  sets each with diameter $\sqrt 2$ and each intersecting $S$, we know 
  \be
  (x,y) \in T_{\sqrt2} (S) \subset T_{1.5}(S).
  \ee
Suppose $(x,y)\in \square_j - \boxplus_j$. Note that 
\begin{eqnarray*}
\square_j - \boxplus_j &=& [j_1-0.5, j_1+0.5]\times [j_2,j_2+1.5]\\
&\cup&[j_1-0.5, j_1+0,5]\times [j_2+1.5,j_2+3] \\
&\cup&[j_1,j_1+1.5]\times [j_2+1,j_2+2] \\
&\cup&[j_1-1.5, j_1]\times [j_2+1,j_2+2]. 
\end{eqnarray*}
Therefore, $(x,y)$ is in at most a $1.5$ distance from one of the stitches $I_{(j_1,j_2)}$, $I_{(j_1,J_2+3)}$, $I_{(j_1-1.5,j_2+1.5)}$ or $I_{(j_1+1.5, j_2+1.5)}$.  Thus
 \be
  (x,y) \in  T_{1.5}(S).
  \ee
\end{proof}

\begin{lem} \label{lem-length-H}
The smocking lengths are 
\be
L=L^H_{min}=L^H_{max}=1.
\ee
\end{lem}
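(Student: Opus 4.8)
The plan is to read off the length of every stitch directly from Definition~\ref{defn-H} and observe that it is always exactly $1$, so that both the infimum and the supremum in Definition~\ref{defn-smocking-lengths} collapse to $1$. The only mild subtlety is that $X_H$ has two genuinely different types of stitch, indexed by the two pieces $J_H = J_H^- \cup J_H^\vert$ of the index set, so I would handle these as two short cases and confirm that both orientations contribute the same length.

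First I would take an index $(j_1,j_2) \in J_H^-$. By Definition~\ref{defn-H} the corresponding stitch is the horizontal segment $I_{(j_1,j_2)} = [j_1-0.5, j_1+0.5] \times \{j_2\}$, whose length is $(j_1+0.5) - (j_1-0.5) = 1$. Next I would take $(j_1,j_2) \in J_H^\vert$, for which the stitch is the vertical segment $I_{(j_1,j_2)} = \{j_1\} \times [j_2-0.5, j_2+0.5]$, of length $(j_2+0.5) - (j_2-0.5) = 1$ as well. This is exactly parallel to the computation in Lemma~\ref{lem-length-diamond}, the only difference being that here one must check the vertical stitches too.

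Since every stitch in $X_H$, regardless of orientation, has length exactly $1$, the set $\{L(I_j) : j \in J_H\}$ equals the singleton $\{1\}$. Hence $L^H_{min} = \inf\{L(I_j) : j \in J_H\} = 1$ and $L^H_{max} = \sup\{L(I_j) : j \in J_H\} = 1$, so the common smocking length is $L = 1$ as claimed. There is no real obstacle in this argument; the computation is immediate once the two stitch types are treated separately, and the worst that could go wrong is conflating the horizontal and vertical cases, which the explicit interval endpoints above rule out.
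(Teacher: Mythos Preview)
Your proof is correct and follows the same approach as the paper, which simply states that $L(I_j)=1$ for each $j\in J$; you have merely expanded this one-line observation into the two explicit cases for horizontal and vertical stitches.
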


\begin{proof}
For each $j\in J$ we have $L(I_j) = 1$.
\end{proof}

\begin{lem} \label{lem-delta-H}
The smocking separation factor is
\be
\delta_H =\sqrt 2
\ee
\end{lem}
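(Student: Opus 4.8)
The plan is to establish the two matching inequalities $\delta_H \le \sqrt2$ and $\delta_H \ge \sqrt2$ separately, exactly in the spirit of the proof of Lemma~\ref{lem-delta-diamond}. For the upper bound I would exhibit a single pair of stitches realizing distance $\sqrt2$. The cleanest choice pairs a horizontal stitch with a diagonally neighboring vertical one: take $I_{(0,0)}=[-\tfrac12,\tfrac12]\times\{0\}$, whose center lies in $J_H^-$, together with $I_{(1.5,1.5)}=\{1.5\}\times[1,2]$, whose center lies in $J_H^\vert$. Their nearest endpoints are $(\tfrac12,0)$ and $(1.5,1)$, and $|(\tfrac12,0)-(1.5,1)|=\sqrt{1+1}=\sqrt2$, so $\delta_H\le\sqrt2$.

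For the lower bound I would show that any two distinct stitches are at least $\sqrt2$ apart, splitting into three cases according to orientation. When both stitches are horizontal, their centers lie in $3\mathbb{Z}\times 3\mathbb{Z}$, and one argues exactly as in the two cases of Lemma~\ref{lem-delta-diamond}: either the centers differ in height by at least $3$, in which case every point of one stitch has $y$-coordinate $j_2$ and every point of the other has $y$-coordinate $k_2$, forcing vertical separation at least $3$; or the centers share a height and their first coordinates differ by at least $3$, so after subtracting the two half-lengths $0.5$ the horizontal separation is at least $2$. In either case the distance exceeds $\sqrt2$. The both-vertical case gives the same bound $2$ by the symmetric argument (indeed $X_H$ admits the glide reflection $(x,y)\mapsto(y+1.5,x+1.5)$ interchanging horizontal and vertical stitches).

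The mixed case is the binding one and is where the real work lies, since it is where the minimum $\sqrt2$ is actually attained. Here a horizontal stitch has center $(a_1,a_2)\in 3\mathbb{Z}\times 3\mathbb{Z}$ and a vertical stitch has center $(b_1,b_2)\in(3\mathbb{Z}+1.5)\times(3\mathbb{Z}+1.5)$, so I must control both coordinate separations at once. The key observation is that $b_1-a_1\in 3\mathbb{Z}+1.5$ and $b_2-a_2\in 3\mathbb{Z}+1.5$, whence $|b_1-a_1|\ge 1.5$ and $|b_2-a_2|\ge 1.5$. A point of the horizontal stitch has the form $(x,a_2)$ with $x\in[a_1-\tfrac12,a_1+\tfrac12]$ and a point of the vertical stitch has the form $(b_1,y)$ with $y\in[b_2-\tfrac12,b_2+\tfrac12]$, so $|x-b_1|\ge|b_1-a_1|-\tfrac12\ge 1$ and $|a_2-y|\ge|b_2-a_2|-\tfrac12\ge 1$, giving distance at least $\sqrt{1^2+1^2}=\sqrt2$. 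Equality holds precisely when both center offsets equal their minimum $1.5$ and $x,y$ are taken at the appropriate endpoints, which is exactly the configuration used for the upper bound. Combining the three cases yields $\delta_H\ge\sqrt2$, and together with the upper bound this proves $\delta_H=\sqrt2$. I expect the only real subtlety to be bookkeeping the half-integer offset $1.5$ correctly when passing from center separations to point separations in the mixed case.
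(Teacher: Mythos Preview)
Your proof is correct. The organization differs from the paper's: there the authors fix $I_{(0,0)}$ and invoke the translational and reflective symmetry of the pattern to reduce to $\min\{|z-w|:z\in I_0,\,w\in I_j\}$ over indices $j$ in the closed first quadrant, then handle four positional subcases (horizontal $j$ in the open quadrant, vertical $j$ in the open quadrant, horizontal $j$ on each coordinate axis), obtaining lower bounds $\sqrt{13}$, $\sqrt2$, $3$, and $2$ respectively. Your decomposition by orientation pair (HH, VV, HV) treats all pairs directly without the symmetry reduction, and your handling of the binding mixed case via the lattice offset observation $b_i-a_i\in 3\mathbb{Z}+1.5\Rightarrow|b_i-a_i|\ge 1.5$ is more conceptual than the paper's direct identification of the nearest vertical stitch to $I_{(0,0)}$. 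Both arguments are of comparable length; yours makes the provenance of the value $\sqrt2=\sqrt{1^2+1^2}$, with $1=1.5-0.5$ arising in each coordinate, somewhat more transparent.
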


\begin{proof}
By the translational and reflective symmetry of the H-smocking pattern, 
\be
\delta_H = \min \{|z-w| : z \in I_0, w\in I_j, j\in \left(J - \{0\}\right)  \cap \mathbb{R}_{\geq 0}^2 \}.
\ee
Suppose $j\in J_{H}^- \cap \mathbb{R}_+^2$.  Then 
$$
\min \{ |v-w| : v\in I_0 , w \in I_j \} = |(0.5, 0) - (j_1 - 0.5, j_2)| \geq |(0.5,0)-(3-0.5,3)| = \sqrt{13}.
$$
Suppose $j\in J_{H}^\vert \cap \mathbb{R}_+^2$.  Then 
$$
\min \{ |v-w| : v\in I_0 , w \in I_j \} = |(0.5, 0) - (j_1, j_2-0.5)| \geq |(0.5,0)-(1.5,1.5-0.5)| = \sqrt{2}
$$
Suppose $j\in J_H^- \cap \left( \{0\}\times \mathbb{R}_+ \right)$.  Then 
$$
\min \{ |v-w| : v\in I_0 , w \in I_j \} = |(0,0)-(0,j_2)| \geq |(0,0)-(0,3)| = 3.
$$
Suppose $j\in J_H^- \cap \left(\mathbb{R}_+ \times \{0\}\right)$.  Then 
$$
\min \{ |v-w| : v\in I_0 , w \in I_j \} = |(0.5,0)-(j_1-0.5,0)| \geq |(0.5,0)-(3-0.5,0)| = 2.
$$
\end{proof}

\subsection{ The Smocking Constants of $X_\square$ by Leslie, Emilio, and Aleah}

Here we find the smocking constants for $X_\square$.

\begin{lem} \label{lem-depth-square}
The smocking depth of $X_\square$ is
\be
h_\square = \sqrt{2}
\ee
\end{lem}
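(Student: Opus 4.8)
The plan is to compute the depth as the supremum of the distance-to-the-smocking-set function $D$ via Lemma~\ref{lem-smocking-depth}, and to exploit the periodicity of $X_\square$ under the lattice $3\mathbb{Z}\times 3\mathbb{Z}$ so that everything reduces to one fundamental cell $C=[0,3]\times[0,3]$, whose only stitch is the unit-square perimeter $I_{(0,0)}$ in its lower-left corner, together with the edges of the three neighbouring stitches $I_{(3,0)}$, $I_{(0,3)}$, $I_{(3,3)}$ entering along $x=3$ and $y=3$. I would prove $h_\square=\sqrt{2}$ by establishing the two inequalities $h_\square\ge\sqrt{2}$ and $h_\square\le\sqrt{2}$ separately.

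For the lower bound I would take the centre of the void between four squares, namely $p=(2,2)$. Its nearest features on the four surrounding stitches are the inner corners $(1,1)\in I_{(0,0)}$, $(3,1)\in I_{(3,0)}$, $(1,3)\in I_{(0,3)}$ and $(3,3)\in I_{(3,3)}$; since both coordinates of $p$ exceed the range $[0,1]$ of the square $I_{(0,0)}$, the closest point of that convex square to $p$ really is the corner $(1,1)$, at distance $\sqrt{1^2+1^2}=\sqrt{2}$, and likewise for the other three. All remaining stitches lie farther away by the lattice spacing, so $D(p)=\sqrt{2}$ and hence $h_\square\ge\sqrt{2}$.

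For the upper bound I would show $C\subset T_r(S)$ for every $r>\sqrt{2}$ by partitioning $C$ into the three blocks $[0,1]\times[0,3]$, $[1,3]\times[0,1]$ and the void block $[1,3]\times[1,3]$. On the first block every point lies within vertical distance $\min(y-1,\,3-y)\le 1$ of a horizontal edge $[0,1]\times\{1\}$ or $[0,1]\times\{3\}$ (while points of $[0,1]\times[0,1]$ are within $\tfrac12$ of the perimeter), and the second block is handled identically after reflecting across the diagonal. On the void block every point $(x,y)$ satisfies $x\ge1$ and $y\ge1$ relative to $I_{(0,0)}$, and the analogous inequalities relative to the other three squares, so its distance to $S$ equals its distance to the nearest of the four corners $(1,1),(3,1),(1,3),(3,3)$; this minimum is maximized at the centre $(2,2)$ with value $\sqrt{2}$. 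Since the three blocks cover $C$ and the pattern is periodic, $\mathbb{E}^2\subset T_r(S)$ for all $r>\sqrt{2}$, giving $h_\square\le\sqrt{2}$.

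The only genuinely delicate step is the void-block estimate: I must argue that inside $[1,3]\times[1,3]$ the relevant distance is to the four square corners rather than to an edge, which holds because each such point lies in the exterior corner quadrant of its nearest square, and then that the minimum distance to the four corners of a side-$2$ square is maximized at its centre. Everything else is routine bookkeeping once periodicity has reduced the problem to the single cell $C$.
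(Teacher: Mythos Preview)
Your proof is correct and follows essentially the same approach as the paper's: reduce by periodicity to the fundamental cell $[0,3]\times[0,3]$, identify $(2,2)$ as the point farthest from the four surrounding square stitches, and compute $D((2,2))=\sqrt{2}$. Your version is considerably more detailed than the paper's one-line sketch, but the underlying idea is identical.
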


\begin{proof}
By the symmetry of the square lattice we need only examine $I_{(0,0)}, I_{(0,3)}, I_{(3,0)}, I_{(3,3)}$ it is clear that the point simultaneously farthest from each square is $(2,2)$. It is $\sqrt{2}$ distance from every square.
\end{proof}

\begin{lem} \label{lem-length-square}
\begin{eqnarray}
L^\square_{min}&=& \inf \{ L(I_j): \, j \in J_\square\} = \sqrt{2}\\
L^\square_{max}&=& \sup \{ L(I_j): \, j \in J_\square\} = \sqrt{2}
\end{eqnarray}
so the smocking length is $L_\square = \sqrt{2}$.
\end{lem}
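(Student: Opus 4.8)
The plan is to observe that the square stitches of $X_\square$ are not intervals, so by the convention in Definition~\ref{defn-smocking-lengths} the quantity $L(I_j)$ must be interpreted as the \emph{diameter} of the stitch rather than an arclength. Recalling Definition~\ref{defn-square}, each stitch $I_{(j_1,j_2)}$ is the boundary of a unit square with corners $(j_1,j_2)$, $(j_1+1,j_2)$, $(j_1,j_2+1)$, and $(j_1+1,j_2+1)$. Since every stitch is a translate of $I_{(0,0)}$ (the index set $J_\square$ is a lattice and the defining formula for $I_{(j_1,j_2)}$ is translation invariant), all stitches are congruent, so it suffices to compute $\diam(I_{(0,0)})$; this common value will then be simultaneously the infimum and the supremum.

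To evaluate the diameter I would prove a matching pair of bounds. For the lower bound $\diam(I_{(0,0)}) \geq \sqrt{2}$, I would exhibit the two opposite corners $(0,0)$ and $(1,1)$, which both lie on the stitch and satisfy $|(0,0)-(1,1)| = \sqrt{1^2 + 1^2} = \sqrt{2}$. For the upper bound $\diam(I_{(0,0)}) \leq \sqrt{2}$, I would note that every point of the stitch lies in the closed filled square $[0,1]\times[0,1]$, so for any two such points the differences in each coordinate are at most $1$, giving a Euclidean distance at most $\sqrt{1+1}=\sqrt{2}$. Combining the two bounds yields $\diam(I_{(0,0)}) = \sqrt{2}$, and hence $L^\square_{min} = L^\square_{max} = \sqrt{2}$.

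There is no real obstacle here; the one point that requires care is the interpretation of ``length'' for a stitch that is a square rather than a segment. Once one invokes the diameter convention of Definition~\ref{defn-smocking-lengths}, the remaining content is the elementary observation that the diameter of the boundary of a unit square agrees with that of the filled square, because the diagonal, which realizes the maximum distance, is spanned by two corners that lie on the boundary.
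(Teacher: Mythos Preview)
Your proposal is correct and follows the same approach as the paper: invoke the diameter convention of Definition~\ref{defn-smocking-lengths} since the stitches are squares rather than intervals, then compute the diameter of a unit square as $\sqrt{2}$. Your version is simply more explicit about the translation invariance of the stitches and the matching upper and lower bounds for the diameter, but the content is identical.
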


\begin{proof}
For $X_\square$, $L(I_j) = \diam I_j$. In this case of unit squares, it follows that $L(I_j) = \sqrt{2}$. 
\end{proof}

\begin{lem} \label{lem-delta-square}
The smocking separation factor is
\be
\delta_\square = 2
\ee
\end{lem}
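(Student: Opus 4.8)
The plan is to establish the two inequalities $\delta_\square \le 2$ and $\delta_\square \ge 2$ separately, in exactly the style of Lemma~\ref{lem-delta-diamond}. For the upper bound I would exhibit an explicit pair of points in distinct stitches lying at distance $2$: taking $z_0 = (1, \tfrac12) \in I_{(0,0)}$ on the right edge of the stitch and $w_0 = (3, \tfrac12) \in I_{(3,0)}$ on the left edge of its horizontal neighbor, both are genuinely in their respective stitches and $|z_0 - w_0| = 2$, so the minimum defining $\delta_\square$ is at most $2$.

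For the lower bound I would exploit the translational symmetry of $J_\square = 3\Z \times 3\Z$ to reduce to bounding $\dist(I_{(0,0)}, I_k)$ for all $k \neq (0,0)$. The key observation is that each square stitch is the topological boundary of a solid unit square, $I_{(3p,3q)} = \partial Q_{(p,q)}$ with $Q_{(p,q)} = [3p, 3p+1] \times [3q, 3q+1]$. Since $I_j \subset Q_j$, enlarging the sets can only lower the infimum of pairwise distances, so $\dist(I_j, I_k) \ge \dist(Q_j, Q_k)$, and it suffices to bound the distance between the \emph{solid} squares from below by $2$.

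The distance between two axis-aligned boxes is $\sqrt{g_x^2 + g_y^2}$, where $g_x, g_y$ are the gaps between their coordinate projections. Writing $j = (3p,3q)$ and $k = (3r,3s)$, if $p \neq r$ then the $x$-projections $[3p, 3p+1]$ and $[3r, 3r+1]$ are separated by $g_x \ge 3|p - r| - 1 \ge 2$, and symmetrically $g_y \ge 2$ when $q \neq s$. Splitting into the three cases ($p \neq r$ with $q = s$; $p = r$ with $q \neq s$; and $p \neq r$ with $q \neq s$), the box distance is at least $\min\{2, 2, 2\sqrt2\} = 2$ in every case, while the remaining case $p = r,\ q = s$ is excluded by $k \neq (0,0)$. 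Hence $\dist(I_j, I_k) \ge 2$ for all $j \neq k$, giving $\delta_\square \ge 2$ and, combined with the upper bound, $\delta_\square = 2$.

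The one point demanding care is the monotonicity step: one must check that replacing each boundary stitch by its solid square moves the inequality in the correct direction, i.e.\ that enlarging the sets lowers the infimum of pairwise distances, so that a lower bound for the solid squares transfers to a lower bound for the stitches. Once that is settled, the remainder is the same elementary box-distance estimate already used for $X_\diamond$, and no genuine obstacle remains.
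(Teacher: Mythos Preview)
Your proof is correct and considerably more thorough than the paper's. The paper's argument is a single sentence: it invokes the symmetry of the lattice and then simply computes the distance between the nearest points of $I_{(0,0)}$ and $I_{(3,0)}$, namely $|(1,0)-(3,0)|=2$, without explicitly separating the upper and lower bounds or justifying why no other pair of stitches could be closer.

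Your approach differs mainly in the treatment of the lower bound. Rather than appealing to symmetry informally, you embed each stitch $I_j = \partial Q_j$ into its solid unit square $Q_j$, use the monotonicity $\dist(I_j,I_k)\ge\dist(Q_j,Q_k)$, and then reduce to the standard gap formula for axis-aligned boxes. This is a cleaner and genuinely rigorous route: it makes the case split ($p\neq r$, $q\neq s$, or both) entirely explicit and avoids any hand-waving about which pair of stitches realizes the minimum. The trade-off is length; the paper's version is terse because the smocking-constant lemmas throughout are treated as near-trivial observations, whereas you have written something closer to a complete proof in the style of Lemma~\ref{lem-delta-diamond}. Your caution about the direction of the monotonicity inequality is well placed, and you have it right: enlarging the sets can only decrease the infimum, so the bound transfers in the desired direction.
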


\begin{proof}
The minimum distance between any two squares is the plain Euclidean distance, which is $2$. To prove this, we can use the symmetry of the pattern and compute any example. Consider the minimum distance between $I_{(0,0)}$ and $I_{(3,0)}$, the distance would be the distance between $(x_1,y_1) = (1,0)$ and $(x_2,y_2) = (3,0)$ which is $2$.
\end{proof}

\section{Balls in Smocked Spaces}

In this section we examine how balls in smocked spaces look by describing their
preimages under the smocking map.  We begin with a few useful lemmas and 
propositions about balls and then draw some of the balls in our smocked spaces
in subsequent subsections.  

\subsection{Useful Facts about Balls in Smocked Spaces by Hindy and Moshe}

Recall $\pi: \mathbb{E}^N \to X$ of Definition~\ref{defn-smock}.   In order to describe the
balls in a smocked space precisely, we instead examine 
\begin{eqnarray}
\pi^{-1}\left(B_r(\pi(p))\right)&=&\left\{ x\in \mathbb{E}^N:\, \pi(x) \in  B_r(\pi(p)) \right\}\\
&=&\left\{ x\in \mathbb{E}^N:\, d(\pi(x),\pi(p))<r\right\} \\
&=& \left\{ x\in \mathbb{E}^N:\, \bar{d}(x, p) < r\right\}.
\end{eqnarray}

Our first lemma will be applied repeatedly within this section:

\begin{lem} \label{smock-h-1}
For all $p\in {\mathbb E}^N$ and all $r,s,>0$ we have
\be\label{eq-smock-h-1}
B_s(x) \subset \pi^{-1}(B_s(\pi(x))
\ee
and
\be
T_s(\pi^{-1}(B_r(\pi(x))))  \subset \pi^{-1}(B_{r+s}(\pi(x))).
\ee
\end{lem}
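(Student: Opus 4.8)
The plan is to reduce both inclusions to two facts that are already available. The first is that the smocked pseudometric never exceeds the Euclidean distance, i.e.\ $\bar{d}(v,w) \le d_0(v,w) = |v-w|$, which holds because $d_0$ is one of the terms in the minimum defining $\bar{d}$ in Definition~\ref{defn-smock}. The second is that $\bar{d}$ satisfies the triangle inequality on $\mathbb{E}^N$, which was established in the proof of Theorem~\ref{thm-smock-metric}. Everything else is unwinding the definition of the preimage $\pi^{-1}(B_r(\pi(x)))$ in terms of $\bar{d}$, using the identity $\bar{d}(v,w) = d(\pi(v),\pi(w))$ and the surjectivity of $\pi$.

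For the first inclusion I would take $w \in B_s(x)$, so that $|w - x| < s$. Then $\bar{d}(w, x) \le |w - x| < s$, which says $d(\pi(w), \pi(x)) < s$, i.e.\ $\pi(w) \in B_s(\pi(x))$, and therefore $w \in \pi^{-1}(B_s(\pi(x)))$. This is exactly the left-hand inclusion of Proposition~\ref{ps-v} transported to the smocked setting.

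For the second inclusion I would take $w \in T_s(\pi^{-1}(B_r(\pi(x))))$. By the definition of the tubular neighborhood there is a point $y \in \pi^{-1}(B_r(\pi(x)))$ with $|w - y| < s$; unwinding the preimage, this means $\bar{d}(y, x) = d(\pi(y), \pi(x)) < r$. Applying the triangle inequality for $\bar{d}$ and then the comparison $\bar{d}(w,y) \le |w-y|$ gives
\[
\bar{d}(w, x) \le \bar{d}(w, y) + \bar{d}(y, x) < s + r,
\]
so that $\pi(w) \in B_{r+s}(\pi(x))$ and hence $w \in \pi^{-1}(B_{r+s}(\pi(x)))$, as claimed.

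The argument is short, and the only real point of care — the main obstacle, such as it is — is to phrase every estimate in terms of the pseudometric $\bar{d}$ on $\mathbb{E}^N$ rather than the induced metric $d$ on $X$, so that the triangle inequality and the comparison $\bar{d}\le|\cdot|$ both apply directly. The surjectivity of $\pi$ then lets one pass freely between assertions about $\bar{d}(v,w)$ and assertions about $d(\pi(v),\pi(w))$, exactly as in the proof of Theorem~\ref{thm-smock-metric}.
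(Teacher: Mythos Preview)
Your proof is correct and follows essentially the same approach as the paper: both inclusions are obtained from the comparison $\bar{d}(v,w)\le|v-w|$ together with the triangle inequality for $\bar{d}$ (equivalently for $d$ after applying $\pi$). The paper phrases the second inclusion directly in terms of $d(\pi(\cdot),\pi(\cdot))$ rather than $\bar{d}$, but since these coincide by definition the arguments are the same.
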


\begin{proof}
If $v \in B_s(x)$ then $|v-x|<s$ so $d(\pi(v),\pi(x))<s$ which implies $\pi(v) \in B_s(\pi(x))$
and we get (\ref{eq-smock-h-1}).
If $v \in T_s(\pi^{-1}(B_r(\pi(x))))$, there exists $z \in \pi^{-1}(B_r(\pi(x)))$ such that 
\be
d(\pi(v), \pi(z))  < s \textrm{ and } d(\pi(z), \pi(x))<r.
\ee
Then by the triangle inequality,
\be
d(\pi(v), \pi(x)) \leq d(\pi(v), \pi(z)) + d(\pi(z), \pi(x)) < s + r.
\ee
It follows that $v \in \pi^{-1}(B_{r+s}(\pi(x))).$ 
\end{proof}

In our first proposition we examine a small ball about a stitch point whose radius is
less than the separation factor of the smocked space:

\begin{prop} \label{ball-delta}
Suppose that $I_j$ is a smocking stitch and $r<\delta_X$ as defined in Definition~\ref{defn-sep}, 
then
\be
\pi^{-1}(B_{r}(\pi(I_j)))=T_r(I_j)
\ee
\end{prop}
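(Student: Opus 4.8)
The plan is to establish the two inclusions separately. Throughout I would use the fact, from Definition~\ref{defn-smock}, that $d(\pi(v),\pi(I_j))=\min\{d_k(v,z):\,k\ge 0,\ z\in I_j\}$, that this minimum is attained by Theorem~\ref{thm-smock-metric} (together with the compactness of $I_j$ used in Lemma~\ref{lem-distance-to-smocking set}), and above all that any single segment of a competing path joining two \emph{distinct} stitches has Euclidean length at least $\delta_X$, directly from the definition of the separation factor in Definition~\ref{defn-sep}.

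The inclusion $T_r(I_j)\subset\pi^{-1}(B_r(\pi(I_j)))$ is the easy half and does not require $r<\delta_X$. First I would take $v\in T_r(I_j)$, so that there is $z\in I_j$ with $|v-z|<r$; since $d_0(v,z)=|v-z|$ is one of the competitors in the minimum defining $d(\pi(v),\pi(I_j))$, this forces $d(\pi(v),\pi(I_j))\le|v-z|<r$, i.e.\ $v\in\pi^{-1}(B_r(\pi(I_j)))$. This is also immediate from the first inclusion of Lemma~\ref{smock-h-1}.

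For the reverse inclusion $\pi^{-1}(B_r(\pi(I_j)))\subset T_r(I_j)$, where the hypothesis $r<\delta_X$ is essential, I would take $v$ with $d(\pi(v),\pi(I_j))<r<\delta_X$ and let $d_N(v,z^*)$, $z^*\in I_j$, realize this distance. If $N\ge 1$ and the realizing path ever passes from one stitch to a \emph{different} one, that passage is a single segment of length at least $\delta_X$, so the whole path has length at least $\delta_X>r$, a contradiction. Hence the realizing configuration makes no genuine transition between distinct stitches, so its length equals the direct distance $\min\{|v-z|:\,z\in I_j\}$, which is therefore $<r$. As $I_j$ is compact this minimum is attained at some $z\in I_j$ with $|v-z|<r$, giving $v\in T_r(I_j)$.

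The step I expect to be the main obstacle is making ``genuine transition between distinct stitches'' precise, since a competing path may teleport for free \emph{inside} the target stitch $I_j$ without paying the separation cost. A configuration of the form $v\to z_1\to z_1'\to z^*$ with $z_1,z_1',z^*\in I_j$ incurs no $\delta_X$ jump and has value exactly $\min\{|v-z|:\,z\in I_j\}$, so it must be recognised as the direct case rather than discarded. I would therefore classify each $N\ge 1$ competitor by the last stitch it visits before $z^*$: if that stitch is some $I_m$ with $m\ne j$, the final segment already joins distinct stitches and has length $\ge\delta_X$; if every stitch visited equals $I_j$, the configuration collapses to the direct distance above. Checking that these cases are exhaustive, and reconciling them with the lower bound on $d_k$ recorded in Theorem~\ref{thm-smock-metric}, is the technical heart of the proof.
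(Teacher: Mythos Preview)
Your proposal is correct and follows essentially the same route as the paper: both obtain the easy inclusion from $d_0(v,z)=|v-z|$, and for the reverse inclusion both argue that a realizing configuration with $N\ge 1$ either contains a Euclidean segment joining two \emph{distinct} stitches (hence length $\ge\delta_X$, contradiction) or has its sole intermediate stitch equal to $I_j$ itself, in which case the value collapses to $\min\{|v-z|:z\in I_j\}$. The one detail you should tighten---and which the paper handles by working with a general $y\in\pi^{-1}(\pi(v))$ rather than $v$---is that when $v$ lies in some stitch $I_m$ the realizer need not start at $v$; simply dispose of $v\in I_j$ trivially and observe that $v\in I_m$ with $m\neq j$ forces the very first or last segment of any competitor to join distinct stitches, giving $d(\pi(I_m),\pi(I_j))\ge\delta_X>r$, a contradiction.
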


\begin{proof}
For each $v \in T_r(I_j),$ there exists some $z \in I_j$ such that $|v - z| < r.$ Therefore,
\be \label{tube-inclusion}
d(\pi(v), \pi(I_j)) \leq d_0(v, z) = |v-z| < r
\ee
so $v \in \pi^{-1}(B_{r}(\pi(I_j))).$ 

To prove the converse, consider $v \in \pi^{-1}(B_{r}(\pi(I_j))).$ 
Setting $p=\pi(v)$ we have $d(p, \pi(I_j)) < r.$ 
This implies that
\be
d(p, \pi(I_j)) = \min \{d_0(y,y'),  d_1(y,y') : \, y\in \pi^{-1}(p),\,\, y'\in I_j \} 
\ee
since 
\be
d_n(y, y') \geq \delta_X > r \textrm{ for } n \ge 2.
\ee
By definition, 
\be
d_1(y, y') =\min \{|y-z_1|+|z'_1-y'|:\, z_1, z_1'\in I_{k}\}
\ee
for some $I_{k}.$    If $d_1(y, y') < r < \delta_X$, then $I_k = I_j$, so it follows that 
\be
d_1(y,y')  \geq  \min\{|y - z_1| \ : z_1 \in I_j \} =  d_0(y,y').
\ee
But also
\be
d_1(y,y')  \leq \min\{|y - z_1| + |y' - y'| \ : z_1 \in I_j \} =  d_0(y,y') 
\ee

Therefore
\begin{eqnarray}
d(p, \pi(I_j)) &=& \min \{d_0(y, y'):\, y \in \pi^{-1}(p), \,y'\in I_j\} \\
&=& \min \{|y - y'|:\, y \in \pi^{-1}(p), \, y'\in I_j\} 
\end{eqnarray}
So
\be
\exists y'\in I_j \,\exists y \in \pi^{-1}(p) \textrm{ such that } |y - y'| < r.
\ee
  Since $r < \delta_X$, it follows that $y \notin S$.    Thus 
  \be
  \pi(v)=p=\pi(y) \notin \pi(S).
  \ee
Therefore, $v=y$, and 
\be
\exists y'\in I_j \textrm{ such that } |v - y'| < r.
\ee
So $v\in T_r(I_j)$.
\end{proof}

\begin{prop} \label{ball-D(x)}
Suppose that $x\in {\mathbb{E}}^N\setminus S$ and $r<D(x)$ is defined as in Definition~\ref{defn-sep}, 
then
\be
\pi^{-1}(B_{r}(\pi(x)))=B_r(x)=\{y:\, |x-y|<r\}.
\ee
\end{prop}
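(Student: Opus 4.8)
The plan is to prove the two set inclusions separately, following the same division of labor used for the pulled-thread analogue Lemma~\ref{ball-ps-h}. The inclusion $B_r(x)\subset\pi^{-1}(B_r(\pi(x)))$ is free: it is exactly the first statement of Lemma~\ref{smock-h-1} with $s=r$. So all the real content lies in the reverse inclusion $\pi^{-1}(B_r(\pi(x)))\subset B_r(x)$, and this is where I would spend the argument.

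First I would fix $v\in\pi^{-1}(B_r(\pi(x)))$, so that $\bar d(v,x)=d(\pi(v),\pi(x))<r$, and record that since $x\notin S$ the fiber $\pi^{-1}(\pi(x))$ is the single point $x$; hence the only freedom in the minimization defining $\bar d(v,x)$ is the choice of preimage of $v$ and the number of jumps $k$. The key observation I would isolate is that every jumping term controls the distance from below: for each $k\ge 1$ the term $d_k(v',x)$ ends with a final segment $|z'_k-x|$ with $z'_k\in I_{j_k}\subset S$, so $d_k(v',x)\ge|z'_k-x|\ge D(x)>r$. Thus no path that jumps through a stitch can realize a distance below $r$.

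To conclude I would split on whether $v\in S$. If $v\notin S$, then $\pi^{-1}(\pi(v))=\{v\}$ and the only surviving competitor in the minimum is $d_0(v,x)=|v-x|$; since the jumping terms all exceed $r$ and $\bar d(v,x)<r$, the minimum must be attained at $d_0$, forcing $|v-x|<r$, i.e. $v\in B_r(x)$. If instead $v\in S$, say $v\in I_j$, then even the non-jumping term $d_0(v',x)=|v'-x|\ge D(x)>r$ for every $v'\in I_j$, so every competitor exceeds $r$ and $\bar d(v,x)>r$, contradicting $\bar d(v,x)<r$; hence this case cannot occur. (Equivalently, one may invoke Theorem~\ref{thm-smock-metric} to attain the minimum at some index $N=N(v,x)$ and rule out $N\ge 1$ by the bound above.)

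The only delicate point, and the one I would state carefully rather than wave at, is the interaction between the minimization over the fiber of $v$ and the strict inequality $r<D(x)$: I must make sure the bound $|z'_k-x|\ge D(x)$ really applies to the \emph{last} segment of every $k\ge 1$ path and to the single segment of every $d_0(v',x)$ with $v'\in S$, so that $r<D(x)$ eliminates all of them simultaneously. Once that observation is pinned down the remainder is routine, so I expect no serious obstacle beyond bookkeeping the cases cleanly.
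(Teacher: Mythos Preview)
Your proposal is correct and follows essentially the same approach as the paper: bound each $d_k$ with $k\ge 1$ from below by $D(x)$ via the last segment $|z'_k-x|$, forcing the minimum to be $d_0(v,x)=|v-x|<r$. The paper argues both inclusions together via an iff chain and is somewhat cavalier about the case $v\in S$, whereas you invoke Lemma~\ref{smock-h-1} for the easy inclusion and handle the $v\in S$ case explicitly; these are minor stylistic differences, not a different method.
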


\begin{proof} 
Note that $v\in \pi^{-1}(B_r(\pi(x)))$ iff
$
\pi (v)\in B_r(\pi (x)).
$
By the definition of the smocking distance, this is true iff
\be \label{ball-D(x)-1}
\min\{d_0(\pi(v),\pi(x)), d_1(\pi(v),\pi(x)), d_2(\pi(v),\pi(x)),...\} =d(\pi(v), \pi(x)) < r < D(x).
\ee
By the hypothesis $|z-x|\ge D(x) \quad \forall z\in S$, so by the definition
of smocking distance for all $j \ge 1$
\be
d_j(\pi(v),\pi(x)) \ge \min \{ |z-x|: \, z\in S\} \ge D(x).
\ee
Thus the minimum in (\ref{ball-D(x)-1}) is achieved by 
\be
d_0(\pi(v),\pi(x))=d(\pi(v), \pi(x)) < r < D(x).
\ee
So (\ref{ball-D(x)-1}) holds iff $|v-x| < r$ which is true iff $x\in B_r(p)$.
\end{proof}

In our next proposition we explore how a ball grows when there are no smocking stitches
too close to the original ball:

\begin{prop} \label{smock-h}
Suppose that the points in the smocking set are a definite distance,
\be
\delta_r= \min\left\{ |z -w|: \,\, z\in S\, \setminus \pi^{-1}(B_r(\pi(p))), \, \, w \in \pi^{-1}(B_r(\pi(p))) \right \}>0,
\ee
away from the points within the ball.
Then for all $r>0$ and for all $s \in (0,\delta_r]$
\be
\pi^{-1}\bigg(B_{r+s}(\pi(x))\bigg)=T_s\bigg(\pi^{-1}(B_r(\pi(x)))\bigg)
\ee
\end{prop}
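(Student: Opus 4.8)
The plan is to prove the two inclusions separately. The inclusion $T_s(\pi^{-1}(B_r(\pi(x)))) \subseteq \pi^{-1}(B_{r+s}(\pi(x)))$ is already recorded in Lemma~\ref{smock-h-1}, so the entire content lies in the reverse inclusion $\pi^{-1}(B_{r+s}(\pi(x))) \subseteq T_s(\pi^{-1}(B_r(\pi(x))))$. Fix $v$ with $\bar{d}(v,x) < r+s$. If $\bar{d}(v,x) < r$ then $v \in \pi^{-1}(B_r(\pi(x)))$, and taking itself as the witness point gives $v \in T_s(\pi^{-1}(B_r(\pi(x))))$ trivially; so I may assume $r \le \bar{d}(v,x) < r+s$. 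By Theorem~\ref{thm-smock-metric} the value $\bar{d}(v,x)$ is attained by a finite broken path that starts at $v$, runs along a Euclidean segment to a first stitch $I_{j_1}$ (entering at $z_1$), and then proceeds through finitely many further stitches to $x$; if the realizing value is $d_0$ the path is the single segment $[v,x]$ and there is no first stitch. In both cases write $q$ for the far endpoint of the first Euclidean segment, so $q=z_1$, or $q=x$ in the direct case, and $\bar{d}(v,x) = |v-q| + \bar{d}(q,x)$.

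The heart of the argument is a separate lemma: \emph{every stitch $I_j$ with $\pi(I_j)\notin B_r(\pi(x))$ satisfies $\bar{d}(z,x) \ge r+\delta_r$ for $z\in I_j$}. To prove it, take a minimizing path from $\pi(I_j)$ to $\pi(x)$; since subpaths of minimizers are minimizing, $\bar{d}(\,\cdot\,,x)$ decreases at unit speed along it and therefore crosses the value $r$ at exactly one point $w^{*}\in \pi^{-1}(\partial B_r(\pi(x)))$, with everything strictly before $w^{*}$ lying outside the ball. The stitch-exit $P$ that begins the straight (jump-free) segment containing $w^{*}$ then lies in a stitch outside the ball, so $P \in S \setminus \pi^{-1}(B_r(\pi(x)))$; since $w^{*}$ is a limit of points of $\pi^{-1}(B_r(\pi(x)))$, the definition of $\delta_r$ forces $|P-w^{*}| \ge \delta_r$. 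As that final segment carries no jump, its $\bar{d}$-length equals its Euclidean length and is traversed entirely outside the ball, whence $\bar{d}(z,x) \ge \bar{d}(P,x) = |P-w^{*}| + r \ge r+\delta_r$.

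Granting the lemma, suppose for contradiction that the first stitch $I_{j_1}$ lay outside $B_r(\pi(x))$. Since the realizing path passes through it, $\bar{d}(v,x) \ge \bar{d}(z_1,x) \ge r+\delta_r \ge r+s$, contradicting $\bar{d}(v,x) < r+s$. Hence either the path is direct or $\pi(I_{j_1}) \in B_r(\pi(x))$; in either case $\bar{d}(q,x) < r$, so the first segment is long: $|v-q| = \bar{d}(v,x) - \bar{d}(q,x) > \bar{d}(v,x) - r$. I then ``walk back'' along $[v,q]$: choose $w$ on this segment with $|v-w|$ in the nonempty interval $(\bar{d}(v,x)-r,\ \min(s,|v-q|))$. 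Traversing straight from $w$ to $q$ and then following the original continuation from $q$ to $x$ gives $\bar{d}(w,x) \le \bar{d}(v,x) - |v-w| < r$, so $w \in \pi^{-1}(B_r(\pi(x)))$, while $|v-w| < s$ exhibits $v \in T_s(\pi^{-1}(B_r(\pi(x))))$. Note that because $w$ is found at genuine Euclidean distance $<s$ from $v$, this argument is valid whether $T_s$ is read with the Euclidean or the smocked metric.

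The step I expect to be the main obstacle is precisely the separate lemma bounding the smocked distance of an outside stitch below by $r+\delta_r$. Its subtlety is the mismatch between Euclidean and smocked distance created by the free jumps across stitches: one must locate the crossing point $w^{*}$ on a jump-free Euclidean segment, where the two notions of length coincide, and feed the Euclidean separation $\delta_r$ in exactly there. Making rigorous the ``unit-speed decrease'' and ``single crossing'' behaviour of the broken geodesics of Theorem~\ref{thm-smock-metric} — i.e. that the smocked space is a length space in which subpaths of minimizers are minimizing — is the technical point to pin down; once it is in hand, everything else reduces to the elementary one-segment estimates above.
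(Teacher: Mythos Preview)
Your proof is correct and follows essentially the same strategy as the paper: both locate the point on a minimizing broken path where $\bar d(\cdot,x)$ crosses the value $r$, and use the gap $\delta_r$ to force this crossing onto the initial segment emanating from $v$, yielding a nearby witness in $\pi^{-1}(B_r(\pi(x)))$. Your organization differs only cosmetically---you package the key estimate as a separate lemma (outside stitches have $\bar d\ge r+\delta_r$) and walk back to a point strictly inside the open ball, whereas the paper argues inline and lands on the boundary, invoking $T_s(\overline{A})=T_s(A)$ at the end.
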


\begin{proof} 
Suppose $v \in \pi^{-1}(B_{r+s}(\pi(x))).$ If $x \in  \pi^{-1}(B_{r}(\pi(x)))$ then clearly  
\be
v \in T_s\left(\pi^{-1}(B_{r}(\pi(x)))\right),
\ee
and we are done.  If not, note that the distance 
$
d(\pi(v), \pi(x)) < r+s,
$
is achieved by a collection of segments starting at 
$y \in \pi^{-1}(\pi(v))$ and ending at $y' \in \pi^{-1}(\pi(x))$: 
\be
d(\pi(v), \pi(x)) = |y-z_1|+\sum_{i=1}^{n} |z'_i-z_{i+1}|+|z'_k-y'| 
\ee
for some $n \geq 0$, where 
$\pi(z_i) = \pi(z'_i)$ for each $i$. 
Notice that for any $z_i'$ in the minimizing sum, we must have 
\be
d(\pi(v),\pi(x)) = d(\pi(v), \pi(z_i')) + d(\pi(z_i'), \pi(x)),
\ee
Since $y \notin \pi^{-1}(B_r(\pi(x)))$ and $y' \in \pi^{-1}(B_r(\pi(x))),$ at least one term in the sum must be of the form $|a- b|$ where $a \notin \pi^{-1}(B_r(\pi(x)))$ and $b \in  \pi^{-1}(B_r(\pi(x))).$ Since $\bar{d}(a, x) > r$ and $\bar{d}(b,x) < r,$ the line segment from $a$ to $b$ must hit a point $c$ such that $\bar{d}(c, p ) = d(\pi(c), \pi(x)) = r,$ by the continuity of the pseudometric $\bar{d}$.
It follows that 
\begin{eqnarray}
r + s &>& d_n(y, y') \\
&\geq& d(\pi(a), \pi(x)) \\
&=& |a - b| + d(\pi(b), \pi(x))\\
&=& |a - c| + |c - b| + d(\pi(b), \pi(x))\\
&=& |a -c| + d(\pi(c), \pi(x)) \\
&=& |a-c| + r.
\end{eqnarray}
This implies that $|a - c| < s.$ If $a \in S$ then $|a -c| \geq \delta_r \geq s$, which is a contradiction. Therefore, $a \notin S$ which implies that $a = v.$ Hence, $|v - c| < s$ which implies that 
\be
v \in T_s\left(\,\overline{\pi^{-1}(B_r(\pi(x)))}\,\right) = T_s\left(\pi^{-1}(B_r(\pi(x)))\right) .
\ee
The other direction holds by Lemma~\ref{smock-h-1}.
\end{proof}

\begin{lem} \label{smock-add-1}
In a smocked metric space as in Definition~\ref{defn-smock}.
If there is a stitch, $I_j$ such that
\be
\pi^{-1}(\bar{B}_r(\pi(x))) \cap I_j \neq \emptyset
\ee
then for all $s>0$
\be
T_s\bigg(\pi^{-1}(B_r(\pi(x)))\bigg) \cup T_s(I) \subset \pi^{-1}\bigg(B_{r+s}(\pi(x))\bigg)
\ee
\end{lem}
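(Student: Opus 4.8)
The plan is to treat the union one piece at a time, since the set on the right-hand side is fixed. First I would observe that the inclusion
$$T_s\left(\pi^{-1}(B_r(\pi(x)))\right) \subset \pi^{-1}(B_{r+s}(\pi(x)))$$
is precisely the second assertion of Lemma~\ref{smock-h-1}, so no new work is needed for that half. The entire content of the lemma therefore reduces to establishing the remaining inclusion $T_s(I_j) \subset \pi^{-1}(B_{r+s}(\pi(x)))$.

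The key preliminary observation is that the hypothesis $\pi^{-1}(\bar{B}_r(\pi(x))) \cap I_j \neq \emptyset$ is really a statement about the single point $\pi(I_j) \in X$. Because $\pi$ collapses the whole stitch to one point we have $\pi^{-1}(\pi(I_j)) = I_j$, so the preimage of any subset of $X$ either contains all of $I_j$ or is disjoint from it. Hence a nonempty intersection forces $\pi(I_j) \in \bar{B}_r(\pi(x))$, that is,
$$d(\pi(I_j),\pi(x)) \le r.$$

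Next I would take an arbitrary $v \in T_s(I_j)$, choose $z \in I_j$ with $|v-z| < s$, and estimate the smocked distance from $\pi(v)$ to $\pi(x)$ using the triangle inequality proved in Theorem~\ref{thm-smock-metric}:
$$d(\pi(v),\pi(x)) \le d(\pi(v),\pi(z)) + d(\pi(z),\pi(x)).$$
Here $d(\pi(z),\pi(x)) = d(\pi(I_j),\pi(x)) \le r$ since $z \in I_j$, while $d(\pi(v),\pi(z)) = \bar{d}(v,z) \le d_0(v,z) = |v-z| < s$, because $d_0$ is one of the competitors in the minimum defining $\bar{d}$ in Definition~\ref{defn-smock}. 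Adding these gives $d(\pi(v),\pi(x)) < r+s$, so $v \in \pi^{-1}(B_{r+s}(\pi(x)))$, which completes the second inclusion and hence the lemma.

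I do not anticipate a genuine obstacle here. The only step that deserves care is the remark that meeting the stitch is equivalent to containing it, which relies entirely on $\pi$ sending all of $I_j$ to a single point of $X$; once this is in place, the argument is just the triangle inequality combined with the elementary bound $\bar{d} \le d_0$.
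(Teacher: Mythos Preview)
Your proof is correct and follows essentially the same approach as the paper: both invoke Lemma~\ref{smock-h-1} for the $T_s(\pi^{-1}(B_r(\pi(x))))$ piece and, for the $T_s(I_j)$ piece, deduce $d(\pi(I_j),\pi(x))\le r$ from the hypothesis and then apply the triangle inequality together with $\bar d\le d_0$. Your justification that the nonempty intersection forces $\pi(I_j)\in\bar B_r(\pi(x))$ (since $\pi^{-1}$ of any set either contains or misses all of $I_j$) is in fact more explicit than the paper's, which simply asserts this step.
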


\begin{proof}
For each $v \in T_s(I_j),$ there exists some $z \in I_j$ such that $|v - z| < s.$ Therefore,
\be
d(\pi(v), \pi(I_j)) \leq d_0(v, z) = |v-z| < s
\ee
so
 \be \label{smock-add-1-line-1} 
 T_s(I_j)\subset \pi^{-1}(B_{s}(\pi(I_j))).
 \ee 
 By assumption we have 
\be
\pi(I_j)\in \bar B_r(\pi(x))
\ee 
which gives us
\be 
B_{s}(\pi(I_j)) \subset B_{r+s}(\pi(x)).
\ee  
By looking at the preimages we get
\be 
 \pi^{-1}(B_{s}(\pi(I_j))) \subset \pi^{-1}(B_{r+s}(\pi(x))).
 \ee 
 Combining this with line \ref{smock-add-1-line-1} we get 
\be  
T_s(I_j) \subset \pi^{-1}(B_{s}(\pi(I_j))) \subset \pi^{-1}(B_{r+s}(\pi(x))).
\ee
In order to show that 
\be
T_s(\pi^{-1}(B_r(\pi(x))))\subset \pi^{-1}(B_{r+s}(\pi(p)))
\ee
 we apply Lemma~\ref{smock-h-1}.
\end{proof}

\begin{prop} \label{smock-add}
If  we consider all the smocking stitches that just touch a
given ball:
\be
J_r= \left\{ j \in J: \,\, I_j \cap \pi^{-1}(\bar{B}_r(x)) \neq \emptyset \textrm{ and } I_j \cap \pi^{-1}(B_r(x)) = \emptyset\right\}
\ee
and the distance to the nearest smocking interval that does not touch this ball
\be
\bar{\delta}_r= \min\left\{ |z -w|: \,\, z\in S\, \setminus \pi^{-1}(\bar{B}_r(\pi(v))), \, \, w \in \pi^{-1}(\bar{B}_r(\pi(v))) \right \}>0,
\ee
then for all $s \le \bar{\delta}_r$ we have
\be
\pi^{-1}(B_{r+s}(\pi(v))) =
T_s\left(\pi^{-1}(B_r(\pi(v)))\right) \cup 
\bigcup_{j\in J_r} T_s(I_j).
\ee
\end{prop}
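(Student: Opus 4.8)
The plan is to prove the two inclusions separately. The inclusion $\supseteq$ is immediate from the earlier results: the term $T_s(\pi^{-1}(B_r(\pi(v))))$ lies in $\pi^{-1}(B_{r+s}(\pi(v)))$ by the second part of Lemma~\ref{smock-h-1}, and for each $j \in J_r$ the defining condition $I_j \cap \pi^{-1}(\bar{B}_r(\pi(v))) \neq \emptyset$ is precisely the hypothesis of Lemma~\ref{smock-add-1}, which then yields $T_s(I_j) \subseteq \pi^{-1}(B_{r+s}(\pi(v)))$. So all the work is in the reverse inclusion $\subseteq$, and the conceptual key I would isolate first is the \emph{atomicity} of stitches: since $\pi$ collapses each $I_j$ to a single point of $X$, every $p \in I_j$ has the same value $\bar{d}(p,v) = d(\pi(I_j),\pi(v)) =: \rho_j$. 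Hence a stitch is never split by a ball: $I_j \subseteq \pi^{-1}(\bar{B}_r(\pi(v)))$ iff $\rho_j \le r$, and $I_j \cap \pi^{-1}(B_r(\pi(v))) = \emptyset$ iff $\rho_j \ge r$, so that $J_r = \{\, j : \rho_j = r \,\}$.

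Next I would establish a \emph{gap estimate}: if $\rho_k > r$ then in fact $\rho_k \ge r + \bar{\delta}_r$. To see this, take a distance-realizing broken path from $I_k$ to $v$ (Theorem~\ref{thm-smock-metric} guarantees the minimum is achieved), taut in the sense used in Proposition~\ref{smock-h}. Since $\bar{d}(\cdot,v)$ decreases continuously along the honest segments from $\rho_k > r$ down to $0$, and stays constant across the free teleports within a stitch (by atomicity), it attains the value $r$ at some point $c$ lying on a segment $[a,b]$ with $a$ the anchor on the far side. Every such anchor $a$ is a point of $I_k$ or the exit point of a traversed stitch, hence $a \in S$; moreover $\bar{d}(a,v) > r$, so $a \in S \setminus \pi^{-1}(\bar{B}_r(\pi(v)))$ while $c \in \pi^{-1}(\bar{B}_r(\pi(v)))$. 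As $[a,c]$ is teleport-free, $|a-c| = \bar{d}(a,v) - r$, and the definition of $\bar{\delta}_r$ forces $|a-c| \ge \bar{\delta}_r$, whence $\rho_k \ge \bar{d}(a,v) \ge r + \bar{\delta}_r$. Because $s \le \bar{\delta}_r$, this shows no stitch has $\rho_j \in (r, r+s)$, which already settles every $u \in S$: such a $u$ lies in some $I_k$ with $\rho_k \le r$ (otherwise $\bar{d}(u,v) = \rho_k \ge r+s$ and $u$ is not in the left-hand side), so $u \in \pi^{-1}(B_r(\pi(v)))$ if $\rho_k < r$, or $u \in T_s(I_k)$ with $k \in J_r$ if $\rho_k = r$.

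For a general $u \in \pi^{-1}(B_{r+s}(\pi(v))) \setminus S$ I would run the crossing-point argument of Proposition~\ref{smock-h}. If $\bar{d}(u,v) < r$ then $u \in \pi^{-1}(B_r(\pi(v))) \subseteq T_s(\pi^{-1}(B_r(\pi(v))))$ and we are done, so assume $r \le \bar{d}(u,v) < r+s$. Along a taut minimizing broken path from $u$, let $t^* = \bar{d}(u,v) - r \in [0,s)$ be the arc-length at which $\bar{d}(\cdot,v)$ first equals $r$, attained at a point $c$, and let $z_1 \in I_{j_1}$ be the first stitch met, entered at arc-length $|u-z_1|$. If $t^* > |u-z_1|$ then $\rho_{j_1} = \bar{d}(u,v) - |u-z_1| > r$, so by the gap estimate $\rho_{j_1} \ge r + \bar{\delta}_r \ge r+s$, contradicting $\rho_{j_1} < \bar{d}(u,v) < r+s$; this case cannot occur. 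If $t^* < |u-z_1|$ (including the direct-path case), then $c$ lies on the teleport-free initial segment $[u,z_1]$, so $|u-c| = t^* < s$, and nudging $c$ slightly toward $z_1$ produces a point of $\pi^{-1}(B_r(\pi(v)))$ within distance $s$ of $u$, giving $u \in T_s(\pi^{-1}(B_r(\pi(v))))$. Finally, if $t^* = |u-z_1|$, then $c = z_1 \in I_{j_1}$ with $\rho_{j_1} = r$, so $j_1 \in J_r$ and $|u-z_1| = t^* < s$ yields $u \in T_s(I_{j_1})$.

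I expect the main obstacle to be the gap estimate, since it is exactly what simultaneously rules out the overshoot geometry and forbids stray stitches in the open annulus $(r, r+s)$, so that the two outcomes of the dichotomy ($T_s$ of the ball versus $T_s$ of a touching stitch) drop out of one clean trichotomy. The remaining difficulties are bookkeeping: verifying that the anchor $a$ at each crossing always lies in $S$ so that $\bar{\delta}_r$ applies, using atomicity to rule out spurious crossings during teleports, and tracking every strict-versus-nonstrict inequality ($r$ versus $r+s$, open versus closed balls) carefully enough that the boundary value $s = \bar{\delta}_r$ remains admissible.
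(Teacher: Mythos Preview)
Your proof is correct and shares the paper's overall strategy: establish $\supseteq$ via Lemmas~\ref{smock-h-1} and~\ref{smock-add-1}, then for $\subseteq$ track where a minimizing broken path crosses the sphere $\bar d(\cdot,v)=r$, following the template of Proposition~\ref{smock-h}. Where you differ is in first isolating the gap estimate $\rho_k>r\Rightarrow\rho_k\ge r+\bar\delta_r$ as a standalone fact; the paper asserts the equivalent claim (that any stitch meeting $\pi^{-1}(B_{r+s}(\pi(v)))$ already lies in $\pi^{-1}(\bar B_r(\pi(v)))$ or in $J_r$) without isolating it as a lemma. Your gap estimate then lets you dispose of the $u\notin S$ case by a clean trichotomy on $t^*$ versus $|u-z_1|$, whereas the paper splits on whether the minimizing path from $u$ uses zero stitches or at least one and argues each case directly. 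Both routes reach the same conclusion; yours makes the role of $\bar\delta_r$ more transparent and handles paths through arbitrarily many stitches without further comment.
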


\begin{proof}
By applying Lemma \ref{smock-add-1} to each $j\in J_r$ we know that 
\be 
T_s\left(\pi^{-1}(B_r(\pi(v)))\right) \cup 
\bigcup_{j\in J_r} T_s(I_j)\subset \pi^{-1}(B_{r+s}(\pi(v))).
\ee
So we need only show that 
\be 
\pi^{-1}(B_{r+s}(\pi(v)))\subset T_s\left(\pi^{-1}(B_r(\pi(v)))\right) \cup 
\bigcup_{j\in J_r} T_s(I_j).
\ee
We want to show that for any $q\in B_{r+s}(\pi(v))$, that one of the following holds:
\begin{eqnarray*}
(A) \qquad&\exists    x'\in \pi^{-1}(B_r(\pi(v)))& \textrm{ such that } |x'-\pi^{-1}(q)|<s
\\
(B)\qquad  &\exists  j \in J_r, \,\, z\in I_j & \textrm{such that } |z-\pi^{-1}(q)| <s.
\end{eqnarray*}
First, consider $q\in (B_{r+s}(\pi(v)))\cap \pi(S)$.
Since $s< \bar \delta_r$ all of the smocking intervals intersecting with $\pi^{-1}(B_{r+s}(\pi(v)))$ must either be contained inside $\pi^{-1}(\bar B_{r}(\pi(v)))$ in which case (A) holds, or be one of the $I_j$ for some $j\in J_r$ in which case (B) holds.

Now consider $q\in(B_{r+s}(\pi(v)))\setminus \pi(S)$, which implies there exists a unique point $y\in {\mathbb  E}^N$
such that 
\be
\pi(y)=q
\textrm{ and }
d(p,\pi(v))=d(\pi(y), \pi(v))< r+s.
\ee
Since $\delta_X>0$, by Theorem \ref{thm-smock-metric} we know that  one of the following cases holds:
\begin{eqnarray*}
\textrm{Case I: }&& d(p,\pi(v))=d(\pi(y), \pi(v))=|v-y|\\
\textrm{Case II:}&&\exists\, I_j\in S \,\, \exists\, w,w'\in I_j \,s.t.\,\,\, d(q,\pi(v))=d(\pi(y), \pi(v))= |y-w| + |w'-v|.
\end{eqnarray*}
In  Case I  we see that 
  \be
 y \in T_s(B_r(v)) \subset T_s(\pi^{-1}(B_r(\pi(v))))
  \ee
  by Lemma~\ref{smock-h-1}.  So we have (A) because  
  \be
  \exists x'\in \pi^{-1}(B_r(\pi(v)))\textrm{ such that }
 |x'-\pi^{-1}(q)|=|x'-y|<s.
 \ee
 In Case II, 
 \be
 \exists I_j\in S \,\, w,w'\in I_j \, s.t. \, d(q,\pi(v))=d(\pi(y), \pi(v))= |y-w| + |w'-v|<r+s
 \ee
and the interval $I_j$ intersects with $B_r(\pi(v))$, so  $I_j\in B_r(\pi(v))$. Consider the point $z\in \bar B_r(\pi(p))$ which minimizes $d(q, z)$. Since the closest smocking interval lies inside $B_r(\pi(v))$ we know that $z$ is either inside $B_r(\pi(v))$ (in which case $y$ is as well) or $z$ is on the boundary. In the second case $d(p, z)=r$ so 
\be
d(q, z)=|y-\pi^{-1}(z)|<s.
\ee 
 If the interval $I_j$ does not intersect with $B_r(\pi(p))$ then it must be in $J_r$. Then since 
 \be 
 d(q, \pi(v))=d(q, I_j)+d(I_j, \pi(v))<r+s
 \ee 
 and $d(I_j, \pi(v))=r$ we have $ d(q, I_j)<s$ which gives us that $\pi^{-1}(q)\in T_s(I_j)$and so we have completed the proof of (B).
\end{proof}


\subsection{ Exploring the Balls in $X_\diamond$ by Prof.~Sormani and Dr.~Kazaras:}

Here we consider balls in $X_\diamond$ centered on the point,
\be
p_0=I_{(0,0)}=[-1/2, 1/2]\times \{0\},
\ee  
by drawing their preimages $\pi^{-1}(B_{R}(p_0))\subset {\mathbb{E}}^N$.  See Figure~\ref{fig:diamond-balls}.

\begin{figure}[h]
\label{fig:diamond-balls}
\includegraphics[width=.4 \textwidth]{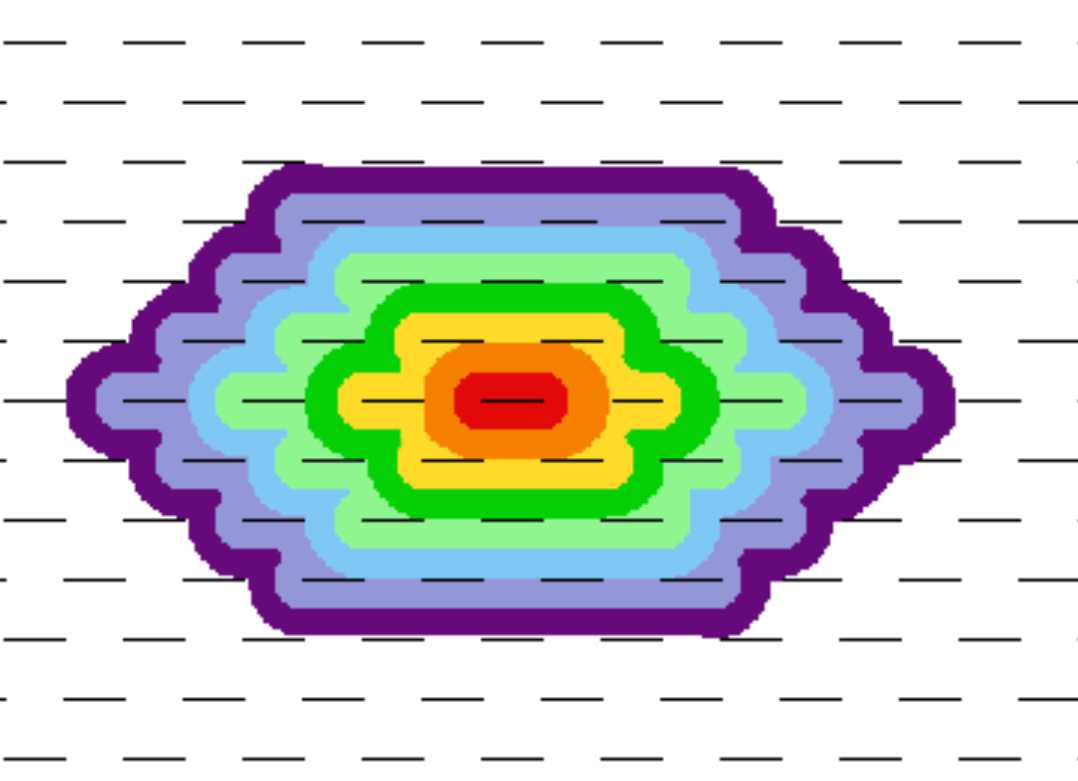}
\caption{Lifts of balls, $B_{R}(p_0)$, where $p_0=I_{(0,0)}$ in the smocked space $(X_\diamond, d_\diamond)$.}
\end{figure}

Observe that by  Lemma~\ref{ball-delta} we know that
\be
\pi^{-1}(B_{R}(p_0))=T_R(I_{(0,0)}) \quad \forall R\in (0,1],
\ee
because 
the smocking separation factor was proven in Lemma~\ref{lem-delta-diamond} to be
$
\delta_\diamond =1.
$
Thus the ball of radius $R=1/2$ is depicted in red  and of radius $R=1$ is depicted in orange in Figure~\ref{fig:diamond-balls}.

We next apply Proposition~\ref{smock-add}, keeping in mind that $\delta_\diamond=1$ and observing that
when $r=1$ there are six stitches touching the ball of radius $r=1$.  This gives us the ball of radius $3/2$
depicted in yellow in Figure~\ref{fig:diamond-balls}:
\be
\pi^{-1}(B_{3/2}(p)) =
T_{1/2} \left(\pi^{-1}(B_1(p))\right) \cup 
\bigcup_{j\in J_1} T_{1/2}(I_j)
\ee
where 
\be
J_1=\{(1,1), (2,0), (1,-1), (-1,-1), (-2,0), (-1,1)\}.
\ee
We then apply Proposition~\ref{smock-h} with $r=3/2$ and $s=1/2$ to find the ball of radius $2$
depicted in green in  Figure~\ref{fig:diamond-balls}:
\be
\pi^{-1}(B_{2}(p))=T_{1/2}(\pi^{-1}(B_{3/2}(p))).
\ee
We next apply Proposition~\ref{smock-add},  observing that
when $r=2$ there are twelve stitches touching the ball of radius $r=2$:
\be
J_2=\{(0,\pm 2), (\pm 2,\pm 2), (\pm 3, \pm 1), (\pm 4, \pm 0) \}.
\ee
  This gives us the ball of radius $5/2$
depicted in pale green in Figure~\ref{fig:diamond-balls}:
\be
\pi^{-1}(B_{5/2}(p)) =
T_{1/2} \left(\pi^{-1}(B_2(p))\right) \cup 
\bigcup_{j\in J_2} T_{1/2}(I_j).
\ee
We continue in this matter to complete the drawing in Figure~\ref{fig:diamond-balls} by eye.

While it can be rather complicated to provide formulas for these sets, we can
nevertheless approximately describe their shapes. For large radii,
these balls appear to have a hexagonal shape to them.  
 In fact it appears approximately to be the intersection of the strip, $y^{-1}(-r,r)$ with the diamond:
\be \label{diamond-set}
 \{ (x,y):\, \, |x|+|y|<R\} \textrm{ which has vertices at } (\pm R, 0) \textrm{ and } (0, \pm R)
\ee
where $R=2r$.  There is an error in this approximation of about the length of a smocking interval.  

We conjecture  that
$
J_r= \bar{J}_r \cup \hat{J}_r
$
where
\begin{eqnarray*}
\bar{J}_r&=&\{ (j_1, j_2) \in J_\diamond:\,\, |j_2|=r \textrm{ and } |j_1|\le |j_2|\}\\
\hat{J}_r&=& \{ (j_1, j_2) \in J_\diamond:\,\, |j_1+j_2| \in [2r-1, 2r] \}
\end{eqnarray*}

Due to the lengthiness of the proof required to rigorously prove this guess is true, 
we postpone studying this particular smocked space further in this paper.
See \cite{SWIF-smocked} for a rigorous study of this space.

\subsection{ Exploring the Balls in $X_T$ by Julinda, Aleah, and Victoria}

Here we consider balls in $X_T$ centered on the point,
\be
p_0=I_{(0,0)}=[-1, 1]\times \{0\},
\ee  
by drawing their preimages $\pi^{-1}(B_{R}(p_0))\subset {\mathbb{E}}^N$.  See Figure~\ref{fig:ballsT}.

\begin{figure}[h]
\includegraphics[width=.4\textwidth]{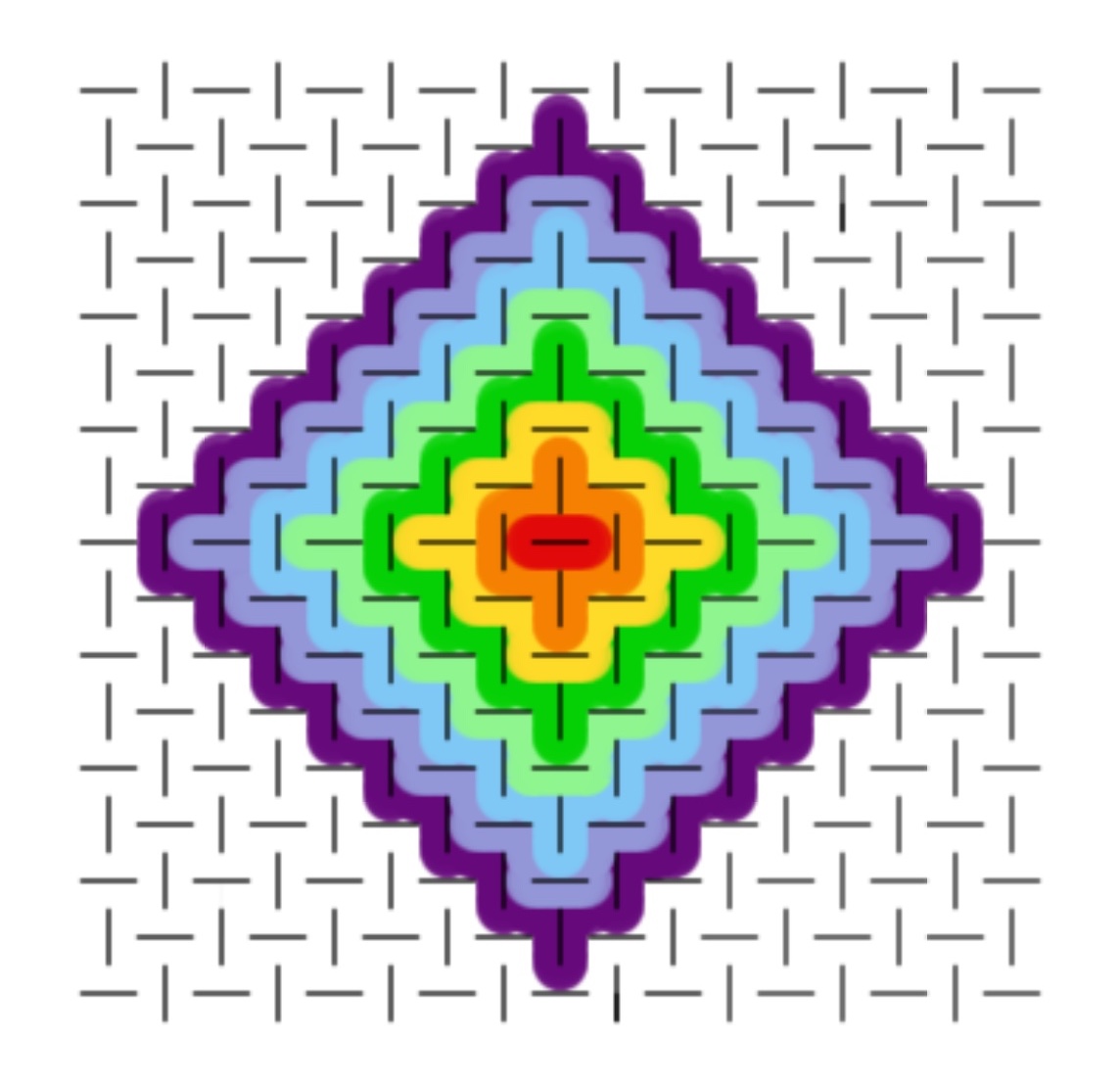}
\caption{Lifts of balls, $B_{R}(p_0)$, where $p_0=I_{(0,0)}$ in the smocked space $(X_T, d_T)$.}
\label{fig:ballsT}
\end{figure}

Observe that by  Lemma~\ref{ball-delta} we know that
\be
\pi^{-1}(B_{R}(p_0))=T_R(I_{(0,0)}) \quad \forall R\in (0,1],
\ee
because 
the smocking separation factor was proven in Lemma~\ref{lem-delta-T} to be
$
\delta_\diamond =1.
$
Thus the ball of radius $R=1$ is depicted in red   in Figure~\ref{fig:ballsT}.

We next apply Proposition~\ref{smock-add}, keeping in mind that $\delta_T=1$ and observing that
when $r=1$ there are four stitches touching the ball of radius $r=1$.  This gives us the ball of radius $2$
depicted in orange in Figure~\ref{fig:ballsT}:
\be
\pi^{-1}(B_{2}(p)) =
T_{1/2} \left(\pi^{-1}(B_1(p))\right) \cup 
\bigcup_{j\in J_1} T_{1}(I_j)
\ee
where 
\be
J_1=\{(2,0), (0,2), (-2,0), (0,-2)\}.
\ee
We next apply Proposition~\ref{smock-add},  observing that
when $r=3$ there are eight stitches touching the ball of radius $r=2$:
\be
J_2=\{(0,\pm 4), (\pm 2,\pm 2), (\pm 4, \pm 0) \}.
\ee
  This gives us the ball of radius $3$
depicted in  yellow in Figure~\ref{fig:ballsT}:
\be
\pi^{-1}(B_{3}(p)) =
T_{1} \left(\pi^{-1}(B_2(p))\right) \cup 
\bigcup_{j\in J_2} T_{1}(I_j).
\ee
We continue in this matter to easily complete the drawing in Figure~\ref{fig:ballsT}.

A rigorous analysis of this space will be continued within this paper.

\subsection{ Exploring the Balls in $X_=$ by Prof.~Sormani, Leslie, and Shanell}

Here we consider balls in $X_=$ centered on the point,
\be
p_0=I_{(0,0)}=[-.5, .5]\times \{0\},
\ee  
by drawing their preimages $\pi^{-1}(B_{R}(p_0))\subset {\mathbb{E}}^N$.  See Figure~\ref{fig:balls=}.

\begin{figure}[h]
\includegraphics[width=.4\textwidth]{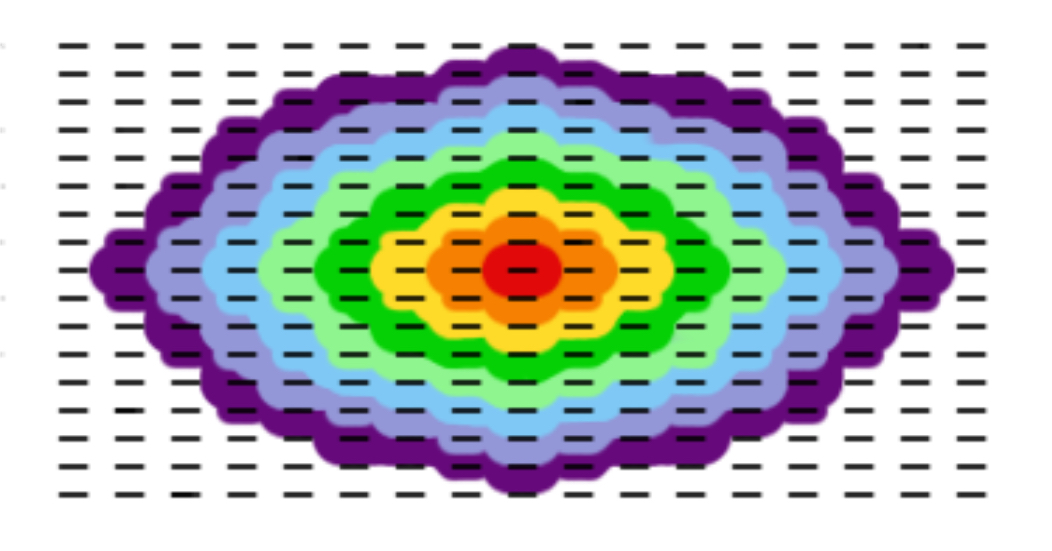}
\caption{Lifts of balls, $B_{R}(p_0)$, where $p_0=I_{(0,0)}$ in the smocked space $(X_=, d_=)$..}
\label{fig:balls=}
\end{figure}

By  Proposition~\ref{ball-delta} we know that
\be
\pi^{-1}(B_{R}(p_0))=T_R(I_{(0,0)}) \quad \forall R\in (0,1],
\ee
because 
the smocking separation factor was proven in Lemma~\ref{lem-delta-T} to be
$
\delta_\diamond =1.
$
Thus the ball of radius $R=1$ is depicted in red   in Figure~\ref{fig:balls=}.

To find the ball of radius $2$
depicted in orange in Figure~\ref{fig:T-balls} observe that we need to do things
very carefully.   Proposition~\ref{smock-add} implies that
\be
\pi^{-1}(B_{1+s}(\pi(p))) =
T_s\left(\pi^{-1}(B_1(\pi(p)))\right) \cup 
\bigcup_{j\in J_1} T_s(I_j)
\ee
where
\be
J_1=\{(2,0), (0,2), (-2,0), (0,-2)\} 
\ee
and
\be
s\le \bar{\delta_r}=\sqrt{2}-1
\ee
because at $\sqrt{2}$ we hit the intervals in
\be
J_{\sqrt{2}}=\{\pm 2, \pm 2\}.
\ee
So
\be
\pi^{-1}(B_{2}(p)) =
T_{1} \left(\pi^{-1}(B_1(p))\right) \cup 
\bigcup_{j\in J_1} T_{1}(I_j) \cup \bigcup_{j\in J_{\sqrt{2}}} T_{2-\sqrt{2}}(I_j).
\ee

To find the ball of radius $3$ we need
\be
J_3=\{(\pm 3,0), (0, \pm 3)\} \textrm{ and } J_{\sqrt{5}}=\{\pm 1, \pm 2\}
\textrm{ and } J_{1+\sqrt{2}} = \{\pm 2, \pm 1\}
\ee
so we see that things become very complicated rapidly.  Nevertheless
we roughly draw the balls by eye using an art program to produce Figure~\ref{fig:balls=}.

Although these balls appear to be converging to an ellipse, this has been shown to
be false.   Due to the lengthy nature of the estimates involved we will not
explore $X_=$ further within this paper.
Further work on this space will appear in \cite{SWIF-smocked}.

\subsection{ Exploring the Balls in $X_+$ by Emilio, Moshe, and Ajmain}

Let us consider balls in $X_+$ around 
\be
p_0=I_{(0,0)} = ([-1,1] \times \lbrace (0,0) \rbrace) \cup (\lbrace (0,0) \rbrace \times [-1,1])
\ee  
by drawing their preimages $\pi^{-1}(B_{R}(p_0))\subset {\mathbb{E}}^N$.  See Figure~\ref{fig:balls+}.

\begin{figure}[h]
\includegraphics[width=.4 \textwidth]{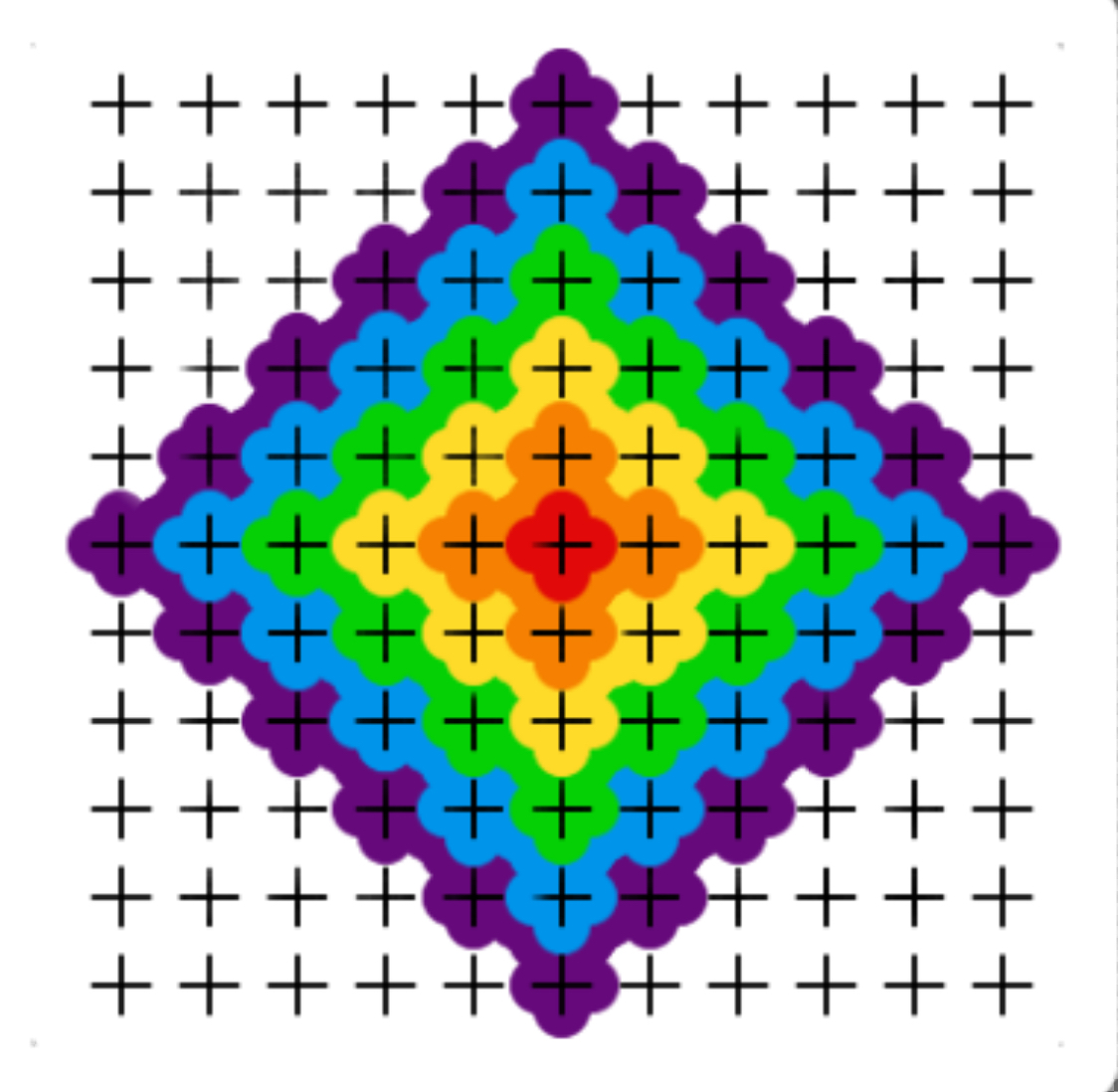}
\caption{Lifts of balls, $B_{R}(p_0)$, where $p_0=I_{(0,0)}$ in the smocked space $(X_+, d_+)$.}
\label{fig:balls+}
\end{figure}

Observe that by  Lemma~\ref{ball-delta} we know that
\be
\pi^{-1}(B_{R}(p_0))=T_R(I_{(0,0)}) \quad \forall R\in (0,1],
\ee
because 
the smocking separation factor was proven in Lemma~\ref{lem-delta-+} to be
$
\delta_+ =1.
$
Thus the ball of radius $R=1$ is depicted in red in Figure~\ref{fig:balls+}.

We next apply Proposition~\ref{smock-add}, keeping in mind that $\delta_T=1$ and observing that
when $r=1$ there are four stitches touching the ball of radius $r=1$.  This gives us the ball of radius $2$
depicted in orange in Figure~\ref{fig:balls+}:
\be
\pi^{-1}(B_{2}(p)) =
T_{1/2} \left(\pi^{-1}(B_1(p))\right) \cup 
\bigcup_{j\in J_1} T_{1}(I_j)
\ee
where 
\be
J_1=\{(\pm 3,0), (0,\pm3)\}.
\ee
We next apply Proposition~\ref{smock-add} again,  observing that
when $r=3$ there are eight stitches touching the ball of radius $r=2$:
\be
J_r=\{(0,\pm 6), (\pm 3,\pm 3), (\pm 6, \pm 0) \}.
\ee
  This gives us the ball of radius $3$
depicted in  yellow in Figure~\ref{fig:balls+}:
\be
\pi^{-1}(B_{3}(p)) =
T_{1} \left(\pi^{-1}(B_2(p))\right) \cup 
\bigcup_{j\in J_2} T_{1}(I_j).
\ee
We continue in this matter to complete the drawing in Figure~\ref{fig:balls+}.

\begin{rmrk} \label{rmrk-shape-+}
It appears that the balls are bumpy diamond shapes.
That is if $k = \lceil{r}\rceil$, with $r > 1$ then 
\be
S'_{3(k-2)} \subset \pi^{-1}(B_{r}(p)) \subset S'_{3k},
\ee
where
\be
S'_r= \lbrace x \in X_+ \, : \, d_T(x,p) < r \rbrace
\ee
with $d_T$ being the taxicab metric:
\be
d_T(x,y) = |x_1 - y_1| + |x_2 - y_2|
\ee
where $x = (x_1,x_2)$, $y = (y_1, y_2)$.
\end{rmrk}

\begin{rmrk} \label{rmrk-J_r-+}
If $k = \lceil{r}\rceil$, then we define $J^+_r$ as such. 
\be
J^+_r = J^+_1 \cup J^+_2 \cup \dots \cup J^+_{k-1} \cup J^+_k
\ee
where
\be
J^+_k = \lbrace \pm(0,3(k-1)), \pm(3\cdot1, 3(k-2)), \dots, \pm(3\cdot (k-1),0) \rbrace
\ee
\end{rmrk}

A rigorous proof of these intuitive estimates will be provided in later sections.

\subsection{ Exploring the Balls in $X_H$ by Prof.~Sormani, David, and Vishnu}

Here we consider balls in $X_H$ centered on the point,
\be
p_0=I_{(0,0)}= [-.5, +.5] \times \{0\}
\ee  
by drawing their preimages $\pi^{-1}(B_{R}(p_0))\subset {\mathbb{E}}^N$.  
We also consider balls about the deepest point $q_0=(1.5, 1.5)$ because they
have more symmetry.   For very large balls the center point should not matter
because
\be
B_R(p_0) \subset B_{R+C}(q_0) \subset B_{R+2C}(p_0)
\ee
where $C=d_H(p_0,q_0)$.   See Figure~\ref{fig:ballsH}.

\begin{figure}[h]
\includegraphics[width=.4\textwidth]{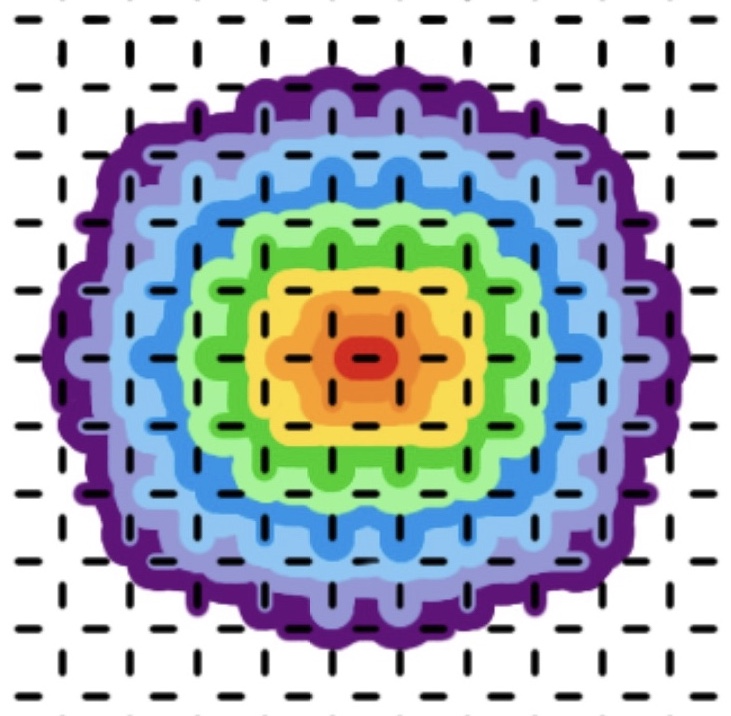} \hspace{1cm} \includegraphics[width=.4\textwidth]{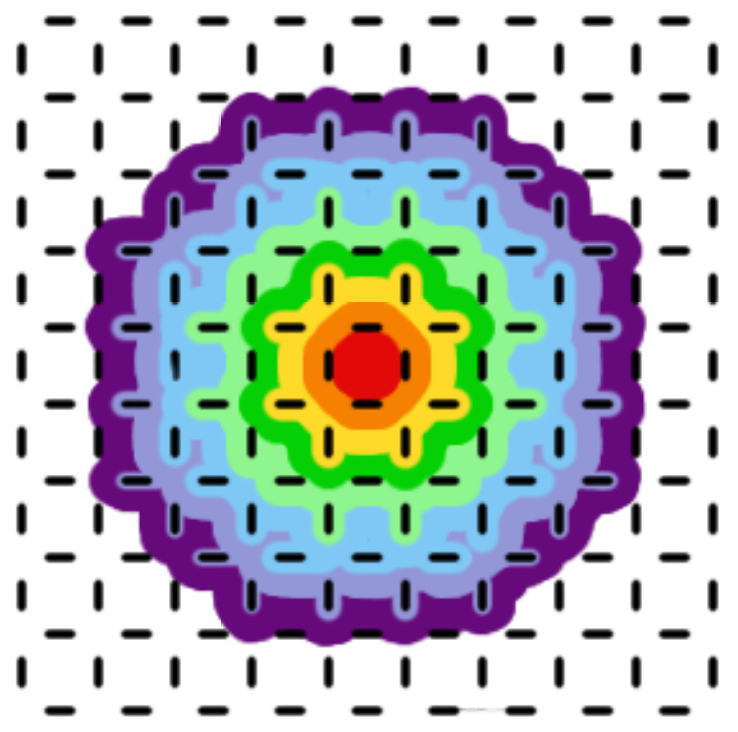}
\caption{Balls centered on $p_0=I_{(0,0)}$ in $(X_H,d_H)$ have vertical and horizontal symmetry
but balls centered at the deepest point $q_0=(1.5, 1.5)$ display octagonal symmetry.}
\label{fig:ballsH}
\end{figure}

Observe that by  Lemma~\ref{ball-delta} we know that
\be
\pi^{-1}(B_{R}(p_0))=T_R(I_{(0,0)}) \quad \forall R\in (0,\sqrt{2}].
\ee
because 
the smocking separation factor was proven in Lemma~\ref{lem-delta-H} to be
$
\delta_\diamond =\sqrt{2}.
$
Thus the ball of radius $R=1$ is depicted in red  in Figure~\ref{fig:balls-square}.

To find the balls of radius $2$ 
depicted in dark orange in Figure~\ref{fig:ballsH} we apply  
Proposition~\ref{smock-add} implies that
\be
\pi^{-1}(B_{2}(\pi(p))) =
T_1\left(\pi^{-1}(B_1(\pi(p)))\right) \cup 
\bigcup_{j\in J_{\sqrt{2}}} T_{2-\sqrt{2}}(I_{\sqrt{2}})
\ee
where
\be
J_{\sqrt{2}}=\{(\pm 1.5,\pm1.5)\}. 
\ee
For radius $3$ we need to include two more intervals
\be
J_2=\{\pm 2.5, 0)\}
\ee
For radius $4$ depicted in yellow we need to include
\be
J_3=\{(0, \pm 3)\} \textrm{ and } J_{2\sqrt{2}}=\{(\pm 3, \pm 3)\} \textrm{ and }
J_{2+\sqrt{2}}=\{(\pm 4.5, \pm 1.5)\}.   
\ee
We continuing drawing larger and larger balls by eye 
on the left side of Figure~\ref{fig:ballsH}
and we cannot see any shape developing.   

However, on the right side of Figure~\ref{fig:ballsH} drawn by eye, we see
a nice octagonal shape forming.  
Due to the complicated nature of the balls in this space, further analysis of this
space is postponed to \cite{SWIF-smocked}.

\subsection{ Exploring the Balls in $X_\square$ by Prof.~Sormani, Maziar, and Hindy:}

Here we consider balls in $X_\square$ centered on the point,
\be
p_0=I_{(0,0)}=\left([0, 1] \times \{0\} \right) \,\, \cup \,\,  \left([0, 1] \times \{1\} \right)
 \quad \cup \left( \{0\}\times [0, 1] \right) \,\,\cup \,\, \left( \{1\}\times [0, 1] \right)
\ee  
by drawing their preimages $\pi^{-1}(B_{R}(p_0))\subset {\mathbb{E}}^N$.  See Figure~\ref{fig:balls-square}.

\begin{figure}[h]
\includegraphics[width=.4\textwidth]{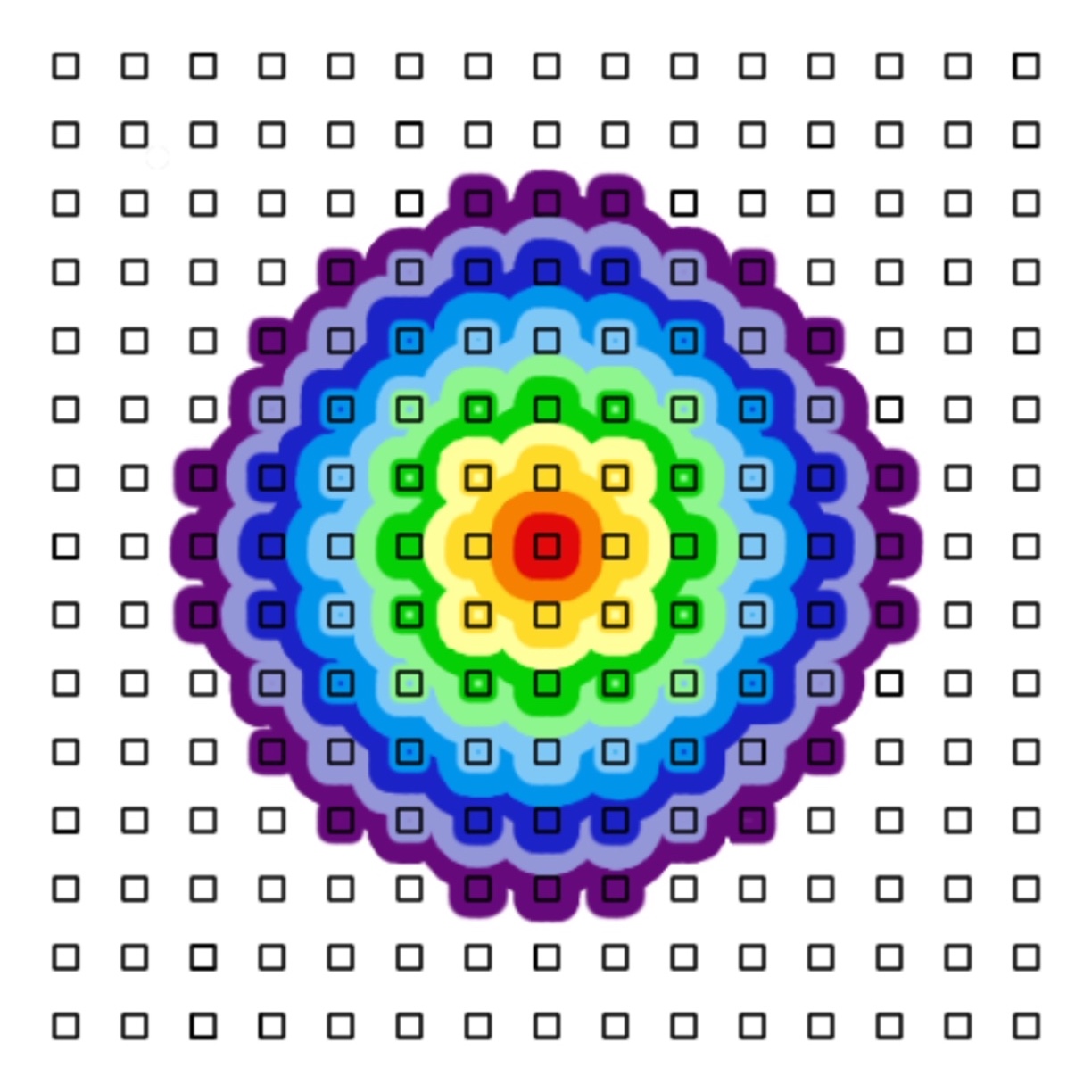}
\caption{Lifts of balls, $B_{R}(p_0)$, where $p_0=I_{(0,0)}$ in the smocked space $(X_\square, d_\square)$..}
\label{fig:balls-square}
\end{figure}

Observe that by  Lemma~\ref{ball-delta} we know that
\be
\pi^{-1}(B_{R}(p_0))=T_R(I_{(0,0)}) \quad \forall R\in (0,2].
\ee
because 
the smocking separation factor was proven in Lemma~\ref{lem-delta-square} to be
$
\delta_\diamond =2.
$
Thus the ball of radius $R=1$ is depicted in red  and
the ball of radius $R=2$ is depicted in orange in Figure~\ref{fig:balls-square}.

To find the balls of radius $3$ and $4$
depicted in shades of yellow in Figure~\ref{fig:balls-square} observe that we need to do things
very carefully.   Proposition~\ref{smock-add} implies that
\be
\pi^{-1}(B_{2+s}(\pi(p))) =
T_s\left(\pi^{-1}(B_2(\pi(p)))\right) \cup 
\bigcup_{j\in J_2} T_s(I_j)
\ee
where
\be
J_2=\{(\pm3,0), (0,\pm3)\} 
\ee
and
\be
s\le \bar{\delta_r}=\sqrt{8}-2
\ee
because at $\sqrt{8}$ we hit the four stitches of
\be
J_{\sqrt{8}}=\{\pm 3, \pm 3\}.
\ee
So
\be
\pi^{-1}(B_{3}(p)) =
T_{1} \left(\pi^{-1}(B_2(p))\right) \cup 
\bigcup_{j\in J_2} T_{1}(I_j) \cup \bigcup_{j\in J_{\sqrt{8}}} T_{3-\sqrt{8}}(I_j).
\ee
and
\be
\pi^{-1}(B_{4}(p)) =
T_{2} \left(\pi^{-1}(B_2(p))\right) \cup 
\bigcup_{j\in J_2} T_{2}(I_j) \cup \bigcup_{j\in J_{\sqrt{8}}} T_{4-\sqrt{8}}(I_j).
\ee
We draw the rest of the balls by eye using an art program to produce Figure~\ref{fig:balls-square}.

To really better understand the shape of these balls on a large scale we 
computed 
\be
\bar{J}^\square_R = \bigcup_{r<R} J^\square_R  \cup  \{(0,0)\}
\ee 
for increasingly large values of $r$ using the computer program: {\em Processing 3}.
See Figure~\ref{fig:square-hex}.  It appears that the balls are becoming octagonal in shape. 
This space will be studied further within this paper.

\begin{figure}[h]
\includegraphics[width=.18\textwidth]{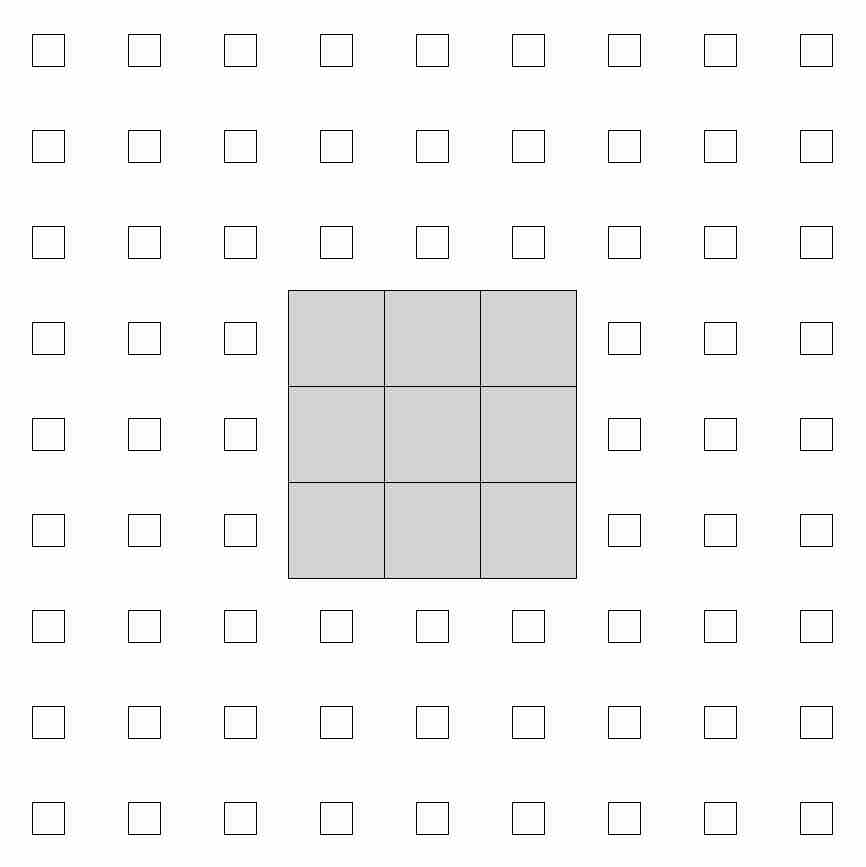}
\includegraphics[width=.18\textwidth]{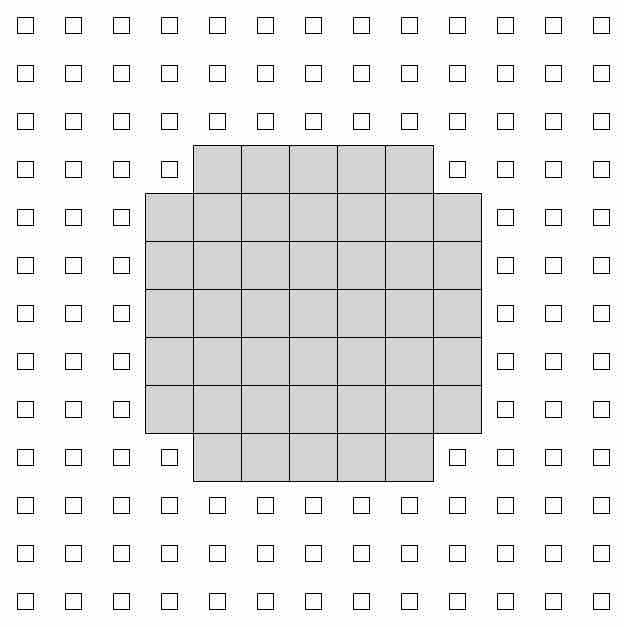}
\includegraphics[width=.18\textwidth]{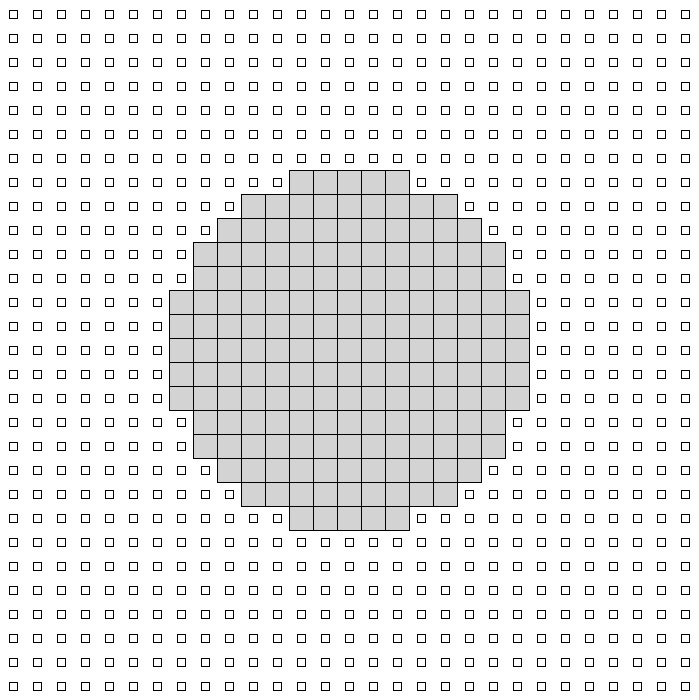}
\includegraphics[width=.18\textwidth]{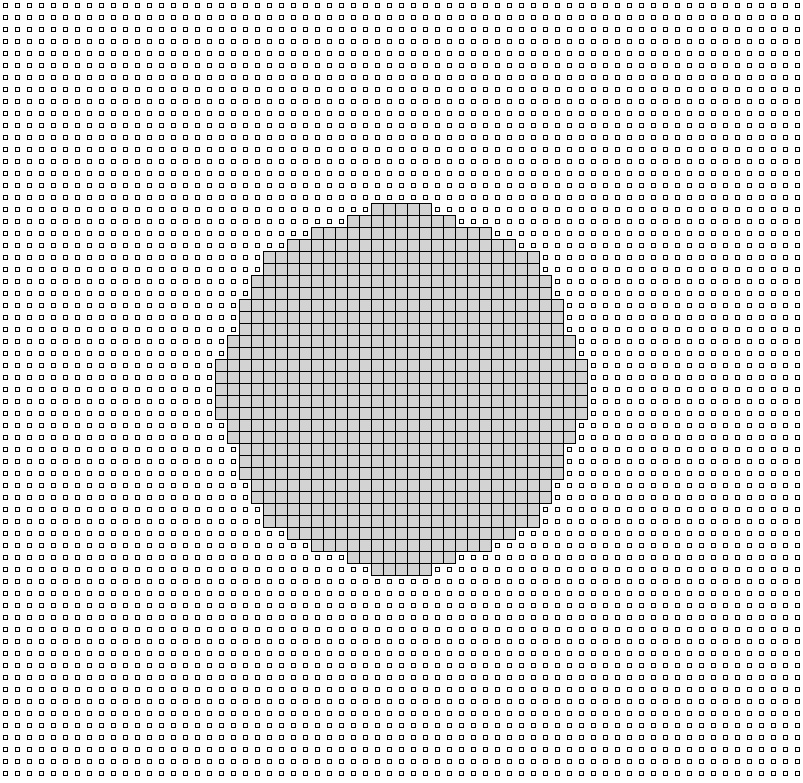}
\includegraphics[width=.18\textwidth]{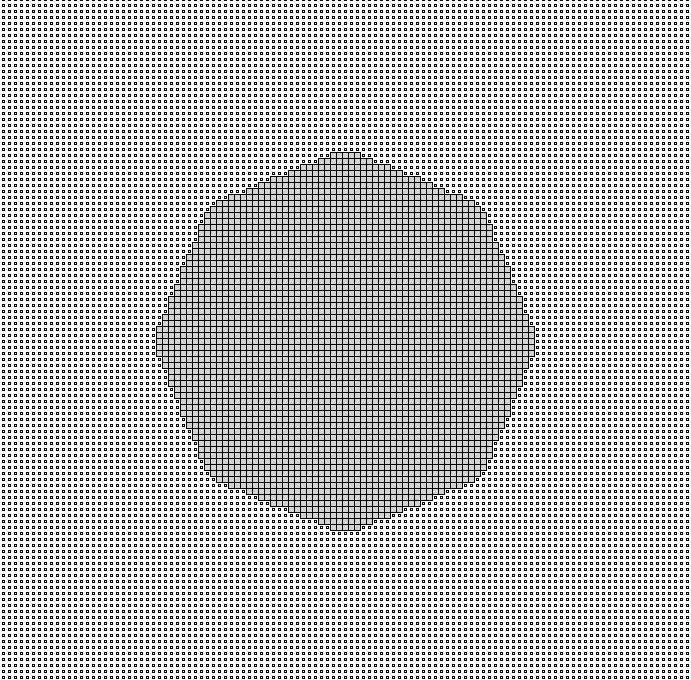}
\caption{Computer generated rescaled images capturing $\bar{J}^\square_R$
for increasingly large values of $R$ using {\em Processing 3}.}   
\label{fig:square-hex}
\end{figure}

\section{ Smocked Metric Spaces are Complete and Noncompact by Prof.~Sormani and Maziar} 

Recall Definition~\ref{defn-smock} of a smocked metric space.  In this section we
prove the following theorem:

\begin{thm} \label{thm-complete}
Any smocked space $(X,d)$ is complete and noncompact.
\end{thm}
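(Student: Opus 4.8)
The plan is to show that the smocking map $\pi\colon \mathbb{E}^N \to X$ is a bi-Lipschitz equivalence (it is already surjective by construction), from which both completeness and noncompactness follow at once. Writing $\bar d$ for the smocking pseudometric and $L_{max}$ for the maximal smocking length of Definition~\ref{defn-smocking-lengths} (finite for all six spaces above), I claim that for all $v,w \in \mathbb{E}^N$
\begin{equation} \label{eq-bilip-plan}
\frac{\delta}{\delta + L_{max}}\,|v-w| \;\le\; \bar d(v,w) \;\le\; |v-w|,
\end{equation}
where $\delta$ is the separation factor. The upper bound is immediate, since $d_0(v,w)=|v-w|$ is one of the terms in the minimum defining $\bar d$; in particular $\pi$ is $1$-Lipschitz, hence continuous.

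The lower bound in \eqref{eq-bilip-plan} is the heart of the matter and the step I expect to be the main obstacle. By Theorem~\ref{thm-smock-metric} the minimum is achieved, say $\bar d(v,w)=d_N(v,w)$, realized by segments through consecutive distinct stitches $I_{j_1},\dots,I_{j_N}$ with chosen points $z_i,z_i'\in I_{j_i}$. First I would bound the number of jumps: since $d_N(v,w)\ge N\delta$ (each $|z_i'-z_i|\ge\delta$, as established in the proof of Theorem~\ref{thm-smock-metric}), one gets $N \le \bar d(v,w)/\delta$. Next I would compare the total length of the segments, which equals $\bar d(v,w)$, with the net Euclidean displacement $w-v$. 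Telescoping the segment vectors together with the ``free'' within-stitch jump vectors $z_i'-z_i$ recovers $w-v$ exactly, so the triangle inequality in $\mathbb{E}^N$ gives
\begin{equation}
|v-w| \;\le\; \bar d(v,w) + \sum_{i=1}^N |z_i'-z_i| \;\le\; \bar d(v,w) + N\,L_{max} \;\le\; \bar d(v,w)\Big(1+\tfrac{L_{max}}{\delta}\Big),
\end{equation}
which rearranges to the lower bound in \eqref{eq-bilip-plan}.

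Granting \eqref{eq-bilip-plan}, noncompactness is immediate: taking $v=0$ and $w=Re_1$ with $R\to\infty$ yields $\bar d(0,Re_1)\ge \tfrac{\delta}{\delta+L_{max}}R\to\infty$, so $X$ has infinite diameter and therefore cannot be compact. For completeness I would take any Cauchy sequence $\{p_n\}$ in $(X,d)$ and choose representatives $v_n\in\pi^{-1}(p_n)\subset\mathbb{E}^N$. The lower bound then gives
\begin{equation}
|v_n-v_m| \;\le\; \frac{\delta+L_{max}}{\delta}\,\bar d(v_n,v_m) \;=\; \frac{\delta+L_{max}}{\delta}\,d(p_n,p_m)\longrightarrow 0,
\end{equation}
so $\{v_n\}$ is Cauchy in the complete space $\mathbb{E}^N$ and converges to some $v_\infty$. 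Continuity of $\pi$ (the upper bound) then yields $d(p_n,\pi(v_\infty))=\bar d(v_n,v_\infty)\le |v_n-v_\infty|\to 0$, whence $p_n\to\pi(v_\infty)\in X$, proving completeness. The only delicate point is that \eqref{eq-bilip-plan} presumes $L_{max}<\infty$, which holds for all the spaces treated here; the general unbounded-stitch case can be handled separately by working in a region a fixed positive distance from the smocking set, where $\pi$ is a local isometry by Proposition~\ref{ball-D(x)}.
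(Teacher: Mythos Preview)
Your bi-Lipschitz inequality \eqref{eq-bilip-plan} is false, and this breaks the completeness argument. Take a single stitch $I=[-\tfrac12,\tfrac12]\times\{0\}$ and points $v=(-\tfrac12-\epsilon,0)$, $w=(\tfrac12+\epsilon,0)$ just outside its endpoints. Then $\bar d(v,w)=2\epsilon$ (hop to one end, jump across the stitch for free, hop to the other end) while $|v-w|=1+2\epsilon$, so $|v-w|/\bar d(v,w)\to\infty$ as $\epsilon\to0$. The error in your derivation is the step $N\,L_{max}\le \bar d(v,w)\,L_{max}/\delta$: for $N=1$ the sum defining $d_1$ contains no between-stitch segment, so the bound $d_N\ge N\delta$ fails (you only get $d_N\ge (N-1)\delta$). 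What your telescoping actually yields is the \emph{affine} estimate
\[
|v-w|\;\le\;\Big(1+\tfrac{L_{max}}{\delta}\Big)\,\bar d(v,w)\;+\;L_{max},
\]
and the additive $L_{max}$ cannot be removed.

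Consequently your completeness step collapses: lifts of a Cauchy sequence in $X$ need not be Cauchy in $\mathbb E^N$. Concretely, with $I$ as above, let $v_n$ alternate between $(-\tfrac12-\tfrac1n,0)$ and $(\tfrac12+\tfrac1n,0)$; then $\bar d(v_n,v_m)\le \tfrac1n+\tfrac1m\to0$ so $\pi(v_n)$ is Cauchy in $X$, yet $(v_n)$ oscillates and has no Euclidean limit. The repair is exactly what the paper does: the affine bound (or the paper's Lemma~\ref{lem-one-stitch}, which is the $\bar d<\delta$ special case) shows the lifts are \emph{bounded}, then Bolzano--Weierstrass gives a convergent subsequence $v_{n_k}\to v_\infty$, continuity of $\pi$ gives $p_{n_k}\to\pi(v_\infty)$, and finally a Cauchy sequence with a convergent subsequence converges. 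Your noncompactness argument survives with the affine bound (infinite diameter still follows as $R\to\infty$); the paper instead exhibits infinitely many disjoint $\delta/2$-balls, which has the mild advantage of not invoking $L_{max}<\infty$.
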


Before we prove this theorem, we prove two lemmas.

\begin{lem} \label{lem-one-stitch}
Suppose $(X,d)$ is a smocked metric space with separation factor
$\delta_X>0$ and smocked length $L_{max}<\infty$.  Then
\be \label{one-stitch-a}
\bar{d}(v,w) < \delta \implies \bar{d}(v,w)=\min\{d_0(v,w), d_1(v,w)\} .
\ee
In other words the sum defining the smocking
distance between $\pi(v)$ and $\pi(w)$
is of only one or two segments.  Thus
\be \label{one-stitch-b}
|v-w| \le \bar{d}(v,w) + L_{max}< \delta + L.
\ee
\end{lem}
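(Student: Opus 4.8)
The plan is to combine the fact from Theorem~\ref{thm-smock-metric} that the infimum defining $\bar{d}$ is attained, say $\bar{d}(v,w)=d_N(v,w)$ with $N$ finite, together with a lower bound on $d_k$ for $k\ge 2$ coming directly from the separation factor of Definition~\ref{defn-sep}.

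First I would isolate the key observation: whenever $k\ge 2$, every competitor in the definition of $d_k(v,w)$ routes through at least two distinct stitches $I_{j_1},\dots,I_{j_k}$ and therefore contains at least one inter-stitch jump segment $|z'_i-z_{i+1}|$ with $j_i\neq j_{i+1}$. By the definition of the separation factor, such a segment has length at least $\delta$, so $d_k(v,w)\ge \delta$ for all $k\ge 2$. This is the content of (\ref{k-delta}) restricted to $k\ge 2$; only the jumps between distinct stitches are controlled by $\delta$, while travel within a single collapsed stitch is free, which is exactly why the bound does not apply to $k=1$.

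Granting this, suppose $\bar{d}(v,w)<\delta$ and write $\bar{d}(v,w)=d_N(v,w)$ for the minimizing index $N$ furnished by Theorem~\ref{thm-smock-metric}. If $N\ge 2$ we would have $\bar{d}(v,w)=d_N(v,w)\ge\delta$, contradicting $\bar{d}(v,w)<\delta$; hence $N\in\{0,1\}$. Since $\bar{d}(v,w)=\min_k d_k(v,w)\le\min\{d_0(v,w),d_1(v,w)\}\le d_N(v,w)=\bar{d}(v,w)$, all these inequalities collapse to equalities, which is precisely (\ref{one-stitch-a}).

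Finally, for (\ref{one-stitch-b}) I would treat the two surviving cases. If $\bar{d}(v,w)=d_0(v,w)=|v-w|$ the estimate $|v-w|\le\bar{d}(v,w)+L_{max}$ is immediate. If instead $\bar{d}(v,w)=d_1(v,w)=|v-z_1|+|z'_1-w|$ with $z_1,z'_1\in I_{j_1}$, the triangle inequality gives $|v-w|\le|v-z_1|+|z_1-z'_1|+|z'_1-w|$, and since $z_1$ and $z'_1$ lie in the same stitch we have $|z_1-z'_1|\le L_{max}$ by Definition~\ref{defn-smocking-lengths}; hence $|v-w|\le d_1(v,w)+L_{max}=\bar{d}(v,w)+L_{max}$. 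In either case $|v-w|\le\bar{d}(v,w)+L_{max}<\delta+L$, using the hypothesis $\bar{d}(v,w)<\delta$ and $L=L_{max}$. The one genuinely delicate point is the inequality $d_k\ge\delta$ for $k\ge 2$: one must attribute the cost $\delta$ to an inter-stitch jump rather than to the free within-stitch travel, so that the legitimate two-segment connections measured by $d_1$ are not inadvertently excluded.
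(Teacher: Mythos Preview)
Your proof is correct and follows essentially the same route as the paper's: invoke Theorem~\ref{thm-smock-metric} to get a minimizing index $N$, rule out $N\ge 2$ via the separation-factor lower bound on an inter-stitch jump, and then handle the $N=0$ and $N=1$ cases for (\ref{one-stitch-b}) by the triangle inequality with $|z_1-z'_1|\le L_{max}$. Your care in attributing the $\delta$ cost specifically to a jump $|z'_i-z_{i+1}|$ between \emph{distinct} stitches (rather than to within-stitch travel) is a worthwhile clarification of why the argument singles out $k\ge 2$.
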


\begin{proof}
By Theorem~\ref{thm-smock-metric}, we know there exists $N=N(v,w)$ such that
\be 
\bar{d}(v,w) = d_N(v,w).
\ee
If $N \ge 2$ then there are more than three segments and so there
exists $z, z'$ in different smocking stitches such that
\be
\bar{d}(v,w) \ge |z-z'| \ge \delta.
\ee
This is a contradiction.   So we have (\ref{one-stitch-a}).
If $N=0$ then 
\be
\bar{d}(v,w)=|v-w|\ge |v-w|-L_{max}
\ee
and if $N=1$ then there exists $z,z'$ in the same stitch such that
\be
\bar{d}(v,w)=|v-z|+|z'-w|\ge |v-w|-|z-z'| \ge |v-w|-L_{max}
\ee
by the triangle inequality,  Thus in either case we have (\ref{one-stitch-b}).
\end{proof}

\begin{lem}\label{lem-disjoint}
Let $I_j$ and $I_{j'}$ be distinct smocking stitches $j\neq j'$ and
let $p_j=\pi(I_j)$ and $p_{j'} = \pi(I_{j'})$ then
\be\label{tube-cap}
T_r(I_j) \cap T_r(I_{j'}) = \emptyset \quad \forall r < \delta/2
\ee
and
\be \label{ball-cap}
B_r(p_j) \cap B_r(p_{j'}) = \emptyset \quad \forall r < \delta/2.
\ee
where $\delta$ is the separation factor.
\end{lem}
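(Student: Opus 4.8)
The plan is to prove each of the two identities by a short contradiction argument, in both cases exploiting the defining property of the separation factor $\delta$ together with a triangle inequality. The first identity lives entirely in $\mathbb{E}^N$ and makes no reference to the quotient, while the second is cleanest if reduced to the first via Proposition~\ref{ball-delta}.

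For (\ref{tube-cap}) I would argue directly in the Euclidean space. Suppose toward a contradiction that some $x \in T_r(I_j) \cap T_r(I_{j'})$ with $r < \delta/2$. By the definition of tubular neighborhood there exist $z \in I_j$ and $z' \in I_{j'}$ with $|x-z| < r$ and $|x-z'| < r$. The ordinary triangle inequality in $\mathbb{E}^N$ then gives $|z-z'| \le |z-x| + |x-z'| < 2r < \delta$, which contradicts (\ref{s-factor}), since $z,z'$ lie in distinct stitches $j \ne j'$ and hence must satisfy $|z-z'| \ge \delta$. Thus no such $x$ exists and the two tubular neighborhoods are disjoint.

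For (\ref{ball-cap}) I would reduce to the first part. Since $r < \delta/2 < \delta_X$, Proposition~\ref{ball-delta} yields $\pi^{-1}(B_r(p_j)) = T_r(I_j)$ and $\pi^{-1}(B_r(p_{j'})) = T_r(I_{j'})$. Because preimages commute with intersections,
\[
\pi^{-1}\big(B_r(p_j) \cap B_r(p_{j'})\big) = T_r(I_j) \cap T_r(I_{j'}) = \emptyset
\]
by the first part. As the smocking map $\pi$ is surjective, any set with empty preimage is itself empty, so $B_r(p_j) \cap B_r(p_{j'}) = \emptyset$.

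Alternatively one can prove (\ref{ball-cap}) directly: were some $q$ to lie in both balls, the triangle inequality in $(X,d)$ would force $d(p_j,p_{j'}) < 2r < \delta$, whereas the definition of the smocked distance gives $d(I_j,I_{j'}) = \min_k d_k \ge \delta$, a contradiction. I expect the only step requiring genuine care is this lower bound $d(I_j,I_{j'}) \ge \delta$: one must check the $k=0$ term, $d_0 = |z'-z| \ge \delta$ on distinct stitches by (\ref{s-factor}), as well as every jumping term $d_k \ge k\delta \ge \delta$ for $k \ge 1$ by (\ref{k-delta}). Everything else in the proof is a routine application of the triangle inequality, so I would favor the reduction via Proposition~\ref{ball-delta} as the shortest route.
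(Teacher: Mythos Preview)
Your proof is correct. The paper reverses the order of implication: it first notes via Lemma~\ref{ball-delta} that $T_r(I_j)=\pi^{-1}(B_r(p_j))$, so (\ref{ball-cap}) implies (\ref{tube-cap}), and then proves (\ref{ball-cap}) directly by the triangle-inequality contradiction you sketch as your alternative. Your ordering is the cleaner of the two, since (\ref{tube-cap}) is a one-line Euclidean argument requiring no knowledge of the smocked metric, whereas the paper's direct attack on (\ref{ball-cap}) needs the lower bound $d(I_j,I_{j'})\ge\delta$ that you correctly flag as the only nontrivial step.
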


\begin{proof}
First recall that by Lemma~\ref{ball-delta}
\be
B_r(p_j)=\pi(T_r(I_j)).
\ee
So (\ref{ball-cap}) implies (\ref{tube-cap}).

Suppose $q \in B_r(p_j) \cap B_r(p_{j'}) $.  Then
\be
d(p_j, p_{j'}) \le d(p_j,q) + d(q, p_{j'}) < \delta/2 + \delta/2= \delta.
\ee
However if  $z \in I_j$ and $z'\in I_{j'}$ we have
\be
d(p_j, p_{j'})= \bar{d}(z, z') \le |z-z'| < \delta
\ee
by the definition of the separation factor.  Thus we have a contradiction.
\end{proof}

We can now prove Theorem~\ref{thm-complete}:

\begin{proof}
To see that $(X,d)$ is noncompact we show it contains an infinite collection of pairwise
disjoint balls of radius $\delta/2$.   When the smocking set is infinite, we can center
these balls on the images of the smocking stitches and apply Lemma~\ref{lem-disjoint}.
When the smocking set is finite, we center the balls on the images of a sequence of
points diverging to infinity in Euclidean space that are greater than $\delta$ apart and
are far from all the smocking stitches.

To prove completeness we must show that any Cauchy sequence $x_n\in X$ converges to a limit in $X$.
Let $v_n \in \pi^{-1}(x_n) \subset {\mathbb{E}}^N$,
so
\be
\forall \varepsilon>0 \,\, \exists N_\varepsilon \,\, s.t.\,\, \forall n,m \ge N_\varepsilon \,\,\,
d(x_n, x_m)=\bar{d}(v_n, v_m) < \varepsilon.
\ee
Take $\varepsilon=\delta_X$, and $N'=N_{\delta_X}$ then
\be
d(x_n, x_m) =\bar{d}(v_n, v_m)< \delta_X \quad \forall n,m \ge N'.
\ee
By Lemma~\ref{lem-one-stitch}, 
\be
|v_n-v_m| < \delta_X + L_{max} \quad \forall n,m \ge N'.
\ee
So $v_n$ is a bounded sequence in $\mathbb{E}^N$.

By the Bolzano-Weierstrass theorem, any bounded sequence in $\mathbb{E}^N$ has a convergent subsequence, 
So $\exists y \in \mathbb{E}^N$ and a subsequence $v_{n_k} \to y \in \mathbb{E}^N$. 
Since
\be
d(\pi(v_{n_k}), \pi(y)) = \bar{d}(v_{n_k}, y) \le |v_{n_k}-y| \to 0
\ee
we see that the subsequence $x_{n_k}=\pi(v_{n_k}) \to \pi(y)$. 

If a Cauchy sequence has a converging subsequence, then the sequence itself converges to the same limit.
So $x_n \to \pi(y) \in X$.  Thus $(X,d)$ is complete.

\end{proof}


\section{GH Convergence and Tangent Cones at Infinity}

\subsection{ Review of the Definitions by Prof.~Sormani}

Gromov-Hausdorff convergence was first defined by David Edwards in \cite{Edwards} and 
rediscovered by Gromov in \cite{Gromov-metric}.  See the text by Burago-Burago-Ivanov
\cite{BBI} for an
excellent introduction to this topic.  

\begin{defn} \label{defn-GH}
We say a sequence of compact metric spaces
\be
(X_j, d_j) \GHto (X_\infty, d_\infty)
\ee
iff
\be
d_{GH}((X_j,d_j), (X_\infty, d_\infty)) \to 0.
\ee
Where the Gromov-Hausdorff distance is defined
\be
d_{GH}(X_j, X_\infty) = \inf \{d^Z_H(\varphi_j(X_j), \varphi_\infty(X_\infty)): \,\, Z,\,\, \varphi_j: X_j \to Z\}
\ee
where the infimum is over all compact metric spaces, $Z$, and over all
distance preserving maps $\varphi_j: X_j\to Z$:
\be
d_Z(\varphi_j(a), \varphi_j(b))=d_j(a,b) \,\,\,\forall a,b \in X_j.
\ee
The Hausdorff distance is defined 
\be
d_H(A_1, A_2) = \inf\{r:\,\, A_1\subset T_r(A_2) \textrm{ and } A_2\subset T_r(A_1) \}.
\ee
\end{defn}

Notice in the above definition that if a compact metric space $X_j$ were replaced by another 
compact metric space $Y$ which is isometric to it then
\be
d_{GH}(X_j, X_\infty)=d_{GH}(Y_j, X_\infty).
\ee
So this Gromov-Hausdorff distance between metric spaces is really a distance between isometry
classes of metric spaces.   Furthermore, Gromov proved in that given two compact metric spaces, $X$ and $Y$,
\be
d_{GH}(X,Y)=0 \iff X \textrm{ is isometric to } Y.
\ee

\begin{ex} \label{taxi-same}
The taxi space
\be
(X_1,d_1)=({\mathbb{E}}^2, d_{taxi}) \textrm{ where }d_{taxi}((x_1,x_2),(y_1,y_2)) =|x_1-y_1|+|x_2-y_2|
\ee
and the rescaled taxi space:
\be
(X_2,d_2)=({\mathbb{E}}^2, d_{taxi/2}) \textrm{ where }d_{taxi/2}((x_1,x_2),(y_1,y_2)) =(|x_1-y_1|+|x_2-y_2|)/2
\ee
are isometric via the isometry: 
\be
F: X_1\to X_2 \textrm{ where } F(x_1,x_2)= (2x_1, 2x_2)
\ee
because for all $x,y\in {\mathbb{E}}^2$ we have
\be
d_2(F(x),F(y))= (|2x_1-2y_1|+|2x_2-2y_2|)/2=|x_1-y_1|+|x_2-y_2|=d_1(x,y) .
\ee
The Gromov-Hausdorff distance between these spaces is $0$.  This can easily be
proven using the following theorem by defining the correspondence 
\be
\mathcal{C}=\{(x,F(x)): \, x\in X_1\}
\ee
\end{ex}

\begin{thm}\label{thm-correspondence}
If there exists a correspondence $\mathcal{C} \subset X \times Y$:
\be
\forall x \in X \,\,\exists y \in Y \,\,s.t.\,\, (x,y) \in \mathcal{C} \textrm{ and }
\forall y \in Y \,\,\exists x \in X \,\,s.t.\,\, (x,y) \in \mathcal{C}
\ee
which is $\epsilon$ almost distance preserving:
\be
|d_X(x_1,x_2)-d_Y(y_1,y_2)|<\epsilon \quad \forall (x_i,y_i) \in \mathcal{C}
\ee
then
\be
d_{GH}\left((X,d_X), (Y,d_Y)\right) < 2\epsilon.
\ee
\end{thm}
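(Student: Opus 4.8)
The plan is to exhibit a single compact metric space $Z$ into which both $X$ and $Y$ embed distance-preservingly, and whose Hausdorff distance between the two images is at most $\epsilon$; by Definition~\ref{defn-GH} this immediately bounds $d_{GH}$. First I would take $Z = X \sqcup Y$ as a set, retaining $d_X$ on the $X$-part and $d_Y$ on the $Y$-part, and define a bridging distance across the two pieces using the correspondence. For $x \in X$ and $y \in Y$ set
\[
 d_Z(x,y) = \inf\{\, d_X(x,x') + \epsilon + d_Y(y',y) : (x',y') \in \mathcal{C}\,\},
\]
and $d_Z(y,x) = d_Z(x,y)$. The additive constant $\epsilon$ is inserted precisely so that corresponding points sit at controlled distance while no crossing triangle inequality is ever violated.

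Next I would check that $d_Z$ is genuinely a metric on $Z$. Symmetry is built in, and positivity holds since $d_Z(x,y) \ge \epsilon > 0$ for $x \in X$, $y \in Y$. The mixed inequalities that cross the bridge only once, namely $d_Z(x,y_1) \le d_Z(x,y_2) + d_Y(y_2,y_1)$ and $d_Z(x_1,y) \le d_X(x_1,x_2) + d_Z(x_2,y)$, follow directly by pushing the extra term inside the infimum and applying the triangle inequality in $Y$ and in $X$ respectively. The one substantive case is $d_X(x_1,x_2) \le d_Z(x_1,y) + d_Z(y,x_2)$, and this is where the hypothesis enters. Choosing near-optimal $(x',y'),(x'',y'') \in \mathcal{C}$ realizing the two infima, I would estimate
\[
 d_Z(x_1,y) + d_Z(x_2,y) \ge d_X(x_1,x') + d_X(x_2,x'') + 2\epsilon + d_Y(y',y''),
\]
using $d_Y(y',y)+d_Y(y'',y) \ge d_Y(y',y'')$. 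Then I would replace $d_Y(y',y'')$ by $d_X(x',x'')-\epsilon$ via the $\epsilon$-almost-distance-preserving property, and finally collapse $d_X(x_1,x')+d_X(x',x'')+d_X(x'',x_2) \ge d_X(x_1,x_2)$ by the triangle inequality in $X$. This leaves a surplus of $2\epsilon-\epsilon=\epsilon\ge 0$, which is exactly what validates the inequality; the same computation with the roles of $X$ and $Y$ swapped handles the symmetric case $d_Y(y_1,y_2) \le d_Z(y_1,x)+d_Z(x,y_2)$.

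Finally I would bound the Hausdorff distance. The inclusions $\varphi_X : X \hookrightarrow Z$ and $\varphi_Y : Y \hookrightarrow Z$ are distance preserving by construction, and $Z$ is compact because $X$ and $Y$ are compact and the bridge distances are bounded. Given any $x \in X$, the correspondence supplies $y \in Y$ with $(x,y) \in \mathcal{C}$, so $d_Z(x,y) \le \epsilon$, and symmetrically every $y$ lies within $\epsilon$ of some $x$. Hence $X \subset T_r(Y)$ and $Y \subset T_r(X)$ inside $Z$ for every $r > \epsilon$, giving $d^Z_H(\varphi_X(X),\varphi_Y(Y)) \le \epsilon$. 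Therefore $d_{GH}(X,Y) \le \epsilon < 2\epsilon$, which is stronger than the stated bound. The main obstacle is precisely the verification of the doubly-crossing triangle inequality $d_X(x_1,x_2) \le d_Z(x_1,y)+d_Z(x_2,y)$: it is the only step that uses the almost-isometry hypothesis and it dictates the choice of the bridge constant $\epsilon$, while everything else is routine bookkeeping with infima and triangle inequalities.
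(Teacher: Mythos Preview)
Your argument is correct and is in fact the standard proof of this result (see, e.g., Burago--Burago--Ivanov \cite{BBI}, Theorem~7.3.25). Note, however, that the paper does \emph{not} supply its own proof of Theorem~\ref{thm-correspondence}: it is stated without proof as a known tool, so there is nothing in the paper to compare your approach against. Your construction of the metric on $Z = X \sqcup Y$ with bridge constant $\epsilon$, the verification of the doubly-crossing triangle inequality via the almost-distance-preserving hypothesis, and the Hausdorff-distance estimate are all sound; indeed you obtain the sharper conclusion $d_{GH}(X,Y) \le \epsilon$, stronger than the stated $< 2\epsilon$. (One could sharpen further to $d_{GH}(X,Y) \le \epsilon/2$ by taking the bridge constant to be $\epsilon/2$ instead of $\epsilon$, since the surplus $2c - \epsilon$ in your key inequality only needs to be nonnegative; but this is not required for the theorem as stated.)
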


In this paper we are considering unbounded metric spaces, so
we must consider the following definition by Gromov:

\begin{defn}\label{defn-ptGH}
If one has a sequence of complete noncompact metric spaces, $(X_j, d_j)$, and points
$x_j \in X_j$, one can define pointed GH convergence:
\be
(X_j, d_j,x_j) \ptGHto (X_\infty, d_\infty, x_\infty)
\ee
iff
for every radius $r>0$ the closed balls of radius $R$ in $X_j$ converge in the GH sense as metric spaces
with
the restricted distance to closed balls in $X_\infty$:
\be
d_{GH}((\bar{B}_r(x_j)\subset X_j,d_j), (\bar{B}_r(x_\infty)\subset X_\infty, d_\infty)) \to 0.
\ee
\end{defn}

We will be rescaling our metric spaces to see how they look from a distance.  Unlike the taxi
space in Example~\ref{taxi-same}, most metric spaces are not isometric to their rescalings.  So when
we rescale a metric space repeatedly we obtain a sequence of metric spaces.  If the sequence
or a subsequence converges in the Gromov-Hausdorff sense then we obtain a space that is called the
tangent cone at infinity:

\begin{defn} \label{defn-tan-cone}
A complete noncompact metric space with infinite diameter, $(X, d_X)$,
has a tangent cone at infinity, $(Y, d_Y)$, if there is a sequence of
rescalings, $R_j \to \infty$, and points, $x_0\in X$ and $y_0\in Y$, such that
\be
(X, d/R_j, x_0) \ptGHto (Y, d_Y, y_0)
\ee
\end{defn}

There are a variety of theorems in the literature concerning the existence and
uniqueness of such tangent spaces at infinity.  We will not be applying those
theorems in this paper.  We can prove our theorems directly using only what
we've stated in this section.

\subsection{Distances in Pulled thread Spaces by Prof.~Sormani}

\begin{lem} \label{ps-dist}
In a pulled thread space with an interval $I$ of length $L$ we have
\be
| \bar{d}(a,b) - |a-b| | \le L \quad \forall a,b \in {\mathbb{E}}^N.
\ee
\end{lem}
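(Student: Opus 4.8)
The plan is to unwind the definition of the pulled thread pseudometric directly from Definition~\ref{defn-ps}, namely
\be
\bar{d}(a,b)= \min\{ |a-b|,\ \min\{ |a-z|+|z'-b|:\, z, z'\in I\}\},
\ee
and to split the claimed inequality $|\bar{d}(a,b)-|a-b||\le L$ into its two halves, handling each separately. The upper half is immediate: since $|a-b|$ is itself one of the quantities over which the outer minimum is taken, we automatically get $\bar{d}(a,b)\le |a-b|$, and therefore $\bar{d}(a,b)-|a-b|\le 0\le L$. So the substance of the lemma lies entirely in the reverse bound $|a-b|-\bar{d}(a,b)\le L$, i.e. $\bar{d}(a,b)\ge |a-b|-L$.

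For the reverse bound I would argue by cases on which term realizes the minimum defining $\bar{d}(a,b)$. In the first case, $\bar{d}(a,b)=|a-b|$, and then $|a-b|-\bar{d}(a,b)=0\le L$ trivially. In the second case there exist $z,z'\in I$ with $\bar{d}(a,b)=|a-z|+|z'-b|$. Here I apply the Euclidean triangle inequality through the two points $z$ and $z'$:
\be
|a-b|\le |a-z|+|z-z'|+|z'-b|.
\ee
Since $z$ and $z'$ both lie on the interval $I$, whose length is $L$, we have $|z-z'|\le L$, and hence $|a-b|\le |a-z|+|z'-b|+L=\bar{d}(a,b)+L$, which is exactly the desired lower bound.

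Combining the two halves gives $-L\le \bar{d}(a,b)-|a-b|\le 0$, so in particular $|\bar{d}(a,b)-|a-b||\le L$, completing the argument. I do not expect any real obstacle here: the only nontrivial ingredient is the estimate $|z-z'|\le L$ for two points on an interval of length $L$, together with the observation that the outer minimum always competes against $d_0(a,b)=|a-b|$. In fact this statement is essentially a restatement of the inclusion $B_r(x)\subset \pi^{-1}(B_r(\pi(x)))\subset B_{r+L}(x)$ already recorded in Proposition~\ref{ps-v}, so one could alternatively deduce it from that proposition; but the direct two-case computation above is shorter and entirely self-contained.
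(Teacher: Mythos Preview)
Your proof is correct and follows essentially the same approach as the paper: both split into cases according to which term realizes the minimum in the definition of $\bar{d}$, and in the nontrivial case apply the Euclidean triangle inequality through $z$ and $z'$ together with $|z-z'|\le L$. Your write-up is slightly more explicit about separating the two halves of the absolute value, but the argument is the same.
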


\begin{proof} 
By Definition~\ref{defn-ps}, 
\be
\bar{d}(a,b)=\min\{|a-b|, |a-z|+|z'-b|:\, z,z'\in I\}.                 
\ee
If the minimum is $|a-b|$, then the lemma follows trivially.
If the minimum occurs at $z, z'\in I$ then
\begin{eqnarray}
|a-b|  &\ge& \bar{d}(a,b)=|a-z|+|z'-b| \\
&=&|a-z|+|z-z'|+|z'-b|  -|z-z'|\\
&\ge& |a-b| -|z-z'| \ge |a-b|-L
\end{eqnarray}
because $z,z'\in I \implies |z-z'|\le L$.  This implies
the lemma.
\end{proof}

Because of this uniform comparison, we have the following 
surprising fact even though the pair of spaces
we consider are unbounded:

\begin{lem} \label{GH-dist}
If $X$ is an $N$ dimensional pulled thread space with an interval of length $L$,
then
\be
d_{GH}((X,d_X), ({\mathbb{E}}^N, d_E)) \le 2L
\ee
where $d_E(v,w)=|v-w|$.
\end{lem}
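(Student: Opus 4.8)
The plan is to apply the correspondence criterion of Theorem~\ref{thm-correspondence}, using the pulled thread map $\pi$ of Definition~\ref{defn-ps} as the natural identification between $\mathbb{E}^N$ and $X$. First I would define the correspondence
\be
\mathcal{C} = \{(\pi(v), v):\, v \in \mathbb{E}^N\} \subset X \times \mathbb{E}^N.
\ee
To check that this is a correspondence in the sense of Theorem~\ref{thm-correspondence}, note that the second projection is surjective by construction, while the first projection is surjective because $\pi$ maps onto $X = (\mathbb{E}^N \setminus I) \cup \{I\}$: each $v \in \mathbb{E}^N \setminus I$ equals $\pi(v)$, and the special point $I$ equals $\pi(z)$ for any $z \in I$.

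Next I would verify that $\mathcal{C}$ is $L$-almost distance preserving. Given two pairs $(\pi(v_1), v_1)$ and $(\pi(v_2), v_2)$ in $\mathcal{C}$, the distance in $X$ between the first coordinates is $d_X(\pi(v_1), \pi(v_2)) = \bar{d}(v_1, v_2)$ by the definition of the pulled thread pseudometric, while the distance between the second coordinates is $d_E(v_1, v_2) = |v_1 - v_2|$. Lemma~\ref{ps-dist} then gives directly
\be
\left| d_X(\pi(v_1), \pi(v_2)) - d_E(v_1, v_2) \right| = \left| \bar{d}(v_1, v_2) - |v_1 - v_2| \right| \le L.
\ee
Thus $\mathcal{C}$ is $\epsilon$-almost distance preserving with $\epsilon = L$, and Theorem~\ref{thm-correspondence} yields $d_{GH}((X,d_X),(\mathbb{E}^N,d_E)) < 2L$, which in particular gives the claimed bound $\le 2L$.

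The one point requiring care is that $X$ and $\mathbb{E}^N$ are unbounded, whereas the Gromov-Hausdorff framework in Definition~\ref{defn-GH} and Theorem~\ref{thm-correspondence} was introduced for compact spaces. I expect this to be the only genuine obstacle: one should confirm that the construction underlying Theorem~\ref{thm-correspondence} — producing an admissible metric on the disjoint union $X \sqcup \mathbb{E}^N$ in which each pair of corresponding points lies within $\epsilon$ — carries over verbatim to these noncompact spaces, so that the estimate $d_{GH} < 2\epsilon$ still holds. Once that is granted, everything else is an immediate consequence of the uniform comparison in Lemma~\ref{ps-dist}.
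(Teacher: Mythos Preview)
Your proof is correct and follows essentially the same approach as the paper: define the correspondence $\mathcal{C}=\{(\pi(v),v):v\in\mathbb{E}^N\}$, invoke Lemma~\ref{ps-dist} to show it is $L$-almost distance preserving, and apply Theorem~\ref{thm-correspondence}. Your added remark about the noncompactness issue is a fair observation, and indeed the paper simply applies the correspondence theorem directly without further justification on that point.
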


\begin{proof}
We set up the correspondence
\be
\mathcal{C} = \{ (\pi(w) ,w)\in X \times {\mathbb{E}}^N: \,\, w \in {\mathbb{E}}^N \},
\ee
which is a correspondence because $\pi: {\mathbb{E}}^N \to X$
is surjective.   It is $L$ distance preserving because
\be
| d(\pi(v), \pi(w)) -|v-w| | = | \bar{d}(v,w) -|v-w| | \le L.
\ee
The lemma then follows from Theorem~\ref{thm-correspondence}.
\end{proof}

\subsection{Rescaling Pulled Thread Spaces by Prof.~Sormani}

\begin{lem} \label{ps-dist-R}
In a pulled thread space with an interval $I$ of length $L$ we have
\be
\lim_{R\to \infty} \frac{\bar{d}(Rx,Rx)}{R} = |x-y| 
\ee
where the convergence is uniform on ${\mathbb{E}}^N$:
\be
| \,  \bar{d}(Rx,Ry)/R -|x-y|\, |\, \le\, L/R \quad \forall a,b \in {\mathbb{E}}^N.
\ee
\end{lem}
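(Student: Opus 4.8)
The plan is to observe that this lemma is an immediate corollary of the uniform comparison already established in Lemma~\ref{ps-dist}. That lemma provides, for every pair $a,b \in {\mathbb{E}}^N$, the bound $|\bar{d}(a,b) - |a-b|| \le L$, crucially with no dependence on the distance between $a$ and $b$. Because this estimate is uniform over the whole plane, it survives the rescaling, and all that remains is bookkeeping with the positive scaling constant $R$.

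First I would apply Lemma~\ref{ps-dist} to the specific points $a = Rx$ and $b = Ry$, for an arbitrary $R>0$ and arbitrary $x,y \in {\mathbb{E}}^N$. This yields
\be
|\bar{d}(Rx, Ry) - |Rx - Ry|| \le L.
\ee
Next I would use the homogeneity of the Euclidean norm, namely $|Rx - Ry| = R|x-y|$ for $R>0$, to rewrite the middle term, and then divide the inequality through by $R$ (legitimate since $R>0$), obtaining
\be
\left| \frac{\bar{d}(Rx, Ry)}{R} - |x-y| \right| \le \frac{L}{R}.
\ee
This is precisely the claimed uniform estimate, and since the right-hand side $L/R$ does not depend on $x$ or $y$, the convergence holds uniformly over all of ${\mathbb{E}}^N$.

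Finally, letting $R \to \infty$ sends $L/R \to 0$, which forces
\be
\lim_{R\to\infty} \frac{\bar{d}(Rx, Ry)}{R} = |x-y|,
\ee
establishing the limit. There is no genuine obstacle here: the entire analytic content was already absorbed into the uniform bound of Lemma~\ref{ps-dist}, and everything remaining is substitution, homogeneity of the norm, and division by a positive constant. The only point to flag is the apparent typo in the displayed limit (it should read $\bar{d}(Rx,Ry)/R$ rather than $\bar{d}(Rx,Rx)/R$), which the argument above resolves automatically.
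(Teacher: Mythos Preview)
Your proof is correct and follows essentially the same route as the paper: apply Lemma~\ref{ps-dist} with $a=Rx$, $b=Ry$, use $|Rx-Ry|=R|x-y|$, and divide by $R$ to obtain the uniform bound $L/R$, from which the limit follows. You also correctly identify the typo in the statement.
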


\begin{proof} 
By Lemma~\ref{ps-dist} applied with $a=Rx$ and $b=Ry$
we have
\be
\left| \frac{\bar{d}(Rx,Ry)}{R} - |x-y| \right| = \frac{| \bar{d}(Rx,Ry)- |Rx-Ry| |}{R} \le \frac{L}{R}
\ee
so $\lim_{R\to0} | \,  \bar{d}(Rx,Ry)/R -|x-y|\, |=0$ uniformly on ${\mathbb{E}}^N$.
\end{proof}

\begin{thm}\label{thm-ps-R}
If $X$ is an $N$ dimensional pulled thread space with an interval $I$ of length $L$,
then it has a unique tangent cone at infinity which is ${\mathbb{E}}^N$ endowed with
the standard Euclidean metric $d_E(v,w)=|v-w|$.
\end{thm}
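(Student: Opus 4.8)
The plan is to verify Definition~\ref{defn-tan-cone} directly, taking an \emph{arbitrary} rescaling sequence $R_j \to \infty$ together with fixed basepoints $x_0 = \pi(0) \in X$ and $y_0 = 0 \in {\mathbb{E}}^N$. First I would reduce the claim, via Definition~\ref{defn-ptGH}, to showing that for each fixed $r > 0$ the rescaled closed balls $\bar B_r(x_0) \subset (X, d_X/R_j)$ converge in the Gromov--Hausdorff sense to $\bar B_r(0) \subset ({\mathbb{E}}^N, d_E)$. All of the metric content needed is already packaged in Lemma~\ref{ps-dist-R}, whose uniform estimate
\[ \left| \frac{\bar d(R_j x, R_j y)}{R_j} - |x-y| \right| \le \frac{L}{R_j} \]
says precisely that the rescaled pulled--thread distance and the Euclidean distance differ by at most $L/R_j$ everywhere; the remaining work is purely Gromov--Hausdorff bookkeeping.

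Next I would imitate the correspondence from the proof of Lemma~\ref{GH-dist}, but now at scale $R_j$. Using surjectivity of $\pi$, match each $y \in {\mathbb{E}}^N$ with the rescaled image $\pi(R_j y) \in X$. For any two such pairs, Lemma~\ref{ps-dist-R} gives
\[ \left| \tfrac{d_X}{R_j}\big(\pi(R_j x), \pi(R_j y)\big) - |x - y| \right| = \left| \frac{\bar d(R_j x, R_j y)}{R_j} - |x-y| \right| \le \frac{L}{R_j}, \]
so this matching is $(L/R_j)$--almost distance preserving; Theorem~\ref{thm-correspondence} is the device that will convert this into a Gromov--Hausdorff bound once the matching is cut down to a ball.

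The main obstacle, and essentially the only subtlety, is that this matching does not carry $\bar B_r(x_0)$ exactly onto $\bar B_r(0)$: the uniform slack $L/R_j$ means a ball in one space corresponds only to a near--ball in the other. I would handle this by restricting the scaled pairs $(\pi(R_j y), y)$ to those with $\pi(R_j y) \in \bar B_r(x_0)$, which \emph{is} a genuine correspondence between $\bar B_r(x_0)$ and its Euclidean image $E_j$; the estimate above forces
\[ \bar B_{r - L/R_j}(0) \subset E_j \subset \bar B_{r + L/R_j}(0). \]
Theorem~\ref{thm-correspondence} then gives $d_{GH}(\bar B_r(x_0), E_j) \le 2L/R_j$, while $E_j$ and $\bar B_r(0)$ lie in the common space ${\mathbb{E}}^N$ at Hausdorff distance at most $L/R_j$, hence at Gromov--Hausdorff distance at most $L/R_j$. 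The triangle inequality for $d_{GH}$ then yields $d_{GH}(\bar B_r(x_0), \bar B_r(0)) \le 3L/R_j \to 0$ for every $r$, which is exactly the pointed convergence required.

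Finally, uniqueness needs no extra argument: since Lemma~\ref{ps-dist-R} is uniform and the computation above goes through verbatim for \emph{every} sequence $R_j \to \infty$ and every choice of basepoint, each such sequence produces the same limit $({\mathbb{E}}^N, d_E)$. Consequently any tangent cone at infinity of $X$ is forced to be isometric to $({\mathbb{E}}^N, d_E)$, so the tangent cone exists and is unique, completing the proof.
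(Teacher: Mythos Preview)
Your proof is correct and follows essentially the same strategy as the paper: both arguments exploit the uniform estimate of Lemma~\ref{ps-dist-R} to build an $(L/R)$--almost distance preserving correspondence between the rescaled ball in $X$ and the Euclidean ball, and then invoke Theorem~\ref{thm-correspondence}. The only technical difference lies in how the mismatch of ball boundaries is handled: the paper defines a single surjective map $f:U_{Rr}(x_0)\to \bar B_r(0)$ that radially retracts the thin overhanging annulus onto $\partial B_r(0)$, obtaining a direct correspondence that is $(3L/R)$--almost distance preserving; you instead pass through the intermediate set $E_j$, control $d_{GH}(\bar B_r(x_0),E_j)$ by the correspondence and $d_{GH}(E_j,\bar B_r(0))$ by a Hausdorff estimate in $\mathbb{E}^N$, then combine via the triangle inequality for $d_{GH}$. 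Both routes yield the same $O(L/R)$ bound and the same conclusion; your two-step argument avoids defining the retraction map explicitly at the cost of invoking the triangle inequality for $d_{GH}$, which the paper's one-step argument does not need.
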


\begin{proof}
Take any $x_0\in X$.  By shifting
the location of the interval, $I$, we may assume that $\pi(0)=x_0$ where
$\pi: {\mathbb{E}}^N\to X$ is the pulled thread map.   

We need to show that for all $r>0$
\be
\lim_{R\to \infty}d_{GH}((B_{Rr}(x_0), d_X/R), (B_r(0), d_E)) =0
\ee
and we will do this by finding a correspondence for each $R,r>0$.
Let 
\be 
U_{Rr}(x_0)= \pi^{-1}\left(\bar{B}_{Rr}(x_0)\right)\subset {\mathbb{E}}^N.
\ee   
Note that by the fact that 
\be
d(x,x_0)=\bar{d}(u,0) \le |u-0| \textrm{ when } \pi(u)=x
\ee
 we have
\be 
\bar{B}_{Rr}(0) \subset U_{Rr}(x_0).
\ee 
We set up a correspondence
\be
\mathcal{C}_R = \{ (\pi(w), f(w)) : \,\, w\in U_{Rr}(x_0)\} \subset B_{Rr}(x_0) \times B_r(0),
\ee
where $\pi$ is the pulled thread map and $f: U_{Rr}(x_0) \to \bar{B}_{r}(0) $
is defined:
\begin{equation}
   f(w)=%
   \begin{cases}
     w/R &\text{if }|w| < rR \\
       r(w/|w|) &\text{if $w$} \ge rR.
   \end{cases}
\end{equation}
This is a correspondence because $\pi:  U_{Rr}(x_0) \to B_{Rr}(x_0)$ and
$f: U_{Rr}(x_0) \to \bar{B}_r(0)$ are surjective.   

We claim that $\mathcal{C}$ is $(3L)/R$ almost distance preserving:
$$
|\,d_X(\pi(v),\pi(w))/R - | f(v)-f(w)| \,| \le (3L/R).
$$
Observe that 
\be
|f(w)-w/R| =   \begin{cases}
   0 &\text{if $|w| < rR$} \\
    (|w|/R) \,-\,r  &\text{if $w \ge rR$}
   \end{cases}
\ee
and since $|w| \le Rr + L$ on $U_{Rr}(x_0)$, 
we have
\be
|f(w)-w/R|\le L/R \qquad \forall w \in U_{Rr}(x_0).
\ee
So
\be
| \, |f(v)-f(w)| - |v/R - w/R| \, | \le  2L/R.
\ee
Thus we have our claim:
\begin{eqnarray}
\left |\,\frac{d_X(\pi(v),\pi(w))}{R} - | f(v)-f(w)| \,\right|&=& \left|\,\frac{\bar{d}(v,w)}{R} - \frac{|v-w|}{R}\,\right|\, +\, \frac{2L}{R}\\
&=& \left( |\,\bar{d}(v,w) - |v-w| \,|\, +\, 2L \right)/R\\
&\le &( L +\, 2L )/R =(3L)/R.
\end{eqnarray}
So by Theorem~\ref{thm-correspondence} we have
\be
d_{GH}((B_{Rr}(x_0), d_X/R), (B_r(0), d_E)) \le 2 (3L)/R \to 0 \textrm{ as } R \to \infty.
\ee
Note that we do not need a subsequence, nor does this depend on the base point, $x_0$.
\end{proof}

\section{Approximating the Smocking Pseudometric}

In this section, we analyze the smocking pseudometric.  In the first subsection
we prove a key lemma which will allow us to estimate the smocking distances
between points when one can only approximate the smocking distances between intervals.
The next few subsections, we find the smocking pseudometric
for three of our smocked spaces: $X_T$, $X_+$,
and $X_\square$.  The proofs for the other smocked spaces, $X_\diamond$, 
$X_=$ and $X_H$, are significantly more difficult, so we postpone them 
to our next paper \cite{SWIF-smocked}.   

\subsection{ Key Lemma by Prof.~Sormani}

\begin{lem} \label{lem-dil-to-approx}
Given an $N$ dimensional smocked space 
parametrized by points in intervals as in (\ref{param-by-points}),
with smocking depth $h\in (0,\infty)$,
and smocking length $L=L_{max}\in (0,\infty)$, 
if one can find a Lipschitz
function $F: {\mathbb{E}}^N \to [0, \infty)$ such that
\be
|\,d(I_j, I_{j'})\,- \,[F(j)-F(j')] \,| \,\le \,C, 
\ee
then
\be
|\,\bar{d}(x, x')\,- \,[F(x)-F(x')] \,| \,\le \, 2h+ C + 2 \dil(F) (h+L)
\ee
where $dil(F)$ is the dilation factor or Lipschitz constant of $F$:
\be
\dil(F) = \sup \left\{\frac{|F(a)-F(b)|}{|a-b|}\,:\,\, a\neq b \in {\mathbb E}^N\right\}.
\ee
\end{lem}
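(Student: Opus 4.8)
The plan is to avoid tracking the entire minimizing broken geodesic between $x$ and $x'$, and instead to compare \emph{both} $\bar d(x,x')$ and $F(x)-F(x')$ to the single interstitch distance $d(I_a,I_b)$, where $I_a$ and $I_b$ are stitches nearest to $x$ and to $x'$. The essential point is that the hypothesis is then invoked exactly once, so the error $C$ does not accumulate along a long path. Routing through the two nearest stitches, rather than along a minimizing chain of segments, is precisely what keeps $C$ linear.

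First I would locate the nearest stitches. Since $h<\infty$, Lemma~\ref{lem-smocking-depth} gives $D(x)\le h$ and $D(x')\le h$, and Lemma~\ref{lem-distance-to-smocking set} says these distances are achieved: there are stitches $I_a,I_b$ and points $z\in I_a$, $z'\in I_b$ with $|x-z|=D(x)\le h$ and $|x'-z'|=D(x')\le h$. Writing $a,b$ for the parametrizing centers, so that $a\in I_a$ and $b\in I_b$ by (\ref{param-by-points}), the diameter bound $L=L_{max}$ gives $|z-a|\le L$ and $|z'-b|\le L$, whence
\[
|x-a|\le h+L, \qquad |x'-b|\le h+L.
\]
I would then record two consequences. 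Since $\pi(a)=I_a$ and $\pi(b)=I_b$, the $d_0$ term yields $\bar d(x,a)=d(\pi(x),I_a)\le|x-z|\le h$ and similarly $\bar d(x',b)\le h$. And by the Lipschitz bound, $|F(x)-F(a)|\le \dil(F)(h+L)$ and $|F(x')-F(b)|\le \dil(F)(h+L)$.

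For the upper bound on $\bar d(x,x')-(F(x)-F(x'))$, I would apply the triangle inequality for $\bar d$ (available from Theorem~\ref{thm-smock-metric}) to get
\[
\bar d(x,x')\le \bar d(x,a)+\bar d(a,b)+\bar d(b,x')\le 2h + d(I_a,I_b),
\]
using $\bar d(a,b)=d(I_a,I_b)$. Then I replace $d(I_a,I_b)$ by $(F(a)-F(b))+C$ via the hypothesis, and replace $F(a)-F(b)$ by $(F(x)-F(x'))+2\dil(F)(h+L)$ via the two Lipschitz estimates. This produces exactly $\bar d(x,x')\le (F(x)-F(x'))+2h+C+2\dil(F)(h+L)$.

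For the lower bound I would run the identical comparison in reverse: write $F(x)-F(x')=(F(x)-F(a))+(F(a)-F(b))+(F(b)-F(x'))$, bound each outer bracket by $\dil(F)(h+L)$, bound $F(a)-F(b)\le d(I_a,I_b)+C$ from the hypothesis, and bound $d(I_a,I_b)\le \bar d(a,x)+\bar d(x,x')+\bar d(x',b)\le \bar d(x,x')+2h$. This gives the mirror inequality $(F(x)-F(x'))-\bar d(x,x')\le 2h+C+2\dil(F)(h+L)$, and the two one-sided estimates combine to the claimed absolute-value bound. I do not anticipate a genuine obstacle; the only care needed is the bookkeeping that keeps $C$ entering once rather than once per stitch, which is guaranteed by comparing through the two nearest stitches rather than along the geodesic.
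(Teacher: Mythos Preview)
Your proposal is correct and follows essentially the same approach as the paper: locate stitches $I_a,I_b$ nearest to $x,x'$, use the triangle inequality for $\bar d$ to compare $\bar d(x,x')$ with $d(I_a,I_b)$ up to error $2h$, invoke the hypothesis once to pass to $F(a)-F(b)$ with error $C$, and then use the Lipschitz bound and $|x-a|,|x'-b|\le h+L$ to pass to $F(x)-F(x')$ with error $2\dil(F)(h+L)$. The only cosmetic difference is that the paper carries absolute values throughout in a single chain, whereas you establish the two one-sided inequalities separately; the substance is identical.
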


\begin{proof}
Given any $x, x' \in {\mathbb{E}}^N$, by the definition of smocking depth,
we have closest points in closest intervals, $z\in I_j$ and $z'\in I_{j'}$, such that
\be
d(x, I_j)=\bar{d}(x, z)=|x-z| \le h \textrm{ and } d(x', I_{j'})=\bar{d}(x', z')=|x'-z'| \le h.
\ee
Since our smocked space is parametrized by points in intervals
we have
\be
|z-j|\le L \textrm{ and } |z'-j'|\le L.
\ee
So
\be \label{75}
|x-j|\le L+h \textrm{ and } |z'-j'|\le L+h.
\ee
By the definition of the smocking pseudometric we have 
\be
\bar{d}(z,z')=d(\pi(z), \pi(z'))=d(I_j, I_{j'}).
\ee
Thus by the $\bar{d}$ triangle inequality twice we have
\begin{eqnarray}
\qquad\qquad  \bar{d}(x, x') &\le &\bar{d}(x, z) + \bar{d}(z,z') +\bar{d}(z', x')  \\
&\le& h + d(I_j, I_{j'}) +h.\\
d(I_j, I_{j'})&=& \bar{d}(z,z') \le
\bar{d}(x, z) + \bar{d}(x, x')+\bar{d}(z', x')  \\
&\le& h + d(x,x') +h. 
\end{eqnarray}
So
\be
|\bar{d}(x, x') -d(I_j, I_{j'})| \le 2h.
\ee
We are given that
\be
|\,d(I_j, I_{j'})\,- \,[F(j)-F(j')] \,| \,\le \,C,
\ee
so by the triangle inequality we have
\be\label{76}
|\bar{d}(x, x') \,- \,[F(j)-F(j')] \,| \,\le \,2h+C.
\ee
By the definition of dilation and (\ref{75}), we know
\begin{eqnarray}
|F(j)-F(x)| &\le& \dil(F)\, |j-x| \,\,\le \,\,\dil(F) \,(h+L)\\
|F(j')-F(x')| &\le& \dil(F)\, |j'-x'| \,\,\le\,\, \dil(F) \,(h+L).
\end{eqnarray}
Thus
\begin{eqnarray}
| \,[F(j)-F(j')] \,-\,[F(x)-F(x')] \,| &\le&  | F(j)-F(x)|+|F(j')-F(x')| \qquad\\
&\le& 2 \,\dil(F) \,(h+L).
\end{eqnarray}
Combining this with (\ref{76}) we have
\be
|\bar{d}(x, x') \,- \,[F(x)-F(x')] \,| \,\le \,2h+C+ 2 \,\dil(F) \,(h+L).
\ee
\end{proof}


\subsection{ Estimating the Distances $d_+$ by Prof.~Sormani, Shanell, Vishnu, and Hindy}

In this subsection we estimate the distance between the plus stitches in $X_+$.  First, by examination
it appears that the optimal paths of segments joining one plus stitch to another is found by traveling
first vertically and then horizontally as in Figure~\ref{fig:plus-geods}.  The sum of the lengths of vertical segments between
two plus stitches is $1/3$ of the vertical distance between the centers of the plus stitches and
the sum of the lengths of horizontal segments between
two plus stitches is $1/3$ of the horizontal distance between the centers of the plus stitches.  This
intuitively allows us to guess the formula for the distance in lemma below.  To prove the lemma rigorously
one must ensure that there are no other shorter paths.

\begin{figure}[h]
\includegraphics[width=.6\textwidth]{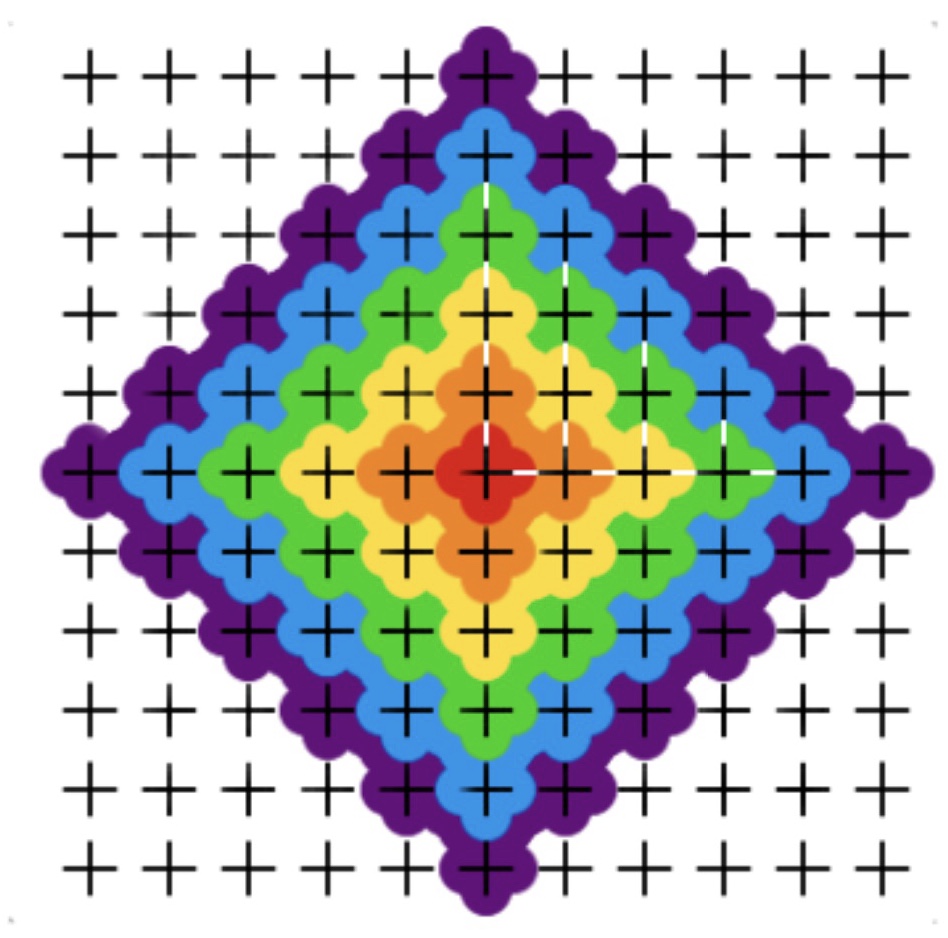}
\caption{Optimal paths of segments from various $I_{(j_1,j_2)}$ to $I_{(0,0)}$ are
drawn in white.}
\label{fig:plus-geods}
\end{figure}

\begin{lem}\label{lem-interval-dist-+}
For any two pairs $(j_1,j_2),(k_1,k_2)\in J_+$, we have
the distance between smocking stiches
\begin{equation}\label{interval-dist-+}
d_{+}(I_{(j_{1},j_{2})},I_{(k_1,k_2)})=\frac{|k_1-j_{1}|+|k_2-j_{2}|}{3}
\end{equation}
\end{lem}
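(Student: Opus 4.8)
The plan is to prove the identity by establishing matching upper and lower bounds for $d_+(I_{(j_1,j_2)},I_{(k_1,k_2)})$. Writing the lattice centers as $(j_1,j_2)=(3a,3b)$ and $(k_1,k_2)=(3c,3d)$ with $a,b,c,d\in\mathbb{Z}$, the target value $\tfrac{|k_1-j_1|+|k_2-j_2|}{3}$ becomes $|c-a|+|d-b|$, so it suffices to show the distance equals this.

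For the upper bound I would exhibit an explicit admissible chain of segments realizing $|c-a|+|d-b|$. Sweeping first horizontally along the row of stitches $I_{(3a,3b)},I_{(3(a\pm1),3b)},\dots,I_{(3c,3b)}$ and then vertically along the column $I_{(3c,3b)},\dots,I_{(3c,3d)}$, each jump between consecutive plus-stitches runs from a tip of one plus to the nearest tip of the next and has length exactly $1$: neighbouring centers are $3$ apart and each arm has length $1$, leaving a gap of $1$, while moves within a stitch are free. This path visits $|c-a|$ horizontal gaps and $|d-b|$ vertical gaps, hence is a $d_N$ of Definition~\ref{defn-smock} with $N=|c-a|+|d-b|$ (all visited stitches distinct), giving $d_+\le|c-a|+|d-b|$.

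The lower bound is the real content. A single Lipschitz potential constant on each stitch (for instance a function of $x$ alone, flat on each stitch column and rising by $1$ across each gap) only yields $d_+\ge\max\{|c-a|,|d-b|\}$, since Euclidean segment length controls each coordinate projection separately but not their sum; this naive approach is therefore too weak. Instead I would prove a \emph{per-segment} inequality: for any single segment joining a point $p\in I_{(3a',3b')}$ to a point $q\in I_{(3a'',3b'')}$ on two distinct stitches,
\[
|p-q|\ \ge\ |a''-a'|+|b''-b'|.
\]
Granting this, Theorem~\ref{thm-smock-metric} guarantees the distance is attained by a finite chain of stitches $I_{(j_1,j_2)}=S_0,S_1,\dots,S_M=I_{(k_1,k_2)}$ with segment lengths $\ell_i$; summing the per-segment bound and applying the triangle inequality to the telescoping index sums gives $\sum_i\ell_i\ge\sum_i|\Delta a_i|+\sum_i|\Delta b_i|\ge|c-a|+|d-b|$, which is the desired lower bound.

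The main obstacle, and the heart of the argument, is the per-segment inequality. I would prove it by a center-and-reach estimate: every point of a plus-stitch lies within Euclidean distance $1$ of its center, so $|p-q|\ge 3\sqrt{(a''-a')^2+(b''-b')^2}-2$. It then remains to verify the elementary inequality $3\sqrt{s_1^2+s_2^2}-2\ge s_1+s_2$ for all integers $s_1,s_2\ge0$ not both zero. Here integrality is essential: equality holds exactly at $(s_1,s_2)=(1,0)$ and $(0,1)$ (a jump between adjacent collinear pluses), whereas the inequality would fail at the non-integer point $(\tfrac12,\tfrac12)$; for $s_1+s_2\ge 2$ it follows from $\sqrt{s_1^2+s_2^2}\ge(s_1+s_2)/\sqrt2$. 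The one point to check carefully is that the optimizing chain may be taken with all consecutive stitches distinct (merging a repeated stitch only decreases length), so that the per-segment bound legitimately applies to every segment in the chain.
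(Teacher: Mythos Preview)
Your argument is correct and takes a genuinely different route from the paper. Both proofs share the same upper bound (an explicit horizontal-then-vertical chain of unit-length hops), but the lower bounds diverge. The paper proceeds by strong induction on $N=(|k_1-j_1|+|k_2-j_2|)/3$: it shows via a direct algebraic computation that a \emph{single} segment from $I_{(0,0)}$ to $I_{(k_1,k_2)}$ already has length at least $(|k_1|+|k_2|)/3$, so any minimizing chain must pass through an intermediate stitch, and then invokes the inductive hypothesis on each sub-chain. Your approach instead isolates a clean per-segment inequality, $|p-q|\ge |a''-a'|+|b''-b'|$ for $p\in I_{(3a',3b')}$, $q\in I_{(3a'',3b'')}$, proved by the center-and-reach bound $|p-q|\ge 3\sqrt{s_1^2+s_2^2}-2$ together with the elementary integer inequality $3\sqrt{s_1^2+s_2^2}\ge s_1+s_2+2$ for $(s_1,s_2)\in\mathbb{Z}_{\ge0}^2\setminus\{(0,0)\}$; the global lower bound then drops out by telescoping, with no induction. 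Your method is tidier and more robust: it sidesteps the somewhat delicate ``segments head toward the origin'' reduction and the case analysis in the paper's inductive step, and it makes transparent exactly where integrality of the lattice is used. One minor correction: in the notation of Definition~\ref{defn-smock} your chain realizes $d_N$ with $N=|c-a|+|d-b|-1$ intermediate stitches (and $|c-a|+|d-b|$ segments), not $N=|c-a|+|d-b|$; this does not affect the argument.
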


\begin{proof} 
We proceed by inducting on the sum $N=(|j_1-k_1|+|j_2-k_2|)/3$, which is in  ${\mathbb{N}}$
because $j,k\in 3{\mathbb{Z}}\times 3{\mathbb{Z}}$.   By symmetry at any stage 
we can assume $(j_1,j_2)=0$ and $k_1\ge k_2 \ge 0$.  Define 
\be
J_N=\left\{(j_1,j_2):\,\, \frac{|j_{1}|+|j_{2}|}{3}\leq N\right\}.
\ee

For $N=1$,  (\ref{interval-dist-+}) can be demonstrated directly
because the  distance is achieved by a single horizontal segment:
\be
d_+( (3,0), (0,0)) = |(2,0)-(1,0)| = 1 = (3+0)/3.
\ee
Assume that (\ref{interval-dist-+}) holds for all $(k_1,k_2), (j_1,j_2)\in J_+$ satisfying 
\be
(|j_1-k_1|+|j_2-k_2|)/3=N
\ee
for some whole number $N\geq1$.   Now we must prove  (\ref{interval-dist-+})
for all $(k_1,k_2), (j_1,j_2)\in J_+$ satisfying 
\be
(|j_1-k_1|+|j_2-k_2|)/3=N+1.
\ee
By symmetry, we can consider only $k_1>k_2>0$.  Then
taking $i_1=k_1-3$ and $i_2=k_2$ we have $(i_1,i_2) \in S_{N}$.  By the triangle
inequality and induction,
\begin{eqnarray*}
d_+(I_{(k_1,k_2)}, I_{(0,0)}) &\le& d_+(I_{(k_1,k_2)}, I_{(i_1,i_2)})+ d_+(I_{(i_1,i_2)}, I_{(0,0)})\\
&=& 1 + (i_1+i_2)/3  = (3+ k_1-3 +k_2)/3  = (|k_1|+|k_2|)/3.
\end{eqnarray*}
The distance between $I_{(k_1,k_2)}$ and $I_{(0,0)}$ is achieved by a sum of lengths
of segments between intervals.  Note that every one of these segments will head towards the
origin, otherwise they could be replaced with one of the same length that does head towards the
origin and the overall sum of segments would be shorter.

We claim it cannot be achieved by a single segment from $I_{(k_1,k_2)}$ to $I_{(0,0)}$ unless $k_1=3$
and $k_2=0$.  If it could, then
the segment would connect the right tip of $I_{(0,0)}$ which is $(1,0)$ to 
the bottom tip of $I_{(k_1,k_2)}$ which is $(k_1, k_2-1)$ since $k_1\ge k_2> 0$.  So
\begin{eqnarray*}  
d_+(I_{(k_1,k_2)}, I_{(0,0)}) &=& |(k_1, k_2-1)-(1,0)|\\
&=& \sqrt{ (k_1-1)^2 + (k_2-1)^2} = \sqrt{k_1^2 + k_2^2 - 2(k_1 + k_2)} \\
&= & \frac{1}{3}\sqrt{9(k_1^2 + k_2^2) - 18(k_1 + k_2)}\\
&= & \frac{1}{3}\sqrt{8(k_1^2 + k_2^2) + (k_1 - k_2)^2 + 2k_1k_2 - 18(k_1 + k_2)}\\
&\geq & \frac{1}{3}\sqrt{k_1^2 + k_2^2  + 2k_1k_2 + 7(k_1^2 + k_2^2) - 18(k_1 + k_2)}\\
&=&\frac{1}{3}\sqrt{k_1^2 + k_2^2  + 2k_1k_2 + 63\left(\left(\tfrac{k_1}{3}\right)^2 + \left(\tfrac{k_2}{3}\right)^2\right) 
- 54\left( \left(\tfrac{k_1}{3}\right) + \left(\tfrac{k_2}{3}\right)\right)}\\
&\geq& \frac{1}{3}\sqrt{k_1^2 + k_2^2 + 2k_1k_2} \qquad \qquad \textrm{ because } k_i/3 \ge 1\\
& = & \frac{|k_1|+|k_2|}{3}.
\end{eqnarray*}
So the distance between the plus sewing stitches is achieved by a 
collection of segments which passes through some $I_{(i_1, i_2)}$.   Since
the segments head towards the origin we have
\be
(i_1, i_2) \in J_N \textrm{ and }  (k_1-i_1, k_2-i_2) \in J_N.
\ee
Since this interval $I_{(i_1, i_2)}$ lies along a shortest collection of segments we have
\begin{eqnarray*}
d_+(I_{(k_1,k_2)}, I_{(0,0)})&=&d_+(I_{(k_1,k_2)}, I_{(i_1,i_2)})+ d_+(I_{(i_1,i_2)}, I_{(0,0)})\\
&=&(|k_1-i_1|+|k_2-i_2|)/3 + (|i_1-0|+|i_2-0|)/3 \textrm{ by the ind. hyp. }\\
&=&(k_1-i_1+k_2-i_2)/3 + (i_1-0+i_2-0)/3 \textrm{ by } k_j>i_j>0,\\
&=& (k_1+k_2)/3= (|k_1|+|k_2|)/3.
\end{eqnarray*}
\end{proof}

\subsection{ Estimating the Distances $d_\square$ by Prof.~Sormani, Leslie, Emilio, and Aleah}

In this section we prove Proposition~\ref{prop-dist-square}.  

\begin{prop}\label{prop-dist-square}
For any two pairs $(i_1,i_2),(j_1,j_2)\in J_\square$, we have
the distance between square smocking stitches
\be\label{dsq0}
d_{\square}(I_{(i_1, i_2)}, I_{(j_1, j_2)}) = 2\sqrt{2} \,\min \left\{\tfrac{|i_1 - j_1|}{3},\tfrac{|i_2 - j_2|}{3} \right\} + 2 \left| \tfrac{|i_1 - j_1|}{3} - \tfrac{|i_2 - j_2|}{3} \right|.
\ee
\end{prop}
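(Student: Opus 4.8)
The plan is to prove the two inequalities of (\ref{dsq0}) separately. Write $a=|i_1-j_1|/3$ and $b=|i_2-j_2|/3$; these are nonnegative integers because $i,j\in 3\Z\times 3\Z$. The first observation I would record is that the right-hand side of (\ref{dsq0}) equals $g(a,b)$, where
$$
g(x,y)=2\sqrt2\,\min(|x|,|y|)+2\big||x|-|y|\big|=2\max(|x|,|y|)+(2\sqrt2-2)\min(|x|,|y|).
$$
This $g$ is the function on ${\mathbb E}^2$ whose unit ball is the convex, centrally symmetric octagon with vertices $(\pm\tfrac12,0)$, $(0,\pm\tfrac12)$, $(\pm\tfrac1{2\sqrt2},\pm\tfrac1{2\sqrt2})$; hence $g$ is a norm, and in particular it is subadditive, $g(u+v)\le g(u)+g(v)$. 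This is consistent with the octagonal balls seen numerically in Figure~\ref{fig:square-hex}.

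\textbf{Upper bound.} I would exhibit an explicit chain of segments. By the reflection and coordinate-swap symmetries of $J_\square$ we may assume $(i_1,i_2)=(0,0)$ and $(j_1,j_2)=(3a,3b)$ with $0\le a\le b$. First take $a$ ``diagonal'' jumps $I_{(3k,3k)}\to I_{(3k+3,3k+3)}$, each realized by the segment from the corner $(3k+1,3k+1)$ to the corner $(3k+3,3k+3)$, of length $2\sqrt2$; moving across each collapsed stitch between its incoming and outgoing corners costs nothing. Then take $b-a$ ``vertical'' jumps $I_{(3a,3\ell)}\to I_{(3a,3\ell+3)}$, each a segment of length $2$ from the top edge of one square to the bottom edge of the next. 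Summing gives $2\sqrt2\,a+2(b-a)=g(a,b)$, so $d_\square(I_{(i_1,i_2)},I_{(j_1,j_2)})\le g(a,b)$.

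\textbf{Lower bound.} I would begin from a minimizing chain of segments, which exists by Theorem~\ref{thm-smock-metric}: it visits stitches $I_{M_0},\dots,I_{M_r}$ with $M_0=(i_1,i_2)$, $M_r=(j_1,j_2)$ and consecutive stitches distinct. Writing $\vec d_\ell=(M_\ell-M_{\ell-1})/3\in\Z^2$ and $(m_\ell,n_\ell)=(|(\vec d_\ell)_1|,|(\vec d_\ell)_2|)$, each connecting segment has length at least the Euclidean distance between its two squares, namely $3\max(m_\ell,n_\ell)-1$ when $\min(m_\ell,n_\ell)=0$ and $\sqrt{(3m_\ell-1)^2+(3n_\ell-1)^2}$ when $m_\ell,n_\ell\ge1$. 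The key per-segment inequality to verify is that this lower bound is always at least $g(m_\ell,n_\ell)=g(\vec d_\ell)$. Granting it, the total length is at least $\sum_\ell g(\vec d_\ell)\ge g\big(\sum_\ell \vec d_\ell\big)=g\big((j_1-i_1)/3,(j_2-i_2)/3\big)=g(a,b)$, using subadditivity of $g$ together with $\sum_\ell\vec d_\ell=(M_r-M_0)/3$. Combined with the upper bound this proves (\ref{dsq0}).

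\textbf{Main obstacle.} The crux is exactly the per-segment inequality. The case $\min=0$ is immediate, since $3\max-1\ge 2\max$ for $\max\ge1$. For $m,n\ge1$, taking $m\ge n$ without loss of generality, I would square and reduce to $Q:=5m^2+(8\sqrt2-3)n^2-(8\sqrt2-8)mn-6m-6n+2\ge0$ for all integers $m\ge n\ge1$. Its quadratic part is positive definite and $Q=0$ exactly at $(m,n)=(1,1)$, so completing the square yields the inequality, strict except at the unit moves $(1,0),(0,1),(1,1)$; this sharpness also explains geometrically why optimal chains use only unit horizontal, vertical, and diagonal jumps. An alternative route for the lower bound would be induction as in Lemma~\ref{lem-interval-dist-+}, but the norm-subadditivity argument seems cleaner here because the target function is a norm rather than a linear one.
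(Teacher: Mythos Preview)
Your argument is correct and takes a genuinely different route from the paper's proof. The paper proceeds by a layered induction on $n=(j_1-i_1)/3$: it first computes the distances for $n=1$ and $n=2$ by direct comparison of candidate paths, then uses a slope analysis (any segment from $(1,1)$ that misses the column $j_1=3$ has slope in $[3/2,5/3]$, and must hit $I_{(6,9)}$) to rule out long direct segments for $n\ge3$, reducing to the inductive hypothesis. Your approach instead recognizes the target expression as a norm $g$ and splits into an explicit upper-bound path plus a lower bound obtained by bounding each link of a minimizing chain below by $g$ of its displacement and summing via subadditivity. The crux---your per-segment inequality $Q(m,n)\ge0$---replaces the paper's slope-and-induction machinery with a single convex-quadratic estimate; the check that $\nabla Q(1,1)$ has both components positive (so the convex $Q$ is minimized over $\{m\ge n\ge1\}$ at $(1,1)$ where $Q=0$) is a clean way to close it. Your method is shorter and more structural, and it makes transparent why only unit axial and diagonal jumps appear in geodesics (these are exactly the displacements where $Q=0$ or the axial inequality $3m-1\ge 2m$ is tight); the paper's approach, by contrast, is more hands-on and avoids invoking the norm property in the proof of the distance formula itself, at the cost of several explicit case checks.
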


\begin{proof}
Thanks to the symmetry of the placement of the squares, $d_{X_{sq}} = d_{\square} = d$ is translation invariant. 
Thus we will fix $I_{(0,0)}$ and see how to compute $d(I_{(0,0)}, I_{(j_1,j_2)})$ for $j_2\ge j_1> 0$, as this will allow us to compute the distance between any two squares in the lattice, as 
\be
d(I_{(0,0)}, I_{(j_1, j_2)}) = d(I_{(0,0)}, I_{(|j_1|,|j_2|)})=d(I_{(0,0)}, I_{(|j_2|,|j_1|)}),
\ee 
once again because of the symmetry of the square lattice.   So we need only prove that for $j_2\ge j_1\ge 0$
\be
d_{\square}(I_{(0,0)}, I_{(j_1, j_2)}) = \sqrt{8} \left( \frac{j_1}{3} \right) + 2 \left( \frac{j_2-j_1}{3}  \right).
\ee
We will work up to this general formula slowly.  First consult
Figure~\ref{fig:sqmetric} for adjacent square stitches.

\begin{figure}[h]
 \includegraphics[width=.6 \textwidth]{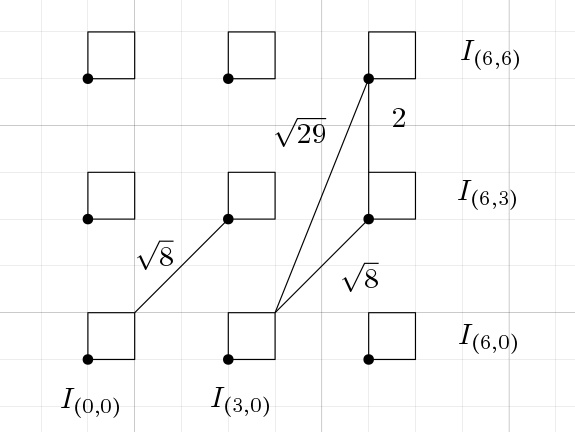}
\caption{Key segments needed to estimate $d_\square(I_j, I_k)$ .}
\label{fig:sqmetric}
\end{figure}

We claim 
\be\label{dsq1}
d(I_{(i_1,i_2)}, I_{(i_1, i_2+3)})= d(I_{(0,0)}, I_{(0,3)}) = 2.
\ee
It is quite easy to see that this will be the Euclidean distance between these two squares. Since we can compute the distance by looking at the distance between the closest corners of the two squares. In the case of $I_{(0,0)}$ and $I_{(3,0)}$ we will simply have to find the shortest distance between the points $(1,1)$ and $(3,1)$. This gives us 
\be
||(3 - 1, 1 - 1)|| = \sqrt{2^2 + 0} = 2.
\ee

We claim that
\be\label{dsq2}
d(I_{(i_1,i_2)}, I_{(i_1+3, i_2+3)})=d(I_{(0,0)}, I_{(3,3)}) = \sqrt{8}.
\ee
We use the Pythagorean theorem here, as the diagonal is clearly shorter than a path going the is first horozontal and then vertical. Since we can compute the distance by looking at the distance between the closest corners we will simply have to find the shortest distance between the points $(1,1)$ and $(3,3)$. This gives us $||(3-1,3-1)|| = ||(2,2)|| = \sqrt{2^2 + 2^2}$, thus we arrive at a distance of $\sqrt{8}$.

We claim
\be \label{dsq3}
d(I_{(i_1,i_2)}, I_{(i_1+3, i_2+6)})=d(I_{(0,0)}, I_{(3,6)}) = \sqrt{8} + 2.  
\ee
Clearly the only contenders for shortest paths are the straight line from $I_{(0,0)}$ to $I_{(3,6)}$ 
of length $\sqrt{29}$
and the path that takes one diagonal jump connecting $I_{(0,0)}$ and $I_{(3,3)}$ plus the straight line $I_{(3,3)}$ to $I_{(3,6)}$ of length $\sqrt{8}+2 < \sqrt{29}$.

We claim that $j_2 \in 3\mathbb{Z}, j_2 > 3$, 
\be\label{dsq4}
d(I_{(0,0)}, I_{(3,j_2)}) = \sqrt{8} + 2(j_2-3)/3
\ee
which is achieved by taking one diagonal segment of length $\sqrt{8}$ followed by $(\frac{j_2}{3} - 1)$
vertical segments of length $2$.  We must verify that there are no shorter paths.

We will prove this by induction on $n$ taking $j_2=3n$.  Note the base case has
been proven for $n=1$ in (\ref{dsq2}) and also $n=2$ in (\ref{dsq3}).  Suppose we
have (\ref{dsq4}) for all $j_2\le 3(n-1)$.  We prove it for $j_2=3n$.   Note that since
(\ref{dsq4}) holds for all $j_2\le 3(n-1)$, this means that it is always shorter to take a single 
diagonal jump connecting $I_{(0,0)}$ and $I_{(3,3)}$ followed by vertical segments, than it
is to take any diagonals of slope greater than $1$, except possibly to go
on a single straight segment from $I_{(0,0)}$ to $I_{(3,3n)}$.  The length of that
single segment is
 \be
 d(I_{(0,0)}, I_{(3,3n)}) = ||(1,1),(3,3n)|| = ||(2,3n-1)|| = \sqrt{9n^2 - 6n + 5}.
 \ee 
 Now we square the distance of taking two jumps $\sqrt{8} + 2(n-1)$ to get $4n^2 - 4n + 8\sqrt{2}$. 
 So to prove our induction step, we just have to prove that for $n > 1$, 
 \be\label{dsq5}
 9n^2 - 6n + 5 > 4n^2 - 4n + 8 \sqrt{2}.
 \ee
 We will prove (\ref{dsq5}) by induction.  Plugging in $n = 2$ we have 
  \be
  29 > 24 > 8 + 8 \sqrt{2},
  \ee
   that proves the base case. 
   Now assuming the induction hypotheses
   \be
   9n^2 - 6n + 5 > 4n^2 - 4n + 8 \sqrt{2}
   \ee 
   holds we wish to prove
   \be
   9(n+1)^2 - 6(n+1) + 5> 4(n+1)^2 - 4(n+1) + 8 \sqrt{2}.
   \ee
   To see this note that
   \begin{eqnarray*}
   9(n+1)^2 - 6(n+1) + 5&=&
   (9n^2 -6n + 5) +18n +3 \\
   &>& (4n^2 - 4n + 8 \sqrt{2}\,) + 18n + 3  \textrm{ by the ind. hyp.}\\
   &>& 4n^2  -4n  +8\sqrt{2} + 8n \\
   &=& 4(n+1)^2 - 4(n+1) + 8 \sqrt{2} \textrm{ by } n>1.
   \end{eqnarray*}
   Thus (\ref{dsq5}) holds for all $n$.  Which implies that the
   single segment is not shorter so we have (\ref{dsq4}).
      
 Before we continue to larger values of $j_2$, we observe that any
 line $\{(x,y):\, y= m(x-1)+1\}$ starting at $(1,1)$ that does not
hit a square with $j_1=3$ as seen in 
Figure~\ref{fig:square-slope} must have slope 
\be \label{dsq6}
3/2 \le m \le 5/3.   
\ee  
This follows from the observation that the line must
pass above  $I_{(3,i_2)}$ and below $I_{(3,i_2+3)}$ for some $i_2\ge 3$.  Thus when this line
crosses $x=3$ and $x=4$ we have
\be
i_2+1 < m(3-1)+1  \textrm{ and }  m(4-1)+1 < i_2+3.
\ee
which implies (\ref{dsq6}).

\begin{figure}[h]
\includegraphics[scale=.59]{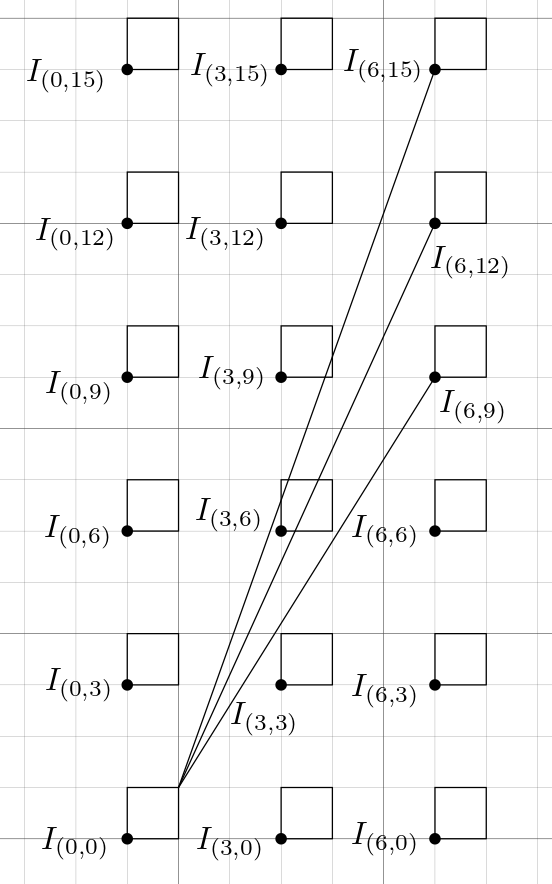}
\caption{Lines with slope greater than 2 must intersect the second column of squares}
\label{fig:square-slope}
\end{figure}
     
We claim that for $j_1=6$ and $j_2 \in 3\mathbb{Z}, j_2 \ge 6 $, 
\be\label{dsq6}
d(I_{(0,0)}, I_{(6,j_2)}) = 2 \sqrt{8} + 2 (j_2-6)/{3} 
\ee
which is achieved by taking two diagonal segment of length $\sqrt{8}$ followed by $(j_2-6)/3$
vertical segments of length $2$.  We must verify that there are no shorter paths.   By
(\ref{dsq4}) shifted, we know:
\be
d(I_{(3,i_2)}, I_{(6,j_2)}) =d(I_{(0,0)}, I_{(3,j_2-i_2)}) = \sqrt{8} + 2(j_2-i_2)/{3} 
\ee
So any shortest path of segments between $I_{(0,0)}$ and  $I_{(6,j_2)}$ which touches a
square on the middle column, $I_{(3,i_2)}$, must consist of only diagonals of length $\sqrt{8}$
and verticals of length $2$.  Such a path has the length in (\ref{dsq6}).  
We need only show that
a direct segment running from $I_{(0,0)}$ to $I_{(6, j_2)}$ is longer than a segment
stopping at a square in the middle.  We know such a segment would have slope 
as in (\ref{dsq6}).  Thus our only concern is when $j_2=9$.
However we see that
the direct segment has longer length too:
\be
||(6,9) - (1,1)|| = \sqrt{5^2 + 8^2} = \sqrt{89} > 2\sqrt{8} + 2
\ee
 because 
 $$
 \left(2\sqrt{8} + 2\right)^2 = 32 + 8\sqrt{8} + 4 = 32 + 16 \sqrt{2} + 4 < 32 + 32 + 4 = 68 < 89 = \left(\sqrt{89}\right)^2.
 $$
 
We complete the proof of the proposition inductively,  still assuming $j_2\ge j_1\ge 0$:\\
{\em Statement(n)}: $ \forall j_2-i_2\ge j_1-i_1=3n$ 
\be
d_{\square}(I_{(i_1, i_2)}, I_{(j_1, j_2)}) = 2\sqrt{2}\, (j_1-i_1)/{3} + 2\,((j_2-i_2)-(j_1-i_1))/{3} 
\ee
{\em which is achieved by taking $n=(j_1-i_1)/3$ diagonal segment of length $\sqrt{8}$ 
followed by $(j_2-j_1)/3$
vertical segments of length $2$}.  

We've already 
proven two base cases $n=1$ and $n=2$.  Assuming we have the induction hypothesis,
{\em Statement(n)}, we now show {\em Statement(n+1)}.  
We first observe that
any shortest path between the squares which passes through a square on the way must
consist of a pair of shortest paths satisfying the induction hypotheses, and so all the segments 
are diagonal segments of length $\sqrt{8}$  or vertical of length $2$.  The only possible way to have a shorter
path is to go directly between the endpoints without hitting any squares in between.   
We claim that for 
$n \ge 3$ there are no such direct paths.   To see this we shift $I_{(i_1,i_2)}$
to $I_{(0,0)}$ so the line segment can be written as $\{(x,y):\, y= m(x-1)+1\}$.
By (\ref{dsq6}), such a direct
path would have to have slope $m\in [3/2,2]$.  We
must also avoid hitting the square $I_{(6,9)}$
as depicted in  Figure~\ref{fig:square-slope}.  
However any line segment which passes
beneath square $I_{(6,9)}$ has slope
\be
m_{under}\le (9-1)/(7-1)=8/6=4/3 < 3/2
\ee
and any line segment which passes above 
square $I_{(6,9)}$ has slope
\be
m_{above}\le (10-1)/(6-1)=9/5> 2.
\ee
So there are no direct paths when $n\ge 3$.   Thus we have proven the
induction step and the proposition follows.
\end{proof}

\subsection{ Estimating the Distances $d_T$ by Dr.~Kazaras, Moshe, and David}

In this subsection, we prove a formula for the distance between the stitched line segments in $X_T$ [Proposition~\ref{prop-dist-T}]. 
Recall that the $d_T$-distance between two points is the infimum of lengths of paths composed of straight line paths. We will show that every path between $I_{(0,0)}$ and another stitch $I_{(i_1,i_2)}$ is at least as long as a path which is constructed via a composition of horizontal and vertical lines of length 1 between neighboring stitches. 
See Figure~\ref{fig:T-geodesics} for an exploration as to how one might jump from
one interval to the next. 

\begin{figure}[h]
\includegraphics[width=.4\textwidth]{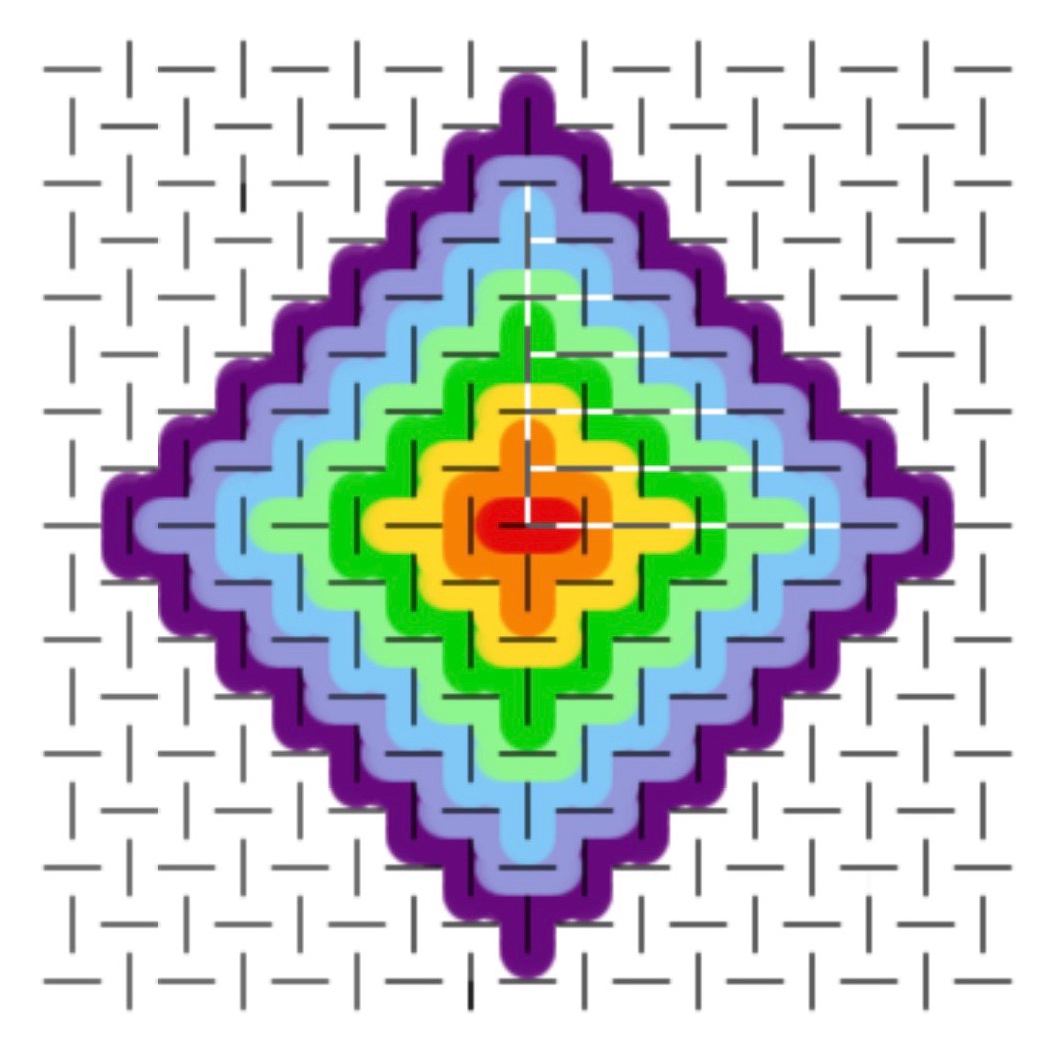}
\caption{Optimal paths of segments from various $I_{(j_1,j_2)}$ to $I_{(0,0)}$ are
drawn in white.}
\label{fig:T-geodesics}
\end{figure}

In this subsection, we prove a formula for the distance between the stitched line segments in $X_T$. 
Recall that the $d_T$-distance between two points is the infimum of lengths of paths composed of straight line paths. We will show that every path between $I_{(0,0)}$ and another stitch $I_{(i_1,i_2)}$ is at least as long as a path which is constructed via a composition of horizontal and vertical lines of length 1 between neighboring stitches. 
See Figure~\ref{fig:T-geodesics} for an exploration as to how one might jump from
one interval to the next.

To begin our discussion, we characterize $d_T$-minimizing line segments leaving $I_{(0,0)}$.

\begin{lem} \label{lem-starting-point-T}
Let $L\subset\mathbb{E}^2$ be a line segment minimizing the $d_T$-distance from $I_{(0,0)}$ to $I_{(k_1,k_2)}$ with $k_1,k_2\geq0$. Then either $L$ has infinite slope and begins at $(0,0)\in I_{(0,0)}$ or $L$ has non-negative slope and begins at $(1,0)\in I_{(0,0)}$.
\end{lem}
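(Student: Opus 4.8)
The plan is to reduce the statement to a short enumeration of the stitches neighboring $I_{(0,0)}$, after two normalizations of the minimizing segment. First I would record the elementary fact underlying everything: by Theorem~\ref{thm-smock-metric} a minimizing path realizes its distance as a finite sum of segments, and if $L$ runs from a point $s=(a,0)\in I_{(0,0)}$ (so $a\in[-1,1]$) to a point $q=(q_1,q_2)$ on the next stitch the path meets, then $L$ must realize the Euclidean distance from the segment $I_{(0,0)}=[-1,1]\times\{0\}$ to $q$; otherwise we could shorten the total path without moving $q$. Since the nearest point of $[-1,1]\times\{0\}$ to $q$ is the foot of the perpendicular when $q_1\in[-1,1]$ and the endpoint $(\sgn(q_1),0)$ otherwise, this pins the start to $a=\max\{-1,\min\{1,q_1\}\}$: the segment $L$ is vertical exactly when $q_1\in[-1,1]$, and begins at $(1,0)$ with slope $q_2/(q_1-1)$ when $q_1>1$. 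Moreover, because movement inside a single stitch costs nothing in $X_T$, I may take $q$ to be the point of the next stitch closest to $I_{(0,0)}$, so that $L$ is simply the closest-pair segment between $I_{(0,0)}$ and the first stitch visited.

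Second I would normalize the direction. Since $k_1,k_2\ge 0$, I claim a minimizing path may be chosen so that its first segment heads into the closed first quadrant, i.e. $q_1\ge 0$ and $q_2\ge 0$. This is where the work lies: I would prove it by an exchange/reflection argument using that $\sigma_x\colon(x,y)\mapsto(-x,y)$ and $\sigma_y\colon(x,y)\mapsto(x,-y)$ are isometries of $X_T$ fixing $I_{(0,0)}$ and permuting the stitches (the orientation of $I_{(j_1,j_2)}$ depends only on the parity of $(j_1+j_2)/2$, which is preserved by $j_1\mapsto -j_1$ and $j_2\mapsto -j_2$ because $k_i$ are even). Unfolding the initial portion of the path across the relevant axis should produce a competitor of no greater length whose first segment points neither down nor left. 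I expect this monotonicity step to be the main obstacle, since one must verify that reflecting an initial excursion against the target direction genuinely does not lengthen the path, and one must handle the boundary cases $q_1=0$ and $q_2=0$ with care.

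With both normalizations the conclusion is immediate: $a=\min\{1,q_1\}$ and $q_1,q_2\ge 0$, so either $q_1\ge 1$, in which case $L$ starts at $(1,0)$ with slope $q_2/(q_1-1)\ge 0$ (case~(b)), or $q_1\in[0,1]$, in which case $L$ is the vertical segment from $(q_1,0)$. It remains only to exclude $q_1\in(0,1)$. For this I would invoke the lattice: over the open strip $0<x<1$ there is no stitch with $0<y<4$, the first one met above $I_{(0,0)}$ being $I_{(0,4)}=[-1,1]\times\{4\}$. Hence a vertical first segment with $q_1\in(0,1)$ has length at least $4$, and any path using it is strictly longer than one that first climbs straight up through $I_{(0,2)}=\{0\}\times[1,3]$ to reach $I_{(0,4)}$ in length $2$ before continuing; so such an $L$ never occurs in a minimizer, forcing $q_1=0$ and placing us in case~(a). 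The one borderline value $q_1=1$ — for instance the direct closest-pair segment from $I_{(0,0)}$ to $I_{(2,2)}=[1,3]\times\{2\}$, which is vertical from $(1,0)$ — lies on the boundary between the two cases and is absorbed into case~(b) with its limiting vertical slope.
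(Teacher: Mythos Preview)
Your approach is essentially the paper's: the nearest-point projection onto $I_{(0,0)}$ pins the starting point of the first segment, and the length-$4$ obstruction through $I_{(0,4)}$ rules out vertical first segments with $q_1\neq 0$. The step you flag as the main obstacle---that the first segment may be taken to head into the closed first quadrant---is simply asserted without argument in the paper's proof (``Since $k_1,k_2\geq0$, $L$ must have non-negative slope''), so your reflection sketch already goes further than the paper does.
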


\begin{proof}
We only consider the case when the $L$ does not intersect any other stitch $I_{(i_1,i_2)}$ since we may apply our argument to the first such segment of a general line. 
First assume $L$ is vertical. If $L$ does not leave from $(0,0)$, then it must terminate at stitch $I_{(0,4)}$ and have length $4$. However, $d_T(I_{(0,0)},I_{(0,4)})$
is evidently $2$, giving a contradiction.

Now consider the case where $L$ has finite slope. Since $k_1,k_2\geq0$, $L$ must have non-negative slope.
If $L$ leaves from point $(a,0)$ for $a<1$, then $L$ does not even locally minimize distance from $I_{(0,0)}$.
\end{proof}

\begin{lem} \label{lem-length-1-T}
Let $\gamma\subset\mathbb{E}^2$ be a collection of lines minimizing the $d_T$-distance from $I_{(i_1,  i_2)}$ to $I_{(k_1,k_2)}$
with $k_1\geq i_1$ and $k_2\geq i_2$. Then $\gamma$ will intersect $I_{(i_1+2,  i_2)}$, $I_{(i_1,  i_2+2)}$ or $I_{(i_1+2, i_2+2)}$.
\end{lem}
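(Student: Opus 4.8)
The plan is to reduce to a source stitch at the origin and then follow the first segment of the minimizing collection $\gamma$, showing that it must already terminate on one of the three neighbors. By Definition~\ref{defn-T} the stitch $I_{(0,0)}=[-1,1]\times\{0\}$ is horizontal, and its three up-and-to-the-right neighbors are $I_{(2,0)}=\{2\}\times[-1,1]$, $I_{(0,2)}=\{0\}\times[1,3]$, and $I_{(2,2)}=[1,3]\times\{2\}$. First I would use the symmetries of $X_T$ to reduce to $(i_1,i_2)=(0,0)$. When $I_{(i_1,i_2)}$ is horizontal this is just a translation within the sublattice of horizontal-stitch centers. When $I_{(i_1,i_2)}$ is vertical, a translation first moves it to $I_{(2,0)}$ and then the isometry $(x,y)\mapsto(y,x-2)$ --- obtained by composing the $90^\circ$ rotation about the cell center $(1,1)$ with a lattice translation and the reflection across the $y$-axis --- carries it to $I_{(0,0)}$; one checks that this isometry maps the smocking set to itself, preserves the condition $k_1\ge i_1,\ k_2\ge i_2$, and sends the three up-right neighbors to $I_{(2,0)},I_{(0,2)},I_{(2,2)}$. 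In either case it suffices to treat $I_{(0,0)}$ with target $I_{(k_1,k_2)}$ satisfying $k_1,k_2\ge 0$.

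Next I would split $\gamma$ so that each segment joins two distinct stitches without passing through a third in its interior; this is free because stitches are single points, so any transit can be inserted as a stop. Let $L$ be the first segment, from $I_{(0,0)}$ to the first stitch it reaches. By Lemma~\ref{lem-starting-point-T}, $L$ is either vertical leaving from $(0,0)$, or has finite non-negative slope $m$ and leaves from $(1,0)$. The core of the argument is a short case analysis. A vertical $L$ runs straight up and first meets $I_{(0,2)}$ at $(0,1)$. If $0\le m\le 1$, then $L$ reaches the line $x=2$ at height $y=m\in[0,1]$, so it first meets $I_{(2,0)}$. If $m\ge 1$, then $L$ reaches the line $y=2$ at $x=1+\tfrac{2}{m}\in[1,3]$, so it first meets $I_{(2,2)}$. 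In every case the far endpoint of $L$ lies on one of the three neighbors, and since $L\subset\gamma$ this gives the conclusion.

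The delicate point, and the one I expect to be the main obstacle, is justifying the word \emph{first}: I must rule out some other stitch intruding before $L$ reaches the asserted neighbor, so that $L$ cannot slip past all three to a farther stitch such as $I_{(4,0)}$ or $I_{(4,2)}$. For this I would verify directly that the open strip $\{(x,y):1<x<3,\ 0<y<2\}$ meets the smocking set only along the sub-segment $\{2\}\times(0,1]$ of $I_{(2,0)}$ --- there are no horizontal stitches at heights in $(0,2)$ and no other vertical stitches with $1<x<3$ --- and that $\{0\}\times(0,1)$ is stitch-free. A non-negative-slope $L$ crosses $x=2$ at height $m$: it strikes $I_{(2,0)}$ exactly when $m\le 1$, and otherwise passes above it and continues untouched to $I_{(2,2)}$ at $y=2$; a vertical $L$ ascends the stitch-free segment $\{0\}\times(0,1)$ to $I_{(0,2)}$. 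Finally I would dispose of the boundary cases: if $I_{(k_1,k_2)}$ is itself one of the three neighbors then $\gamma$ ends on it trivially, and the subranges $1<m<2$ and $m\ge 2$ (where $L$ does, respectively does not, cross $x=2$) both still land on $I_{(2,2)}$, so the slope cases --- vertical, $0\le m\le 1$, and $m\ge 1$ --- are exhaustive.
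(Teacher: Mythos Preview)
Your argument is correct and follows essentially the same route as the paper's proof: reduce to the horizontal stitch $I_{(0,0)}$ by a pattern-preserving isometry, invoke Lemma~\ref{lem-starting-point-T} to pin down the starting point and direction of the first segment, and then check by slope that this segment must land on one of $I_{(2,0)}$, $I_{(0,2)}$, $I_{(2,2)}$. Your isometry $(x,y)\mapsto(y,x-2)$ for the vertical case differs from the paper's $T(x_1,x_2)=(x_2+2,x_1)$ only by a lattice translation, and your explicit verification that the strip $\{1<x<3,\ 0<y<2\}$ meets $S$ only in $\{2\}\times(0,1]$ is a welcome addition of detail that the paper leaves implicit.
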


\begin{proof}
We first consider the case where $I_{(i_1,i_2)}=I_{(0, 0)}$, which is a horizontal line segment. Notice that the line with slope $1$ from $(1,0)$ intersects both $I_{(0,2)}$ and $I_{(2,2)}$. As the slope tends towards $\infty$ the line intersects $I_{(2,2)}$ and as the slope tends towards $0$ the line intersects the neighboring  vertical line $I_{(0,2)}$. So if the first segment of $\gamma$ has positive slope, Lemma \ref{lem-starting-point-T} says it must leave $(1,0)$ and hence intersect $I_{(2,0)}$ or $I_{(2,2)}$. If the first segment of $\gamma$
has infinite slope, Lemma \ref{lem-starting-point-T} says it must leave from $(0,0)$ and hence intersect $I_{(0,2)}$.

If $\gamma$ begins from some other horizontal stitch $I_{(i_1,i_2)}$, we can translate $I_{(0, 0)}$ to the horizontal line segment $I_{(i_1,  i_2)}$ under a $d_T$-isometry, which yields that a line with positive slope from $I_{(i_1,  i_2)}$ must intersect $I_{(i_1+2,i_2)},I_{(i_1,  i_2+2)}$ or $I_{(i_1+2,  i_2+2)}$. 

Now we consider the case where $I_{(i_1,i_2)}$ is a vertical stitch. Consider the transformation 
\be
T:\mathbb{E}^2\to\mathbb{E}^2,\quad\quad T(x_1,x_2)=(x_2+2,x_1),
\ee
which is the composition of reflection about the line $x_1=x_2$ and a horizontal translation. $T$ preserves the stitching set and hence yields an isometry of $X_T$.
Applying this transformation to $\gamma$, we can apply the previous case, since $T$ takes vertical stitches to horizontal stitches. This completes the proof of Lemma \ref{lem-length-1-T}.
\end{proof}

Now we are ready to state and proof the formula for $d_T$-distances between 
stitches in $X_T$.

\begin{prop}\label{prop-dist-T}  
For any two pairs $(j_1,j_2), (k_1,k_2)\in J_T$, we have 
\be\label{interval-dist-T}
d_T(I_{(j_1,j_2)},I_{(k_1,k_2)})=\frac{|j_1-k_1|+|j_2-k_2|}{2}.
\ee
\end{prop}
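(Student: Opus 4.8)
\emph{Setup and reductions.} The plan is to prove the formula by two matching inequalities, after normalizing the pair of stitches using the symmetries of $X_T$. The index set $J_T = 2\Z\times 2\Z$ and the horizontal/vertical assignment are invariant under translation by $(2,0)$ and $(0,2)$ and under the reflections $(x_1,x_2)\mapsto(\pm x_1,\pm x_2)$: each reflection preserves the parity of $(j_1+j_2)/2$ (the difference is an even number) and carries horizontal stitches to horizontal ones and vertical to vertical, so it is an isometry of $X_T$. Hence it suffices to fix $I_{(j_1,j_2)}=I_{(0,0)}$ and prove
\[
d_T(I_{(0,0)},I_{(k_1,k_2)})=\frac{k_1+k_2}{2}\qquad\text{for } k_1,k_2\ge 0,
\]
the general formula $\tfrac{|k_1-j_1|+|k_2-j_2|}{2}$ following by reflecting the target into the first quadrant.

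\emph{Upper bound.} I would exhibit an explicit path. A lattice step by $(2,0)$ or $(0,2)$ flips the parity of $(j_1+j_2)/2$, so it always joins a horizontal stitch to a perpendicular (vertical) one, and two such neighbors are connected by a single segment of length $\delta_T=1$ by Lemma~\ref{lem-delta-T}. Concatenating $k_1/2$ steps in the $+x$-direction and $k_2/2$ steps in the $+y$-direction, repositioning freely inside each collapsed stitch, gives a path of total length $\tfrac{k_1}{2}+\tfrac{k_2}{2}$, so $d_T(I_{(0,0)},I_{(k_1,k_2)})\le\tfrac{k_1+k_2}{2}$.

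\emph{Lower bound.} The crux is a single geometric estimate: for any two distinct stitches $I_a,I_b$ and any $p\in I_a$, $q\in I_b$,
\[
|p-q|\ \ge\ \frac{|a_1-b_1|+|a_2-b_2|}{2}.
\]
Granting this, Theorem~\ref{thm-smock-metric} gives that $d_T(I_{(0,0)},I_{(k_1,k_2)})$ is realized by a finite chain of stitches $I_{(0,0)}=I_{\sigma_0},I_{\sigma_1},\dots,I_{\sigma_m}=I_{(k_1,k_2)}$ with one connecting segment between consecutive stitches. Applying the estimate to each segment and then the triangle inequality for the $\ell^1$ (taxicab) metric on the centers yields
\[
d_T(I_{(0,0)},I_{(k_1,k_2)})\ \ge\ \sum_i \frac{\|\sigma_i-\sigma_{i+1}\|_1}{2}\ \ge\ \frac{\|\sigma_0-\sigma_m\|_1}{2}=\frac{k_1+k_2}{2},
\]
which matches the upper bound. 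This route sidesteps the inductive argument flagged by Lemmas~\ref{lem-starting-point-T} and~\ref{lem-length-1-T}, whose delicate point is guaranteeing that a minimizing path never backtracks away from the target.

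\emph{Main obstacle.} The difficulty is entirely in the per-segment estimate at small separations. Writing $\Delta_i=|a_i-b_i|\in 2\Z_{\ge 0}$ and $D=\Delta_1+\Delta_2$, for $D\ge 8$ the crude bound $|p-q|\ge\|p-q\|_1/\sqrt2\ge (D-2)/\sqrt2$ (each of $p,q$ lies within $\ell^1$-distance $1$ of its center) already exceeds $D/2$. The finitely many cases $D\in\{2,4,6\}$ must be checked by hand, and here the alternation of the pattern is essential: two parallel stitches one lattice step apart ($D=2$) could share an endpoint and force $|p-q|=0$, but the parity condition $(j_1+j_2)/2 \bmod 2$ guarantees that stitches at $\ell^1$-distance $2$ are always perpendicular, so their nearest points are at distance exactly $1=D/2$. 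Confirming that this horizontal/vertical alternation rules out every degenerate near-collision is the heart of the argument.
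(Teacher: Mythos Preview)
Your argument is correct and takes a genuinely different route from the paper. The paper establishes the formula by induction on $N=(k_1+k_2)/2$: the inductive step uses Lemmas~\ref{lem-starting-point-T} and~\ref{lem-length-1-T} to show that the first segment of a minimizer from $I_{(k_1,k_2)}$ must land in one of the three neighbors $I_{(k_1-2,k_2)}$, $I_{(k_1,k_2-2)}$, or $I_{(k_1-2,k_2-2)}$, and in each case the inductive hypothesis plus the known distance to that neighbor yields $(k_1+k_2)/2$. Your approach instead proves a uniform per-segment bound $|p-q|\ge \tfrac12\|a-b\|_1$ for $p\in I_a$, $q\in I_b$, and then sums along any realizing chain using the $\ell^1$ triangle inequality; no induction or starting-point analysis is needed. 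The finite case-check you flag for $D\in\{2,4,6\}$ does go through (the parity condition is exactly what makes $D=2$ and $D=4$ tight rather than degenerate), and your asymptotic estimate via $\|v\|_2\ge \|v\|_1/\sqrt{2}$ correctly handles $D\ge 8$.

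The trade-off: your method is shorter and more self-contained, while the paper's lemmas give extra structural information---they pin down exactly which neighboring stitch a minimizing path visits first, which can be useful if one later wants to describe geodesics explicitly rather than just compute distances. Your per-segment inequality is essentially the statement that the $1$-Lipschitz map $(x_1,x_2)\mapsto \tfrac12(|x_1|+|x_2|)$ is constant on each stitch, which is a clean way to package the lower bound and would generalize well to other patterns.
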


\begin{proof}
By making use of the reflectional and translational symmetry of the stitches that define $X_T$,
it suffices to demonstrate formula (\ref{interval-dist-T}) holds for $j_1=j_2=0$ and $k_1,k_2\geq0$.
We will show that 
\be\label{proof-interval-dist-T}
d_T(I_{(0,0)},I_{(k_1,k_2)})=\frac{k_1+k_2}{2}
\ee
by inducting on the whole number $N=\frac{k_1+k_2}{2}$.
For a whole number $N>0$, consider the collection
\be
B_N=\{(k_1,k_2)\in J_+\colon k_1,k_2\geq0,\frac{k_1+k_2}{2}= N\}.
\ee
Formula (\ref{proof-interval-dist-T}) holds for all $(k_1,k_2)\in B_1$ since $I_{(k_1,k_2)}$ can be reached by unit-length segments emanating from $I_{(0,0)}$ and the tubular neighborhood
of radius $1$ about $I_{(0,0)}$ contains no other stitches. 

Now consider a whole 
number $N>1$ and assume 
formula (\ref{proof-interval-dist-T}) holds for all points in $B_n$ where $n=1,2,\ldots,N$.
Let $(k_1,k_2)\in B_{N+1}$ and let $\gamma$ be a distance-minimizing path from 
$I_{(k_1,k_2)}$ to $I_{(0,0)}$. Consider $\gamma_0\subset\mathbb{E}^2$, the 
component of $\gamma$ which emanates from $I_{(k_1,k_2)}$. Notice that $\gamma_0$
is a single line segment which realizes the $d_T$-distance between $I_{(k_1,k_2)}$ and
the stitch it terminates in, which we denote by $I_{(i_1,i_2)}$. 
Applying Lemma \ref{lem-length-1-T} to $\gamma_0$, 
we find that $I_{(i_1,i_2)}$ must be one of three possibilities: $I_{(k_1-2,k_2)}, I_{(k_1,k_2-2)},$ or $I_{(k_1-2,k_2-2)}$. We consider these cases.

In the cases where $I_{(i_1,i_2)}=I_{(k_1-2,k_2)}$ or $I_{(k_1,k_2-2)}$, then $(i_1,i_2)\in B_N$ and
\begin{align}
d_T(I_{(0,0)},I_{(k_1,k_2)})&=d_T(I_{(0,0)},I_{(i_1,i_2)})+d_T(I_{(i_1,i_2)},I_{(k_1,k_2)})\\
{}&=N+1 =\frac{k_1+k_2}{2}\notag
\end{align}
where we have used the inductive hypothesis. This verifies the 
desired formula (\ref{proof-interval-dist-T}) in this case.

In the case where $I_{(i_1,i_2)}=I_{(k_1-2,k_2-2)}$, then $(i_1,i_2)\in B_{N-1}$ and we may use the inductive hypothesis to find
\begin{align}
d_T(I_{(0,0)},I_{(k_1,k_2)})&=d_T(I_{(0,0)},I_{(i_1,i_2)})+d_T(I_{(i_1,i_2)},I_{(k_1,k_2)})\\
{}&=(N-1)+2 =\frac{k_1+k_2}{2}\notag.
\end{align}
This finishes the proof of the lemma.
\end{proof}

\section{Rescaling Smocked Spaces}

Recall that in Theorem~\ref{thm-ps-R} we proved that the tangent cone at
infinity of a pulled thread space is unique and is Euclidean space with the
standard Euclidean metric.  Later in this section we will show that we also
obtain unique tangent cones for various smocked spaces, that are normed spaces
but not Euclidean space.  The first section has
a theorem which will be a key ingredient
used in those proofs. In the next few subsections, we find the tangent cone
at infinity for three of our smocked spaces:  $X_T$, $X_+$,
and $X_\square$.  The proofs for the other smocked spaces, 
$X_\diamond$, $X_=$ and $X_H$, are significantly more difficult, so we postpone them 
to our next paper \cite{SWIF-smocked}.

\subsection{Main Theorem}

\begin{thm}\label{thm-smocking-R}
Suppose we have an $N$ dimensional smocked space, $(X, d)$, as in Definition~\ref{defn-smock}
 such that
\be
\left|\,\bar{d}(x, x')\,- \,[F(x)-F(x')] \,\right| \,\le \, K \qquad \forall x,x' \in {\mathbb{E}}^N
\ee
where $F: {\mathbb{E}}^N \to [0,\infty)$ is a norm.
Then $(X,d)$ has a unique tangent cone at infinity, 
\be
({\mathbb{R}}^N, d_F) \textrm{ where }
d_F(x,x')=||x-x'||_F=F(x-x').
\ee
\end{thm}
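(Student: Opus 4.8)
The plan is to mirror the proof of Theorem~\ref{thm-ps-R}, replacing the Euclidean comparison of Lemma~\ref{ps-dist} by the hypothesized uniform comparison between $\bar d$ and the norm $F$; throughout I read the hypothesis as the estimate $|\bar d(x,x')-F(x-x')|\le K$ for all $x,x'\in\mathbb E^N$ (the norm \emph{distance}), which is what makes the target metric $d_F(x,x')=\|x-x'\|_F=F(x-x')$ meaningful. Fix the base points $x_0=\pi(0)\in X$ and $y_0=0\in\mathbb R^N$. By Definition~\ref{defn-ptGH} and Definition~\ref{defn-tan-cone} it suffices to prove that for every fixed $r>0$,
\[
d_{GH}\big((\bar B_{Rr}(x_0),\, d/R),\ (\bar B_r(0),\, d_F)\big)\longrightarrow 0\qquad\text{as } R\to\infty,
\]
with no passage to a subsequence and independently of the base point; uniqueness of the tangent cone then follows since Gromov--Hausdorff limits of (compact) balls are unique up to isometry.

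First I would record the rescaled comparison, the analogue of Lemma~\ref{ps-dist-R}. Because $F$ is a norm it is homogeneous, $F(Rv)=R\,F(v)$, so applying the hypothesis to $Rx,Rx'$ and dividing by $R$ gives
\[
\left|\frac{\bar d(Rx,Rx')}{R}-F(x-x')\right|\le\frac{K}{R}\qquad\forall x,x'\in\mathbb E^N .
\]
Thus the rescaled pseudometrics converge uniformly on $\mathbb E^N$ to $d_F$, which is exactly the control needed to build a correspondence. Next I would set $U_{Rr}=\pi^{-1}(\bar B_{Rr}(x_0))\subset\mathbb E^N$ and define the radial $F$-projection $f\colon U_{Rr}\to \bar B_r(0)=\{y:F(y)\le r\}$ by
\[
f(w)=\begin{cases} w/R & \text{if } F(w)\le Rr,\\[2pt] r\,w/F(w) & \text{if } F(w)>Rr.\end{cases}
\]
On $U_{Rr}$ the hypothesis forces $F(w)\le\bar d(w,0)+K\le Rr+K$, and a direct computation gives $F(f(w)-w/R)\le K/R$ in either branch. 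Hence the relation $\mathcal C_R=\{(\pi(w),f(w)):w\in U_{Rr}\}$ is $O(K/R)$-almost distance preserving: for $v,w\in U_{Rr}$,
\[
\left|\frac{d(\pi v,\pi w)}{R}-F(f(v)-f(w))\right|\le\frac{|\bar d(v,w)-F(v-w)|}{R}+F\!\left(f(v)-\tfrac{v}{R}\right)+F\!\left(f(w)-\tfrac{w}{R}\right)\le\frac{3K}{R},
\]
using $d(\pi v,\pi w)=\bar d(v,w)$ and the triangle inequality for the norm. Theorem~\ref{thm-correspondence} then bounds the Gromov--Hausdorff distance by $6K/R\to 0$, provided $\mathcal C_R$ is a genuine correspondence.

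The main obstacle is precisely the surjectivity of $f$ onto $\bar B_r(0)$, where the argument genuinely departs from the pulled-thread case: there one had $\bar d(v,0)\le|v|$, so $f$ landed onto the Euclidean ball exactly, whereas here $|\bar d-F|\le K$ only sandwiches $U_{Rr}$ between the $F$-balls of radii $Rr-K$ and $Rr+K$, so $f$ may miss a thin shell of $F$-width $K/R$. I would resolve this by showing $f(U_{Rr})$ is $(K/R)$-dense in $\bar B_r(0)$: given $y$ with $F(y)\le r$, the point $y'=\max(0,F(y)-K/R)\,y/F(y)$ satisfies $F(y-y')\le K/R$, and $w=Ry'$ lies in $U_{Rr}$ with $f(w)=y'$ since $\bar d(Ry',0)\le F(Ry')+K\le Rr$. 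An almost-isometry that is $(K/R)$-almost onto is promoted to an honest correspondence by adjoining, for each $y\in\bar B_r(0)$, a pair $(\pi(w_y),y)$ with $f(w_y)$ within $K/R$ of $y$; this enlarges the almost-distance-preserving constant only by a further $O(K/R)$, so Theorem~\ref{thm-correspondence} still yields $d_{GH}\to 0$. Finally, since the limit $(\mathbb R^N,d_F)$ is independent of the sequence $R\to\infty$ and, by translation invariance of the norm distance, of the choice of base point, the tangent cone at infinity exists, is unique, and is the normed space $(\mathbb R^N,d_F)$.
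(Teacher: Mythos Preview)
Your proof is correct and follows the same strategy as the paper's: build a correspondence between $\bar B_{Rr}(x_0)$ and the $F$-ball $\bar B_r(0)$ using the smocking map $\pi$ on one side and an $F$-radial projection $f$ on the other, then invoke Theorem~\ref{thm-correspondence}. Your reading of the hypothesis as $|\bar d(x,x')-F(x-x')|\le K$ is also the correct one and is exactly what the paper uses in its own proof and in the applications.

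The one place you and the paper genuinely diverge is the surjectivity issue you flagged. The paper sidesteps it by rescaling with the factor $r/(Rr-K)$ rather than $1/R$: since $F^{-1}[0,Rr-K]\subset U_{Rr}$, the map $w\mapsto rw/(Rr-K)$ hits all of $\bar B_r(0)$ exactly, at the price of a messier error term $\Psi(R;K,r)=\tfrac{2rK}{Rr-K}+\tfrac{K(Rr+K)}{R(Rr-K)}$. You instead keep the clean scaling $1/R$, accept that $f(U_{Rr})$ is only $(K/R)$-dense, and enlarge the correspondence accordingly. Both fixes cost $O(K/R)$ and are equally valid; the paper's is a one-step trick, yours is more transparent about where the obstruction lies and why it is asymptotically harmless.
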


\begin{proof}
Take any $x_0\in X$.  By shifting the smocking set, $S$, we may assume 
that $\pi(0)=x_0$ where
$\pi: {\mathbb{E}}^N\to X$ is the smocking map.   

We need to show that for all $r>0$
\be
\lim_{R\to \infty}d_{GH}((B_{Rr}(x_0), d_X/R), (B_r(0), d_F)) =0
\ee
and we will do this by finding a correspondence for each $R,r>0$.
Let 
\be 
U_{s}(x_0)= \pi^{-1}(\bar{B}_{s}(x_0) )= \{u \in  {\mathbb{E}}^N: \, \bar{d}(u,0)\le s\}.
\ee   
Note that by the fact that 
\be
\bar{d}(u,0) \le F(u) +K  \textrm{ when } \pi(u)=x,
\ee
We have
\be 
U_{Rr}(x_0) \supset \{u: \, F(u)\le Rr-K\} = F^{-1}[0, Rr-K]. 
\ee 
We set up a correspondence
\be
\mathcal{C}_R = \{ (\pi(w), f(w)) : \,\, w\in U_{Rr}(x_0)\} \subset B_{Rr}(x_0) \times B_r(0).
\ee
where $\pi$ is the pulled thread map and 
\be
f: U_{Rr}(x_0) \to \bar{B}_r(0)=F^{-1}[0,r]
\ee
is defined to be surjective:
\begin{equation}
   f(w)=%
   \begin{cases}
     r w / (Rr-K) &\text{if }F(w) < rR-K \\
       r w / F(w)  &\text{if } F(w)\ge rR-K.
   \end{cases}
\end{equation}
This is a correspondence because $\pi:  U_{Rr}(x_0) \to B_{Rr}(x_0)$ and
$f: U_{Rr}(x_0) \to \bar{B}_r(0)$ are surjective.   

We claim that $\mathcal{C}_R$ is $\epsilon_R$ almost distance preserving:
\be
|\,d_X(\pi(v),\pi(w))/R - ||f(v)-f(w)||_F \,| \le \epsilon_R
\textrm{ where }\epsilon_R\to 0 \textrm{ and }R\to \infty.
\ee 
Observe that 
\be
\left|\left| f(w)- \left(\tfrac{r}{(Rr-K)} \right) w\, \right|\right|_F =  \begin{cases}
     0 &\text{if }F(w) < rR-K \\
       \left(||w||_F -(Rr-K)\right) \left(\tfrac{r}{(Rr-K)} \right) &\text{if } F(w)\ge rR-K.
   \end{cases} 
\ee
For $w\in U_{Rr}(x_0)$,  we have $||w||_F \le Rr + K$, 
so
\be
\left|\left| f(w)- \left(\tfrac{r}{(Rr-K)} \right) w\, \right|\right|_F \le \tfrac{2rK}{(Rr-K)}
\ee
and
\be
\left|\left| \left(\tfrac{r}{(Rr-K)} \right) w\,  - \tfrac{1}{R} \, w \right|\right|_F
\le\left( \tfrac{r}{(rR-K)} -\tfrac{1}{R}\right) ||w||_F \le \tfrac{K }{R(rR-K)} (Rr+K).
\ee
Thus for all $v,w \in U_{Rr}(x_0)$ we have
\be
 \left| \, ||f(v)-f(w)||_F  - || \tfrac{v}{R}-\tfrac{w}{R}||_F \,\right| \le \Psi(R; K,r)
 \ee
 where
 \be
 \Psi(R;K,r)=\tfrac{2rK}{Rr-K} + \tfrac{K(Rr+K) }{R(rR-K)} \to 0 \textrm{ as } R\to \infty.
\ee
Taking $\epsilon_R=(1/R) K  \, +\, \Psi(R;K,r)$, 
we see that $\mathcal{C}$  is $\epsilon_R$ almost distance preserving:
\begin{eqnarray*}
|\,d_X(\pi(v),\pi(w))/R -  ||f(v)-f(w)||_F \,|&=& |\,\bar{d}(v,w)/R - ||v-w||_F/R\,|\, +\, \Psi(R;K,r)\\
&=& \tfrac{1}{R} |\,\bar{d}(v,w) - ||v-w||_F \,|\, +\, \Psi(R;K,r) \le \epsilon_R
\end{eqnarray*}
with
$
\lim_{R\to \infty}\epsilon_R=\lim_{R\to\infty} (1/R) K  \, +\, \Psi(R;K,r) =0.
$
So by Theorem~\ref{thm-correspondence} we have
\be
d_{GH}((B_{Rr}(x_0), d_X/R), (B_r(0), d_E)) \le 2 (3K)/R \to 0 \textrm{ as } R \to \infty.
\ee
We do not need a subsequence nor did this limit depend on the base point, $x_0$.
\end{proof}

\subsection{ The Tangent Cone at Infinity of $X_+$  by Dr.~Kazaras, Shanell, Emilio, and Moshe}

In this section we prove the  following theorem which is
depicted in Figure~\ref{fig:plus-lim}.

\begin{thm} \label{thm-tan+}
The tangent cone at infinity of $(X_+, d_+)$ is $({\mathbb{R}}^2, d_{F+})$,
where
\be \label{F-def-+}
d_{F+}( \vec{x},\vec{y})= F_+(\vec{x}- \vec{y})
\textrm{ where }
F_+(\vec{x})= ||\vec{x}||_F=\frac{|x_1| + |x_2|}{3}.
\ee
Note that in the limit space, the shape of a ball, 
$
\{\vec{x}\in\mathbb{R}^2\colon \,\,F_+(\vec{x}) = R\},
$
 is a diamond with vertices at $(\pm 3,0)$ and $(0, \pm 3)$.
\end{thm}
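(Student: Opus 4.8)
The plan is to realize $(X_+,d_+)$ as satisfying the hypotheses of Theorem~\ref{thm-smocking-R} with the norm $F=F_+$, so that the tangent cone at infinity is produced directly and automatically inherits uniqueness and independence of basepoint. First I would record that $F_+(\vec x)=\tfrac{|x_1|+|x_2|}{3}$ is genuinely a norm on $\mathbb{R}^2$: it is $\tfrac13$ times the $\ell^1$ (taxicab) norm, hence positively homogeneous, subadditive, and positive definite. This is exactly the structural input that Theorem~\ref{thm-smocking-R} demands of $F$, and it is what forces the limit to be the normed space $(\mathbb{R}^2,d_{F+})$ rather than merely some metric space.

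The engine of the argument is the already-proven interval-distance formula, Lemma~\ref{lem-interval-dist-+}, which gives $d_+(I_{(j_1,j_2)},I_{(k_1,k_2)})=\tfrac{|j_1-k_1|+|j_2-k_2|}{3}=F_+(j-k)$ \emph{exactly}. Thus in the Key Lemma~\ref{lem-dil-to-approx} I may take the error constant $C=0$. To apply that lemma I would check its standing hypotheses for $X_+$: the space is parametrized by points in its stitches as in (\ref{param-by-points}) (the index set $J_+$ consists of the centers of the plus-stitches, and each center lies in its stitch), the smocking depth $h_+=\sqrt{5/2}$ is finite and positive by Lemma~\ref{lem-depth-+}, and the smocking length $L_+=2$ is finite and positive by Lemma~\ref{lem-length-+}. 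I would then compute the dilation $\dil(F_+)=\tfrac{\sqrt2}{3}$, since $|F_+(a)-F_+(b)|\le F_+(a-b)=\tfrac{|a_1-b_1|+|a_2-b_2|}{3}$ and $|a_1-b_1|+|a_2-b_2|\le\sqrt2\,|a-b|$ with equality when $|a_1-b_1|=|a_2-b_2|$. Feeding these into Lemma~\ref{lem-dil-to-approx} yields
\be
\left|\,\bar d_+(x,x')-F_+(x-x')\,\right|\le 2h_++0+2\dil(F_+)(h_++L_+)=2\sqrt{\tfrac52}+\tfrac{2\sqrt2}{3}\!\left(\sqrt{\tfrac52}+2\right)=:K,
\ee
a finite constant independent of $x,x'$. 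With this uniform comparison in hand, Theorem~\ref{thm-smocking-R} applies verbatim and gives that the unique tangent cone at infinity of $(X_+,d_+)$ is $(\mathbb{R}^2,d_{F+})$, with no passage to a subsequence and no dependence on the basepoint. Finally, the ball description follows by unwinding the norm: $\{\vec x:F_+(\vec x)=R\}=\{|x_1|+|x_2|=3R\}$ is the taxicab diamond with vertices $(\pm 3R,0)$ and $(0,\pm 3R)$, which is the asserted shape at $R=1$.

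The one point that needs care --- and the only place where the argument is more than bookkeeping --- is the notational reconciliation between Lemma~\ref{lem-dil-to-approx} and Theorem~\ref{thm-smocking-R} as stated and the norm structure here. Those statements are written with the expression ``$F(x)-F(x')$'', but the meaningful quantity (and the one used in their proofs, where the dilation bounds $|F(j)-F(x)|$) is the norm evaluated on the difference, $F_+(x-x')=d_{F+}(x,x')$; translation invariance of the stitch distances, $d_+(I_j,I_k)=F_+(j-k)$, is precisely what makes this interpretation legitimate and what lets the reverse triangle inequality $|F_+(a)-F_+(b)|\le F_+(a-b)$ drive the dilation estimate. I expect that verifying this consistency, together with confirming the positive definiteness of $F_+$ needed for Theorem~\ref{thm-smocking-R}, will be the subtlest step; all the genuinely hard geometry has already been discharged in Lemma~\ref{lem-interval-dist-+}.
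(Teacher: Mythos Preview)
Your proposal is correct and follows essentially the same route as the paper: verify $F_+$ is a norm, invoke Lemma~\ref{lem-interval-dist-+} for the exact stitch-distance formula, bound the dilation of $F_+$, obtain a uniform $|\bar d_+(x,x')-F_+(x-x')|\le K$, and conclude via Theorem~\ref{thm-smocking-R}. The only cosmetic difference is that the paper re-derives the content of the Key Lemma~\ref{lem-dil-to-approx} inline (as Lemma~\ref{lem-point-dist-+}, with the cruder bound $\dil(F_+)\le 1$ yielding $K=4h+2L$) rather than citing it as a black box; your observation that the statements of Lemma~\ref{lem-dil-to-approx} and Theorem~\ref{thm-smocking-R} should read $F(x-x')$ rather than $F(x)-F(x')$ is exactly right and is how the paper's own arguments actually proceed.
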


\begin{figure}[h]
\includegraphics[width=.8 \textwidth]{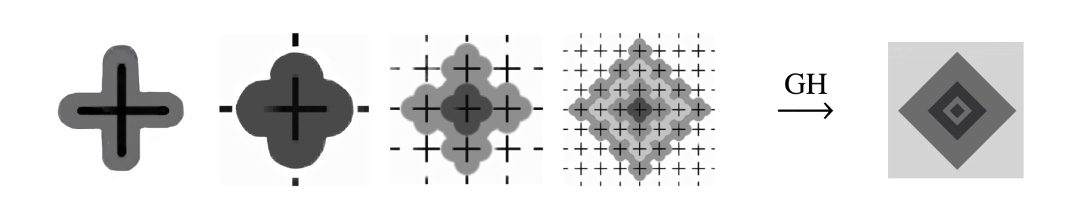}
\caption{Converging to the Tangent Cone at Infinity.}
\label{fig:plus-lim}
\end{figure}

We will prove this by applying Theorem~\ref{thm-smocking-R}.
Recall that the function $F_+$ was first found in  Lemma \ref{lem-interval-dist-+}, 
\be
d_+(I_{(j_1,j_2)}, I_{(j_1',j_2')}) = \frac{|j_1 - j_1'| + |j_2 - j_2'|}{3} =  F_+(\vec{j} - \vec{j'}).
\ee
We will prove a series of lemmas to show we have all the hypotheses needed to apply  
Theorem~\ref{thm-smocking-R} taking $F=F_+$.

\begin{lem}
$F_+$ as in (\ref{F-def-+}) is a norm.   
\end{lem}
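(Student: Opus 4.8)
The plan is to verify the three defining axioms of a norm directly, observing that $F_+$ is simply $\tfrac13$ times the standard $\ell^1$-norm on $\mathbb{R}^2$, so none of the steps should present any genuine difficulty. First I would record positive definiteness: since $|x_1|\ge 0$ and $|x_2|\ge 0$ for every $\vec{x}=(x_1,x_2)$, we immediately get $F_+(\vec{x})=\tfrac{|x_1|+|x_2|}{3}\ge 0$, and $F_+(\vec{x})=0$ forces $|x_1|+|x_2|=0$, hence $x_1=x_2=0$, i.e.\ $\vec{x}=\vec{0}$. Conversely $F_+(\vec 0)=0$, so $F_+(\vec{x})=0$ if and only if $\vec{x}=\vec 0$.

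Next I would check absolute homogeneity. For any scalar $\lambda\in\mathbb{R}$ and any $\vec{x}=(x_1,x_2)$, using $|\lambda x_i|=|\lambda|\,|x_i|$ we compute
\begin{equation}
F_+(\lambda\vec{x})=\frac{|\lambda x_1|+|\lambda x_2|}{3}=\frac{|\lambda|\,|x_1|+|\lambda|\,|x_2|}{3}=|\lambda|\,\frac{|x_1|+|x_2|}{3}=|\lambda|\,F_+(\vec{x}).
\end{equation}
This is the one place to be slightly careful: the factor pulled out is $|\lambda|$, not $\lambda$, which is exactly what the homogeneity axiom requires.

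The remaining axiom is the triangle inequality, which I regard as the only substantive step, though it too is routine. Given $\vec{x}=(x_1,x_2)$ and $\vec{y}=(y_1,y_2)$, I would apply the scalar triangle inequality $|x_i+y_i|\le |x_i|+|y_i|$ in each coordinate and sum:
\begin{align}
F_+(\vec{x}+\vec{y})&=\frac{|x_1+y_1|+|x_2+y_2|}{3}\\
&\le \frac{(|x_1|+|y_1|)+(|x_2|+|y_2|)}{3}\\
&=\frac{|x_1|+|x_2|}{3}+\frac{|y_1|+|y_2|}{3}=F_+(\vec{x})+F_+(\vec{y}).
\end{align}
Together with the two previous paragraphs this establishes that $F_+$ is a norm. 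I do not anticipate any real obstacle here; the entire lemma reduces to the elementary properties of $|\cdot|$ on $\mathbb{R}$, and the only point worth flagging is simply to invoke the correct axiom (absolute value of the scalar in homogeneity, coordinatewise triangle inequality for subadditivity).
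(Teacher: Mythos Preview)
Your proof is correct and follows essentially the same approach as the paper: direct verification of definiteness, absolute homogeneity, and the triangle inequality. In fact, your triangle inequality step is slightly more careful than the paper's, which writes an equality $|x_1+y_1|+|x_2+y_2|=|x_1|+|x_2|+|y_1|+|y_2|$ where only $\le$ is justified.
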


\begin{proof}
It is definite:
\be
0=F_+(\vec{x})= \frac{|x_1| + |x_2|}{3} \,\, \iff \,\, x_1=x_2=0
\ee
It scales:
\be
F_+(R\vec{x})= \frac{|Rx_1| + |Rx_2|}{3}= |R|  \frac{|x_1| + |x_2|}{3}= |R| F_+(\vec{x}).
\ee
It satisfies the triangle inequality:
\be
F_+(\vec{x}+\vec{y})= \frac{|x_1+y_1| + |x_2+y_2|}{3}= \frac{|x_1| + |x_2|}{3} + \frac{|y_1| + |y_2|}{3}=
F_+(\vec{x})+F_+(\vec{y}).
\ee
\end{proof}

Next, we will estimate
\be
\text{dil}(F_+) = \max_{\vec{a} \neq \vec{b}\in\mathbb{R}^2} \frac{| F(\vec{a}) - F(\vec{b})|}{|\vec{a} - \vec{b}|}.
\ee

\begin{lem}\label{dilF-+}
The function $F_+$ satisfies $\text{dil}(F_+) \leq 1$.
\end{lem}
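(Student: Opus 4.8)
The plan is to reduce the dilation bound to a pointwise comparison between $F_+$ and the Euclidean norm, exploiting the fact that $F_+$ has already been shown to be a norm. Since a norm satisfies the reverse triangle inequality, for any $\vec{a}\neq\vec{b}\in\mathbb{R}^2$ we have
\be
|F_+(\vec{a})-F_+(\vec{b})|\le F_+(\vec{a}-\vec{b}),
\ee
so that
\be
\dil(F_+)=\sup_{\vec{a}\neq\vec{b}}\frac{|F_+(\vec{a})-F_+(\vec{b})|}{|\vec{a}-\vec{b}|}\le\sup_{\vec{v}\neq 0}\frac{F_+(\vec{v})}{|\vec{v}|}.
\ee
Thus it suffices to prove the single pointwise inequality $F_+(\vec{v})\le|\vec{v}|$ for every $\vec{v}=(v_1,v_2)\in\mathbb{R}^2$.

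First I would justify the reverse triangle inequality: writing $\vec{a}=(\vec{a}-\vec{b})+\vec{b}$ and applying the triangle inequality for the norm $F_+$ (established in the preceding lemma) gives $F_+(\vec{a})\le F_+(\vec{a}-\vec{b})+F_+(\vec{b})$, and symmetrically $F_+(\vec{b})\le F_+(\vec{b}-\vec{a})+F_+(\vec{a})=F_+(\vec{a}-\vec{b})+F_+(\vec{a})$, since $F_+$ is symmetric under sign change. Together these yield the displayed bound.

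Next I would establish the pointwise comparison directly. Unwinding the definitions, the claim $F_+(\vec{v})\le|\vec{v}|$ reads
\be
\frac{|v_1|+|v_2|}{3}\le\sqrt{v_1^2+v_2^2}.
\ee
I would invoke the standard comparison between the $\ell^1$ and $\ell^2$ norms on $\mathbb{R}^2$, namely $|v_1|+|v_2|\le\sqrt{2}\,\sqrt{v_1^2+v_2^2}$ (which itself follows from Cauchy--Schwarz, or by squaring and using $2|v_1||v_2|\le v_1^2+v_2^2$). Dividing by $3$ gives
\be
\frac{|v_1|+|v_2|}{3}\le\frac{\sqrt{2}}{3}\sqrt{v_1^2+v_2^2}\le\sqrt{v_1^2+v_2^2},
\ee
where the last step uses $\sqrt{2}/3<1$. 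This completes the chain and shows $\dil(F_+)\le 1$.

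There is no serious obstacle here; the only point requiring a moment of care is to route the argument through the reverse triangle inequality so that the supremum over pairs $(\vec{a},\vec{b})$ collapses to a supremum over a single vector $\vec{v}=\vec{a}-\vec{b}$. Once that reduction is in place, the remaining inequality is an elementary $\ell^1$-versus-$\ell^2$ estimate with room to spare, since the factor $\sqrt{2}/3$ is comfortably below $1$.
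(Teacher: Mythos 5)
Your proof is correct and follows essentially the same route as the paper's: both arguments reduce the dilation bound to the comparison $|v_1|+|v_2|\le\sqrt{2}\,\|\vec v\|_{\mathbb{E}^2}$ and then use $\sqrt{2}/3<1$. The only cosmetic difference is that you collapse the supremum over pairs to a single vector via the reverse triangle inequality for the norm $F_+$, whereas the paper bounds $\bigl||a_i|-|b_i|\bigr|\le|a_i-b_i|$ componentwise; both reductions are immediate and the key estimate is identical.
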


\begin{proof}
Recall that 
\be
||\vec{x}||_{taxi} = |x_1| + |x_2|
\ee
 is the taxicab norm, and 
\be
||\vec{x}||_{\mathbb{E}^2} = \sqrt{(x_1)^2 + (x_2)^2}
\ee
 is the Euclidean norm. We have
\begin{align}
\text{dil}(F_+) &= \max_{\vec{a} \neq \vec{b}} \frac{1}{3} \frac{|a_1 - b_1 + a_2 - b_2|}{\sqrt{(a_1 - b_1)^2 + (a_2 - b_2)^2}} \notag\\
{}&\leq\max_{\vec{a} \neq \vec{b}} \frac{1}{3} \frac{|a_1 - b_1| + |a_2 - b_2|}{\sqrt{(a_1 - b_1)^2 + (a_2 - b_2)^2}} 
= \max_{\vec{a} \neq \vec{b}} \frac{1}{3} \frac{||\vec{a} - \vec{b}||_{taxi}}{|| \vec{a} - \vec{b}||_{\mathbb{E}^2}}.\notag
\end{align}
It is well-known that $||\vec{x}||_{\mathbb{E}^2} \leq ||\vec{x}||_{taxi} \leq \sqrt{2} || \vec{x} ||_{\mathbb{E}^2}$. Thus $\frac{1}{||\vec{x}||_{\mathbb{E}^2}} \leq \frac{\sqrt{2}}{||\vec{x}||_{taxi}}$, which gives us
\begin{align}
\max_{\vec{a} \neq \vec{b}} \frac{1}{3} \frac{||\vec{a} - \vec{b}||_{taxi}}{||\vec{a} - \vec{b}||_{\mathbb{E}^2}} \leq \max_{\vec{a} \neq \vec{b}} \frac{\sqrt{2}}{3}\frac{||\vec{a} - \vec{b}||_{taxi}}{||\vec{a} - \vec{b}||_{taxi}} 
= \frac{\sqrt{2}}{3} \leq 1\notag,
 \end{align}
 finishing the proof of Lemma \ref{dilF-+}.
 \end{proof}
 
Now we are ready to estimate the $\bar{d}_+$-distance between arbitrary points.

\begin{lem}\label{lem-point-dist-+}
For any two points $\vec{x} = (x_1, x_2), \vec{y} = (y_1, y_2) \in \mathbb{R}^2$, we have
\be\label{dist-estimate-+}
F_+(\vec{x} - \vec{y}) - (4h + 2L) \leq \bar{d}_+((x_1, x_2), (y_1, y_2)) \leq 
F_+(\vec{x} - \vec{y}) + (4h + 2L)
\ee
where $L$ and $h$ are the smocking length and depth, respectively.
\end{lem}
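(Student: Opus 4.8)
The plan is to obtain Lemma~\ref{lem-point-dist-+} as a direct application of the Key Lemma (Lemma~\ref{lem-dil-to-approx}) with the norm $F=F_+$. First I would verify the hypotheses of that lemma for $(X_+,d_+)$. The space is parametrized by points in its stitches, since the index set $J_+=3\mathbb{Z}\times 3\mathbb{Z}$ consists of the centers of the plus-shaped stitches and each center $j$ lies in $I_j$; its smocking depth $h=h_+=\sqrt{5/2}$ and smocking length $L=L_+=2$ are both finite and positive by Lemmas~\ref{lem-depth-+} and~\ref{lem-length-+}; and $F_+$ is a norm (as shown above), hence Lipschitz, with $\dil(F_+)\le 1$ by Lemma~\ref{dilF-+}.

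The crucial input is that the interval-distance is matched \emph{exactly} by $F_+$: Lemma~\ref{lem-interval-dist-+} gives $d_+(I_{(j_1,j_2)},I_{(j_1',j_2')})=F_+(\vec{j}-\vec{j'})$ with no error, so the constant $C$ in the hypothesis of Lemma~\ref{lem-dil-to-approx} may be taken to be $C=0$. Feeding $C=0$ and $\dil(F_+)\le 1$ into the conclusion of the Key Lemma yields
\[
\left|\bar{d}_+(\vec{x},\vec{y})-F_+(\vec{x}-\vec{y})\right|\le 2h+C+2\dil(F_+)(h+L)\le 2h+0+2(h+L)=4h+2L,
\]
which is exactly the two-sided estimate claimed.

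For transparency, and to avoid leaning on the convention by which $[F(x)-F(x')]$ abbreviates $F(x-x')$ for a norm $F$, I would also be prepared to unwind the argument directly. Choosing nearest stitches $I_j\ni z$ to $\vec{x}$ and $I_{j'}\ni z'$ to $\vec{y}$ gives $|\vec{x}-z|,|\vec{y}-z'|\le h$, and the parametrization gives $|z-j|,|z'-j'|\le L$, hence $|\vec{x}-j|,|\vec{y}-j'|\le h+L$. Two applications of the triangle inequality for $\bar{d}_+$, together with $\bar{d}_+(z,z')=d_+(I_j,I_{j'})=F_+(j-j')$, produce $|\bar{d}_+(\vec{x},\vec{y})-F_+(j-j')|\le 2h$; then the reverse triangle inequality for the norm $F_+$ and the bound $F_+(\vec{v})\le \dil(F_+)|\vec{v}|\le |\vec{v}|$ give $|F_+(j-j')-F_+(\vec{x}-\vec{y})|\le F_+(j-\vec{x})+F_+(j'-\vec{y})\le 2(h+L)$. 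Adding the two estimates recovers the bound $4h+2L$.

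The step carrying the real content is the passage from distances between stitch centers to distances between arbitrary points: this is where the depth $h$ and length $L$ enter, and where one must check that the nearest-point and parametrization bounds combine correctly. Because the interval formula of Lemma~\ref{lem-interval-dist-+} is exact, no genuine analytic obstacle remains — the estimate is essentially bookkeeping — so the only point to watch is that the constants assemble to precisely $4h+2L$ rather than something looser.
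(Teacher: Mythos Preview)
Your proposal is correct and essentially identical to the paper's proof: the paper carries out exactly your ``direct unwinding'' (nearest-stitch bound $|\bar d_+(\vec x,\vec y)-F_+(j-j')|\le 2h$, then the dilation/reverse-triangle bound $|F_+(j-j')-F_+(\vec x-\vec y)|\le 2(h+L)$, summed to $4h+2L$). Your first framing via Lemma~\ref{lem-dil-to-approx} with $C=0$ is just a repackaging of the same computation, and your caveat about reading $[F(x)-F(x')]$ as $F(x-x')$ for the norm is well placed.
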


\begin{proof}
By the definition of the smocking depth, there exists some $I_{\vec{j}} = I_{(j_1,j_2)}$ and some $I_{\vec{j'}} = I_{(j_1',j_2')}$ such that $d_+(\vec{x}, I_{\vec{j}}) \leq h$ and $d_+(\vec{y}, I_{\vec{j'}}) \leq h$.

We first claim that
\be\label{lem-point-dist-claim1+}
|\bar{d}_+(\vec{x},\vec{y}) - \bar{d}_+(\vec{j}, \vec{j'})| \leq 2h.
\ee
Note that 
\be
\bar{d}_+(\vec{x},\vec{y}) \leq \bar{d}_+(\vec{x}, \vec{j}) + \bar{d}_+(\vec{j}, \vec{y}) \leq \bar{d}_+(\vec{x}, \vec{j}) + \bar{d}_+(\vec{j}, \vec{j'}) + \bar{d}_+(\vec{j'}, \vec{y}).
\ee
Thus 
\be
\left| \bar{d}_+(\vec{x},\vec{y}) - \bar{d}_+(\vec{j}, \vec{j'}) \right| \leq |\bar{d}_+(\vec{x}, \vec{j})| + |\bar{d}_+(\vec{j'}, \vec{y})| \leq 2h,
\ee
establishing (\ref{lem-point-dist-claim1+}).

Next, we claim that
\be\label{jtox-+}
|F_+(\vec{j} - \vec{j'}) - F_+(\vec{x} - \vec{y})| \leq 2(h+L).
\ee
Indeed, by definition of $\text{dil}(F_+)$, we have
\be
 \frac{|F(\vec{j} - \vec{j'}) - F(\vec{x} - \vec{y})|}{|(\vec{j}- \vec{j'}) - (\vec{x} - \vec{y})|} \leq \text{dil}(F).
\ee
By our choice of $\vec{j}$, $\vec{j'}$ and using Lemma \ref{dilF-+}, it follows that
\begin{align}
 |F_+(\vec{j} - \vec{j'}) - F_+(\vec{x} - \vec{y})|\,\, &\leq\,\, |(\vec{j}- \vec{j'}) - (\vec{x} - \vec{y})| \, \text{dil}(F_+) \\
 &\leq\,\, (|\vec{j} - \vec{x}| + |\vec{j'} - \vec{y}|) \, \text{dil}(F_+)\,\, \leq \,\,2(h+L)\notag,
 \end{align}
establishing inequality (\ref{jtox-+}).

Combining inequalities (\ref{lem-point-dist-claim1+}), (\ref{jtox-+}), and the triangle inequality, we have 
\begin{align}
|\bar{d}_+(\vec{x}, \vec{y}) - F(\vec{x} - \vec{y})| &\leq |\bar{d}_+(\vec{x},\vec{y})| - \bar{d}_+(\vec{j},\vec{j'})| + |F(\vec{j} - \vec{j'}) - F(\vec{x} - \vec{y})| \le 4h+2L,\notag
\end{align}
finishing the proof of Lemma \ref{lem-point-dist-+}.
\end{proof}

\begin{lem}\label{limit-norm-+}
For any two points $\vec{x}=(x_1,x_2), \vec{y}=(y_1,y_2)$, we have
\[
F_+(\vec{x}-\vec{y})=\lim_{R \to \infty} \frac{1}{R}\left[ \bar{d}_+((Rx_1, Rx_2),(Ry_1,Ry_2)) \right].
\]
\end{lem}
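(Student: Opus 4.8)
The plan is to deduce this scaling limit directly from the uniform two-sided estimate already recorded in Lemma~\ref{lem-point-dist-+}. The key observation is that the additive error $4h+2L$ appearing there is a fixed constant, independent of the two points, so that after rescaling and dividing by $R$ it becomes negligible. Everything else is packaged into the homogeneity of the norm $F_+$ and a squeeze argument.

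First I would apply Lemma~\ref{lem-point-dist-+} to the dilated points $(Rx_1,Rx_2)$ and $(Ry_1,Ry_2)$, which yields
\[
F_+(R\vec{x}-R\vec{y}) - (4h+2L) \le \bar{d}_+(R\vec{x},R\vec{y}) \le F_+(R\vec{x}-R\vec{y}) + (4h+2L).
\]
Next I would invoke the homogeneity of $F_+$ established above (since $F_+$ is a norm and $R>0$), giving $F_+(R\vec{x}-R\vec{y})=R\,F_+(\vec{x}-\vec{y})$. Dividing every term by $R$ then produces
\[
F_+(\vec{x}-\vec{y}) - \frac{4h+2L}{R} \le \frac{1}{R}\,\bar{d}_+(R\vec{x},R\vec{y}) \le F_+(\vec{x}-\vec{y}) + \frac{4h+2L}{R}.
\]

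Finally, since $h$ and $L$ are finite constants (the smocking depth and length of $X_+$), both outer bounds tend to $F_+(\vec{x}-\vec{y})$ as $R\to\infty$. By the squeeze theorem the middle quantity must converge to the same value, which is exactly the asserted limit. Note in particular that the limit exists (not merely along a subsequence), in agreement with the later uniqueness statement in Theorem~\ref{thm-tan+}.

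There is essentially no hard step here: the entire analytic content lives in Lemma~\ref{lem-point-dist-+}, and what remains is homogeneity of the norm plus an elementary squeeze. The only point worth checking carefully is that the error constant $4h+2L$ is genuinely independent of $R$ and of the chosen points—but this is immediate from the statement of Lemma~\ref{lem-point-dist-+}, whose error term depends only on the smocking depth $h$ and the smocking length $L$.
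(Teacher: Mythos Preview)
Your proof is correct and follows essentially the same approach as the paper: apply Lemma~\ref{lem-point-dist-+} to the rescaled points, use the positive homogeneity of $F_+$, divide by $R$, and let $R\to\infty$. The paper packages the two-sided bound as a single absolute-value inequality rather than an explicit squeeze, but the content is identical.
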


\begin{proof}
By Lemma \ref{lem-point-dist-+}, we have 
\be
\tfrac1R \left|\bar{d}_+((Rx_1, Rx_2), (Ry_1, Ry_2))-F(R\vec{x}-R\vec{y})\right|\leq\tfrac1R(4h + 2L)\notag.
\ee
Using the formula for $F_+$, we find
\be
\left|\tfrac1R \bar{d}_+((Rx_1, Rx_2), (Ry_1, Ry_2))-F_+(\vec{x}-\vec{y})\right|\leq\tfrac1R(4h + 2L)\notag.
\ee
Taking the limit of both sides,
$$
\bar{d}_{\infty}(\vec{x},\vec{y}) = F_+(\vec{x},\vec{y}) = \frac{1}{3} \left[ \frac{|x_1 - y_1|}{3} + \frac{|x_2 - y_2|}{3} \right],
$$
establishing the first part of Lemma \ref{limit-norm-+}.
\end{proof}

Combining these lemmas with Theorem~\ref{thm-smocking-R}
we conclude Theorem~\ref{thm-tan+}.

\subsection{ The Tangent Cone at Infinity of $X_\square$ by Dr.~Kazaras and Hindy}

In this section we prove the tangent cone at infinity of the $\square$ smocked space is 
a normed space whose unit ball is an octagon.  See Theorem~\ref {thm-tan-square}
which is depicted in Figure~\ref{fig:square-lim}.

\begin{figure}[h]
\includegraphics[width=.8 \textwidth]{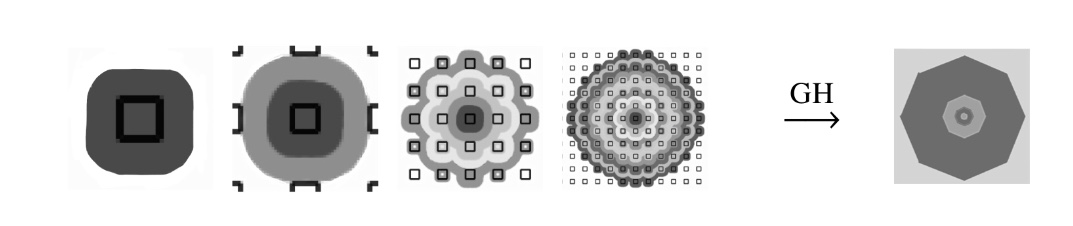}
\caption{Converging to the Tangent Cone at Infinity.}
\label{fig:square-lim}
\end{figure}

\begin{thm} \label{thm-tan-square}
The tangent cone at infinity of $(X_\square, d_\square)$ is $({\mathbb{R}}^2, d_{F_\square})$,
where
\be\label{F-def-square}
d_{F_\square}( \vec{x},\vec{y})= F_\square(\vec{x}- \vec{y})
\textrm{ and }
F_\square(\vec{x})= ||\vec{x}||_\square=\tfrac{2\sqrt{2}}{3}\min \left( |x_1|,|x_2| \right) + \tfrac{2}{3}  \left||x_1| - |x_2| \right| .
\ee
In the limit space, the shape of a ball, 
$
\{\vec{x}\in\mathbb{R}^2\colon \,\,F_\square(\vec{x}) = R\},
$
 is an octagon.
\end{thm}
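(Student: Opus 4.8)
The plan is to apply Theorem~\ref{thm-smocking-R} with $F=F_\square$, following the exact template used for $X_+$ in the proof of Theorem~\ref{thm-tan+}. Concretely, two facts must be established: that $F_\square$ as defined in (\ref{F-def-square}) is a genuine norm, and that $F_\square$ satisfies the hypothesis of the Key Lemma (Lemma~\ref{lem-dil-to-approx}), so that a uniform estimate of the form $\big|\bar d_\square(x,x')-F_\square(x-x')\big|\le K$ holds for some finite $K$. Once both are in hand, Theorem~\ref{thm-smocking-R} immediately gives the unique tangent cone $(\mathbb{R}^2,d_{F_\square})$.

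The main obstacle is verifying that $F_\square$ is a norm, since its triangle inequality is not transparent from (\ref{F-def-square}). The tempting decomposition into $\tfrac{\sqrt2}{3}(|x_1|+|x_2|)+\tfrac{2-\sqrt2}{3}\big||x_1|-|x_2|\big|$ does not settle the matter, because the term $\big||x_1|-|x_2|\big|$ is not subadditive (it vanishes at $(1,1)$ and at $(1,-1)$ yet equals $2$ at their sum $(2,0)$). The clean route is to rewrite $F_\square$ as a maximum of two weighted taxicab expressions. Using $\min(a,b)=\tfrac12\big((a+b)-|a-b|\big)$ one obtains
\[
F_\square(\vec x)=\max\left\{\tfrac{2}{3}|x_1|+\tfrac{2\sqrt2-2}{3}|x_2|,\ \tfrac{2\sqrt2-2}{3}|x_1|+\tfrac{2}{3}|x_2|\right\}.
\]
This identity is checked by noting the two arguments differ by $\tfrac{4-2\sqrt2}{3}\big(|x_1|-|x_2|\big)$, so the larger is selected precisely according to whether $|x_1|\ge|x_2|$, matching (\ref{F-def-square}). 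Each argument is a weighted taxicab norm with strictly positive weights ($\tfrac23$ and $\tfrac{2\sqrt2-2}{3}>0$), hence a genuine norm, and a maximum of two norms is again a norm. This representation simultaneously exhibits the unit ball $\{F_\square\le1\}$ as the intersection of two elongated rhombi, i.e. the octagon with vertices $(\pm\tfrac32,0)$, $(0,\pm\tfrac32)$, and $(\pm\tfrac{3\sqrt2}{4},\pm\tfrac{3\sqrt2}{4})$, as asserted in the theorem.

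Next I would feed the two inputs into the Key Lemma. First, Proposition~\ref{prop-dist-square} gives the stitch-to-stitch distance exactly, since formula (\ref{dsq0}) is literally $F_\square$ evaluated at the difference, i.e. $d_\square(I_{(i_1,i_2)},I_{(j_1,j_2)})=F_\square(\vec i-\vec j)$; thus the constant $C$ in Lemma~\ref{lem-dil-to-approx} may be taken to be $0$. Second, I would bound the dilation: from the form $F_\square(\vec x)=\tfrac{\sqrt2}{3}(|x_1|+|x_2|)+\tfrac{2-\sqrt2}{3}\big||x_1|-|x_2|\big|$ together with $|x_1|+|x_2|\le\sqrt2\,|\vec x|$ and $\big||x_1|-|x_2|\big|\le|\vec x|$, the reverse triangle inequality for the norm $F_\square$ yields $\dil(F_\square)\le\tfrac{4-\sqrt2}{3}<\infty$. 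Using the smocking constants $h_\square=\sqrt2$ (Lemma~\ref{lem-depth-square}) and $L_\square=\sqrt2$ (Lemma~\ref{lem-length-square}), Lemma~\ref{lem-dil-to-approx} then produces
\[
\big|\bar d_\square(x,x')-F_\square(x-x')\big|\le 2h_\square+2\,\dil(F_\square)\,(h_\square+L_\square)=:K<\infty
\]
for all $x,x'\in\mathbb{E}^2$.

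Finally, with $F_\square$ shown to be a norm and the uniform bound in place, Theorem~\ref{thm-smocking-R} applies verbatim and shows $(X_\square,d_\square)$ has a unique tangent cone at infinity equal to $(\mathbb{R}^2,d_{F_\square})$, whose balls are the octagons identified above. I expect the norm verification to be the only genuinely delicate point; everything else reduces to invoking the distance formula and the smocking constants already computed.
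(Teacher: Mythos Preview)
Your proposal is correct and follows the same overall strategy as the paper: verify that $F_\square$ is a norm, establish a uniform bound $|\bar d_\square(x,x')-F_\square(x-x')|\le K$, and then invoke Theorem~\ref{thm-smocking-R}. The tactical choices differ in two places, and in both your version is cleaner. For the norm property, the paper rewrites $F_\square(\vec x)=\tfrac{2\sqrt2-2}{3}\min(|x_1|,|x_2|)+\tfrac23\max(|x_1|,|x_2|)$ and then argues the triangle inequality via a monotonicity trick; your representation of $F_\square$ as the maximum of two positively weighted $\ell^1$ norms dispatches this in one line and, as you note, immediately exhibits the unit ball as an intersection of two rhombi, hence an octagon. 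For the dilation, the paper does a two-case computation yielding $\dil(F_\square)\le\tfrac43$, whereas your bound $F_\square(\vec x)\le\tfrac{4-\sqrt2}{3}|\vec x|$ combined with the reverse triangle inequality for norms gives the sharper $\dil(F_\square)\le\tfrac{4-\sqrt2}{3}$ with less work. Finally, you appeal directly to the Key Lemma (Lemma~\ref{lem-dil-to-approx}) with $C=0$, while the paper re-derives essentially that lemma's argument ad hoc in Lemma~\ref{lem-point-dist-square}; these are equivalent.
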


Notice that, according to Lemma \ref{prop-dist-square}, for $\vec{j},\vec{j'}\in J_\square$
\be
d_{\square}(I_{\vec{j}}, I_{\vec{j'}}) =  F_\square(\vec{j} - \vec{j'}).
\ee
We will prove a series of lemmas to show we have all the hypotheses needed to apply  
Theorem~\ref{thm-smocking-R} taking $F=F_\square$.

\begin{lem}
The function $F_{\square}$ is a norm.
\end{lem}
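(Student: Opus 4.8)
The plan is to avoid grinding out the triangle inequality directly and instead rewrite $F_\square$ as a nonnegative linear combination of two standard norms, from which all three norm axioms follow simultaneously. Concretely, I would introduce the sup norm $\|\vec{x}\|_\infty = \max(|x_1|,|x_2|)$ alongside the taxicab norm $\|\vec{x}\|_{taxi} = |x_1| + |x_2|$ already used in the paper, and record the two elementary identities
\begin{align*}
\max(|x_1|,|x_2|) + \min(|x_1|,|x_2|) &= \|\vec{x}\|_{taxi},\\
\big||x_1| - |x_2|\big| &= \max(|x_1|,|x_2|) - \min(|x_1|,|x_2|),
\end{align*}
both valid for every $\vec{x}\in\mathbb{R}^2$.

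Using the second identity to eliminate the absolute-difference term in the defining formula (\ref{F-def-square}) gives
\begin{align*}
F_\square(\vec{x}) = \tfrac{2}{3}\max(|x_1|,|x_2|) + \tfrac{2(\sqrt{2}-1)}{3}\min(|x_1|,|x_2|),
\end{align*}
and then substituting $\min(|x_1|,|x_2|) = \|\vec{x}\|_{taxi} - \|\vec{x}\|_\infty$ yields the key decomposition
\begin{align*}
F_\square(\vec{x}) = \tfrac{2(2-\sqrt{2})}{3}\,\|\vec{x}\|_\infty + \tfrac{2(\sqrt{2}-1)}{3}\,\|\vec{x}\|_{taxi},
\end{align*}
after the arithmetic $\tfrac{2}{3} - \tfrac{2(\sqrt{2}-1)}{3} = \tfrac{2(2-\sqrt{2})}{3}$. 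The main observation is that both coefficients are strictly positive, since $\sqrt{2}\in(1,2)$ forces $2-\sqrt{2}>0$ and $\sqrt{2}-1>0$. Because $\|\cdot\|_\infty$ and $\|\cdot\|_{taxi}$ are genuine norms on $\mathbb{R}^2$, a nonnegative combination of them inherits absolute homogeneity and the triangle inequality term by term, and definiteness holds since each coefficient is positive and each summand vanishes only at $\vec{x}=\vec{0}$. Hence $F_\square$ is a norm.

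I do not expect a genuine obstacle here: the only real content is discovering the decomposition and verifying the two opening identities and the coefficient arithmetic. As a consistency check with the stated octagonal unit ball, I would note that the plane splits into eight maximal regions according to the quadrant and to whether $|x_1|\ge|x_2|$ or $|x_1|\le|x_2|$; on each region $F_\square$ is a linear function of $(x_1,x_2)$, so the level set $\{F_\square=1\}$ contributes one straight edge per region, producing the eight-sided figure. If a fully self-contained argument were preferred instead, one could check nonnegativity, definiteness, and homogeneity directly from the $\min$/absolute-difference form (all immediate), leaving only the triangle inequality as the hard point, which the decomposition above settles at once.
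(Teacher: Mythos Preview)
Your proof is correct, and it takes a genuinely different route from the paper's own argument. Both begin the same way: use the identity $\big||x_1|-|x_2|\big|=\max(|x_1|,|x_2|)-\min(|x_1|,|x_2|)$ to rewrite
\[
F_\square(\vec{x})=\tfrac{2}{3}\max(|x_1|,|x_2|)+\tfrac{2(\sqrt{2}-1)}{3}\min(|x_1|,|x_2|).
\]
From here the paper argues the triangle inequality by hand: it bounds $|x_i+y_i|\le|x_i|+|y_i|$, then observes that an expression of the form $\tfrac{2(\sqrt{2}-1)}{3}a+\tfrac{2}{3}b$ with $a+b$ fixed increases when mass is shifted from $a$ to $b$, and uses this monotonicity to split $\min(|x_1|+|y_1|,|x_2|+|y_2|)$ and $\max(|x_1|+|y_1|,|x_2|+|y_2|)$ into the corresponding $\vec{x}$- and $\vec{y}$-pieces. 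You instead push the algebra one step further, substituting $\min=\|\cdot\|_{taxi}-\|\cdot\|_\infty$ to obtain $F_\square=\tfrac{2(2-\sqrt{2})}{3}\|\cdot\|_\infty+\tfrac{2(\sqrt{2}-1)}{3}\|\cdot\|_{taxi}$ with both coefficients positive, so that all three norm axioms are inherited at once from the two standard norms. Your approach is shorter and conceptually cleaner, and it makes the octagonal shape of the unit ball immediately visible as an intersection-type phenomenon; the paper's approach is more self-contained in that it never names $\|\cdot\|_\infty$, but it requires the extra monotonicity step and treats definiteness and homogeneity separately.
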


\begin{proof}
It is definite:
\be
0=F_\square(\vec{x})= 2\sqrt{2}\min \left( \tfrac{|x_1|}{3},\tfrac{|x_2|}{3} \right) + 2 \left| \tfrac{|x_1|}{3} - \tfrac{|x_2|}{3} \right| 
\ee
if and only if $ \min |x_i|=0 \textrm{ and } |x_1|=|x_2|=0 $ which holds exactly when $x_1=x_2=0.$

\noindent
It scales:
\begin{align*}
F_\square(R\vec{x})&= 2\sqrt{2}\min \left( \tfrac{|Rx_1|}{3},\tfrac{|Rx_2|}{3} \right) + 2  \left| \tfrac{|Rx_1|}{3} - \tfrac{|Rx_2|}{3} \right|  \\
&= 2\sqrt{2}|R|\min \left( \tfrac{|x_1|}{3},\tfrac{|x_2|}{3} \right) + 2|R| \left| \tfrac{|x_1|}{3} - \tfrac{|x_2|}{3} \right|  
\quad = \quad |R| F_\square(\vec{x}).
\end{align*}

It satisfies the triangle inequality:
Using the fact that 
\begin{equation}
\big{|}|A|-|B|\big{|}=\max(|A|,|B|)-\min(|A|,|B|)
\end{equation}
for any numbers $A$ and $B$, we can rewrite the formula for $F_\square$ as follows:
\begin{equation}
    F_\square(\vec{x}) = \tfrac{2\sqrt{2} - 2}{3} \min( |x_1|, |x_2|) + \tfrac{2}{3}\max(|x_1|, |x_2|).
\end{equation}
Using this formula and applying the Euclidean triangle inequality, we obtain
\begin{align}
     F_\square(\vec{x}+\vec{y})&=\tfrac{2(\sqrt{2}-1)}{3}\min \left(|x_1 + y_1|,|x_2 + y_2| \right) + \tfrac{2}{3}\max\left(|x_1 + y_1|,|x_2 + y_2| \right)\notag\\
    &\leq \tfrac{2(\sqrt{2}-1)}{3}\min \left(|x_1| + |y_1|,|x_2| + |y_2| \right) + \tfrac{2}{3}\max\left(|x_1| +| y_1|,|x_2 |+| y_2| \right) \label{F_square_triangle_inequality}.
\end{align}
Note that \eqref{F_square_triangle_inequality} is an expression of the form $\tfrac{2(\sqrt{2}-1)}{3}a + \tfrac23b$ where $a+b = |x_1| + |y_1|  + |x_2| +|y_2|$. Since $\tfrac{2(\sqrt{2}-1)}{3}< \tfrac23$, the quantity $\tfrac{2(\sqrt{2}-1)}{3}a + \tfrac23b$ increases if $a$ is decreased and $b$ is increased, while $a+b$ is kept constant. Combining this observation with  \eqref{F_square_triangle_inequality}, we have
\begin{align*}
     F_\square(\vec{x}+\vec{y})&\leq\tfrac{2(\sqrt{2}-1)}{3}\Bigg(\min \left(|x_1|,|x_2|\right)+ \min\left(|y_1| , |y_2| \right)\Bigg) + \tfrac{2}{3}\Bigg(\max\left(|x_1|,|x_2 |\right) + \max\left(|y_1| ,| y_2| \right)\Bigg)\\
     &= F_\square(\vec{x}) + \tfrac32 F_\square(\vec{y}).
\end{align*}
\end{proof}

Next, we will estimate $\text{dil}(F_\square)$.

\begin{lem}\label{dilF-square}
The function $F_\square$ satisfies $\text{dil}(F_\square) \leq \tfrac{4}{3}$.
\end{lem}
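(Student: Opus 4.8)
The plan is to mimic the proof of Lemma~\ref{dilF-+}, exploiting the fact that $F_\square$ has just been shown to be a norm. Since $F_\square$ is a norm, the reverse triangle inequality gives $|F_\square(\vec{a}) - F_\square(\vec{b})| \le F_\square(\vec{a} - \vec{b})$ for all $\vec{a}, \vec{b} \in \mathbb{R}^2$, and hence
\be
\text{dil}(F_\square) = \max_{\vec{a}\neq\vec{b}} \frac{|F_\square(\vec{a})-F_\square(\vec{b})|}{||\vec{a}-\vec{b}||_{\mathbb{E}^2}} \le \sup_{\vec{v}\neq \vec{0}} \frac{F_\square(\vec{v})}{||\vec{v}||_{\mathbb{E}^2}}.
\ee
Thus it suffices to bound the single-vector ratio $F_\square(\vec{v})/||\vec{v}||_{\mathbb{E}^2}$, which replaces the two-point maximization by a homogeneous one.

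For the estimate I would use the rewritten form of $F_\square$ obtained in the proof of the previous lemma,
\be
F_\square(\vec{v}) = \tfrac{2(\sqrt{2}-1)}{3}\min(|v_1|,|v_2|) + \tfrac{2}{3}\max(|v_1|,|v_2|).
\ee
Since $\min(|v_1|,|v_2|) \le \max(|v_1|,|v_2|) \le ||\vec{v}||_{\mathbb{E}^2}$ and both coefficients are nonnegative, this yields
\be
F_\square(\vec{v}) \le \left(\tfrac{2(\sqrt{2}-1)}{3} + \tfrac{2}{3}\right)||\vec{v}||_{\mathbb{E}^2} = \tfrac{2\sqrt{2}}{3}\,||\vec{v}||_{\mathbb{E}^2}.
\ee
Because $2\sqrt{2} < 4$, we have $\tfrac{2\sqrt{2}}{3} < \tfrac{4}{3}$, and combining this with the displayed bound on $\text{dil}(F_\square)$ finishes the argument.

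I expect no serious obstacle: the target $\tfrac{4}{3}$ is generous, so even the crude estimate $\max(|v_1|,|v_2|) \le ||\vec{v}||_{\mathbb{E}^2}$ comfortably suffices. (For the record, the sharp value is $\tfrac{1}{3}\sqrt{16-8\sqrt{2}} \approx 0.72$, attained when $\vec{v}$ is parallel to the coefficient vector $\left(\tfrac{2(\sqrt{2}-1)}{3}, \tfrac{2}{3}\right)$, whose slope $\sqrt{2}-1 < 1$ lies in the valid region $\min \le \max$.) The only genuine points of care are that the reverse triangle inequality is legitimate precisely because $F_\square$ has already been established as a norm, and that one should work with the $\min$--$\max$ rewrite rather than the original $\min$--absolute-difference form, so that the two bounded quantities can be controlled by $||\vec{v}||_{\mathbb{E}^2}$ simultaneously with nonnegative coefficients.
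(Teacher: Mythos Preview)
Your proof is correct and is cleaner than the paper's. The paper does not use the reverse triangle inequality; instead it works directly with the two-point quotient
\[
\frac{|F_\square(\vec{a})-F_\square(\vec{b})|}{\|\vec{a}-\vec{b}\|_{\mathbb{E}^2}},
\]
first passing to the taxi norm in the denominator via $\|\vec{x}\|_{\mathbb{E}^2}\ge\tfrac{1}{\sqrt{2}}\|\vec{x}\|_{taxi}$, and then splitting into the cases $|a_1|\ge|a_2|,\,|b_1|\ge|b_2|$ and $|a_1|\ge|a_2|,\,|b_1|\le|b_2|$. The first case yields the bound $\tfrac{2\sqrt{2}}{3}$, but the second case only gives $\tfrac{4}{3}$, which is where the stated constant comes from.

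Your route is genuinely different: by invoking the norm property established in the preceding lemma, you collapse the two-point problem to the homogeneous ratio $F_\square(\vec{v})/\|\vec{v}\|_{\mathbb{E}^2}$ and obtain the uniform bound $\tfrac{2\sqrt{2}}{3}$ in one line using the $\min$--$\max$ rewrite. This avoids the case split entirely and produces a strictly sharper constant (and, as you note, the true operator norm $\tfrac{1}{3}\sqrt{16-8\sqrt{2}}\approx 0.72$ is sharper still). The paper's approach has the minor advantage of being self-contained---it does not logically depend on the triangle inequality for $F_\square$---but since that lemma precedes this one, your dependence on it is perfectly legitimate.
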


\begin{proof}
Recall that $|x_1|+|x_2|\leq \sqrt{2}||\vec{x}||_{\mathbb{E}^2}$. For two points $\vec{a}=(a_1,a_2),\vec{b}=(b_1,b_2)$, we want to estimate the quantity
\begin{align}
\frac{| F_\square(\vec{a}) - F_\square(\vec{b})|}{||\vec{a} - \vec{b}||_{\mathbb{E}^2}} &
	=\frac{2}{3} \frac{\left| | \sqrt{2}(\min(|a_1|,|a_2|)-\min(|b_1|,|b_2|))+||a_1|-|a_2||-||b_1|-|b_2||\right|}{||\vec{a} - \vec{b}||_{\mathbb{E}^2}} \notag \\
{}&\leq \frac{2\sqrt{2}}{3} \frac{\left| | \sqrt{2}(\min(|a_1|,|a_2|)-\min(|b_1|,|b_2|))+||a_1|-|a_2||-||b_1|-|b_2||\right|}{|a_1-b_1|+|a_2-b_2|} \label{eq-dilF-square}
\end{align}
By the symmetry of expression (\ref{eq-dilF-square}) in $\vec{a}$ and $\vec{b}$, it suffices to consider the following two cases: first when $|a_1|\geq|a_2|$, $|b_1|\geq|b_2|$
and second when $|a_1|\geq|a_2|$, $|b_1|\leq|b_2|$.

First, assume that $|a_1|\geq|a_2|$ and $|b_1|\geq|b_2|$. Then we may estimate (\ref{eq-dilF-square}) by
\begin{align}
\frac{| F_\square(\vec{a}) - F_\square(\vec{b})|}{||\vec{a} - \vec{b}||_{\mathbb{E}^2}} &
 \leq\frac{2\sqrt{2}}{3} \frac{\left|  \sqrt{2}(|a_2|-|b_2|)+|a_1|-|a_2|-|b_1|+|b_2|\right|}{|a_1-b_1|+|a_2-b_2|}\\
 &\leq \frac{2\sqrt{2}}{3}\frac{\left||a_1|-|b_1|+|a_2|-|b_2|\right|}{|a_1-b_1|+|a_2-b_2|}
 \leq\frac{2\sqrt{2}}{3}\frac{|a_1-b_1|+|a_2-b_2|}{|a_1-b_1|+|a_2-b_2|} =\frac{2\sqrt{2}}{3}\notag,
\end{align}
which gives the desired estimate.

Now assume that $|a_1|\geq|a_2|$ and $|b_1|\leq|b_2|$. Then we may estimate (\ref{eq-dilF-square}) by
\begin{align}
\frac{| F_\square(\vec{a}) - F_\square(\vec{b})|}{||\vec{a} - \vec{b}||_{\mathbb{E}^2}} &
\leq \frac{2\sqrt{2}}{3} \frac{\left|  \sqrt{2}(|a_2|-|b_1|)+|a_1|-|a_2|+|b_1|-|b_2|\right|}{|a_1-b_1|+|a_2-b_2|}\notag\\
&
 	 =\frac{2\sqrt{2}}{3} \frac{\left|  (\sqrt{2}-1)|a_2|-(\sqrt{2}-1)|b_1|+|a_1|-|b_2|\right|}{|a_1-b_1|+|a_2-b_2|}\notag\\
{}&\leq\begin{cases}
\frac{2\sqrt{2}}{3} \frac{(\sqrt{2}-1)|a_1|-(\sqrt{2}-1)|b_1|+|a_1|-|b_1|}{|a_1-b_1|+|a_2-b_2|}&\text{ if }F_\square(\vec{a})-F_\square(\vec{b})\geq0\\
\frac{2\sqrt{2}}{3} \frac{-(\sqrt{2}-1)|a_2|+(\sqrt{2}-1)|b_2|-|a_2|+|b_2|}{|a_1-b_1|+|a_2-b_2|}&\text{ otherwise}
\end{cases}\notag\\
{}&\leq\begin{cases}
\frac{4}{3} \frac{|a_1-b_1|}{|a_1-b_1|+|a_2-b_2|}&\text{ if }F_\square(\vec{a})-F_\square(\vec{b})\geq0\\
\frac{4}{3} \frac{|a_2-b_2|}{|a_1-b_1|+|a_2-b_2|}&\text{ otherwise}
\end{cases}\qquad \leq\frac43\notag,
\end{align}
which gives the desired estimate and completes the proof of Lemma \ref{dilF-square}.
\end{proof}
 
Now we are ready to estimate the $\bar{d}_\square$-distance between arbitrary points.

\begin{lem}\label{lem-point-dist-square}
For any two points $\vec{x} = (x_1, x_2), \vec{y} = (y_1, y_2) \in \mathbb{R}^2$, we have
\be\label{dist-estimate-square}
F_\square(\vec{x} - \vec{y}) - (\tfrac{14}{3}h+\tfrac{8}{3}L) \leq \bar{d}_\square((x_1, x_2), (y_1, y_2)) \leq 
F_\square(\vec{x} - \vec{y}) + (\tfrac{14}{3}h+\tfrac{8}{3}L)
\ee
where $L$ and $h$ are the smocking length and depth, respectively.
\end{lem}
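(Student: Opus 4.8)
The plan is to obtain (\ref{dist-estimate-square}) as an immediate consequence of the Key Lemma (Lemma~\ref{lem-dil-to-approx}) applied with $F = F_\square$, so that the real content has already been assembled in the preceding results. First I would check the standing hypotheses of that lemma. The space $X_\square$ is parametrized by points in its stitches as in (\ref{param-by-points}), since the index point $\vec{j}=(j_1,j_2)\in J_\square$ is the lower-left corner of the square $I_{\vec{j}}$ and hence lies on it; the depth $h=h_\square=\sqrt{2}$ and length $L=L_\square=\sqrt{2}$ are finite and positive by Lemmas~\ref{lem-depth-square} and~\ref{lem-length-square}; and $F_\square$ is a norm by the lemma just proved, hence Lipschitz.

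Next I would supply the two quantitative inputs the Key Lemma needs. By Proposition~\ref{prop-dist-square} we have the exact identity
\be
d_\square(I_{\vec{j}}, I_{\vec{j'}}) = F_\square(\vec{j}-\vec{j'}) \qquad \forall\, \vec{j},\vec{j'}\in J_\square,
\ee
so the distance-comparison hypothesis holds with constant $C=0$ (adopting the convention, as in Theorem~\ref{thm-smocking-R}, that the bracket $[F_\square(x)-F_\square(x')]$ denotes $F_\square(x-x')$). The dilation input is Lemma~\ref{dilF-square}, which gives $\dil(F_\square)\le\tfrac{4}{3}$. Substituting $C=0$ and $\dil(F_\square)\le\tfrac{4}{3}$ into the conclusion of Lemma~\ref{lem-dil-to-approx} yields
\be
\bigl|\bar{d}_\square(\vec{x},\vec{y}) - F_\square(\vec{x}-\vec{y})\bigr| \le 2h + 0 + 2\cdot\tfrac{4}{3}(h+L) = \tfrac{14}{3}h + \tfrac{8}{3}L,
\ee
which is exactly the two-sided estimate (\ref{dist-estimate-square}).

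There is no genuine obstacle remaining at this stage: all of the difficulty was already absorbed into proving the exact interval-distance formula of Proposition~\ref{prop-dist-square} (which is what forces $C=0$) and the dilation bound of Lemma~\ref{dilF-square}. The only point requiring care is confirming the parametrization-by-points condition so that Lemma~\ref{lem-dil-to-approx} truly applies. As an alternative to invoking the Key Lemma, one could reprove the estimate in the style of Lemma~\ref{lem-point-dist-+}: pick closest stitches $I_{\vec{j}},I_{\vec{j'}}$ with $d_\square(\vec{x},I_{\vec{j}}),\,d_\square(\vec{y},I_{\vec{j'}})\le h$, bound $|\bar{d}_\square(\vec{x},\vec{y})-\bar{d}_\square(\vec{j},\vec{j'})|\le 2h$ by the triangle inequality, bound $|F_\square(\vec{j}-\vec{j'})-F_\square(\vec{x}-\vec{y})|\le 2\,\dil(F_\square)(h+L)\le\tfrac{8}{3}(h+L)$ using the Lipschitz estimate, and combine these with the identity $\bar{d}_\square(\vec{j},\vec{j'})=F_\square(\vec{j}-\vec{j'})$; the error constants again add to $2h+\tfrac{8}{3}(h+L)=\tfrac{14}{3}h+\tfrac{8}{3}L$.
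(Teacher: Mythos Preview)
Your proposal is correct and matches the paper's argument: the paper carries out exactly your ``alternative'' direct computation (choosing nearest stitches, bounding $|\bar d_\square(\vec x,\vec y)-\bar d_\square(\vec j,\vec j')|\le 2h$ and $|F_\square(\vec j-\vec j')-F_\square(\vec x-\vec y)|\le\tfrac{8}{3}(h+L)$, then adding), rather than invoking Lemma~\ref{lem-dil-to-approx} by name, but the content is identical. Your observation that the bracket $[F(x)-F(x')]$ must be read as $F(x-x')$ for the Key Lemma and Theorem~\ref{thm-smocking-R} to mesh is well taken.
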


\begin{proof}
By the definition of the smocking depth, there exists some $I_{\vec{j}} = I_{(j_1,j_2)}$ and some $I_{\vec{j'}} = I_{(j_1',j_2')}$ such that $d_\square(\vec{x}, I_{\vec{j}}) \leq h$ and $d_\square(\vec{y}, I_{\vec{j'}}) \leq h$.

We first claim that
\be\label{lem-point-dist-claim1square}
|\bar{d}_\square(\vec{x},\vec{y}) - \bar{d}_\square(\vec{j}, \vec{j'})| \leq 2h.
\ee
Note that 
\be
\bar{d}_\square(\vec{x},\vec{y}) \leq \bar{d}_\square(\vec{x}, \vec{j}) + \bar{d}_\square(\vec{j}, \vec{y}) \leq \bar{d}_\square(\vec{x}, \vec{j}) + \bar{d}_\square(\vec{j}, \vec{j'}) + \bar{d}_\square(\vec{j'}, \vec{y}).
\ee
Thus 
\be
\left| \bar{d}_\square(\vec{x},\vec{y}) - \bar{d}_\square(\vec{j}, \vec{j'}) \right| \leq |\bar{d}_\square(\vec{x}, \vec{j})| + |\bar{d}_\square(\vec{j'}, \vec{y})| \leq 2h,
\ee
establishing (\ref{lem-point-dist-claim1square}).

Next, we claim that
\be\label{jtox-square}
|F_\square(\vec{j} - \vec{j'}) - F_\square(\vec{x} - \vec{y})| \leq \tfrac{8}{3}(h+L).
\ee
Indeed, by definition of $\text{dil}(F_\square)$, we have
$$
 \frac{|F_\square(\vec{j} - \vec{j'}) - F_\square(\vec{x} - \vec{y})|}{|(\vec{j}- \vec{j'}) - (\vec{x} - \vec{y})|} \leq \text{dil}(F_\square).
$$
By our choice of $\vec{j}$, $\vec{j'}$ and using Lemma \ref{dilF-square}, it follows that
\begin{align}
 |F_\square(\vec{j} - \vec{j'}) - F_\square(\vec{x} - \vec{y})| &\leq |(\vec{j}- \vec{j'}) - (\vec{x} - \vec{y})| \, \text{dil}(F_\square) \notag\\
 {}&\leq (|\vec{j} - \vec{x}| + |\vec{j'} - \vec{y}|) \, \text{dil}(F_\square) \notag\\
 {}&\leq \tfrac{8}{3}(h+L)\notag,
 \end{align}
establishing inequality (\ref{jtox-square}).

Combining inequalities (\ref{lem-point-dist-claim1square}), (\ref{jtox-square}), and the triangle inequality, we have 
\begin{align}
|\bar{d}_\square(\vec{x}, \vec{y}) - F(\vec{x} - \vec{y})| &\leq |\bar{d}_\square(\vec{x},\vec{y})| - \bar{d}_\square(\vec{j},\vec{j'})| + |F_\square(\vec{j} - \vec{j'}) - F_\square(\vec{x} - \vec{y})| \notag\\
{}&\leq \tfrac{14}{3}h+\tfrac{8}{3}L,\notag
\end{align}
finishing the proof of Lemma \ref{lem-point-dist-square}.
\end{proof}

Combining Lemma \ref{lem-point-dist-square} with Theorem~\ref{thm-smocking-R}, we conclude Theorem~\ref{thm-tan-square}.

\subsection{ The Tangent Cone at Infinity of $X_T$  by Prof.~Sormani, Victoria, David, and Aleah}

In this section, we prove the tangent cone at infinity of the T smocked space is 
a normed space whose unit ball, $\{\vec{x}\in\mathbb{R}^2\colon ||\vec{x}||_{\infty,T} = R\}$, is a square. 
See Theorem~\ref {thm-tan-T}
which is depicted in Figure~\ref{fig:T-lim}.

\begin{figure}[h]
\includegraphics[width=.8 \textwidth]{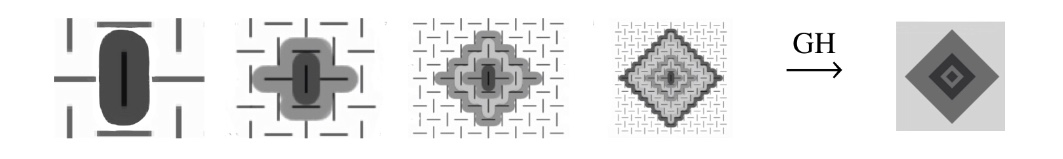}
\caption{Converging to the Tangent Cone at Infinity.}
\label{fig:T-lim}
\end{figure}

\begin{thm} \label{thm-tan-T}
The tangent cone at infinity of $(X_T, d_T)$ is $({\mathbb{R}}^2, d_{F_T})$,
where
\be \label{F-def-T}
d_{F_T}( \vec{x},\vec{y})= F_T(\vec{x}- \vec{y})
\textrm{ where }
F_T(\vec{x})=||\vec{x}||_T=\frac{|x_1|+|x_2|}{2}
\ee
Note that in the limit space, the shape of a ball, 
$
\{\vec{x}\in\mathbb{R}^2\colon ||\vec{x}||_{T} = R\},
$
 is a square with corners at $(\pm 2,0)$ and $(0, \pm 2)$.
\end{thm}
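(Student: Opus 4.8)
The plan is to apply the main rescaling theorem, Theorem~\ref{thm-smocking-R}, with $F = F_T$. The heavy lifting is already complete: Proposition~\ref{prop-dist-T} establishes that for lattice indices $\vec{j}, \vec{k} \in J_T$,
\[
d_T(I_{\vec{j}}, I_{\vec{k}}) = \frac{|j_1 - k_1| + |j_2 - k_2|}{2} = F_T(\vec{j} - \vec{k}),
\]
so $F_T$ agrees \emph{exactly} with the inter-stitch distance on the lattice (the constant $C$ in the Key Lemma, Lemma~\ref{lem-dil-to-approx}, is zero). What remains is to verify the hypotheses of Theorem~\ref{thm-smocking-R}, following the template established for $X_+$ and $X_\square$.

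First I would verify that $F_T(\vec{x}) = \tfrac{1}{2}(|x_1| + |x_2|) = \tfrac{1}{2}||\vec{x}||_{taxi}$ is a norm; definiteness, homogeneity, and the triangle inequality all follow immediately since $F_T$ is just half the taxicab norm. Next I would bound the dilation: using the standard comparison $||\vec{x}||_{taxi} \le \sqrt{2}\,||\vec{x}||_{\mathbb{E}^2}$ together with the reverse triangle inequality, one gets
\[
\dil(F_T) = \max_{\vec{a} \neq \vec{b}} \frac{|F_T(\vec{a}) - F_T(\vec{b})|}{|\vec{a} - \vec{b}|} \le \frac{1}{2}\max_{\vec{a}\neq\vec{b}} \frac{||\vec{a}-\vec{b}||_{taxi}}{||\vec{a}-\vec{b}||_{\mathbb{E}^2}} \le \frac{\sqrt{2}}{2} < 1.
\]

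With $F_T$ a norm of finite dilation and the inter-stitch identity in hand, I would then pass from the distance between stitches to the distance between arbitrary points, exactly as in Lemma~\ref{lem-point-dist-+}. Concretely, given $\vec{x}, \vec{y} \in \mathbb{R}^2$, the smocking depth $h_T = 1$ (Lemma~\ref{lem-depth-T}) produces stitches $I_{\vec{j}}, I_{\vec{j'}}$ with $d_T(\vec{x}, I_{\vec{j}}) \le h$ and $d_T(\vec{y}, I_{\vec{j'}}) \le h$; two applications of the triangle inequality give $|\bar{d}_T(\vec{x}, \vec{y}) - \bar{d}_T(\vec{j}, \vec{j'})| \le 2h$, while the dilation bound together with $L_T = 2$ (Lemma~\ref{lem-length-T}) controls $|F_T(\vec{j} - \vec{j'}) - F_T(\vec{x} - \vec{y})|$. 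Combining these yields a uniform estimate $|\bar{d}_T(\vec{x}, \vec{y}) - F_T(\vec{x} - \vec{y})| \le K$ with $K$ depending only on $h_T$, $L_T$, and $\dil(F_T)$; equivalently, one may invoke Lemma~\ref{lem-dil-to-approx} directly with $C = 0$ to obtain $K = 2h + 2\,\dil(F_T)\,(h + L)$. Feeding this bound into Theorem~\ref{thm-smocking-R} produces the claimed tangent cone $(\mathbb{R}^2, d_{F_T})$, whose unit ball is the square with vertices $(\pm 2, 0)$ and $(0, \pm 2)$. Since the genuinely combinatorial argument already lives in Proposition~\ref{prop-dist-T}, I expect no real obstacle here; the only point requiring minor care is confirming that $\dil(F_T)$ is finite so the Key Lemma applies, which the half-taxicab structure makes transparent.
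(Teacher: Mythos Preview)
Your proposal is correct and follows essentially the same approach as the paper: verify $F_T$ is a norm, bound $\dil(F_T)$, transfer the exact inter-stitch formula from Proposition~\ref{prop-dist-T} to arbitrary points via the smocking depth and length (as in Lemma~\ref{lem-point-dist-T}), and then invoke Theorem~\ref{thm-smocking-R}. Your dilation bound $\dil(F_T)\le \sqrt{2}/2$ is in fact sharper than the paper's stated $\dil(F_T)\le 1$, but either suffices for the argument.
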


Recall that, according to Lemma \ref{prop-dist-T}, for $\vec{j},\vec{j'}\in J_T$
\be
d_{T}(I_{\vec{j}}, I_{\vec{j'}}) =  F_T(\vec{j} - \vec{j'}).
\ee
We will prove a series of lemmas to show we have all the hypotheses needed to apply  
Theorem~\ref{thm-smocking-R} taking $F=F_T$.

\begin{lem}\label{dilF-T}
The function $F_T$ satisfies $\text{dil}(F_T) \leq 1$.
\end{lem}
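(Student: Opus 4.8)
The plan is to mirror the proof of Lemma~\ref{dilF-+} almost verbatim, since $F_T$ differs from $F_+$ only by replacing the factor $\tfrac13$ with $\tfrac12$. First I would unwind the definition of the dilation and pull out the constant, writing
\be
\text{dil}(F_T) = \max_{\vec{a}\neq\vec{b}} \frac{|F_T(\vec{a})-F_T(\vec{b})|}{||\vec{a}-\vec{b}||_{\mathbb{E}^2}} = \max_{\vec{a}\neq\vec{b}} \frac{1}{2}\frac{\left|(|a_1|+|a_2|) - (|b_1|+|b_2|)\right|}{||\vec{a}-\vec{b}||_{\mathbb{E}^2}}.
\ee

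The key step is then to bound the numerator using the reverse triangle inequality $\left||a_i|-|b_i|\right| \le |a_i-b_i|$ applied in each coordinate, which gives
\be
\left|(|a_1|+|a_2|) - (|b_1|+|b_2|)\right| \le |a_1-b_1| + |a_2-b_2| = ||\vec{a}-\vec{b}||_{taxi}.
\ee
Next I would invoke the standard comparison of norms $||\vec{x}||_{\mathbb{E}^2} \le ||\vec{x}||_{taxi} \le \sqrt{2}\,||\vec{x}||_{\mathbb{E}^2}$ that was already used in Lemma~\ref{dilF-+}, so that $\tfrac{1}{||\vec{x}||_{\mathbb{E}^2}} \le \tfrac{\sqrt{2}}{||\vec{x}||_{taxi}}$, and combine the two estimates to conclude
\be
\text{dil}(F_T) \le \max_{\vec{a}\neq\vec{b}} \frac{1}{2}\frac{||\vec{a}-\vec{b}||_{taxi}}{||\vec{a}-\vec{b}||_{\mathbb{E}^2}} \le \frac{\sqrt{2}}{2} \le 1.
\ee

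There is essentially no obstacle here: the whole argument reduces to the coordinatewise reverse triangle inequality together with a fixed norm comparison, and the only arithmetic to check is $\tfrac{\sqrt{2}}{2}\le 1$. If one wished to identify the sharp constant, one would note that equality in the norm comparison is attained along the diagonal direction $\vec{a}-\vec{b}=(t,t)$, on which $F_T$ is maximally sensitive, yielding $\text{dil}(F_T)=\tfrac{\sqrt{2}}{2}$. However, since the hypotheses of Theorem~\ref{thm-smocking-R} and of the point-distance estimate to follow only require a Lipschitz bound, the weaker inequality $\text{dil}(F_T)\le 1$ is all that is needed.
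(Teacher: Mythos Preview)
Your proof is correct and follows essentially the same route as the paper's: apply the coordinatewise reverse triangle inequality to bound the numerator by $||\vec a-\vec b||_{taxi}$, then invoke the norm comparison $||\vec x||_{taxi}\le\sqrt{2}\,||\vec x||_{\mathbb E^2}$ to conclude $\text{dil}(F_T)\le\tfrac{\sqrt 2}{2}\le 1$. Your write-up is in fact slightly cleaner, since you keep the factor $\tfrac12$ explicit throughout and arrive at the sharp constant $\tfrac{\sqrt 2}{2}$, whereas the paper's chain of inequalities absorbs the $\tfrac12$ at the first step and so reads a bit loosely.
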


\begin{proof}
Recall $||\vec{x}||_{taxi} = |x_1| + |x_2|$ is the taxi norm, and 
$||\vec{x}||_{\mathbb{E}^2} = \sqrt{(x_1)^2 + (x_2)^2}$ is the Euclidean norm. 
It is well-known that $||\vec{x}||_{taxi} \leq \sqrt{2} || \vec{x} ||_{\mathbb{E}^2}$.
Let $\vec{a}=(a_1,a_2)$ and $\vec{b}=(b_1,b_2)$ be points in the plane. 
We may estimate
\begin{align}
\frac{|F_T(\vec{a})-F_T(\vec{b})|}{||\vec{a}-\vec{b}||_{\mathbb{E}^2}}&
	\leq\frac{||a_1|+|a_2| -|b_1|-|b_2||}{||\vec{a}-\vec{b}||_{\mathbb{E}^2}} 
\leq\sqrt{2}\,\left(\frac{|a_1-b_1|+|a_2-b_2|}{||\vec{a}-\vec{b}||_{taxi}}\right)
=1.\notag
\end{align}
 \end{proof}
 
Now we are ready to estimate the $\bar{d}_T$-distance between arbitrary points.

\begin{lem}\label{lem-point-dist-T}
For any two points $\vec{x} = (x_1, x_2), \vec{y} = (y_1, y_2) \in \mathbb{E}^2$, we have
\be\label{dist-estimate-T}
F_T(\vec{x} - \vec{y}) - (4h + 2L) \leq \bar{d}_T((x_1, x_2), (y_1, y_2)) \leq 
F_T(\vec{x} - \vec{y}) + (4h + 2L)
\ee
where $L$ and $h$ are the smocking length and depth of $X_T$, respectively.
\end{lem}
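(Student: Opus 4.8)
The plan is to reduce this point-to-point estimate to the exact stitch-to-stitch distance formula already established in Proposition~\ref{prop-dist-T}, following verbatim the template of Lemma~\ref{lem-point-dist-+} for the flower space. The only two ingredients needed beyond that formula are the finiteness of the smocking depth $h$ (so that every point of $\mathbb{E}^2$ lies within $h$ of some stitch) and the dilation bound $\dil(F_T)\le 1$ proved in Lemma~\ref{dilF-T}. This is in fact exactly the scenario handled abstractly by the Key Lemma~\ref{lem-dil-to-approx}, taken with $F=F_T$ and error constant $C=0$, since Proposition~\ref{prop-dist-T} gives $d_T(I_{\vec{j}},I_{\vec{j'}})=F_T(\vec{j}-\vec{j'})$ with no slack; because the present statement is phrased with the norm $F_T(\vec{x}-\vec{y})$ of a difference rather than a difference of scalar potentials, I would write the short argument out directly so that the additive constant $4h+2L$ stays transparent.

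First I would invoke the finiteness of the smocking depth (Lemma~\ref{lem-depth-T}, $h_T=1$) to select, for $\vec{x}$ and $\vec{y}$, nearest stitches $I_{\vec{j}}$ and $I_{\vec{j'}}$ with centers $\vec{j},\vec{j'}\in J_T$ satisfying $\bar{d}_T(\vec{x},\vec{j})\le h$ and $\bar{d}_T(\vec{y},\vec{j'})\le h$. Two applications of the $\bar{d}_T$ triangle inequality then give
\[
|\bar{d}_T(\vec{x},\vec{y}) - \bar{d}_T(\vec{j},\vec{j'})| \le 2h,
\]
and Proposition~\ref{prop-dist-T} identifies the middle term exactly as $\bar{d}_T(\vec{j},\vec{j'})=d_T(I_{\vec{j}},I_{\vec{j'}})=F_T(\vec{j}-\vec{j'})$. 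For the second half, Lemma~\ref{lem-hplusL} (which applies since $X_T$ is parametrized by points in its stitches, each stitch of length $L=L_T=2$ containing its center) gives the Euclidean bounds $|\vec{j}-\vec{x}|\le h+L$ and $|\vec{j'}-\vec{y}|\le h+L$. Feeding these into the Lipschitz estimate of Lemma~\ref{dilF-T} yields
\[
|F_T(\vec{j}-\vec{j'}) - F_T(\vec{x}-\vec{y})| \le \dil(F_T)\,\big(|\vec{j}-\vec{x}| + |\vec{j'}-\vec{y}|\big) \le 2(h+L).
\]
Combining the two displays through the triangle inequality gives $|\bar{d}_T(\vec{x},\vec{y}) - F_T(\vec{x}-\vec{y})| \le 4h+2L$, which is exactly the two-sided estimate claimed.

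I do not expect a genuine obstacle: all of the analytic work has been front-loaded into Proposition~\ref{prop-dist-T} (the closed-form stitch-to-stitch distance) and Lemma~\ref{dilF-T} (the dilation bound), so this lemma is a soft \emph{transfer} from stitches to arbitrary points mediated by the depth. The only point deserving care is the bookkeeping of the constant, in particular justifying $|\vec{j}-\vec{x}|\le h+L$ via Lemma~\ref{lem-hplusL} rather than invoking the sharper $h+L/2$ available for a centered interval; since the statement only asserts the non-optimal constant $4h+2L$, the loose bound suffices and no tightening is required. Once proved, this lemma together with Theorem~\ref{thm-smocking-R} (applied with $K=4h+2L$ and $F=F_T$) will immediately furnish Theorem~\ref{thm-tan-T}.
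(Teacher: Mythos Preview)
Your proposal is correct and follows essentially the same approach as the paper's own proof: select nearest stitches via the smocking depth, bound $|\bar{d}_T(\vec{x},\vec{y})-\bar{d}_T(\vec{j},\vec{j'})|\le 2h$ by the triangle inequality, bound $|F_T(\vec{j}-\vec{j'})-F_T(\vec{x}-\vec{y})|\le 2(h+L)$ via the dilation estimate of Lemma~\ref{dilF-T} and the Euclidean bound $|\vec{j}-\vec{x}|\le h+L$, and combine. Your explicit invocation of Lemma~\ref{lem-hplusL} to justify that Euclidean bound is a slight expository improvement over the paper, which leaves that step implicit.
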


\begin{proof}
By the definition of the smocking depth, 
\be
\exists \,\,I_{\vec{j}} = I_{(j_1,j_2)} \textrm{ and } I_{\vec{j'}} = I_{(j_1',j_2')}
\textrm{ s.t. } d_T(\vec{x}, I_{\vec{j}}) \leq h \textrm{ and }d_T(\vec{y}, I_{\vec{j'}}) \leq h.
\ee
We first claim that
\be\label{lem-point-dist-claim1T}
|\bar{d}_T(\vec{x},\vec{y}) - \bar{d}_T(\vec{j}, \vec{j'})| \leq 2h.
\ee
To see this, note that 
\be
\bar{d}_T(\vec{x},\vec{y}) \leq \bar{d}_T(\vec{x}, \vec{j}) + \bar{d}_T(\vec{j}, \vec{y}) \leq \bar{d}_T(\vec{x}, \vec{j}) + \bar{d}_T(\vec{j}, \vec{j'}) + \bar{d}_T(\vec{j'}, \vec{y}).
\ee
Thus 
\be
\left| \bar{d}_T(\vec{x},\vec{y}) - \bar{d}_T(\vec{j}, \vec{j'}) \right| \leq |\bar{d}_T(\vec{x}, \vec{j})| + |\bar{d}_T(\vec{j'}, \vec{y})| \leq 2h,
\ee
establishing (\ref{lem-point-dist-claim1T}).

Next, we claim that
\be\label{jtox-T}
|F_T(\vec{j} - \vec{j'}) - F_T(\vec{x} - \vec{y})| \leq 2(h+L).
\ee
Indeed, by definition of $\text{dil}(F_T)$,
\be
 \frac{|F_T(\vec{j} - \vec{j'}) - F_T(\vec{x} - \vec{y})|}{|(\vec{j}- \vec{j'}) - (\vec{x} - \vec{y})|} \leq \text{dil}(F_T).
\ee
By our choice of $\vec{j}$, $\vec{j'}$ and using Lemma \ref{dilF-T}, it follows that
\begin{align}
 |F_T(\vec{j} - \vec{j'}) - F_T(\vec{x} - \vec{y})| &\leq |(\vec{j}- \vec{j'}) - (\vec{x} - \vec{y})| \, \text{dil}(F_T) \notag\\
 {}&\leq (|\vec{j} - \vec{x}| + |\vec{j'} - \vec{y}|) \, \text{dil}(F_T) \notag\\
 {}&\leq 2(h+L)\notag,
 \end{align}
establishing inequality (\ref{jtox-T}).

Combining inequalities (\ref{lem-point-dist-claim1T}), (\ref{jtox-T}), and the triangle inequality,
\begin{align}
|\bar{d}_T(\vec{x}, \vec{y}) - F_T(\vec{x} - \vec{y})| &\leq |\bar{d}_T(\vec{x},\vec{y})| - \bar{d}_T(\vec{j},\vec{j'})| + |F_T(\vec{j} - \vec{j'}) - F_T(\vec{x} - \vec{y})| \notag\\
{}&\leq 4h+2L,\notag
\end{align}
finishing the proof of Lemma \ref{lem-point-dist-T}.
\end{proof}

\begin{lem}\label{limit-norm-T}
For any two points $\vec{x}=(x_1,x_2), \vec{y}=(y_1,y_2)$, we have
\be
F_T(\vec{x}-\vec{y})=\lim_{R \to \infty} \tfrac{1}{R}\left[ \bar{d}_T((Rx_1, Rx_2),(Ry_1,Ry_2)) \right].
\ee
Moreover, $||\vec{x}||_{\infty,T}=F_T(\vec{x})$ defines a norm on $\mathbb{R}^2$.
\end{lem}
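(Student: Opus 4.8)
The plan is to prove this lemma in exactly the two-part structure already used for Lemma~\ref{limit-norm-+}, since both ingredients we need are in hand. First I would verify that $F_T$ is a norm on $\mathbb{R}^2$. This is the easy part: because $F_T(\vec{x}) = \tfrac{1}{2}(|x_1| + |x_2|)$ is precisely one-half of the taxicab norm, it inherits all three norm axioms directly. Definiteness holds since $|x_1| + |x_2| = 0$ forces $x_1 = x_2 = 0$; homogeneity follows from $|Rx_i| = |R|\,|x_i|$; and the triangle inequality follows term-by-term from $|x_i + y_i| \le |x_i| + |y_i|$. Alternatively, one simply copies the verification already carried out for $F_+$.

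Second, for the limit formula I would apply Lemma~\ref{lem-point-dist-T} to the rescaled points $R\vec{x}$ and $R\vec{y}$, which gives
\[
\left|\bar{d}_T((Rx_1, Rx_2),(Ry_1,Ry_2)) - F_T(R\vec{x} - R\vec{y})\right| \le 4h + 2L.
\]
Dividing by $R$ and invoking the homogeneity of $F_T$ established above, namely $F_T(R\vec{x} - R\vec{y}) = R\,F_T(\vec{x} - \vec{y})$, yields
\[
\left|\tfrac{1}{R}\,\bar{d}_T((Rx_1, Rx_2),(Ry_1,Ry_2)) - F_T(\vec{x} - \vec{y})\right| \le \tfrac{1}{R}(4h + 2L).
\]
Since $h$ and $L$ are the fixed finite smocking depth and length of $X_T$ computed in Lemma~\ref{lem-depth-T} and Lemma~\ref{lem-length-T}, the right-hand side tends to $0$ as $R \to \infty$, establishing the claimed limit.

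I do not anticipate any genuine obstacle here: the uniform two-sided estimate of Lemma~\ref{lem-point-dist-T} performs all of the analytic work, and the squeeze then delivers the limit immediately. The one point requiring care is that the homogeneity of $F_T$—part one of this very lemma—must be invoked when pulling the factor of $R$ out of $F_T(R\vec{x} - R\vec{y})$. The logical order therefore matters: I would verify the norm axioms first, so that homogeneity is available for the limit computation, and only then carry out the rescaling argument.
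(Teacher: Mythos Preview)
Your proposal is correct and follows essentially the same route as the paper: apply Lemma~\ref{lem-point-dist-T} to the rescaled points, divide by $R$, use homogeneity of $F_T$, and take the limit; then note that $F_T$ is a positive multiple of the taxicab norm. The only cosmetic difference is that the paper verifies homogeneity ``by inspection'' inside the limit argument and checks the norm claim afterward, whereas you (sensibly) establish the norm axioms first so that homogeneity is already in hand.
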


\begin{proof}
By Lemma \ref{lem-point-dist-T}, we have 
\be
\tfrac1R |\bar{d}_T((Rx_1, Rx_2), (Ry_1, Ry_2))-F_T(R\vec{x}-R\vec{y})|\leq\tfrac1R(4h + 2L).
\ee
Inspecting the formula for $F_T$, one can see that $F_T(\lambda\vec{x})=\lambda F_T(\vec{x})$ for any $\lambda\geq0$. It follows that
\be
|\tfrac1R \bar{d}_T((Rx_1, Rx_2), (Ry_1, Ry_2))-F_T(\vec{x}-\vec{y})|\leq\tfrac1R(4h + 2L).
\ee
Taking the limit of both sides,
\be
\bar{d}_{\infty,T}(\vec{x},\vec{y}) = F_T(\vec{x}-\vec{y}),
\ee
establishing the first part of Lemma \ref{limit-norm-T}.

Finally, since $||\cdot||_{\infty,T} = d_{\infty,T}(\cdot,(0,0))$ is a positive multiple of the taxicab norm, it is also a norm.
\end{proof}

Combining these lemmas with Theorem~\ref{thm-smocking-R}
we conclude Theorem~\ref{thm-tan-T}.

\section{Open Problems}

In this section we propose a collection of open problems.  Please be sure to contact
Prof.~Sormani if you are interested in working on some of these problems so that
you are not in competition with other teams.

\subsection{For Undergraduates and Masters students}

\begin{problem}
As suggested in the introduction: students may consider the
balls, distances, and tangent cones at infinity for other periodic smocking patterns as in
as in Figure~\ref{fig:other-patterns}.
You might consider the various herringbone smocking patterns that are commonly sewn.  You might also consider the patterns we
have already studied but change the separation constants and lengths.  
\end{problem}

\begin{figure}[h]
\includegraphics[width=.3 \textwidth]{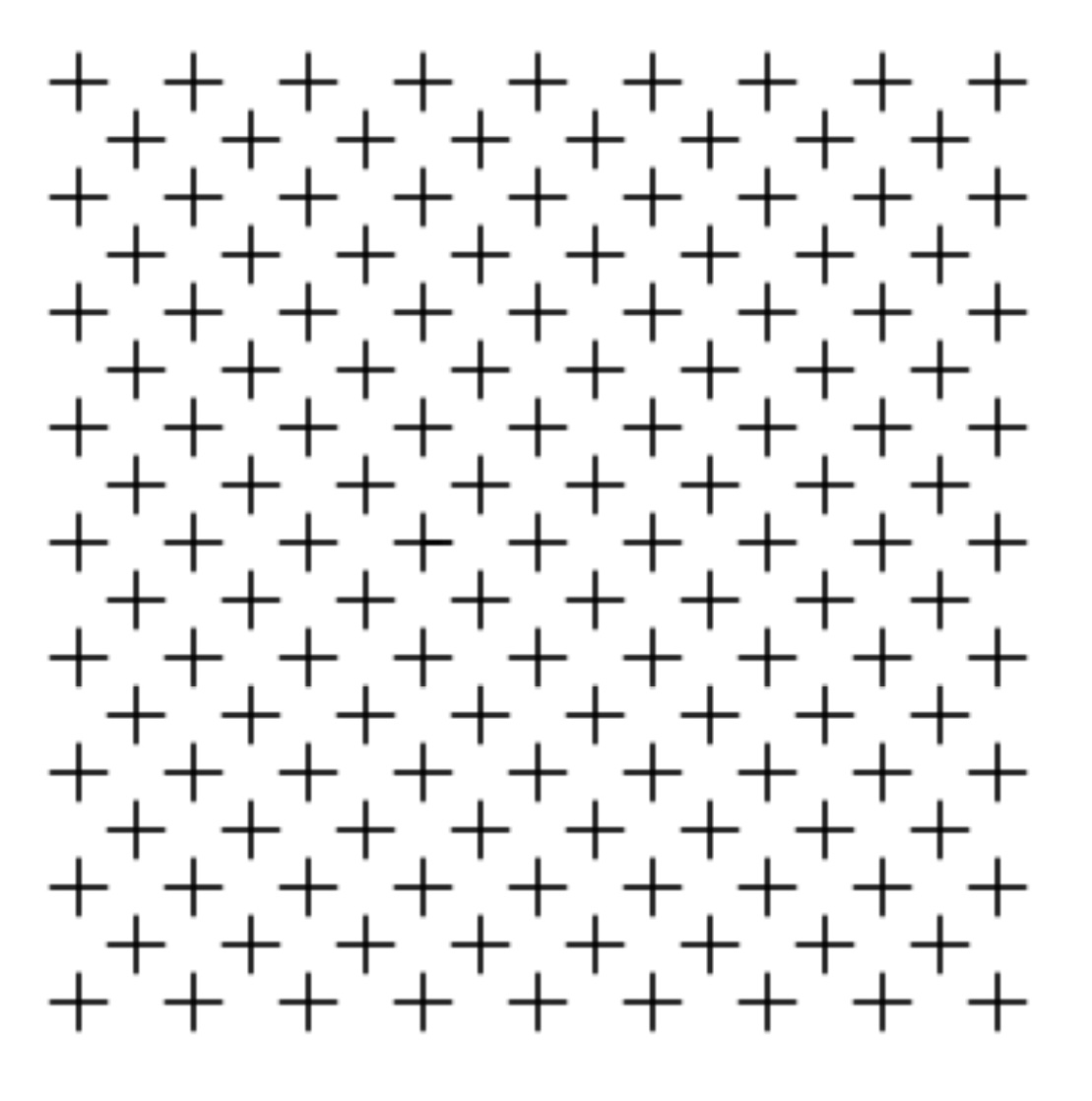} 
\includegraphics[width=.3 \textwidth]{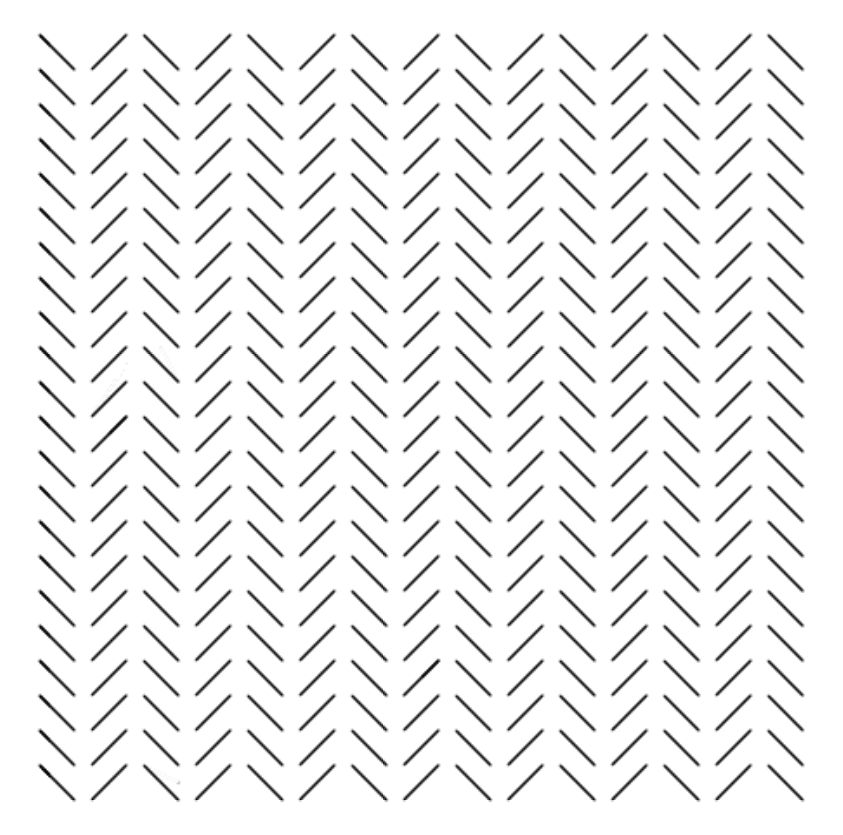} 
\includegraphics[width=.3 \textwidth]{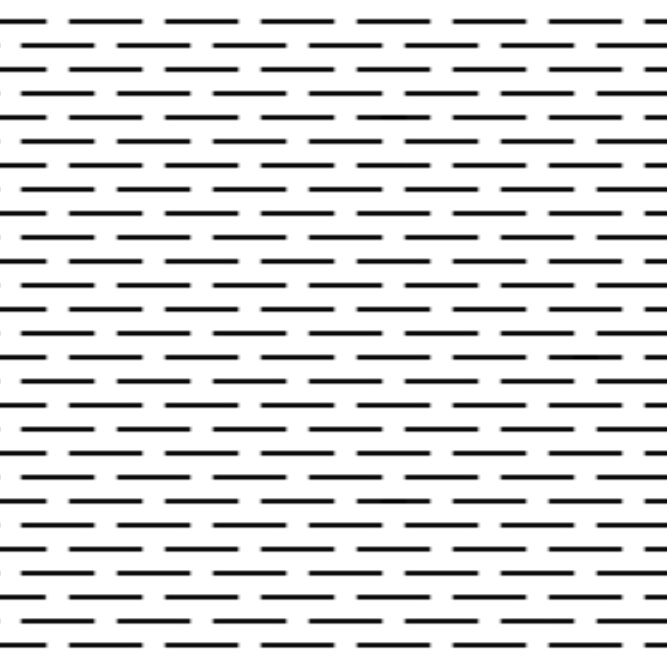} \\
\includegraphics[width=.3 \textwidth]{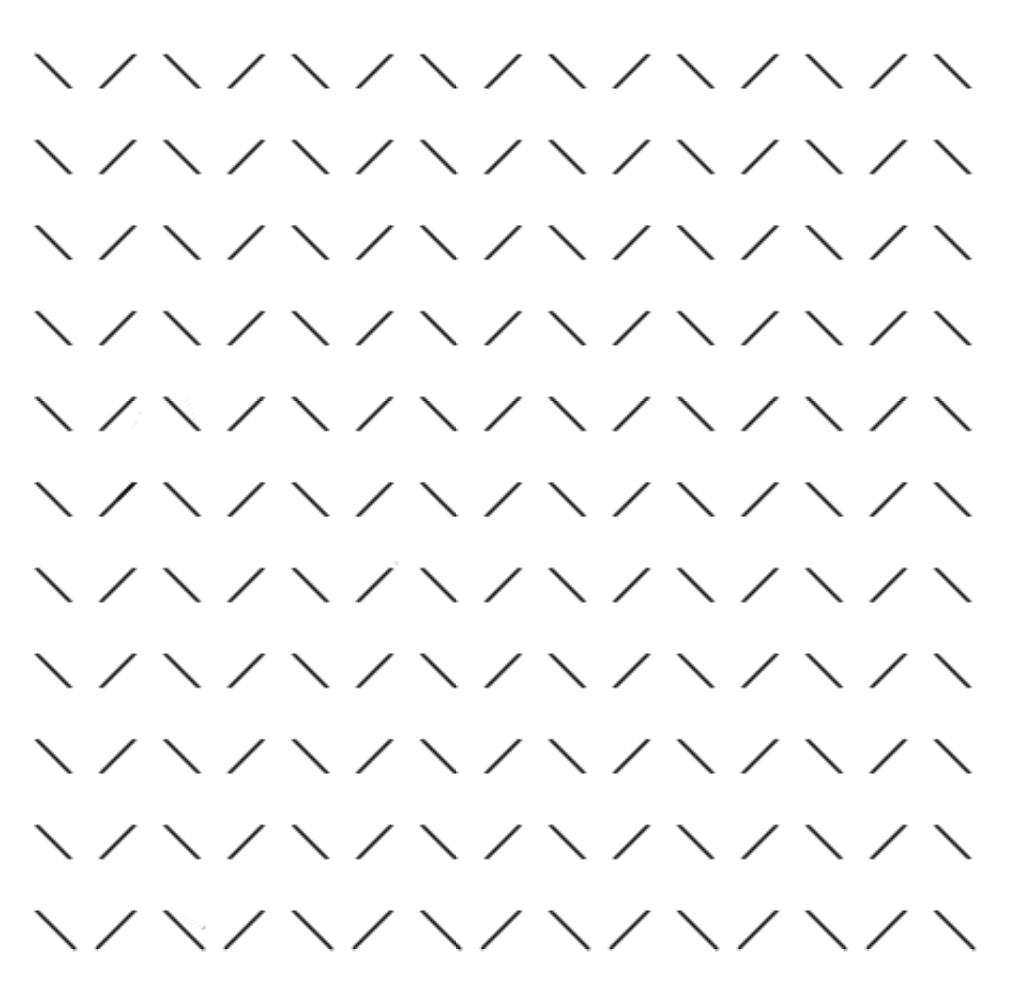} 
\includegraphics[width=.3 \textwidth]{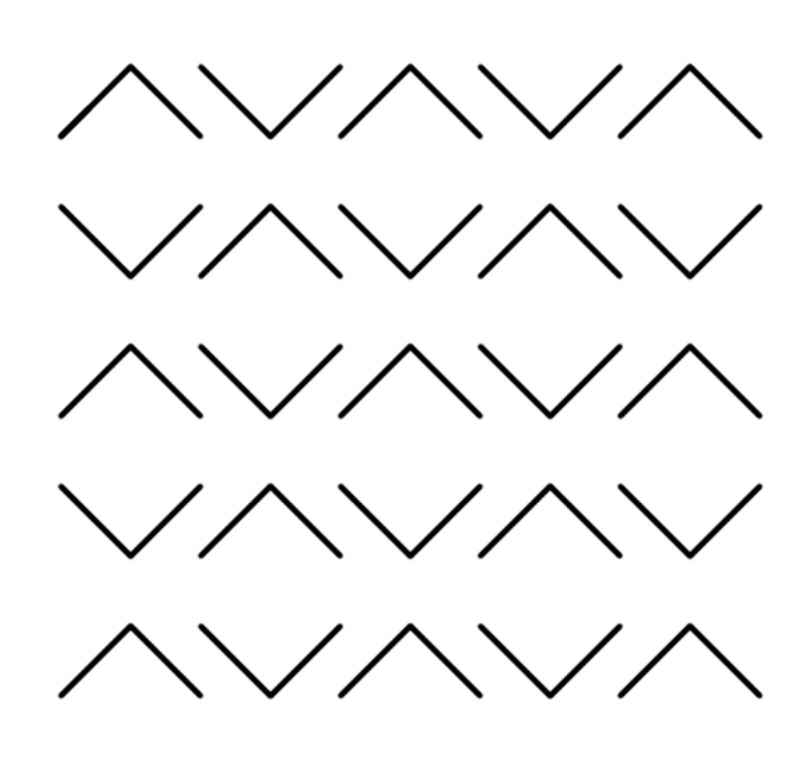} 
\includegraphics[width=.3 \textwidth]{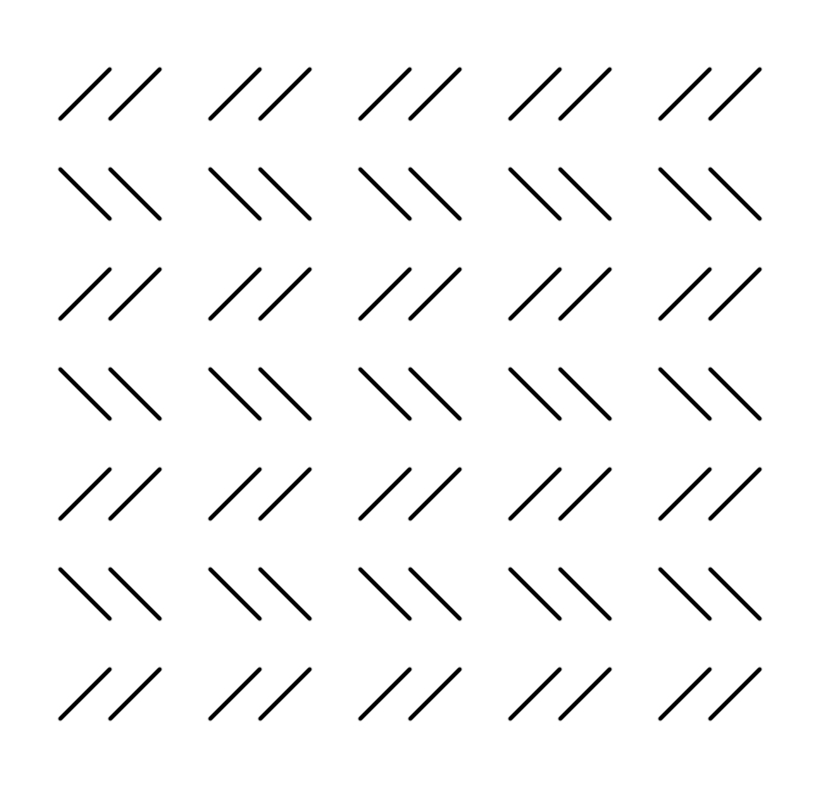} 
\caption{Additional smocked spaces: .}
\label{fig:other-patterns}
\end{figure}

\begin{problem}
Three dimensional smocking patterns will be explored in \cite{SWIF-smocked}.   You may wish to examine others.
\end{problem}

\begin{problem}
Students might also explore the Gromov-Hausdorff limits of sequences of different periodic smocking patterns.  
What happens for example if you examine a sequence of patterns similar to one of our patterns as the
length is taken to $0$ with a fixed index set $J$ as in Figure~\ref{fig:Lto0}?    Does it have a Gromov-Haudorff limit?  Can
you prove a general theorem about the Gromov-Hausdorff limits and what they are?
\end{problem}

\begin{figure}[h]
\includegraphics[width=.8 \textwidth]{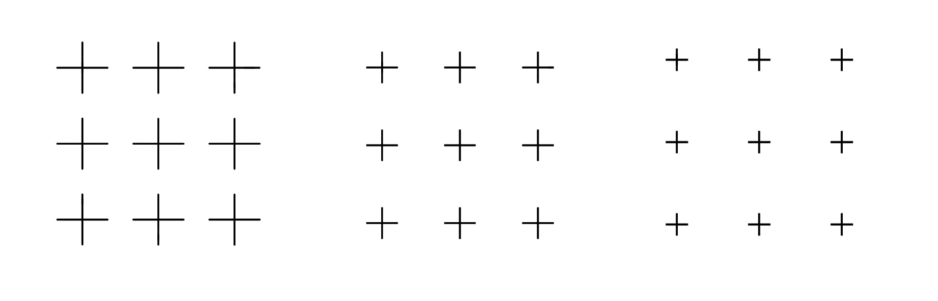} 
\caption{Taking smocking lengths to $0$.   What is the GH limit?}
\label{fig:Lto0}
\end{figure}

\begin{problem}
Students might also explore the Gromov-Hausdorff limit of a sequence of patterns obtained by rescaling one
of our patterns horizontally but not vertically as in Figure~\ref{fig:horiz}?   Does it have a Gromov-Haudorff limit?
\end{problem}

\begin{figure}[h]
\includegraphics[width=.8 \textwidth]{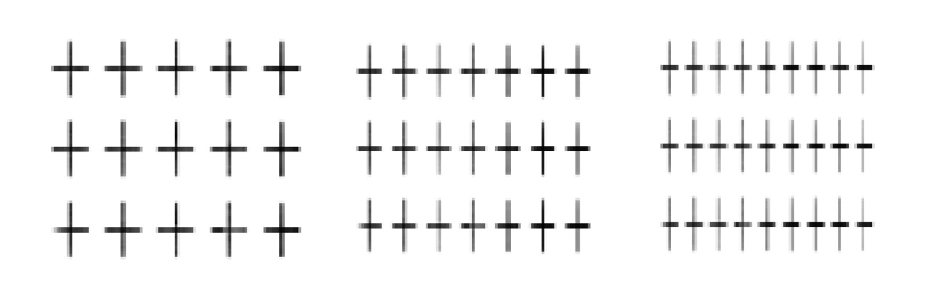} 
\caption{Rescaling horizontally, does the GH limit exist?}
\label{fig:horiz}
\end{figure}

\begin{problem}
Show that the smocking space $(X_Q, d_Q)$ defined by the smocking intervals
\be
S_Q= \{ \, \{j\} \times [0,j]\, : \, j\in J_Q \, \} \textrm{ where } J_Q=\{ j_k: \, k\in {\mathbb{N}}\} \textrm{ with }
\lim_{k\to \infty} j_k/j_{k+1}=0
\ee
does not have a unique tangent cone at infinity and the tangent
cone is not always a normed space.  First show that the GH limit when  rescaling by $R_k=j_k \to \infty$ is
a pulled thread space with a interval of unit length at $\{1\}\times [0,1]$.  Then rescale instead
by $R_k=(j_k+j_{k+1})/2$ and show the GH limit is Euclidean space.  What other possible tangent cones
at infinity do you find?
\end{problem}

\subsection{More Advanced Questions}

\begin{problem}
Under what conditions is the rescaled limit of a smocked metric space a unique normed vector space?
When does it exist?  When is it unique?  When is it a normed vector space?   Here we are asking you to
find hypotheses which allow you to apply Theorem~\ref{thm-smocking-R} without having to carefully
analyze the formula for the distance between intervals in the space.
\end{problem}

\begin{problem}
Note that in this paper two of our tangent cones at infinity, that of $X_T$ and $X_+$ were isometric to one another as they
are both just rescalings of taxispace (see Example~\ref{taxi-same}).   Is there a way to assess a pair of 
smocking metric spaces initially to determine if they have the same tangent cone at infinity without conducting a
complete derivation of their distance functions?  
\end{problem}

\begin{problem}
If you have a sequence of smocked spaces defined using smocking sets,
$S_k\subset {\mathbb{E}}^N$, and the sets $S_K$ converge in the Hausdorff sense
to a smocking set $S_\infty$, 
\be
d_H(S_k, S_\infty) \to 0,
\ee
then do the corresponding smocked metric spaces converge
\be
d_{GH}(X_k, X_\infty) \to 0?
\ee
Prove this or find a counter example.
\end{problem}

\begin{problem}
You may read in \cite{BBI} or \cite{Gromov-metric} to learn Gromov's Compactness Theorem,
which describes when a sequence of metric spaces has a GH converging subsequence.  This
theorem requires a uniform upper bound on diameter and the number of balls  of any given radius.
Smocking spaces do not satisfy these hypotheses and yet we saw the rescaled sequences
converged.  When does a sequence of smocked metric space have a GH converging subsequence? 
Do uniform bounds on the smocking constants suffice?   What properties do the limit spaces have?
\end{problem}  

\begin{problem}
Given a finite dimensional normed vector space, can one find a smocked space whose
unique tangent cone at infinity is that given space?
\end{problem}


\begin{thebibliography}{99}
\bibitem{BBI}
Dmitri Burago, Yuri Burago, and Sergei Ivanov.
\newblock {\em A course in metric geometry}, volume~33 of {\em Graduate Studies
  in Mathematics}.
\newblock American Mathematical Society, Providence, RI, 2001.

\bibitem{Area-spaces}
Dmitri Burago and Sergei Ivanov.
\newblock Area spaces: first steps.
\newblock {\em Geom. Funct. Anal.}, 19(3):662--677, 2009.
\newblock With an appendix by Nigel Higson.

\bibitem{Edwards}
David~A. Edwards.
\newblock {\em The structure of superspace}.
\newblock Academic Press, New York, 1975.

\bibitem{Gromov-Sormani-IAS-report}
M~Gromov and C~Sormani.
\newblock Scalar curvature and convergence.
\newblock {\em IAS Emerging Topics Working Group Report}, 2018.

\bibitem{Gromov-metric}
Mikhael Gromov.
\newblock {\em Structures m\'{e}triques pour les vari\'{e}t\'{e}s
  riemanniennes}, volume~1 of {\em Textes Math\'{e}matiques [Mathematical
  Texts]}.
\newblock CEDIC, Paris, 1981.
\newblock Edited by J. Lafontaine and P. Pansu.

\bibitem{Kazaras-Sormani-tori}
D~Kazaras and C~Sormani.
\newblock Tori of almost nonnegative scalar curvature.
\newblock {\em to appear}, 2020.

\bibitem{smocking-book}
Leona Lehner.
\newblock {\em Lattice Smocking}.
\newblock A Practical Guide from Delos. Delos, 1991.

\bibitem{SWIF-smocked}
C~Sormani, D~Kazaras, and Students.
\newblock {SWIF} convergence of smocked spaces.
\newblock {\em to appear}, 2020.

\end{thebibliography}

\end{document}